\newtheorem{theorem} {Theorem}
\newtheorem{lemma} {Lemma}
\newtheorem{definition} {Definition}
\newtheorem{corollary} {Corollary}
\newtheorem{assumption} {Assumption}
\newtheorem{remark} {Remark}
\def\a{{\mathbf{a}}}
\def\b{{\mathbf{b}}}
\def\c{{\mathbf{c}}}
\def\d{{\mathbf{d}}}
\def\x{{\mathbf{x}}}
\def\e{{\mathbf{e}}}
\def\u{{\mathbf{u}}}
\def\v{{\mathbf{v}}}
\def\z{{\mathbf{z}}}
\def\w{{\mathbf{w}}}
\def\y{{\mathbf{y}}}
\def\b{{\mathbf{b}}}
\def\X{{\mathbf{X}}}
\def\Y{{\mathbf{Y}}}
\def\C{{\mathbf{C}}}
\def\A{{\mathbf{A}}}
\def\D{{\mathbf{D}}}
\def\M{{\mathbf{M}}}
\def\I{{\mathbf{I}}}
\def\B{{\mathbf{B}}}
\def\C{{\mathbf{C}}}
\def\V{{\mathbf{V}}}
\def\F{{\mathbf{F}}}
\def\S{{\mathbf{S}}}
\def\Z{{\mathbf{Z}}}
\def\W{{\mathbf{W}}}
\def\U{{\mathbf{U}}}
\def\P{{\mathbf{P}}}
\def\G{{\mathbf{G}}}
\DeclareMathOperator*{\argmin}{arg\,min}
\DeclareMathOperator*{\argmax}{arg\,max}
\newcommand{\overbar}[1]{\mkern 1.5mu\overline{\mkern-1.5mu#1\mkern-1.5mu}\mkern 1.5mu}
\newcommand{\barC}{\overbar{\C}}
\newcommand{\barG}{\overbar{\G}}
\newcommand{\barU}{\overbar{\U}}
\newcommand{\barV}{\overbar{\V}}
\newcommand{\barW}{\overbar{\W}}
\newcommand{\barX}{\overbar{\X}}
\newcommand{\barY}{\overbar{\Y}}
\newcommand{\barZ}{\overbar{\Z}}
\newcommand{\barP}{\overbar{\P}}
\newcommand{\barS}{\overbar{\S}}
\newcommand{\barI}{\overbar{\I}}
\newcommand{\mK}{\mathcal{K}}
\newcommand{\mZ}{\boldsymbol{\mathcal{Z}}}
\newcommand{\mP}{\boldsymbol{\mathcal{P}}}
\newcommand{\mX}{\boldsymbol{\mathcal{X}}}
\newcommand{\mU}{\boldsymbol{\mathcal{U}}}
\newcommand{\mV}{\boldsymbol{\mathcal{V}}}
\newcommand{\mS}{\boldsymbol{\mathcal{S}}}
\newcommand{\mY}{\boldsymbol{\mathcal{Y}}}
\newcommand{\mA}{\boldsymbol{\mathcal{A}}}
\newcommand{\mI}{\boldsymbol{\mathcal{I}}}
\newcommand{\mB}{\boldsymbol{\mathcal{B}}}
\newcommand{\mW}{\boldsymbol{\mathcal{W}}}
\newcommand{\mM}{\boldsymbol{\mathcal{M}}}
\newcommand{\mQ}{\boldsymbol{\mathcal{Q}}}
\newcommand{\mN}{\boldsymbol{\mathcal{N}}}
\newcommand{\mG}{\boldsymbol{\mathcal{G}}}\newcommand{\mC}{\boldsymbol{\mathcal{C}}}
\newcommand{\mbS}{\mathbb{S}}
\newcommand{\mE}{\boldsymbol{\mathcal{E}}}
\newcommand{\mR}{\boldsymbol{\mathcal{R}}}
\newcommand{\mcB}{\mathcal{B}}
\newcommand{\mcM}{\mathcal{M}}
\newcommand{\mcP}{\mathcal{P}}
\newcommand{\mcQ}{\mathcal{Q}}
\newcommand{\trace}{\textnormal{\textrm{Tr}}}
\newcommand{\rank}{\textnormal{\textrm{rank}}}
\newcommand{\bcirc}{\textnormal{\textrm{bcirc}}}
\newcommand{\bdiag}{\textnormal{\textrm{bdiag}}}
\newcommand{\matbdiag}{\textnormal{\textrm{mat-bdiag}}}
\newcommand{\nnzb}{\textnormal{\textrm{nnzb}}}
\newcommand{\conj}{\textnormal{\textrm{conj}}}
\newcommand{\ri}{\textnormal{\textrm{ri}}}
\newcommand{\diag}{\textnormal{\textrm{diag}}}
\newcommand{\fold}{\textnormal{\textrm{fold}}}
\newcommand{\unfold}{\textnormal{\textrm{unfold}}}
\newcommand{\reals}{\mathbb{R}}
\newcommand{\complex}{\mathbb{C}}
\newcommand{\sign}{\textrm{sign}}
\newcommand{\fft}{\textnormal{\textrm{fft}}}
\newcommand{\ifft}{\textnormal{\textrm{ifft}}}
\newcommand{\tc}{\textnormal{\textrm{H}}}
\newcommand{\real}{\textnormal{\textrm{Re}}}
\newcommand{\im}{\textnormal{\textrm{Im}}}
\DeclarePairedDelimiterX\setc[2]{\{}{\}}{\,#1 \;\delimsize\vert\; #2\,}
\title{Efficiency of First-Order Methods for Low-Rank Tensor Recovery with the Tensor Nuclear Norm Under Strict Complementarity}
\author{Dan Garber \\ {\small Technion - Israel Institute of Technology}\\ {\small \texttt{dangar@technion.ac.il}}
\and
Atara Kaplan \\  {\small Technion - Israel Institute of Technology} \\ {\small  \texttt{ataragold@campus.technion.ac.il}}
}
\date{}
\begin{document} 

\maketitle

\begin{abstract}
We consider convex relaxations for recovering low-rank tensors based on constrained minimization over a ball induced by the tensor nuclear norm, recently introduced in \cite{tensor_tSVD}.
We build on a recent line of results that considered convex relaxations for the recovery of low-rank matrices and established that under  a strict complementarity condition (SC), both the convergence rate and per-iteration runtime of standard gradient methods may improve dramatically. We develop the appropriate strict complementarity condition for the tensor nuclear norm ball and obtain the following main results under this condition:
\begin{itemize}
\item
When the objective to minimize is of the form $f(\mX) = g(\mA\mX)+\langle{\mC,\mX}\rangle$ , where $g$ is strongly convex and $\mA$ is a linear map (e.g., least squares), a quadratic growth bound holds, which implies linear convergence rates for standard projected gradient methods, despite the fact that $f$ need not be strongly convex.
\item
For a smooth objective function, when initialized in certain proximity of an optimal solution which satisfies  SC, standard projected gradient methods only require SVD computations (for projecting onto the tensor nuclear norm ball) of rank that matches the \textit{tubal rank} of the optimal solution. In particular, when the tubal rank is constant, this implies nearly linear (in the size of the tensor) runtime per iteration, as opposed to super linear without further assumptions. Moreover, we establish a characterization of how increasing the rank of the SVD computations also increases the ball around the optimal solution in which we need to initialize. We also provide a practical and efficient procedure to verify that the low-rank SVD-based projections are indeed the exact projections. 
\item
For a nonsmooth objective function which admits a popular smooth saddle-point formulation, we derive similar results to the latter for the well known extragradient method.
\end{itemize} 
An additional contribution which may  be of independent interest, is the rigorous extension of many basic results regarding tensors of arbitrary order, which were previously obtained only for third-order tensors.
\end{abstract}

\section{Introduction}

Low-rank models for multi-dimensional arrays are extremely important in statistics, machine learning, and related areas. From a computational perspective, low-rank implies concise representation that allows for efficient storage and runtime implementations, which is crucial for high-dimensional settings. From a statistical perspective, low-rank often implies the ability  to recover multi-dimensional arrays from only noisy or partial information, under suitable assumptions, see for instance the seminal works on low-rank matrix completion \cite{candes2012exact} and robust principal component analysis  \cite{candes2011robust}. In the past two decades there has been numerous works on models involving low-rank matrices (2D-arrays), in terms of applications, statistical properties, and efficient optimization and learning, with too many references to mention. The focus of this work is on higher-order multi-dimensional arrays of low-rank, namely low-rank high-order tensors, which have gained significant interest in recent years. 


The question of how to define a low-rank tensor has been of great significance. Perhaps the most agreed upon definition of a tensor rank is the CP-rank \cite{kolda} which, in accordance with the matrix rank, is defined as the minimal number of rank-one tensors  necessary to sum to generate the tensor. Unfortunately, even the task of determining the CP-rank of a tensor is well known to be NP-hard \cite{haastad1989tensor}. An alternative is to use the Tucker rank \cite{kolda}, which for an order-d tensor $\mX\in\reals^{n_1\times\cdots\times n_d}$ is defined as $\rank_{\textnormal{tc}}(\mX)=\left(\rank(\X^{\lbrace1\rbrace}),\ldots,\rank(\X^{\lbrace d\rbrace})\right)$, where $\X^{\lbrace j\rbrace}\in\reals^{n_j\times\Pi_{i\neq j}n_i}$ is the mode-$j$ matricization of $\mX$, which is obtained by arranging the mode-$j$ fibers of $\mX$ as columns of a matrix. Thus, computing the Tucker rank of an order-d tensor merely requires computing the ranks of $d$ matrices. Since the nuclear norm for matrices is a well known convex surrogate for matrix rank, an accepted convex surrogate of the Tucker rank is thus the sum of nuclear norms (SNN) $\sum_{j=1}^d \Vert\X^{\lbrace j\rbrace}\Vert_*$, see for instance \cite{liu2012tensor, tensor1_krank, mu2014square, romera2013new, goldfarb2014robust}. However, as noted in \cite{romera2013new}, while the matrix nuclear norm is the convex envelope for the matrix rank over the ball of matrices with spectral norm at most 1, this correspondence does not hold anymore when considering the sum of nuclear norms $\sum_{j=1}^d \Vert\X^{\lbrace j\rbrace}\Vert_*$ and sum of ranks $\sum_{i=1}^d\rank(\X^{\{i\}})$.
Moreover, from an algorithmic point of view, efficient first-order methods for solving convex relaxations with the SNN are based on variable splitting methods that maintain each of the mode-$j$ matricization $\X^{\{j\}}$, $j=1,\dots,d$, separately \cite{liu2012tensor, tensor1_krank, mu2014square, romera2013new, goldfarb2014robust}. This has two important drawbacks. First, such splitting methods, e.g., ADMM \cite{tensor1_krank}, often suffer from slow convergence rates and are significantly more complex than first-order methods for models which could be solved efficiently without variable splitting or the use of Lagrangian methods. Second, since these methods require to maintain $d$ matrix variables, each of the same size (number of entries) as the original tensor, when the order of the tensor $d$ is not very small, these methods may run in inherently super-linear runtime, which may greatly limit the scale of problems to which they could be applied. 

In the last several years a new tensor product between two tensors called a t-product has been introduced \cite{firstTSVD, highOrderTSVD1}, which is denoted by $\mX*\mY$. Using the t-product, many concepts from matrix algebra can be extended to tensors such as a tensor t-SVD which decomposes a tensor as a product $\mX=\mU*\mS*\mV^{\top}$, where $\mU,\mV$ are orthogonal tensors and $\mS$ is a so-called f-diagonal tensor. Related to the t-SVD are two new tensor ranks, the tubal rank and the average rank. Indeed low tubal rank for instance implies that the tensor could be represented as a collection of low-rank matrices which implies both more efficient storage and more efficient computations with the tensor.
For the case of 3rd-order tensors, building on the t-product, the authors in \cite{tensor_tSVD} derived the corresponding tensor spectral norm and tensor nuclear norm (TNN) and have established that the TNN is the convex envelope of the average rank over the unit ball induced by the tensor spectral norm, in analogy with the matrix nuclear norm which is the convex envelope of the  matrix rank overt the unit spectral norm ball of matrices. These derivations led the authors in \cite{tensor_tSVD} to propose a convex relaxation for the problem of Tensor Robust Principal Component Analysis, based on loss minimization regularized with the TNN, and prove that under certain assumptions (akin to those in its celebrated matrix counterpart \cite{candes2011robust}), it exactly recovers a 3rd order tensor with low tubal rank, from its noisy observation. Similar results were obtained for the problem of exact completion of a 3rd-order tensor with low tubal rank from partial random observations in \cite{tensor_tSVD1}. In \cite{tensor_tSVD2, semerci2014tensor} the authors considered a similar approach for 3rd-order tensor completion and de-noising, and multienergy computed tomography. Interestingly, \cite{tensor_tSVD, tensor_tSVD1, tensor_tSVD2, semerci2014tensor} demonstrated empirically that convex relaxations based on the TNN could be superior in practice (in terms of accuracy) to those based on the SNN approach discussed before. For tensors of order greater than 3 however, the results and theory in the literature are very limited. In \cite{highOrderTSVD1} the authors define many of the high-order tensor concepts, yet the definitions are mostly recursive which, while providing an intuition for the mathematical concepts, is not always sufficient for using in further analyzes. In \cite{highOrderTSVD3, highOrderTSVD2} the authors define the TNN norm for higher order tensors and demonstrate the potential of their methods in numerical examples, however their paper lacks theoretical foundation. 

Inspired by the recent literature on convex relaxations for the recovery of low-rank tensors based on the TNN, throughout this paper we are interested in the following general minimization problem over the unit ball induced by the TNN in the space of order-$d$ (for arbitrary $d\geq 3$) real tensors: 
\begin{align} \label{mainModel}
& \min_{\Vert\mX\Vert_*\le1}f(\mX),
\end{align}
where $f:\reals^{n_1\times\cdots\times n_d} \rightarrow \reals$ is  convex and $\Vert\cdot\Vert_*$ is the TNN which we will be formally defined in the sequel. We will consider the both the case that $f$ is smooth and nonsmooth. 

As we shall see, projecting a tensor onto the unit TNN ball amounts to computing $\Pi_{i=3}^dn_i$ matrix SVDs of matrices of size $n_1\times n_2$. Each such SVD computation  requires in worst cast $\mathcal{O}(\min\lbrace n_1,n_2\rbrace\max\lbrace n_1,n_2\rbrace^2)$ runtime. In particular, this amounts to superlinear runtime in the size of the tensor --- $\Pi_{i=1}^dn_i$, which may greatly limit the application of gradient methods to solve Problem \eqref{mainModel}  even in moderate dimensions.  

Very recently, in a series of works, several authors considered the matrix version of Problem \eqref{mainModel} (or very close variants of it, such as minimization over the set of positive semidefinite matrices with bounded trace) in case a \textit{strict complementarity} (SC) condition holds (or certain relaxed notions of). It was established that SC provably and significantly improves the performance of gradient methods in two central aspects. First, for a very popular structure of the objective, namely when $f(\X) := g(\mA\X) = \langle{\X,\C}\rangle$, and when $g(\cdot)$ is strongly convex and $\mA$ is a linear map (e.g., a least squares problem with $f(\X) := \frac{1}{2}\Vert{\mA\X-\b}\Vert_2^2$), $f(\cdot)$ satisfies a quadratic growth bound over the unit nuclear norm of matrices \cite{ding2020kfw, SpectralFrankWolfe, strictComplementarity2, garberRankOne}. Such a quadratic growth bound is well known to imply, in case $f(\cdot)$ is also smooth, linear convergence rates for standard projected-gradient methods, see for instance \cite{nesterov_LCwithQG}, despite the fact that $f(\cdot)$ need not be strongly convex.  Second, it was established, that in a certain radius of an optimal solution which satisfies SC (the radius depends on the measure of SC, or certain relaxation of), the Euclidean projected gradient mapping admits rank that does not exceed that of the optimal solution. This immediately implies that, at least in certain proximity of an optimal solution, full-rank SVD computations, which are required for the computation of the projection on the the matrix nuclear norm ball and are computationally-prohibitive in high-dimensions, could be replaced with only low-rank SVDs, which could be carried out much more efficiently and lead to dramatic reduction in runtime \cite{garberNuclearNormMatrices, garberStochasticLowRank, ourExtragradient}.

The aim of this work is to study the possible extension of  the results in \cite{ding2020kfw,  garberNuclearNormMatrices, ourExtragradient} mentioned above for  convex optimization under SC over the unit nuclear norm
ball of matrices, to the significantly more challenging corresponding tensor optimization problem \eqref{mainModel}.

The main contributions of this work are as follows:
\begin{itemize}
\item
We derive the strict complementarity condition for the tensor optimization problem  \eqref{mainModel}, in case $f$ is differentiable, and motivate it, similarly to its matrix counterpart \cite{garberNuclearNormMatrices, SpectralFrankWolfe}, by demonstrating that it both holds generically, and that it is related to a certain notion of robustness of Problem  \eqref{mainModel} to miss-specification. See Section \ref{sec:SCdefSmooth}.
\item
For an objective function of the form $f(\mX) = g(\mA\mX)+\langle{\mX,\mC}\rangle$ with $g(\cdot)$ being strongly convex and $\mA$ being a linear map, we establish that under the SC condition, and assuming a unique optimal solution, Problem \eqref{mainModel} satisfies a quadratic growth bound, which under the additional assumption that $f(\cdot)$ is smooth, implies a linear convergence rate for standard gradient methods. See Section \ref{sec:QGtheorem}.

\item
Considering the case that $f(\cdot)$ is smooth, we prove that inside a ball centered at an optimal solution with tubal rank at most $r^*$ which satisfies SC, where the radius of the ball scales linearly with the measure of SC, the projected gradient mapping (i.e., the tensor obtained from a projected gradient step) always admits tubal rank at most $r^*$. This directly implies that  the projection step in projected gradient methods (including accelerated variants), could be implemented by computing $N=\Pi_{i=3}^{d}n_i$ thin SVD computations of $n_1\times n_2$ matrices, where only the top $r^*$ components in each such SVD are computed, as opposed to the worst case in which all components of each SVD may be required for the projection. Importantly, for constant $r^*$ this implies nearly linear (in the size of the tensor) per-iteration runtime for standard projected gradient methods  (in the proximity of the optimal solution), as opposed to super-linear without such assumptions. Moreover, we  provide a precise tradeoff which considers weaker versions of the SC condition, and allows to weaken the initialization requirement (i.e., increase the radius of the ball) in favor of increased tubal rank of the projected gradient mapping, i.e., increased complexity of computing the projection. On the practical side, while verifying if the iterates are indeed the aforementioned ball could be difficult, we show that for any given tensor, it could be easily verified if indeed the projection could be computed using only low-rank SVDs as discussed above, which is all that is needed to verify that the method indeed converges correctly (i.e., as when full-rank SVDs are used for the projection). See Section \ref{sec:lowTubalRankProjectionsSmooth}.
\item
We consider the case  that $f(\cdot)$ is nonsmooth but admits the popular structure $f(\mX) = h(\mX) + \max_{\y\in\mK}\y^{\top}(\mA\X-\b)$, where $h$ is smooth and convex, $\mA$ is a linear map, and $\mK$ is convex, compact, and projection-friendly. We derive the corresponding SC condition for the resulting saddle-point problem and obtain results similar to the previous item for the well-known Extragradient method, which is applicable to smooth convex-concave saddle-point problems. See Section \ref{sec:nonsmoothCase}.
\item
We provide some numerical experiments in support of our theoretical investigations. First, we demonstrate the plausibility of the SC condition for the tasks of low-rank tensor completion and low-rank tensor robust principal component analysis with synthetic data, in a meaningful setting in which indeed the TNN relaxation recovers the ground truth tensor with small error. Second, the experiments demonstrate our theoretical findings  regarding linear convergence rates under SC (for low-rank tensor completion), and that for both tasks, using very simple initializations, already from a very early stage of the run, the exact projection in the optimization algorithm could be computed using SVDs of rank that matches the tubal rank of the ground truth tensor, instead of using full-rank SVDs, as required in worst case.  See Section \ref{sec:experiments}.
\item
As an additional contribution, one which may be of independent interest, we rigorously extend many basic results regarding tensors and the t-product, that were previously rigorously obtained only for 3rd-order tensors, to arbitrary-order tensors. 
\end{itemize}

Table \ref{table:bottomLine} below present some concrete algorithmic implications of our work, demonstrating the substantial improvements in worst case complexity of several highly popular first-order methods for Problem \eqref{mainModel}, under the strict complementarity condition.

\begin{table*}[!htb]\renewcommand{\arraystretch}{1.3}
{\footnotesize
\begin{center}
  \begin{tabular}{| l | c | c | c | c | c | c | } \hline 
  & \multicolumn{3}{c|}{ without SC } & \multicolumn{3}{c|}{SC holds an for optimal solution with tubal rank at most $r$} \\ \cline{2-7}
   & conv. & SVD & memory &  conv. rate  & SVD rank & memory \\
  & rate & rank & & & (from warm-start init.) &  (from warm-start init.) \\ \hline
  \multicolumn{7}{|c|}{$f$ is $\beta$-smooth} \\ \hline
  PGD & $\beta/\varepsilon$  &  $n$ & $n^d$  & $\beta/\varepsilon$ &  $r$ & $rn^{d-1}$ \\ \hline
    \multicolumn{7}{|c|}{$f$ is $\beta$-smooth and there exists a unique optimal solution} \\ \hline
  AGD  & $\sqrt{\beta/\varepsilon}$ & $n$ & $n^d$  & $\sqrt{\beta/\varepsilon}$ & $r$ & $rn^{d-1}$ \\ \hline
    \multicolumn{7}{|c|}{$f$ is $\beta$-smooth of the form: $f(\mX)=g(\mA(\mX))+\langle\mC,\mX\rangle$ for strongly convex $g$, }
    \\ \multicolumn{7}{|c|}{ linear map $\mA$, and there exists a unique optimal solution} \\ \hline
  PGD & $\beta/\varepsilon$  &  $n$ & $n^d$  & $(\beta/\gamma)\log(\beta/\varepsilon)$ &  $r$ & $rn^{d-1}$ \\ \hline
  RFG  & $\sqrt{\beta/\varepsilon}$ & $n$ & $n^d$  & $\sqrt{\beta/\gamma}\log(1/\varepsilon)$ & $r$ & $rn^{d-1}$ \\ \hline
  \multicolumn{7}{|c|}{$f$ is nonsmooth of the form: $f(\mX) = h(\mX) + \max_{\y\in\mK}\y^{\top}(\mA(\mX)-\b)$ } 
     \\ \multicolumn{7}{|c|}{for smooth and convex $h$, linear map $\mA$, and convex and compact $\mK$} \\ \hline
  EG  & $\beta_{\textnormal{NS}}/\varepsilon$ & $n$ & $n^d$  & $\beta_{\textnormal{NS}}/\varepsilon$ & $r$ & $rn^{d-1}$ \\ \hline
   \end{tabular}
   \caption{Some algorithmic consequences of our results for the projected gradient descent method (PGD), Nesterov's accelerated gradient  method (AGD), Nesterov's restarted fast gradient method (RFG), and the projected extragradient decent method (EG). For simplicity we focus on tensors of equal dimensions, i.e., $n_i = n$ for all $i=1,\dots,d$. The convergence rate columns refer to the worst case number of iterations to guarantee $\varepsilon$ approximation error w.r.t. function value, the  SVD rank columns refer to the rank of the ($n\times n$ matrix) SVDs required in each iteration to compute the projection onto the unit TNN ball, and the memory columns refer to the memory required by the methods,  excluding gradient computations. The improved SVD rank and memory results  under SC are guaranteed to hold only when the methods are initialized sufficiently close to the optimal solution.   
   We omit all constants except for $\varepsilon$, the smoothness parameter $\beta$ or $\beta_{\textnormal{NS}}$ which is the effective smoothness parameter for the corresponding saddle-point problem, and the quadratic growth parameter $\gamma$, which is guaranteed to be positive. 
  }\label{table:bottomLine}
\end{center}
}
\vskip -0.2in
\end{table*}\renewcommand{\arraystretch}{1}

\subsection{Additional related work}

Our work concerns the convex problem \eqref{mainModel}, for which arguing about convergence to the optimal solution is straightforward, can handle quite general objective functions and is in particular free of any specific statistical model. In recent years there have been many works on recovering low-rank tensors (either with low CP rank, low tucker rank, or low tubal rank) which are based  on alternative nonconvex approaches. These could be divided into two groups. The first, concerns a specific task, e.g., tensor completion or tensor robust principal component analysis, under a specific statistical model, and provides convergence guarantees for the ground truth tensor which hold only in specific setting considered, see for instance   \cite{nonconvexTensors_cp1, nonconvexTensors_cp2,  nonconvexTensors_cp3, nonconvexTensors_cp4, nonconvexTensors_tucker2, nonconvexTensors_tucker1, pmlr-v162-qiu22d}.
The second group of works does not assume a specific statistical model, and consider either the use of nonconvex regularizers for low rank, e.g., \cite{zhang2018nonconvex, wang2021generalized, chen2020robust}, or consider working explicitly with a factorization of the low-rank tensor (either based on the Tucker decomposition or the t-SVD), which allows to always maintain only low-rank tensors throughout the run of the algorithm, e.g., \cite{zhou2017tensor, kressner2014low, goldfarb2014robust}. However, in all of these works, only convergence to critical points is established, or not at all.

\subsection{Organization}
For  ease of presentation all proofs are deferred to the appendix. For the main results, a sketch of the proof is provided in the main body of the paper.

\subsection{Notation}
We denote by $\reals$  the real numbers, and by $\complex$ denotes the complex numbers.  $\mathbb{S}^{n}$ denotes the space of real symmetric $n\times n$ matrices, and $\mathbb{H}^n$ denotes the space of Hermitian matrices of size $n\times n$.
We denote column vectors via lowercase boldface letters, e.g., $\x$, matrices via capital boldface letters, e.g.,  $\X$, and tensors via capital boldface calligraphic letters, e.g., $\mX$. We denote by $\I_n$ the identity matrix of size $n\times n$. For any matrix $\X\in\complex^{m\times n}$ we denote its conjugate-transpose as $\X^{\tc}\in\complex^{n\times m}$. For any tensor $\mX\in\complex^{n_1\times\cdots\times n_d}$ we denote by $\conj(\mX)$ the operator that takes the complex conjugate of each entry of $\mX$. For a matrix $\X\in\complex^{m\times n}$ we let $\sigma_i(\X)$ denote its $i$th largest singular value, and similarly, for a Hermitian matrix $\X\in\mathbb{H}^n$, we let $\lambda_i(\X)$ denote its $i$th largest (signed) eigenvalue. We let $\#\sigma_i(\X)$ denote the multiplicity of the $i$th largest singular value of $\X$. We denote by $[n]$ the set $[n]=\lbrace1,\ldots,n\rbrace$. 
The product $\cdot$ denotes standard matrix multiplication operation between two matrices. The product $\otimes$ denotes the Kronecker product between two matrices. For a tensor $\mX\in\complex^{n_1\times\cdots\times n_d}$ we denote its frontal slices as $\X^{(i_3,\ldots,i_d)}=\mX(:,:,i_3,\ldots,i_d)\in\complex^{n_1\times n_2}$, for any $i_3\in[n_3],\ldots,i_d\in[n_d]$. For a fixed tensor space $\reals^{n_1\times\dots\times{}n_d}$ or $\complex^{n_1\times\dots\times{}n_d}$, we let  $N:=n_3\cdots n_d$ to be the multiplication of all but the first and second dimension. The inner product between  two tensor $\mX,\mY\in\reals^{n_1\times\dots\times{}n_d}$ is defined as
$\langle\mX,\mY\rangle := \sum_{i_d=1}^{n_d}\cdots\sum_{i_1=1}^{n_1} \mX(i_1,\ldots,i_d)\mY(i_1,\ldots,i_d)$, and the Frobenius norm of a tensor is defined as $\Vert\mX\Vert_F=\sqrt{\sum_{i_d=1}^{n_d}\cdots\sum_{i_1=1}^{n_1} \mX(i_1,\ldots,i_d)^2}$. Finally, for a set $S$ we let $\#{}S$ denote its cardinality.


\section{Tensor Preliminaries}
\label{sec:tensorPreliminaries}

In this section we review the basic necessary definitions and results regarding high-order tensors that will be used throughout this paper. Along the way we also rigorously extend previous results that, to the best of our knowledge, were previously obtained only for 3rd-order tensors. Some of these results are based on \cite{highOrderTSVD1}. Additional relevant results have previously appeared in \cite{highOrderTSVD3, highOrderTSVD2}, however they lacked justifications and proofs.

We begin by defining the block circulant matrix which represents a tensor as a matrix while preserving some of the important structure of the tensor. Using the block circulant matrix we will define the t-product between two tensors. This will enable us to define the t-SVD factorization of a tensor. We will also define the Fourier transformation of a tensor and its important connections to the t-product and t-SVD. We will then present the two notions of tensor rank which are central to this work: the tensor average rank and the tensor tubal rank.

\begin{definition}[block-circulant matrix of $3$rd order tensors \cite{firstTSVD}]
Let $\mX\in\complex^{n_1\times n_2\times n_3}$. The block-circulant matrix of $\mX$ is defined as
\begin{align*}
\bcirc(\mX) := \left[\begin{array}{cccc}
\X^{(1)} & \X^{(n_3)} & \cdots & \X^{(2)}
\\ \X^{(2)} & \X^{(1)} & \cdots & \X^{(3)} 
\\ \vdots & \vdots & \ddots & \vdots
\\ \X^{(n_3)} & \X^{(n_3-1)} & \cdots & \X^{(1)} \end{array}\right]\in\complex^{n_1n_3\times n_2n_3}
\end{align*}
where $\X^{(i)}$, $i=1,\dots,n_3$, are the frontal slices of $\mX$.
\end{definition}

Moving to higher-order tensors, the block-circulant matrix generated from a tensor $\mX\in\complex^{n_1\times\cdots\times n_d}$ can be thought of as recursively unfolding each dimension of the tensor in a block circulant pattern. 
At the initial step the last dimension is unfolded in a block circulant pattern which returns a block-circulant tensor of size ${n_1n_d\times n_2n_d\times n_3\cdots\times n_{d-1}}$, where each block is a tensor of order $(d-1)$ and size $n_1\times\cdots\times{}n_{d-1}$.  At each successive level, the last dimension of each block created in the previous level is unfolded in the same way. At the base level, the $n_1\times n_2$ sized matrices generated from the first two dimensions of the tensor are placed in a block circulant pattern in each of the blocks of the previous level. 
Formally, the block-circulant matrix is defined as follows. 
\begin{definition}[block-circulant matrix \cite{Khoromskaia2017BlockCA, davis1979circulant}]
Let $\mX\in\complex^{n_1\times\cdots\times n_d}$. The block-circulant matrix of $\mX$ is defined as
\begin{align} \label{def:bcirc}
\bcirc(\mX) = \sum_{i_d=1}^{n_d}\cdots\sum_{i_3=1}^{n_3} \pi_{n_d}^{i_d-1} \otimes \cdots \otimes\pi_{n_3}^{i_3-1} \otimes \mX{(:,:,i_3,\ldots,i_d)}\in\complex^{n_1n_3\cdots n_d\times n_2n_3\cdots n_d},
\end{align}
where for any $n\ge1$, $\pi_n$ denotes the periodic downward shift permutation matrix which can be written as
\begin{align} \label{def:shiftPermutationMatrix}
\pi_n := \left[\begin{array}{cccccc}
0 & 0 & \cdots & 0 & 0 & 1
\\ 1 & 0 & \cdots & 0 & 0 & 0 
\\ 0 & 1 & \cdots & 0 & 0 & 0
\\ \vdots & \ddots & \ddots & \ddots & \vdots & \vdots
\\ 0 & 0 & \ddots & 1 & 0 & 0
\\ 0 & 0 & 0 & \cdots & 1 & 0
\end{array} \right] \in \reals^{n\times n}.
\end{align}
\end{definition}

Taking a power of the periodic downward shift permutation matrix permutes its columns. For $\pi_n\in \reals^{n\times n}$ the matrices $\pi_n^{0},\pi_n^{1},\ldots,\pi_n^{n-1}$ are all orthogonal to each other and their sum is the all ones matrix of size $n\times n$.

To better grasp the structure of the $\bcirc$ matrix, we demonstrate it for a $4$th order tensor $\mX\in\complex^{n_1\times n_2\times 3\times 3}$. For $i_3=3$, $i_4=1$ we will calculate 
\begin{align*}
& \pi_{3}^{0} \otimes \pi_{3}^{2} \otimes \mX{(:,:,3,1)} 
\\ & = 
\left[\begin{array}{ccc}
1 & 0 & 0
\\ 0 & 1 & 0 
\\ 0 & 0 & 1
\end{array} \right]
\otimes  
\left[\begin{array}{ccc}
0 & 1 & 0
\\ 0 & 0 & 1 
\\ 1 & 0 & 0
\end{array} \right]
\otimes \X^{(3,1)}
\\ &  = \left[
\begin{array}{c|c|c}
\begin{array}{ccc}
\mathbf{0} & \X^{(3,1)} & \mathbf{0}
\\ \mathbf{0} & \mathbf{0} & \X^{(3,1)}
\\ \X^{(3,1)} & \mathbf{0} & \mathbf{0}
\end{array}
& 
\begin{array}{c}
\mathbf{0}
\end{array}
& 
\begin{array}{c}
\mathbf{0}
\end{array}
\\ \hline
\begin{array}{c}
\mathbf{0}
\end{array}
& 
\begin{array}{ccc}
\mathbf{0} & \X^{(3,1)} & \mathbf{0}
\\ \mathbf{0} & \mathbf{0} & \X^{(3,1)}
\\ \X^{(3,1)} & \mathbf{0} & \mathbf{0}
\end{array}
&
\begin{array}{c}
\mathbf{0}
\end{array}
\\ \hline
\begin{array}{c}
\mathbf{0}
\end{array}
&
\begin{array}{c}
\mathbf{0}
\end{array}
&
\begin{array}{ccc}
\mathbf{0} & \X^{(3,1)} & \mathbf{0}
\\ \mathbf{0} & \mathbf{0} & \X^{(3,1)}
\\ \X^{(3,1)} & \mathbf{0} & \mathbf{0}
\end{array}
\end{array}
\right],
\end{align*}
where $\X^{(i,j)}$ denotes the ${i,j}^{th}$ frontal slice of $\mX$, i.e., $\X^{(i,j)}=\mX{(:,:,i,j)}$.
Summing all the indexes as in \eqref{def:bcirc} we obtain the full $\bcirc(\mX)$ matrix, which can be written as
\begin{align*}
& \bcirc(\mX)=
\\ & \left[
\begin{array}{c|c|c}
\begin{array}{ccc}
\X^{(1,1)} & \X^{(3,1)} & \X^{(2,1)}
\\ \X^{(2,1)} & \X^{(1,1)} & \X^{(3,1)}
\\ \X^{(3,1)} & \X^{(2,1)} & \X^{(1,1)}
\end{array}
& 
\begin{array}{ccc}
\X^{(1,3)} & \X^{(3,3)} & \X^{(2,3)}
\\ \X^{(2,3)} & \X^{(1,3)} & \X^{(3,3)}
\\ \X^{(3,3)} & \X^{(2,3)} & \X^{(1,3)}
\end{array}
& 
\begin{array}{ccc}
\X^{(1,2)} & \X^{(3,2)} & \X^{(2,2)}
\\ \X^{(2,2)} & \X^{(1,2)} & \X^{(3,2)}
\\ \X^{(3,2)} & \X^{(2,2)} & \X^{(1,2)}
\end{array}
\\ \hline
\begin{array}{ccc}
\X^{(1,2)} & \X^{(3,2)} & \X^{(2,2)}
\\ \X^{(2,2)} & \X^{(1,2)} & \X^{(3,2)}
\\ \X^{(3,2)} & \X^{(2,2)} & \X^{(1,2)}
\end{array}
& 
\begin{array}{ccc}
\X^{(1,1)} & \X^{(3,1)} & \X^{(2,1)}
\\ \X^{(2,1)} & \X^{(1,1)} & \X^{(3,1)}
\\ \X^{(3,1)} & \X^{(2,1)} & \X^{(1,1)}
\end{array}
&
\begin{array}{ccc}
\X^{(1,3)} & \X^{(3,3)} & \X^{(2,3)}
\\ \X^{(2,3)} & \X^{(1,3)} & \X^{(3,3)}
\\ \X^{(3,3)} & \X^{(2,3)} & \X^{(1,3)}
\end{array}
\\ \hline
\begin{array}{ccc}
\X^{(1,3)} & \X^{(3,3)} & \X^{(2,3)}
\\ \X^{(2,3)} & \X^{(1,3)} & \X^{(3,3)}
\\ \X^{(3,3)} & \X^{(2,3)} & \X^{(1,3)}
\end{array}
&
\begin{array}{ccc}
\X^{(1,2)} & \X^{(3,2)} & \X^{(2,2)}
\\ \X^{(2,2)} & \X^{(1,2)} & \X^{(3,2)}
\\ \X^{(3,2)} & \X^{(2,2)} & \X^{(1,2)}
\end{array}
&
\begin{array}{ccc}
\X^{(1,1)} & \X^{(3,1)} & \X^{(2,1)}
\\ \X^{(2,1)} & \X^{(1,1)} & \X^{(3,1)}
\\ \X^{(3,1)} & \X^{(2,1)} & \X^{(1,1)}
\end{array}
\end{array}
\right]
\in\complex^{9n_1\times 9n_2}.
\end{align*}

The t-product between two tensors is based on the $\fold(\cdot)$ and $\unfold(\cdot)$ operators. The $\unfold(\cdot)$ operator lays out all frontal slices of the tensor to create a vertical block vector, as defined in \cite{firstTSVD}. This is equal to the first block column of the block-circulant matrix generated from the tensor.
\begin{definition}[fold and unfold operators of a tensor \cite{unfold_def,firstTSVD}]
Let $\mX\in\complex^{n_1\times n_2\times \cdots\times n_d}$.
The $\unfold(\cdot)$ operator is defined as
$$\unfold(\mX)=\bcirc(\mX)\C_1\in\complex^{n_1n_3\cdots n_d\times n_2},$$ where $\C_1$ is the matrix
\begin{align*}
\C_1:=\left[\begin{array}{cc}
\I_{n_2} 
\\ \textnormal{\textbf{0}}
\end{array}\right]\in\reals^{n_2n_3\cdots n_d\times n_2}
\end{align*} 
and $\fold(\cdot)$ is the inverse operator such that
$$\fold(\unfold(\mX))=\mX.$$ 
\end{definition}

\begin{definition}[T-product \cite{firstTSVD}]
Let $\mX\in\complex^{n_1\times n_2\times \cdots\times n_d}$ and $\mY\in\complex^{n_2\times \ell\times n_3\times \cdots\times n_d}$. Then the T-product $\mX*\mY$ is defined as
\begin{align*}
\mX*\mY = \fold(\bcirc(\mX)\cdot\unfold(\mY)) \in \complex^{n_1\times \ell \times n_3\times\cdots\times n_d}.
\end{align*}
\end{definition}

\begin{definition}[identity tensor \cite{highOrderTSVD1}]
The identity tensor $\mI\in \reals^{n\times n\times n_3\times\cdots\times n_d}$ is the tensor such that $\mI(:,:,1,\ldots,1) = \I_n$, and all other entries are zero.
\end{definition}

We define the transpose and conjugate transpose operation in a recursive manner.
\begin{definition}[transpose of a tensor \cite{highOrderTSVD1}] 
The transpose of a $3$rd order tensor $\mX\in\reals^{n_1\times n_2\times n_3}$ is the tensor $\mX^{\top}\in\reals^{n_2\times n_1\times n_3}$ obtained by transposing each frontal slice $\mX(:,:,i_3)$ for $i_3\in[n_3]$ and then reversing the order of $\mX^{\top}(:,:,2)$ through $\mX^{\top}(:,:,n_3)$. 
\\The transpose of a tensor $\mX\in \reals^{n_1\times n_2\times n_3\cdots\times n_d}$ is the tensor $\mX^{\top}\in \reals^{n_2\times n_1\times n_3\cdots\times n_d}$ obtained by recursively tensor transposing each slice $\mX(:,\ldots,:,i_d)$ for $i_d\in[n_d]$ and then reversing the order of $\mX^{\top}(:,\ldots,:,2)$ through $\mX^{\top}(:,\ldots,:,n_d)$. 
\end{definition}
\begin{definition}[conjugate transpose of a tensor] 
The conjugate transpose of a $3$rd order tensor $\mX\in\complex^{n_1\times n_2\times n_3}$ is the tensor $\mX^{\tc}\in\complex^{n_2\times n_1\times n_3}$ obtained by conjugate transposing each frontal slice $\mX(:,:,i_3)$ for $i_3\in[n_3]$ and then reversing the order of $\mX^{\tc}(:,:,2)$ through $\mX^{\tc}(:,:,n_3)$ \cite{tensor_tSVD}. 
\\ The conjugate transpose of a tensor $\mX\in \complex^{n_1\times n_2\times n_3\cdots\times n_d}$ is the tensor $\mX^{\tc}\in \complex^{n_2\times n_1\times n_3\cdots\times n_d}$ obtained by recursively tensor conjugate transposing each slice $\mX(:,\ldots,:,i_d)$ for $i_d\in[n_d]$ and then reversing the order of $\mX^{\tc}(:,\ldots,:,2)$ through $\mX^{\tc}(:,\ldots,:,n_d)$.
\end{definition}

%

\begin{definition}[orthogonal tensor \cite{highOrderTSVD1}] 
A tensor $\mQ\in \complex^{n\times n\times n_3\times\cdots\times n_d}$ is orthogonal if $\mQ^{\tc}*\mQ=\mQ*\mQ^{\tc}=\mI$.
\end{definition}

\begin{definition}[f-diagonal tensor \cite{firstTSVD}]
A tensor $\mX\in \reals^{n_1\times\cdots\times n_d}$ is f-diagonal if $\mX(:,:,i_3,\ldots,i_d)$ is a diagonal matrix for all $i_3\in[n_3],\ldots,i_d\in[n_d]$.
\end{definition}

In the following lemma we generalize to higher-order tensors the result in \cite{tensor_tSVD}, which considered only $3$rd order tensors, that any real-valued tensor admits a t-SVD factorization. In \cite{highOrderTSVD1} the authors also derive a t-SVD factorization for higher-order tensor, however they do not show that the tensors $\mU,\mS,\mV$ in the factorization are real-valued as required. A proof of the lemma is given in \cref{sec:App:proofLemma51}.
\begin{lemma}[T-SVD] \label{lemma:tSVD}
Let $\mX\in\reals^{n_1\times \cdots\times n_d}$. Then, it can be factorized as
\begin{align*}
\mX = \mU*\mS*\mV^{\top},
\end{align*}
where $\mU\in\reals^{n_1\times n_1\times n_3\times \cdots\times n_d}$ and $\mV\in\reals^{n_2\times n_2\times n_3\times \cdots\times n_d}$ are orthogonal, and  $\mS\in\reals^{n_1\times \cdots\times n_d}$ is an f-diagonal tensor.
\end{lemma}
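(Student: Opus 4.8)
The plan is to pass to the Fourier domain, where the block‑circulant matrix becomes block‑diagonal, perform ordinary matrix SVDs slice by slice, and transform back; the genuinely delicate point is to carry out the slice‑wise SVDs so that the resulting tensors are real‑valued. Concretely, for any $n$ the unitary DFT matrix $\F_n$ diagonalizes the cyclic shift $\pi_n$ of \eqref{def:shiftPermutationMatrix}, say $\F_n\pi_n\F_n^{\tc}=\Lambda_n$ with $\Lambda_n$ diagonal; applying $\F_{n_d}\otimes\cdots\otimes\F_{n_3}\otimes\I_{n_1}$ on the left of $\bcirc(\mX)$ and $\F_{n_d}^{\tc}\otimes\cdots\otimes\F_{n_3}^{\tc}\otimes\I_{n_2}$ on the right, the expansion \eqref{def:bcirc} turns $\bcirc(\mX)$ into a block‑diagonal matrix $\bdiag(\widehat{\mX})$ whose $N=n_3\cdots n_d$ diagonal blocks (each of size $n_1\times n_2$) are precisely the frontal slices of the tensor $\widehat{\mX}$ obtained by taking the discrete Fourier transform of $\mX$ along modes $3,\dots,d$. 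Before using this I would first record the elementary homomorphism properties of $\bcirc(\cdot)$ --- that it is injective and satisfies $\bcirc(\mX*\mY)=\bcirc(\mX)\cdot\bcirc(\mY)$, $\bcirc(\mX^{\tc})=\bcirc(\mX)^{\tc}$ and $\bcirc(\mI)=\I$ (the higher‑order analogues of the known third‑order identities, established in the preceding part of this section) --- so that it suffices to produce the desired factorization after applying $\bcirc(\cdot)$, equivalently for the block‑diagonal matrix $\bdiag(\widehat{\mX})$.

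Next, for each frontal slice $\widehat{\U}^{(i_3,\dots,i_d)}$-type object $\widehat{\mX}^{(i_3,\dots,i_d)}\in\complex^{n_1\times n_2}$ I would take a complex SVD $\widehat{\mX}^{(i_3,\dots,i_d)}=\widehat{\U}^{(i_3,\dots,i_d)}\widehat{\S}^{(i_3,\dots,i_d)}(\widehat{\V}^{(i_3,\dots,i_d)})^{\tc}$, collect the factors into tensors $\widehat{\mU},\widehat{\mS},\widehat{\mV}$, and let $\mU,\mS,\mV$ be their inverse discrete Fourier transforms along modes $3,\dots,d$. Then $\bdiag(\widehat{\mX})=\bdiag(\widehat{\mU})\,\bdiag(\widehat{\mS})\,\bdiag(\widehat{\mV})^{\tc}$, which translates back to $\bcirc(\mX)=\bcirc(\mU)\bcirc(\mS)\bcirc(\mV)^{\tc}=\bcirc(\mU*\mS*\mV^{\tc})$, so injectivity of $\bcirc(\cdot)$ gives $\mX=\mU*\mS*\mV^{\tc}$. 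Orthogonality of $\mU$ (and likewise $\mV$) follows because $\bdiag(\widehat{\mU})$ is block‑diagonal with unitary blocks, hence unitary, so $\bcirc(\mU)$ is unitary and $\bcirc(\mU^{\tc}*\mU)=\bcirc(\mU)^{\tc}\bcirc(\mU)=\I=\bcirc(\mI)$, whence $\mU^{\tc}*\mU=\mI$ and symmetrically $\mU*\mU^{\tc}=\mI$; and $\mS$ is f‑diagonal because each $\widehat{\S}^{(i_3,\dots,i_d)}$ is diagonal and the inverse transform only forms linear combinations of the frontal slices, which preserves diagonality of each slice.

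The main obstacle --- and the only step that is substantially harder than in the third‑order case of \cite{tensor_tSVD}, and the gap left open in \cite{highOrderTSVD1} --- is arranging the slice‑wise SVDs so that $\mU,\mS,\mV$ come out real. Since $\mX$ is real, its transform is conjugate‑symmetric: $\conj(\widehat{\mX}^{(i_3,\dots,i_d)})=\widehat{\mX}^{(i'_3,\dots,i'_d)}$, where $i'_k:=n_k+2-i_k$ for $i_k\ge2$ and $i'_1:=1$; conversely, a tensor whose Fourier‑domain frontal slices obey this symmetry has a real inverse transform. It therefore suffices to choose the SVDs so that $\widehat{\mU},\widehat{\mS},\widehat{\mV}$ are themselves conjugate‑symmetric in this sense. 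For each index tuple with $(i'_3,\dots,i'_d)\neq(i_3,\dots,i_d)$ the two mirror slices are entrywise complex conjugates of one another; I would fix an SVD on one representative of each pair arbitrarily and take the entrywise conjugate of it as the SVD of the mirror slice (which is again a valid SVD, since the singular‑value factor is real and nonnegative). For the finitely many self‑conjugate tuples --- those with $i_k\in\{1,\,n_k/2+1\}$ for every $k\in\{3,\dots,d\}$, the value $n_k/2+1$ arising only when $n_k$ is even --- the slice $\widehat{\mX}^{(i_3,\dots,i_d)}$ is a real matrix, so I would use a real SVD. With these choices $\widehat{\mU},\widehat{\mS},\widehat{\mV}$ are conjugate‑symmetric, hence $\mU,\mS,\mV$ are real; in particular $\mV^{\top}=\mV^{\tc}$, and we conclude $\mX=\mU*\mS*\mV^{\top}$ with $\mU,\mV$ real orthogonal and $\mS$ real and f‑diagonal, as claimed. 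The care needed here is essentially bookkeeping: correctly identifying the fixed points of the reflection $(i_3,\dots,i_d)\mapsto(i'_3,\dots,i'_d)$ simultaneously over all modes $3,\dots,d$, and verifying that conjugate‑symmetry of the Fourier slices is equivalent to reality of the inverse transform for tensors of arbitrary order.
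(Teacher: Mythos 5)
Your proposal follows essentially the same route as the paper's proof of Lemma~\ref{lemma:tSVD}: pass to the Fourier domain via Lemma~\ref{lemma:bdiag_bcirc_equality}, perform complex SVDs on the frontal slices of $\overbar{\mX}$, enforce the conjugate-complex symmetry of Lemma~\ref{lemma:conjegateComplexSymmetry_realTensors} on the SVD factors so the inverse transform is real, and translate the resulting block-diagonal factorization back through $\bcirc(\cdot)$ using Lemmas~\ref{lemma:bcirc_Tproduct} and~\ref{lemma:tproduct_bdiagMult}. One point where you are actually slightly more careful than the paper: you explicitly isolate the self-conjugate index tuples (those with $i_k\in\{1,\,n_k/2+1\}$, the latter only when $n_k$ is even), observe the corresponding slices are real, and mandate a real SVD there — whereas the paper's proof only spells out the mirror-pair bookkeeping and leaves the requirement of choosing a real SVD at fixed points implicit, though it is needed for $\overbar{\mU},\overbar{\mS},\overbar{\mV}^{\tc}$ to satisfy the symmetry condition.
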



Computing the t-product explicitly using the block circulant matrices can be computationally expensive in terms of both runtime and memory. However, block circulant matrices have an important property that they can be block-diagonalized via the discrete Fourier transformation. Using the corresponding block diagonal matrix for the computations instead of the block circulant matrix, can significantly reduce the runtime and  memory requirements in many cases. In addition, the Fourier transform will enable us to constructively prove \cref{lemma:tSVD} by computing standard matrix SVDs in the Fourier domain, and thereby to derive a simple algorithm for computing the t-SVD of a tensor.

\begin{definition} [the discrete Fourier transformation of a tensor along the $j^{th}$ dimension and its inverse]  \label{def:fourierJdim}
Let $\mX\in\complex^{n_1\times \cdots\times n_d}$. For $j\in\lbrace3,\ldots,d\rbrace$ we define $\fft_j(\mX)\in\mathbb{C}^{n_1\times \cdots\times n_d}$ --- the discrete Fourier transformation of $\mX$ along the $j^{th}$ dimension, i.e., 
for every $i_1\in[n_1],\ldots,i_{j-1}\in[n_{j-1}],i_{j+1}\in[n_{j+1}],\ldots,i_d\in[n_d]$,
\begin{align*}
\left(\fft_j(\mX)\right)(i_1,i_2,\ldots,i_{j-1},:,i_{j+1},\ldots,i_d)=\F_{n_j} \cdot \mX(i_1,i_2,\ldots,i_{j-1},:,i_{j+1},\ldots,i_d),
\end{align*}
where
\begin{align*}
\F_{n_j} := \left[\begin{array}{ccccc}
1 & 1 & 1 & \cdots & 1
\\ 1 & \omega & \omega^2 & \cdots & \omega^{n_j-1} 
\\ \vdots & \vdots & \vdots & \ddots & \vdots
\\ 1 & \omega^{n_j-1} & \omega^{2(n_j-1)} & \cdots & \omega^{(n_j-1)(n_j-1)} \end{array}\right] \in\mathbb{C}^{n_j\times n_j},
\qquad\omega=\textrm{e}^{-\frac{2\pi i}{n_j}}.
\end{align*}
The inverse discrete Fourier transformation along the $j^{th}$ dimension of $\mX$, which we denote by $\ifft_j(\mX)\in\mathbb{C}^{n_1\times \cdots\times n_d}$, is defined as computing 
for every $i_1\in[n_1],\ldots,i_{j-1}\in[n_{j-1}],i_{j+1}\in[n_{j+1}],\ldots,i_d\in[n_d]$
\begin{align*}
\left(\ifft_j(\mX)\right)(i_1,i_2,\ldots,i_{j-1},:,i_{j+1},\ldots,i_d)=\F_{n_j}^{-1} \cdot \mX(i_1,i_2,\ldots,i_{j-1},:,i_{j+1},\ldots,i_d),
\end{align*}
where $\F_{n_j}^{-1}=\frac{1}{n_j}\F_{n_j}^{\tc}$.
\end{definition}

\begin{definition}[the discrete Fourier transformation of tensor and its inverse] \label{def:fft}
Let $\mX\in\complex^{n_1\times \cdots\times n_d}$. The Fourier transform of $\mX$ (along all but the first two dimensions) is defined as
\begin{align*}
\overbar{\mX}=\fft(\mX):=\fft_d(\cdots(\fft_4(\fft_3(\mX)))),
\end{align*}
and the inverse discrete Fourier transformation of $\mX$ (along all but the first two dimensions) is defined as
\begin{align*}
\ifft(\mX):=\ifft_3(\ifft_4(\cdots(\ifft_d(\mX)))).
\end{align*}
\end{definition}

\begin{definition}[block diagonal matrix of a $3$rd order tensor \cite{firstTSVD}]
Let $\mX\in\complex^{n_1\times n_2\times n_3}$. Then the matrix $\barX \in \complex^{n_1n_3\times n_2n_3}$ is the block diagonal matrix defined as
\begin{align*}
\barX= \bdiag(\overbar{\mX}) := \left[\begin{array}{cccc}
\barX^{(1)} &  &  & \\  & \barX^{(2)} &  &  
\\  &  & \ddots & 
\\  &  &  & \barX^{(n_3)} \end{array}\right],
\end{align*}
where $\barX^{(i)}$, $i=1,\dots,n_3$, are the frontal slices of $\overbar{\mX}$.
\end{definition}

The $\bdiag(\cdot)$ operator be easily extended to high-order tensors by placing all frontal slices of the tensor $\overbar{\mX}$ as blocks along a large block diagonal matrix in a colexicographical order. Some papers place the blocks in a different order, which is allowed as long as also the order of the product of the periodic downward shift permutation matrices in \eqref{def:bcirc} used to define the block-circulant matrix is set  accordingly.
\begin{definition}[block diagonal matrix of an order-d tensor]
Let $\mX\in\complex^{n_1\times\cdots\times n_d}$. The matrix $\barX \in \complex^{n_1n_3\cdots n_d\times n_2n_3\cdots n_d}$ is the block diagonal matrix defined as
\begin{align*}
& \barX = \bdiag(\overbar{\mX}) :=
\matbdiag\left(\left(\barX^{(i_3,\ldots,i_d)}\right)_{i_d\in[n_d]
, \ldots , i_3\in[n_3]}\right),
\end{align*}
where $\barX^{(i_3,\ldots,i_d)} = \overbar{\mX}(:,:,i_3,\ldots,i_d)$ denotes the frontal slice of $\overbar{\mX}$ for a set of indexes $i_3\in[n_3],\ldots,i_d\in[n_d]$, and $\matbdiag(\cdot)$ is an operator which maps a set of matrices to the diagonal blocks of a block diagonal matrix in a colexicographical order, i.e, for any $(i_3,k_3)\in[n_3]^2,\ldots,(i_d,k_d)\in[n_d]^2$, it places $\barX^{(i_3,\ldots,i_d)}$ before $\barX^{(k_3,\ldots,k_d)}$ if $i_j<k_j$ for the last index $j\in\lbrace3,\ldots,d\rbrace$ for which $i_j\not=k_j$.
\end{definition}

For example, the block diagonal matrix $\barX=\bdiag(\overbar{\mX})$ of a $4$th-order tensor $\mX\in\complex^{n_1\times n_2\times 3\times 3}$ is of the form
\begin{align*}
\barX = \left[\begin{array}{ccccccccc}
\barX^{(1,1)} &  &  & &  & & & & 
\\  & \barX^{(2,1)} &  & &  & & & &  
\\  &  & \barX^{(3,1)} &  &  & & & &  
\\  &  & & \barX^{(1,2)} &  & & & &  
\\  &  & &  & \barX^{(2,2)} & & & &  
\\  &  & &  &  & \barX^{(3,2)} & & &  
\\  &  & &  &  & & \barX^{(1,3)} & & 
\\  &  & &  &  & & & \barX^{(2,3)} &  
\\  &  & &  &  & & & & \barX^{(3,3)}
\end{array}\right].
\end{align*}

The following lemma (see also \cite{Khoromskaia2017BlockCA} which considered only on the case where $n_1=n_2$) connects between a tensor $\mX$ and the block diagonal matrix $\barX$ generated from the Fourier transform of $\mX$. It will be useful later on for proving that the t-product can be computed in the Fourier domain via standard matrix multiplications, which can be done more efficiently in some cases. The proof, which is an extension of a corresponding result derived only for $3$rd order tensors in \cite{tensor_tSVD}, is given in \cref{sec:App:proofLemma1}.
\begin{lemma} \label{lemma:bdiag_bcirc_equality}
Let $\mX\in\complex^{n_1\times \cdots\times n_d}$. 
Then, $\barX=\bdiag(\fft(\mX))$ if and only if
\begin{align*}
\bcirc(\mX) = (\F_{n_d}^{-1}\otimes\F_{n_{d-1}}^{-1}\otimes\cdots\otimes\F_{n_3}^{-1}\otimes\I_{n_1}) \cdot \barX \cdot (\F_{n_d}\otimes\F_{n_{d-1}}\otimes\cdots\otimes\F_{n_3}\otimes\I_{n_2}).
\end{align*}
\end{lemma}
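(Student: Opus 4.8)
The plan is to reduce the statement to the well-known $3$rd-order case by induction on $d$, using the recursive structure of both $\bcirc(\cdot)$ and $\fft(\cdot)$. First I would recall the base case $d=3$, which is exactly the result of \cite{tensor_tSVD}: $\bcirc(\mX) = (\F_{n_3}^{-1}\otimes\I_{n_1})\cdot\barX\cdot(\F_{n_3}\otimes\I_{n_2})$ where $\barX=\bdiag(\fft_3(\mX))$. For the inductive step, I would view an order-$d$ tensor $\mX\in\complex^{n_1\times\cdots\times n_d}$ as an $n_d$-tuple of order-$(d-1)$ ``slabs'' $\mX_k := \mX(:,\dots,:,k)$ for $k\in[n_d]$. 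Unwinding \eqref{def:bcirc}, $\bcirc(\mX)$ has a block-circulant structure at the outermost level whose blocks are precisely the matrices $\bcirc(\mX_k)$, i.e.\ $\bcirc(\mX) = \sum_{i_d=1}^{n_d}\pi_{n_d}^{i_d-1}\otimes\bcirc(\mX_{i_d})$.

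The key algebraic step is then the classical diagonalization of a (scalar) circulant by the DFT, lifted to blocks: a block-circulant matrix with blocks $\B_1,\dots,\B_m$ satisfies
\begin{align*}
\sum_{k=1}^{m}\pi_m^{k-1}\otimes\B_k \;=\; (\F_m^{-1}\otimes\I)\cdot\bdiag\!\left(\widehat\B_1,\dots,\widehat\B_m\right)\cdot(\F_m\otimes\I),
\end{align*}
where $(\widehat\B_1,\dots,\widehat\B_m)$ is the DFT of the tuple $(\B_1,\dots,\B_m)$, i.e.\ $\widehat\B_j = \sum_k \omega^{(j-1)(k-1)}\B_k$ with $\omega = e^{-2\pi i/m}$. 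This follows from $\pi_m = \F_m^{-1}\mathrm{diag}(1,\omega,\dots,\omega^{m-1})\F_m$ together with mixed-product and bilinearity of $\otimes$; I would state and prove it as a short standalone sublemma (or cite the circulant-diagonalization fact of \cite{davis1979circulant}). Applying it with $m=n_d$ and $\B_k=\bcirc(\mX_k)$ gives
\begin{align*}
\bcirc(\mX) = (\F_{n_d}^{-1}\otimes\I_{n_1 n_3\cdots n_{d-1}})\cdot \bdiag\!\Big(\bcirc\big((\fft_d(\mX))_1\big),\dots,\bcirc\big((\fft_d(\mX))_{n_d}\big)\Big)\cdot(\F_{n_d}\otimes\I_{n_2 n_3\cdots n_{d-1}}),
\end{align*}
since taking the DFT of the slab-tuple is exactly applying $\fft_d$ to $\mX$ (and $\bcirc$ is linear, so it commutes with the $\F_{n_d}$-combination of slabs).

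Now I would apply the induction hypothesis to each of the $n_d$ order-$(d-1)$ tensors $(\fft_d(\mX))_k$, which expresses each $\bcirc\big((\fft_d(\mX))_k\big)$ as $(\F_{n_{d-1}}^{-1}\otimes\cdots\otimes\F_{n_3}^{-1}\otimes\I_{n_1})\cdot\overline{\mathbf{Y}}_k\cdot(\F_{n_{d-1}}\otimes\cdots\otimes\F_{n_3}\otimes\I_{n_2})$ with $\overline{\mathbf{Y}}_k=\bdiag\big(\fft_{d-1}\cdots\fft_3((\fft_d(\mX))_k)\big)$. Substituting these into the block-diagonal above, pulling the common left/right conjugation matrices out of the $\bdiag$ (here I use that $\bdiag(\A_1 \C_1 \B_1, \dots)$ with identical $\A_k\equiv\A$, $\B_k\equiv\B$ equals $(\I_{n_d}\otimes\A)\bdiag(\C_1,\dots)(\I_{n_d}\otimes\B)$), and combining Kronecker factors via the mixed-product property, I get the claimed identity with $\barX = \bdiag(\fft_d\fft_{d-1}\cdots\fft_3(\mX)) = \bdiag(\fft(\mX))$; the colexicographical ordering in $\matbdiag$ matches the ordering induced by iterating ``outermost index first'' through the recursion. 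Finally, for the ``only if'' direction I would simply invert: the matrices $\F_{n_d}\otimes\cdots\otimes\F_{n_3}\otimes\I$ are invertible, so the displayed factorization determines $\barX$ uniquely, hence it must equal $\bdiag(\fft(\mX))$.

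The main obstacle I anticipate is purely bookkeeping rather than conceptual: making the correspondence between (i) the recursive unfolding order of $\bcirc$, (ii) the order in which $\fft_3,\dots,\fft_d$ are composed in Definition~\ref{def:fft}, and (iii) the colexicographical block ordering of $\matbdiag$ all consistent, and verifying that pulling conjugation matrices in and out of $\bdiag/\matbdiag$ respects that ordering. I would handle this by fixing once and for all that the outermost circulant level corresponds to index $d$ and tracking a single multi-index $(i_3,\dots,i_d)$ through the induction; the Kronecker mixed-product identity $(\A\otimes\B)(\C\otimes\D)=(\A\C)\otimes(\B\D)$ and the orthogonality relations among the $\pi_n^k$ noted after \eqref{def:shiftPermutationMatrix} do all the real work, so no heavy computation is needed.
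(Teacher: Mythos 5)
Your proof is correct and rests on the same two algebraic ingredients as the paper's own proof --- the DFT diagonalization $\pi_n=\F_n^{-1}\D_n\F_n$ and the Kronecker mixed-product identity --- so it is essentially the same approach. The only difference is organizational: the paper substitutes $\pi_{n_j}=\F_{n_j}^{-1}\D_{n_j}\F_{n_j}$ for all $j$ simultaneously, pulls out the full conjugation matrices once, and then iterates through dimensions to identify the middle factor with $\barX$, whereas you peel off one dimension at a time via an explicit induction with a standalone block-circulant diagonalization sublemma; both versions perform the same dimension-by-dimension iteration and the same bookkeeping check that the colexicographical $\matbdiag$ ordering matches the order in which the indices are peeled.
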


In the following lemma we present the conjugate-complex symmetry condition of real-valued tensors. This condition will enable us to ensure the tensors we work with will all be real-valued as we desire. The proof is given in \cref{sec:App:proofLemma2}.  This is an extension to the conjugate-complex symmetry condition for a $3$rd order tensor $\mX\in\reals^{n_1\times n_2\times n_3}$, presented in \cite{tensor_tSVD}, which states that $\mX$ is real-valued if and only if 
$$\overbar{\mX}(:,:,1)\in\reals,\qquad \conj(\overbar{\mX}(:,:,i_3))=\overbar{\mX}(:,:,n_3-i_3+2),\ i_3\in\left\lbrace1,\ldots,\left\lceil(n_3+1)/2\right\rceil\right\rbrace.$$
\begin{lemma} \label{lemma:conjegateComplexSymmetry_realTensors}
A tensor $\mX\in\reals^{n_1\times\cdots\times n_d}$ is real-valued if and only if $\overbar{\mX}=\fft(\mX)$ satisfies the conjugate-complex symmetry condition
\begin{align} \label{eq:conjugateComplexSymmetryCondition}
\overbar{\mX}(:,:,i_3,\ldots,i_d) = \conj(\overbar{\mX}(:,:,i'_3,\ldots,i'_d))
\end{align}
for all $i_3\in\left\lbrace1,\ldots,\left\lceil\frac{n_3+1}{2}\right\rceil\right\rbrace,\ldots,i_d\in\left\lbrace1,\ldots,\left\lceil\frac{n_d+1}{2}\right\rceil\right\rbrace$, where for all $j\in\lbrace3,\ldots,d\rbrace$, 
\begin{align} \label{def:i_j_tag}
i'_j=\bigg\lbrace\begin{array}{ll}
1, & i_j=1
\\ n_j-i_j+2, & i_j\in\left\lbrace2,\ldots,\left\lceil\frac{n_j+1}{2}\right\rceil\right\rbrace
\end{array}.
\end{align}
\end{lemma}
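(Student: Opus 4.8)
The statement will follow from a single algebraic identity describing how entrywise conjugation interacts with the separable transform $\fft=\fft_d\circ\cdots\circ\fft_3$. First note that $\fft$ is a composition of the invertible slice-wise multiplications $\mX\mapsto\F_{n_j}\cdot\mX$ (along dimension $j$), hence a linear bijection of $\complex^{n_1\times\cdots\times n_d}$ onto itself with inverse $\ifft$. Consequently $\mX\in\reals^{n_1\times\cdots\times n_d}$ is equivalent to $\conj(\mX)=\mX$, and by injectivity of $\fft$ this is equivalent to $\fft(\conj(\mX))=\fft(\mX)$. So it suffices to express $\fft(\conj(\mX))$ through $\overbar{\mX}=\fft(\mX)$.

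The key fact is one-dimensional: with $\omega=e^{-2\pi i/n}$ one has $\overline{\omega}=\omega^{\,n-1}$, equivalently $\conj(\F_n)=\F_n P_n=P_n\F_n$, where $P_n$ is the permutation matrix that fixes coordinate $1$ and reverses coordinates $2,\dots,n$, i.e.\ it sends coordinate $k$ to $k'$ with $k'$ exactly as in \eqref{def:i_j_tag}. I would verify this by comparing entries, treating $k=1$ on the same footing as $k\ge 2$ using $\omega^{(k-1)n}=1$. Applying this along dimension $j$ gives $\fft_j(\conj(\mX))=\conj\big(\mathcal R_j(\fft_j(\mX))\big)$, where $\mathcal R_j$ reverses the $j$-th index in the sense of $P_{n_j}$; since the $\fft_j$'s (and the $\mathcal R_j$'s, and $\conj$) all commute, iterating over $j=3,\dots,d$ yields
\begin{align*}
\fft(\conj(\mX))(:,:,i_3,\dots,i_d)=\conj\!\left(\overbar{\mX}(:,:,i_3',\dots,i_d')\right)\qquad\text{for all }i_3\in[n_3],\dots,i_d\in[n_d],
\end{align*}
with $i_j'$ as in \eqref{def:i_j_tag}. (Equivalently one can simply expand the multi-index DFT sum for $\fft(\conj(\mX))$ and reindex each summation variable using $\omega_{n_j}^{\,n_j}=1$; or run the whole argument as an induction on the order $d$, peeling off the last dimension and invoking the $3$rd-order version stated in \cite{tensor_tSVD} as the base case. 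The separable structure makes the direct computation self-contained.)

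Plugging this identity into $\fft(\conj(\mX))=\fft(\mX)$ gives $\conj(\overbar{\mX}(:,:,i_3',\dots,i_d'))=\overbar{\mX}(:,:,i_3,\dots,i_d)$ for all index tuples, which after re-indexing by the coordinatewise involution $i_j\mapsto i_j'$ (and conjugating) is precisely \eqref{eq:conjugateComplexSymmetryCondition}; since $i_j\mapsto i_j'$ is an involution on $[n_j]$ fixing $i_j=1$, it is enough to impose the relation over the stated ranges, as in the $3$rd-order bookkeeping of \cite{tensor_tSVD}. This gives both directions of the equivalence at once.

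There is no genuinely deep step here: the one computation that needs care is $\conj(\F_n)=\F_n P_n=P_n\F_n$, in particular handling the $k=1$ ``wrap-around'' coordinate uniformly with $k\ge 2$. After that, the remaining work is purely bookkeeping — composing the index-reversals $\mathcal R_3,\dots,\mathcal R_d$ while tracking which original index is mapped to which, and the passage from all index tuples to the reflected ranges appearing in \eqref{eq:conjugateComplexSymmetryCondition} — which is the $3$rd-order argument of \cite{tensor_tSVD} carried out coordinatewise.
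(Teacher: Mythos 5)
Your proof is correct and takes a cleaner route than the paper's. The paper proves the two implications separately: for the forward direction it expands the multi-index DFT sum and uses the scalar identity $\omega_j^{(k_j-1)(i_j-1)}=\bar{\omega}_j^{(k_j-1)(n_j-i_j+1)}$ slot by slot; for the converse it expands the inverse DFT and pairs each summand indexed by $(k_3,\ldots,k_d)$ with its reflection $(k'_3,\ldots,k'_d)$, arguing each pair contributes a real number. You instead promote the scalar identity to the matrix identity $\conj(\F_n)=\F_n P_n=P_n\F_n$ (with $P_n$ the involution fixing $1$ and reversing $2,\dots,n$), deduce the single tensor identity $\fft(\conj(\mX))(:,:,i_3,\dots,i_d)=\conj\bigl(\overbar{\mX}(:,:,i'_3,\dots,i'_d)\bigr)$, and get both directions at once from bijectivity of $\fft$ via $\mX$ real $\iff\conj(\mX)=\mX\iff\fft(\conj(\mX))=\fft(\mX)$. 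The underlying computation is the same, but your organization gives the equivalence in one stroke and avoids the separate pairing argument in the converse.

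One caution that applies equally to the paper's converse and to your closing remark: for $d>3$, the lemma imposes the conjugate-symmetry relation only on the box $\prod_{j=3}^d\{1,\ldots,\lceil(n_j+1)/2\rceil\}$, and that box does not contain a representative of every orbit of the coordinatewise reflection $(i_3,\dots,i_d)\mapsto(i'_3,\dots,i'_d)$. For instance with $n_3=n_4=3$ the orbit $\{(2,3),(3,2)\}$ is disjoint from $\{1,2\}\times\{1,2\}$. Your assertion that ``it is enough to impose the relation over the stated ranges'' is therefore not literally correct for $d>3$. The identity you actually establish holds for all index tuples, and it is that unrestricted version that both your argument and the paper's converse genuinely rely on; the restricted range in the lemma statement is a small imprecision carried over from the third-order case, where a single index range does meet every orbit.
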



The following lemma connects between the inner product and Frobenius norm for tensors, and the corresponding block diagonal matrices generated from their Fourier transforms. The proof is given in \cref{sec:App:proofLemma3}.
\begin{lemma} \label{lemma:orderPinnerProductEquivalence}
Let $\mX,\mY\in\complex^{n_1\times\cdots\times n_d}$. Then,
\begin{align*}
(i) \  \langle\mX,\mY\rangle  = \frac{1}{n_3\cdots n_d}\langle\barX,\barY\rangle,
\qquad\qquad (ii) \  \Vert\mX\Vert_F = \frac{1}{\sqrt{n_3\cdots n_d}}\Vert\barX\Vert_F.
\end{align*}
\end{lemma}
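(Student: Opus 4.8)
The plan is to reduce the tensor identity to a matrix identity via \cref{lemma:bdiag_bcirc_equality}, exploiting the orthogonality (up to scaling) of the Fourier matrices $\F_{n_j}$. The key structural fact is that $\F_{n_j}^{\tc}\F_{n_j} = n_j\I_{n_j}$ for every $j$, so the Kronecker product $\Q := \F_{n_d}\otimes\cdots\otimes\F_{n_3}\otimes\I_{n_2}$ satisfies $\Q^{\tc}\Q = (n_3\cdots n_d)\I$, and similarly for the variant with $\I_{n_1}$ in the last slot. I would begin by recalling from \cref{lemma:bdiag_bcirc_equality} that $\bcirc(\mX)$ and $\barX$ are related by conjugation with these (scaled) unitary Kronecker factors, and note the analogous relation for $\mY$.

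First I would prove part $(i)$. The starting observation is that the Frobenius inner product of two block-circulant matrices recovers the tensor inner product up to a combinatorial factor: since $\bcirc(\mX)$ has $n_3\cdots n_d$ copies of each frontal slice of $\mX$ (each block row being a permutation of the slices), one gets $\langle \bcirc(\mX), \bcirc(\mY)\rangle = (n_3\cdots n_d)\langle\mX,\mY\rangle$ — this follows directly from \eqref{def:bcirc} together with the fact stated after \eqref{def:shiftPermutationMatrix} that the shift-permutation powers $\pi_n^0,\dots,\pi_n^{n-1}$ are mutually orthogonal (in the trace inner product) and hence the cross terms in $\langle\bcirc(\mX),\bcirc(\mY)\rangle$ vanish. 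Then I would substitute the expression from \cref{lemma:bdiag_bcirc_equality}, $\bcirc(\mX) = \Q_1^{-1}\barX\,\Q_2$ with $\Q_1 = \F_{n_d}\otimes\cdots\otimes\F_{n_3}\otimes\I_{n_1}$ and $\Q_2 = \F_{n_d}\otimes\cdots\otimes\F_{n_3}\otimes\I_{n_2}$, and compute
\begin{align*}
\langle\bcirc(\mX),\bcirc(\mY)\rangle &= \trace\!\left(\bcirc(\mX)^{\tc}\bcirc(\mY)\right) = \trace\!\left(\Q_2^{\tc}\barX^{\tc}(\Q_1^{-1})^{\tc}\Q_1^{-1}\barY\,\Q_2\right).
\end{align*}
Using $(\Q_1^{-1})^{\tc}\Q_1^{-1} = \frac{1}{(n_3\cdots n_d)^2}\Q_1^{\tc}\Q_1 = \frac{1}{n_3\cdots n_d}\I$ (since $\F_{n_j}^{-1} = \frac1{n_j}\F_{n_j}^{\tc}$ and the Kronecker product of these is $\frac1{n_3\cdots n_d}\Q_1^{\tc}$), and the cyclic property of the trace together with $\Q_2\Q_2^{\tc} = (n_3\cdots n_d)\I$, the two scalars cancel and we obtain $\langle\bcirc(\mX),\bcirc(\mY)\rangle = \langle\barX,\barY\rangle$. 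Combining with the combinatorial identity gives $(i)$. Part $(ii)$ is then immediate by taking $\mY = \conj(\mX)$ appropriately, or more simply by applying $(i)$ with $\mY=\mX$ over $\reals$ (or noting $\Vert\mX\Vert_F^2 = \langle\mX,\conj(\mX)\rangle$ in the complex case and $\Vert\barX\Vert_F^2 = \langle\barX,\conj(\barX)\rangle$, which the same computation handles since conjugation commutes with the Kronecker conjugation structure).

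The main obstacle, I expect, is bookkeeping rather than conceptual: one must be careful about (a) the exact combinatorial count showing $\langle\bcirc(\mX),\bcirc(\mY)\rangle = (n_3\cdots n_d)\langle\mX,\mY\rangle$ — this requires checking that in $\bcirc(\mX)^{\tc}\bcirc(\mY)$ only the "diagonal" matchings of slices survive, which is where mutual orthogonality of the permutation powers is used in the nested Kronecker structure; and (b) getting the conjugate-transpose versus transpose conventions right in the complex case, since the lemma is stated over $\complex$. An alternative route that sidesteps (a) is to work directly in the Fourier domain: \cref{lemma:orderPinnerProductEquivalence}$(i)$ can also be proved by induction on $d$ using the single-dimension transform $\fft_j$ and Parseval's identity $\langle\F_{n_j}\u,\F_{n_j}\v\rangle = n_j\langle\u,\conj(\v)\rangle$ applied fiber-by-fiber along dimension $j$, peeling off one factor of $n_j$ at each step; this is cleaner but requires setting up the induction carefully. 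I would present the block-circulant argument as the main proof and possibly remark on the inductive alternative.
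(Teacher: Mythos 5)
Your proposal is correct and follows essentially the same route as the paper's own proof: start from the combinatorial identity $\langle\bcirc(\mX),\bcirc(\mY)\rangle = N\langle\mX,\mY\rangle$, substitute the factorization from \cref{lemma:bdiag_bcirc_equality}, and cancel the scalings using $(\F^{-1}\otimes\I)^{\tc}(\F^{-1}\otimes\I) = \tfrac1N\I$ and $(\F\otimes\I)(\F\otimes\I)^{\tc} = N\I$ together with cyclicity of the trace (the paper records exactly these facts as \eqref{eq:inproofInnerproduct1}--\eqref{eq:inproofInnerproduct2}). Part $(ii)$ then follows by setting $\mY=\mX$, as you note.
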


The following lemma was proved in \cite{unfold_def} for $3$rd order tensors and we extend it to higher order tensors. It establishes that the t-product between two tensors is equivalent to standard matrix multiplication in the Fourier domain. The proof is given in \cref{sec:App:proofLemma5}.
\begin{lemma} \label{lemma:tproduct_bdiagMult}
Let $\mX\in\complex^{n_1\times n_2\times \cdots\times n_d}$ and $\mY\in\complex^{n_2\times \ell\times n_3\times \cdots\times n_d}$ and denote $\barX=\bdiag(\overbar{\mX})$ and $\barY=\bdiag(\overbar{\mY})$. Then $\mZ=\mX*\mY$ if and only if $\barZ=\barX\barY$ where $\barZ=\bdiag(\overbar{\mZ})$.
\end{lemma}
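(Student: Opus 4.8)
The plan is to reduce the claim to the algebra of block‑circulant matrices and then transfer everything to the Fourier (block‑diagonal) domain using \cref{lemma:bdiag_bcirc_equality}. Throughout I would write $\Phi_m := \F_{n_d}\otimes\cdots\otimes\F_{n_3}\otimes\I_m$, so that $\Phi_m^{-1} = \F_{n_d}^{-1}\otimes\cdots\otimes\F_{n_3}^{-1}\otimes\I_m$, and \cref{lemma:bdiag_bcirc_equality} reads $\bcirc(\mX) = \Phi_{n_1}^{-1}\,\barX\,\Phi_{n_2}$ and $\bcirc(\mY) = \Phi_{n_2}^{-1}\,\barY\,\Phi_\ell$ (note the different ``inner'' identity blocks, dictated by the first two dimensions of each tensor). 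The first step I would take is to show that $\bcirc(\mX)\bcirc(\mY)$ is itself the block‑circulant matrix of an order‑$d$ tensor with first two dimensions $n_1\times\ell$. Since $\barX$ and $\barY$ are block‑diagonal with the same colexicographic ordering of their frontal‑slice blocks and with compatible block sizes ($n_1\times n_2$ and $n_2\times\ell$), their product $\barX\barY$ is block‑diagonal with $(i_3,\dots,i_d)$‑th block equal to $\barX^{(i_3,\dots,i_d)}\barY^{(i_3,\dots,i_d)}$. I would then define $\overbar{\mW}$ to be the complex tensor whose frontal slices are exactly these products and set $\mW := \ifft(\overbar{\mW})$, so that $\fft(\mW)=\overbar{\mW}$ and $\bdiag(\overbar{\mW}) = \barX\barY$; applying \cref{lemma:bdiag_bcirc_equality} to $\mW$ gives $\bcirc(\mW) = \Phi_{n_1}^{-1}(\barX\barY)\Phi_\ell$. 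On the other hand, the mixed‑product property of the Kronecker product gives $\Phi_{n_2}\Phi_{n_2}^{-1} = \I_{n_2 n_3\cdots n_d}$, so
\[
\bcirc(\mX)\bcirc(\mY) \;=\; \Phi_{n_1}^{-1}\,\barX\,\big(\Phi_{n_2}\Phi_{n_2}^{-1}\big)\,\barY\,\Phi_\ell \;=\; \Phi_{n_1}^{-1}(\barX\barY)\Phi_\ell \;=\; \bcirc(\mW).
\]

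Next I would close the loop using the $\fold/\unfold$ relations. Since $\unfold(\mY)=\bcirc(\mY)\C_1$ by definition and, likewise, $\bcirc(\mW)\C_1 = \unfold(\mW)$ is exactly the first block column of $\bcirc(\mW)$, I obtain
\[
\mX*\mY \;=\; \fold\!\big(\bcirc(\mX)\,\unfold(\mY)\big) \;=\; \fold\!\big(\bcirc(\mX)\bcirc(\mY)\,\C_1\big) \;=\; \fold\!\big(\bcirc(\mW)\,\C_1\big) \;=\; \fold(\unfold(\mW)) \;=\; \mW,
\]
so $\mX*\mY$ is precisely the tensor $\mW$, whose block‑diagonal matrix is $\barX\barY$. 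The lemma then follows from injectivity of $\fft$ and of $\bdiag$: $\mZ=\mX*\mY \iff \mZ=\mW \iff \overbar{\mZ}=\overbar{\mW} \iff \barZ = \bdiag(\overbar{\mZ}) = \bdiag(\overbar{\mW}) = \barX\barY$.

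The main obstacle I expect is the bookkeeping in the first step: keeping straight the three distinct ``inner'' identity dimensions $\I_{n_1},\I_{n_2},\I_\ell$ appearing in the Kronecker factors of $\bcirc(\mX),\bcirc(\mY),\bcirc(\mW)$, and verifying that the colexicographic block ordering in the definition of $\bdiag$ (together with the matching ordering of the shift‑permutation powers in the definition of $\bcirc$) is preserved under matrix multiplication, so that $\barX\barY$ is genuinely the block‑diagonal matrix of the tensor $\overbar{\mW}$ described above. Granting these, everything else is a direct consequence of \cref{lemma:bdiag_bcirc_equality} and elementary Kronecker‑product identities.
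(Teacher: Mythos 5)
Your proposal is correct and follows essentially the same route as the paper: you use \cref{lemma:bdiag_bcirc_equality} to identify $\bcirc(\mX)\bcirc(\mY)$ as the block-circulant matrix of a tensor $\mW$ with $\bdiag(\overbar{\mW})=\barX\barY$, then unwind the $\fold/\unfold$ definition of the t-product to show $\mX*\mY=\mW$. The only organizational difference is that the paper isolates the identity $\bcirc(\mX*\mY)=\bcirc(\mX)\bcirc(\mY)$ as a separate auxiliary lemma (\cref{lemma:bcirc_Tproduct}) and then transfers it to the $\bdiag$ side, whereas you inline that computation.
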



Equipped with the Fourier transform of a tensor and the connections between the t-product and standard matrix multiplication in the Fourier domain, and the conjugate-complex symmetry condition, we can present in \cref{alg:tSVD} the algorithm for computing the t-SVD of a tensor. The algorithm generalizes the one given in \cite{tensor_tSVD} only for the case of 3rd order tensors. The derivation of the algorithm follows from the constructive proof of \cref{lemma:tSVD} which is given in \cref{sec:App:proofLemma51}.
\begin{algorithm}
	\caption{T-SVD}\label{alg:tSVD}
	\begin{algorithmic}
		\STATE \textbf{Input:} $\mX\in\reals^{n_1\times\cdots\times n_d}$
		\STATE Compute $\overbar{\mX}=\fft(\mX)$
         \FOR {$i_3\in[n_3],\ldots,i_d\in[n_d]$}
         \STATE Denote 
$ i'_j=\bigg\lbrace\begin{array}{ll}
1, & i_j=1
\\ n_j-i_j+2, & i_j\in\left\lbrace2,\ldots,\left\lceil\frac{n_j+1}{2}\right\rceil\right\rbrace
\end{array}$,
$j\in\lbrace3,\ldots,d\rbrace$
        		\IF {SVD of $\barX^{(i_3',\ldots,i_d')}$ was not computed}
        		\STATE Compute SVD of $\barX^{(i_3,\ldots,i_d)}$: $\barX^{(i_3,\ldots,i_d)}=\barU^{(i_3,\ldots,i_d)}\barS^{(i_3,\ldots,i_d)}{\barV^{(i_3,\ldots,i_d)}}{}^{\tc}$            
        		\ELSE
        		\STATE $\barX^{(i_3,\ldots,i_d)}=\conj(\barU^{(i_3',\ldots,i_d')})\barS^{(i_3',\ldots,i_d')}{\conj(\barV^{(i_3',\ldots,i_d')}}){}^{\tc}$
        		\ENDIF
        \ENDFOR
		\STATE Compute $\mS = \ifft(\overbar{\mS})$, $\mU = \ifft(\overbar{\mU})$, $\mV^{\top} = \ifft(\overbar{\mV}^{\tc})$
	\end{algorithmic}
\end{algorithm}

We now turn to present  two notions of tensor rank which will play a crucial part in this work.

\begin{definition}[tensor average rank \cite{tensor_tSVD}]
Let $\mX\in\reals^{n_1\times\cdots\times n_d}$. Then the tensor average rank is defined as
\begin{align*}
\rank_{\textnormal{a}}(\mX)=\frac{1}{N}\rank(\bcirc(\mX)).
\end{align*}
\end{definition}

Note that the matrices $(\F_{n_d}^{-1}\otimes\F_{n_{d-1}}^{-1}\otimes\cdots\otimes\F_{n_3}^{-1}\otimes\I_{n_1})$ and $(\F_{n_d}\otimes\F_{n_{d-1}}\otimes\cdots\otimes\F_{n_3}\otimes\I_{n_2})$ are full rank, since the matrices $\F_{n_3},\ldots,\F_{n_d}$ are all full rank, and a property of the Kronecker product is that for any two matrices $\A,\B$ it holds that $\rank(\A\otimes\B)=\rank(\A)\rank(\B)$. Therefore, by \cref{lemma:bdiag_bcirc_equality} we have that $\rank(\bcirc(\mX))=\rank(\barX)$, and thus, the tensor average rank is also  given by 
\begin{align} \label{eq:averageRankWithBar}
\rank_{\textnormal{a}}(\mX)=\frac{1}{N}\rank(\barX).
\end{align}


\begin{definition}[tensor tubal rank \cite{doi:10.1137/110837711, tensor_tSVD2, tensor_tSVD}]
Let $\mX\in\reals^{n_1\times\cdots\times n_d}$ and denote its t-SVD as $\mX = \mU*\mS*\mV^{\top}$. Then the tensor tubal rank is defined as
\begin{align*}
\rank_{\textnormal{t}}(\mX)= \#\lbrace i\ \vert\ \mS(i,i,:,\ldots,:)\not=0\rbrace = \max_{i_3\in[n_3],\ldots,i_d\in[n_d]}\rank\left(\barX^{(i_3,\ldots,i_d)}\right).
\end{align*}
\end{definition}


Note that the average rank of a tensor $\mX$ is the average number of nonzero singular values in the diagonal blocks of $\barX$, whereas the tubal rank is the maximal number of nonzero singular values in any of the diagonal blocks of $\barX$.

The following lemma shows the relationships between the tubal rank and the other types of tensor ranks, i.e., the average rank, CP-rank, and the ranks of the first two modes of the Tucker-rank. The computation of the tubal rank requires unfolding the Fourier transform of a tensor only along the $3$rd to $d$th dimensions, and so, the tubal rank can be compared only to the ranks of the first two modes of the Tucker-rank. The proof is given in \cref{sec:App:proofLemma13}.
\begin{lemma} \label{lemma:rankRelationships}
Let $\mX\in\reals^{n_1\times\cdots\times n_d}$. The following inequalities hold.
\begin{align*}
(i)\ & \rank_{\textnormal{a}}(\mX)\le\rank_{\textnormal{t}}(\mX)
\\ (ii)\ & \rank_{\textnormal{t}}(\mX)\le\rank_{\textnormal{cp}}(\mX)
\\ (iii)\ & \rank_{\textnormal{t}}(\mX)\le\min\left\lbrace\rank(\X^{\lbrace1\rbrace}),\rank(\X^{\lbrace2\rbrace})\right\rbrace,
\end{align*}
where $\rank_{\textnormal{cp}}(\mX)$ is the CP-rank of $\mX$, and $\X^{\lbrace j\rbrace}$ is the mode-$j$ matricization of $\mX$ \cite{kolda}.
\end{lemma}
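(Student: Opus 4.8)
The plan is to work throughout in the Fourier domain, using the identifications $\rank_{\textnormal{a}}(\mX) = \frac{1}{N}\rank(\barX)$ from \eqref{eq:averageRankWithBar} and $\rank_{\textnormal{t}}(\mX) = \max_{i_3,\ldots,i_d}\rank(\barX^{(i_3,\ldots,i_d)})$ from the definition of the tubal rank, where $\barX = \bdiag(\overbar{\mX})$ is block diagonal with $N = n_3\cdots n_d$ blocks. For part~(i): since $\barX$ is block diagonal, $\rank(\barX) = \sum_{i_3,\ldots,i_d}\rank(\barX^{(i_3,\ldots,i_d)}) \le N\cdot\max_{i_3,\ldots,i_d}\rank(\barX^{(i_3,\ldots,i_d)})$, and dividing by $N$ gives $\rank_{\textnormal{a}}(\mX)\le\rank_{\textnormal{t}}(\mX)$ immediately.

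For part~(ii), I would first reduce to the matrix fact that a CP decomposition $\mX = \sum_{k=1}^{R}\u_k^{(1)}\circ\cdots\circ\u_k^{(d)}$ with $R = \rank_{\textnormal{cp}}(\mX)$ passes to each Fourier slice: applying $\fft$ along dimensions $3,\ldots,d$ is a linear (indeed Kronecker-product) operation acting on the last $d-2$ factors, so each frontal slice $\barX^{(i_3,\ldots,i_d)}$ of $\overbar{\mX}$ is a sum of $R$ rank-one $n_1\times n_2$ matrices $\alpha_{k}^{(i_3,\ldots,i_d)}\,\u_k^{(1)}(\u_k^{(2)})^{\top}$ for suitable scalars $\alpha_k^{(i_3,\ldots,i_d)}$. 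Hence $\rank(\barX^{(i_3,\ldots,i_d)})\le R$ for every slice, so $\rank_{\textnormal{t}}(\mX)\le R = \rank_{\textnormal{cp}}(\mX)$. The one point to state carefully is that the complex conjugate-symmetry of the factors (the factors in dimensions $\ge3$ may become complex after $\fft$) does not affect the rank bound, since it is only an upper bound on the number of rank-one summands.

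For part~(iii), by symmetry between modes $1$ and $2$ it suffices to show $\rank_{\textnormal{t}}(\mX)\le\rank(\X^{\{1\}})$. Again passing to Fourier, each slice $\barX^{(i_3,\ldots,i_d)}$ has column space contained in the column space of $\X^{\{1\}}$ — more precisely, every column (a mode-$1$ fiber of $\overbar{\mX}$) is a linear combination (with Fourier coefficients) of mode-$1$ fibers of $\mX$, which are exactly the columns of $\X^{\{1\}}$; hence $\rank(\barX^{(i_3,\ldots,i_d)})\le\rank(\X^{\{1\}})$ for all slices, giving the bound. I expect the only mild obstacle to be bookkeeping: keeping track of how $\fft_3,\ldots,\fft_d$ act as a Kronecker factor and confirming that each Fourier slice is a linear combination of the original frontal slices (equivalently, that its columns lie in $\text{colspan}(\X^{\{1\}})$ and its rows in $\text{rowspan}(\X^{\{2\}})$), all of which follow from \cref{def:fourierJdim} and \cref{def:fft} by linearity; no genuinely hard step is anticipated.
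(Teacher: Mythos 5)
Your proposal is correct, and parts (i) and (ii) follow the paper's proof almost verbatim: (i) is exactly the block-diagonal count $\rank(\barX)=\sum\rank(\barX^{(i_3,\ldots,i_d)})\le N\max\rank(\barX^{(i_3,\ldots,i_d)})$, and (ii) is the same slice-wise computation showing each $\barX^{(i_3,\ldots,i_d)}$ is a sum of at most $\rank_{\textnormal{cp}}(\mX)$ rank-one outer products $\a^{(1,k)}\circ\a^{(2,k)}$ with scalar Fourier coefficients. Your caveat about the upper-bound nature (rather than exact CP rank preservation) of the slice count is a fine point the paper glosses over but that does not affect the inequality.

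For part (iii) you take a slightly more elementary route than the paper. The paper passes through the Tucker decomposition $\mX=\mG\times_1\U_1\times_2\U_2\cdots\times_d\U_d$ and the $n$-mode product identity $\fft_j(\mX)=\mX\times_j\F_{n_j}$ to conclude that the Tucker rank of $\overbar{\mX}$ equals that of $\mX$, then reads off that each block of $\barX$ has column rank $\le r_1$ and row rank $\le r_2$. You instead observe directly that because $\fft$ acts only along modes $3,\dots,d$, every mode-$1$ fiber of $\overbar{\mX}$ (equivalently every column of every frontal slice $\barX^{(i_3,\ldots,i_d)}$) is a linear combination of mode-$1$ fibers of $\mX$, which are the columns of $\X^{\{1\}}$, and symmetrically for rows and $\X^{\{2\}}$. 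Both arguments rest on the identical fact that the transform only mixes the last $d-2$ modes; your framing avoids invoking the Tucker decomposition and $n$-mode product, which is arguably cleaner, while the paper's framing makes the more general point that the whole Tucker rank tuple is invariant under $\fft$, which is mildly more information than what the lemma needs.
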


The following definition of the skinny t-SVD is slightly different than the one in previous papers (e.g., \cite{tensor_tSVD}), which will be important for the proper derivation of the subdifferential set of the tensor nuclear norm in the sequel (see  \cref{lemma:subdifferentialSet}).

\begin{definition}[skinny t-SVD] \label{def:skinnytSVD}
Let $\mX\in\reals^{n_1\times \cdots\times n_d}$. For every $i_3\in[n_3],\ldots,i_d\in[n_d]$, denote $r_{i_3,\ldots,i_d}=\rank(\barX^{(i_3,\ldots,i_d)})$, and denote $r=\rank_{\textnormal{t}}(\mX)=\max_{i_3\in[n_3],\ldots,i_d\in[n_d]} r_{i_3,\ldots,i_d}$. Then, the skinny t-SVD of $\mX$ is defined as $\mX = \mU_r*\mS_r*{\mV_r}^{\top}$, where $\mU_r\in\reals^{n_1\times r\times n_3\times\cdots\times n_d}$, $\mS_r\in\reals^{r\times r\times n_3\times\cdots\times n_d}$, and $\mV_r\in\reals^{n_2\times r\times n_3\times\cdots\times n_d}$ are such that $\mS_r$ is a f-diagonal, tensor and for every $i_3\in[n_3],\ldots,i_d\in[n_d]$,  ${\barU_{r}^{(i_3,\ldots,i_d)}}^{\tc}{\barU_{r}^{(i_3,\ldots,i_d)}}=\diag(\I_{r_{i_3,\ldots,i_d}},\mathbf{0})$ and ${\barV_{r}^{(i_3,\ldots,i_d)}}^{\tc}{\barV_{r}^{(i_3,\ldots,i_d)}}=\diag(\I_{r_{i_3,\ldots,i_d}},\mathbf{0})$. 
In particular, given the t-SVD components $\mU,\mS,\mV$, 
 the tensors $\mU_r,\mS_r,{\mV_r}$ are such that $\barU_r,\barS_r,\barV_r^{\tc}$ are block diagonal matrices, where each one of their diagonal blocks, corresponding to some choice $i_3\in[n_3],\ldots,i_d\in[n_d]$, contains the leading $r_{i_3,\ldots,i_d}$ columns of the corresponding diagonal block in $\barU,\barS,\barV$, respectively.
\end{definition}

Similarly to the notion of a rank-$r$ SVD of a matrix, which considers only the leading $r$ components in the SVD of a given matrix, we now define the corresponding notion of a rank-$r$ t-SVD.

\begin{definition}[rank-$r$ t-SVD] \label{def:rankRtSVD}
Let $\mX\in\reals^{n_1\times \cdots\times n_d}$ and let $r\ge0$. The rank-$r$ t-SVD of $\mX$ is defined as $\mX_r = \mU_r*\mS_r*{\mV_r}^{\top}$, where $\mU_r\in\reals^{n_1\times r\times n_3\times\cdots\times n_d}$, $\mS_r\in\reals^{r\times r\times n_3\times\cdots\times n_d}$, and $\mV_r\in\reals^{n_2\times r\times n_3\times\cdots\times n_d}$ are such that given the t-SVD components $\mU,\mS,\mV$, the matrices $\barU_r,\barS_r,\barV_r$ are the block diagonal matrices whose each diagonal block contains the leading $r$ columns of the corresponding diagonal block of $\barU,\barS,\barV$, respectively.
\end{definition}

\subsection{The tensor nuclear norm}
\label{sec:TNN}

In this section we define the TNN, which was originally proposed for $3$rd order tensors in \cite{tensor_tSVD}. In \cite{highOrderTSVD2, highOrderTSVD3} the authors provide the definition for the TNN for high-order tensors, however they do not rigorously present the motivation for considering such a norm. To the best of our knowledge, we are the first to provide the full proof of the duality between the tensor spectral norm and the TNN for tensors of arbitrary order. We are also the first to define the full subdifferntial set of the nuclear norm for any order (in \cite{tensor_tSVD} the set the authors claimed to be the subdifferntial set of the  nuclear norm for  $3$rd order tensors is only a subset of the full subdifferntial set, as their definition of the skinny t-SVD is not broad enough).


We begin by defining the tensor spectral norm which can be viewed as  an operator norm of the t-product:
\begin{align*}
\Vert\mX\Vert_2 :&= \sup_{\substack{\mY\in\reals^{n_2\times1\times n_3\times\cdots\times n_d}\\ \Vert\mY\Vert_F\le1}}\Vert\mX*\mY\Vert_F = \sup_{\substack{\mY\in\reals^{n_2\times1\times n_3\times\cdots\times n_d}\\ \Vert\mY\Vert_F\le1}}\Vert\fold(\bcirc(\mX)\cdot\unfold(\mY))\Vert_F 
\\ & = \sup_{\substack{\mY\in\reals^{n_2\times1\times n_3\times\cdots\times n_d}\\ \Vert\mY\Vert_F\le1}}\Vert\bcirc(\mX)\cdot\unfold(\mY)\Vert_F = \Vert\bcirc(\mX)\Vert_2,
\end{align*}
where the last equality follows from the definition of the matrix spectral norm. This leads to the following definition of the tensor spectral norm.
\begin{definition}[tensor spectral norm]
Let $\mX\in\reals^{n_1\times\cdots\times n_d}$. The tensor spectral norm of $\mX$ is defined as
\begin{align*}
\Vert\mX\Vert_2 := \Vert\bcirc(\mX)\Vert_2=\Vert\barX\Vert_2.
\end{align*}
\end{definition}
The second equality in the definition above holds since, as we showed in \eqref{eq:inproofInnerproduct1} and \eqref{eq:inproofInnerproduct2}, $(\F^{-1}\otimes\I_{n_1})$ is unitary up to a constant of $1/N$ and $(\F\otimes\I_{n_2})$ is unitary up to a constant of $N$, where $\F:=\F_{n_d}\otimes\F_{n_{d-1}}\otimes\cdots\otimes\F_{n_3}$. Therefore, using the connection between $\bcirc(\mX)$ and $\barX$ in \cref{lemma:bdiag_bcirc_equality}, by the unitary invariant property of the spectral norm it follows that
$\Vert\bcirc(\mX)\Vert_2 = \Vert\barX\Vert_2$.

We now define the tensor nuclear norm, which as we will show is exactly the dual of the tensor spectral norm.
\begin{definition}[tensor nuclear norm]
Let $\mX=\mU*\mS*\mV^{\top}$ denote the skinny t-SVD of $\mX\in\reals^{n_1\times\cdots\times n_d}$. The tensor nuclear norm of $\mX$ is defined as
\begin{align*}
\Vert\mX\Vert_* := \langle\mS,\mI\rangle=\sum_{i=1}^{r}\mS(i,i,1,\ldots,1),
\end{align*}
where $r=\rank_{\textnormal{t}}(\mX)$.
\end{definition}

Using the connection between the tensor inner product and the inner product of the corresponding block diagonal matrices in the Fourier domain, as given in \cref{lemma:orderPinnerProductEquivalence}, and the relation $\barI=\bdiag(\overbar{\mI})=\I_{r n_3\cdots n_d}$,
 it can be seen that
\begin{align} \label{eq:nuclearNormEquivalence}
\Vert\mX\Vert_*=\langle\mS,\mI\rangle=\frac{1}{N}\langle\barS,\barI\rangle = \frac{1}{N}\trace(\barS)=\frac{1}{N}\Vert\barX\Vert_*.
\end{align}

%

The following lemma was proved in \cite{tensor_tSVD} for 3rd-order tensors. The proof for the general case is given in \cref{sec:App:proofLemma12}.
\begin{lemma} \label{lemma:dualNorms}
The tensor nuclear norm $\Vert\cdot\Vert_*$ is the dual norm of the spectral norm $\Vert\cdot\Vert_2$.
\end{lemma}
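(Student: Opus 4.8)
The plan is to reduce the claim to the classical matrix duality between the nuclear and spectral norms (a consequence of von Neumann's trace inequality), transported to the block-diagonal Fourier domain via \cref{lemma:orderPinnerProductEquivalence}, and then to exhibit an explicit maximizer built from the skinny t-SVD of $\mX$.

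For the easy direction I would unfold the definition of the dual norm,
\[
\|\mX\|_2^{*}\;:=\;\sup_{\substack{\mY\in\reals^{n_1\times\cdots\times n_d}\\ \|\mY\|_2\le 1}}\langle\mX,\mY\rangle ,
\]
and note that by \cref{lemma:orderPinnerProductEquivalence}(i) and the definition of the tensor spectral norm, $\langle\mX,\mY\rangle=\tfrac1N\langle\barX,\barY\rangle$ and $\|\mY\|_2=\|\barY\|_2$. The matrix inequality $\langle\barX,\barY\rangle\le\|\barX\|_*\|\barY\|_2$ (for the real-valued inner product $\langle\barX,\barY\rangle=N\langle\mX,\mY\rangle$) then gives, for every admissible $\mY$, $\langle\mX,\mY\rangle\le\tfrac1N\|\barX\|_*\|\barY\|_2\le\tfrac1N\|\barX\|_*=\|\mX\|_*$, the last equality being \eqref{eq:nuclearNormEquivalence}; hence $\|\mX\|_2^{*}\le\|\mX\|_*$.

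For the matching lower bound I would construct a maximizer. Let $\mX=\mU_r*\mS_r*{\mV_r}^{\top}$ be the skinny t-SVD of \cref{def:skinnytSVD} with $r=\rank_{\textnormal{t}}(\mX)$, and set $\mY:=\mU_r*{\mV_r}^{\top}\in\reals^{n_1\times n_2\times n_3\times\cdots\times n_d}$. As a t-product of real tensors $\mY$ is real (one can also check the symmetry of \cref{lemma:conjegateComplexSymmetry_realTensors}), and by the t-SVD identity $\barX=\barU_r\barS_r\barV_r^{\tc}$ together with \cref{lemma:tproduct_bdiagMult} its block-diagonal representation is $\barY=\barU_r\barV_r^{\tc}$. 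Since in each frontal block the columns of $\barU_r^{(i_3,\ldots,i_d)}$ and of $\barV_r^{(i_3,\ldots,i_d)}$ are orthonormal once the trailing zero columns are discarded (by the defining relations ${\barU_{r}^{(i_3,\ldots,i_d)}}^{\tc}\barU_{r}^{(i_3,\ldots,i_d)}=\diag(\I_{r_{i_3,\ldots,i_d}},\mathbf{0})$ and its analogue for $\barV_r$), each block of $\barY$ has spectral norm at most $1$, so $\|\mY\|_2=\|\barY\|_2\le1$ and $\mY$ is admissible. Finally, a block-by-block trace computation, using that $\barS_r$ is real f-diagonal with its diagonal entries past index $r_{i_3,\ldots,i_d}$ equal to zero, gives $\langle\barX,\barY\rangle=\trace\big(\barS_r^{\tc}\,\barU_r^{\tc}\barU_r\,\barV_r^{\tc}\barV_r\big)=\trace(\barS_r)=\|\barX\|_*$, whence by \cref{lemma:orderPinnerProductEquivalence}(i) and \eqref{eq:nuclearNormEquivalence},
\[
\langle\mX,\mY\rangle=\tfrac1N\langle\barX,\barY\rangle=\tfrac1N\|\barX\|_*=\|\mX\|_* ,
\]
and combining the two bounds yields $\|\mX\|_2^{*}=\|\mX\|_*$.

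The step I expect to be the main obstacle is the bookkeeping around the skinny t-SVD: verifying that $\bdiag$ of the Fourier transform of $\mU_r*\mV_r^{\top}$ really equals $\barU_r\barV_r^{\tc}$ (which rests on the identity $\bdiag(\overline{\mV_r^{\top}})=\barV_r^{\tc}$ coming from the t-SVD construction, combined with \cref{lemma:tproduct_bdiagMult}); confirming that $\mY$ is genuinely real-valued; and checking that the zero padding inherent in \cref{def:skinnytSVD} (zero columns in $\barU_r,\barV_r$ and zero singular values in $\barS_r$) neither inflates $\|\mY\|_2$ above $1$ nor disturbs the trace identity. These are precisely the points where the naive matrix argument needs care in the tensor setting; the matrix-level duality of the nuclear and spectral norms is the only external fact used.
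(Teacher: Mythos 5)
Your proof is correct and follows essentially the same route as the paper's: transport to the block-diagonal Fourier domain via \cref{lemma:orderPinnerProductEquivalence}, invoke the matrix nuclear/spectral duality there, and exhibit a maximizer built from the t-SVD. The one place you are slightly more careful than the paper is the choice of witness: the paper writes $\mY=\mU*\mV^{\top}$ with the \emph{full} t-SVD, where $\mU\in\reals^{n_1\times n_1\times n_3\times\cdots\times n_d}$ and $\mV\in\reals^{n_2\times n_2\times n_3\times\cdots\times n_d}$, so the t-product $\mU*\mV^{\top}$ is only well-typed when $n_1=n_2$; your use of the skinny t-SVD $\mU_r*\mV_r^{\top}$ resolves this dimensional mismatch, and your trace computation $\trace(\barS_r^{\tc}\barU_r^{\tc}\barU_r\barV_r^{\tc}\barV_r)=\trace(\barS_r)$ correctly accounts for the zero padding in \cref{def:skinnytSVD}.
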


The following lemma is proved in \cite{tensor_tSVD} for 3rd order tensors. The proof for the general case is identical except for the change in the size of the block diagonal matrices.
\begin{lemma}
The convex envelope\footnote{The convex envelope of a set is the smallest convex set that contains it.} of the tensor average rank $\rank_{\textnormal{a}}(\mX)$ over the set $\lbrace\mX\in\reals^{n_1\times\cdots\times n_d}\ \vert\ \Vert\mX\Vert_2\le1\rbrace$ is the tensor nuclear norm $\Vert\mX\Vert_*$.
\end{lemma}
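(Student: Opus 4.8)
I would transport the statement, via the block-diagonalization dictionary, to the classical matrix fact (Fazel) that the matrix nuclear norm is the convex envelope of the matrix rank over the unit matrix spectral-norm ball. Set $C:=\{\mX\in\reals^{n_1\times\cdots\times n_d}:\Vert\mX\Vert_2\le1\}$, a convex body with nonempty interior. Since $\bcirc(\mX)$ is unitarily equivalent to $\barX=\bdiag(\fft(\mX))$ by \cref{lemma:bdiag_bcirc_equality} (the intertwining Kronecker factors are scalar multiples of unitaries), the definition of the tensor spectral norm, \eqref{eq:averageRankWithBar}, \cref{lemma:orderPinnerProductEquivalence}, and \eqref{eq:nuclearNormEquivalence} give, for all real tensors $\mX,\mY$,
\begin{equation*}
\Vert\mX\Vert_2=\Vert\bcirc(\mX)\Vert_2,\qquad \rank_{\textnormal{a}}(\mX)=\tfrac1N\rank(\bcirc(\mX)),\qquad \Vert\mX\Vert_*=\tfrac1N\Vert\bcirc(\mX)\Vert_*,\qquad \langle\mX,\mY\rangle=\tfrac1N\langle\bcirc(\mX),\bcirc(\mY)\rangle .
\end{equation*}
Thus $\mX\mapsto\bcirc(\mX)$ is a linear isomorphism of $\reals^{n_1\times\cdots\times n_d}$ onto the subspace of $(n_1N)\times(n_2N)$ block-circulant matrices that scales rank and nuclear norm by $1/N$, preserves spectral norm, preserves (up to $1/N$) the Euclidean inner product, and in particular carries convex combinations to convex combinations.

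One inequality is immediate. By \cref{lemma:dualNorms} the TNN is a norm, hence convex, and for $\mX\in C$ every singular value of $\bcirc(\mX)$ is at most $\Vert\bcirc(\mX)\Vert_2\le1$, so $\Vert\mX\Vert_*=\tfrac1N\sum_i\sigma_i(\bcirc(\mX))\le\tfrac1N\rank(\bcirc(\mX))=\rank_{\textnormal{a}}(\mX)$. Being a convex minorant of $\rank_{\textnormal{a}}$ on $C$, $\Vert\cdot\Vert_*$ is dominated by the convex envelope of $\rank_{\textnormal{a}}$ over $C$.

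For the reverse inequality I would mimic Fazel's argument and show that the envelope at any $\mX_0\in C$ is at most $\Vert\mX_0\Vert_*$ by exhibiting a convex combination of tensors in $C$ whose barycenter is $\mX_0$ and whose ranks average to $\Vert\mX_0\Vert_*$. In the Fourier domain, let the singular values of the block $\barX_0^{(i_3,\dots,i_d)}$ be $\sigma_1^{(i_3,\dots,i_d)}\ge\sigma_2^{(i_3,\dots,i_d)}\ge\cdots$; for a threshold $\tau\ge0$ let $\mQ_\tau$ be the tensor whose block $(i_3,\dots,i_d)$ is $\sum_{k:\sigma_k^{(i_3,\dots,i_d)}>\tau}\u_k\v_k^{\tc}$, the partial isometry onto the leading singular subspace of $\barX_0^{(i_3,\dots,i_d)}$ truncated at level $\tau$, with singular vectors chosen conjugate-consistently across conjugate-symmetric blocks (as in \cref{alg:tSVD}) so that $\fft(\mQ_\tau)$ satisfies \eqref{eq:conjugateComplexSymmetryCondition} and $\mQ_\tau$ is a genuine real tensor. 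Then $\Vert\mQ_\tau\Vert_2\le1$, i.e.\ $\mQ_\tau\in C$, and $\rank_{\textnormal{a}}(\mQ_\tau)=\tfrac1N\#\{(i_3,\dots,i_d,k):\sigma_k^{(i_3,\dots,i_d)}>\tau\}$. The layer-cake identity $\sigma=\int_0^\infty\mathds{1}[\sigma>\tau]\,d\tau$, applied blockwise, gives $\mX_0=\int_0^\infty\mQ_\tau\,d\tau$, where the integrand is piecewise constant in $\tau$ and vanishes for $\tau>\Vert\mX_0\Vert_2$; since $\Vert\mX_0\Vert_2\le1$, absorbing the mass $1-\Vert\mX_0\Vert_2$ into the zero tensor expresses $\mX_0$ as a (finite) convex combination of elements of $C$. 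The same identity yields $\int_0^\infty\rank_{\textnormal{a}}(\mQ_\tau)\,d\tau=\tfrac1N\sum_{i_3,\dots,i_d,k}\sigma_k^{(i_3,\dots,i_d)}=\tfrac1N\Vert\barX_0\Vert_*=\Vert\mX_0\Vert_*$, so the average rank of this convex combination is exactly $\Vert\mX_0\Vert_*$. Hence the convex envelope at $\mX_0$ is $\le\Vert\mX_0\Vert_*$, and together with the first inequality the convex envelope of $\rank_{\textnormal{a}}$ over $C$ equals $\Vert\cdot\Vert_*$.

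The move from $d=3$ to general $d$ changes only the bookkeeping of block sizes: the Fourier domain now carries $N=n_3\cdots n_d$ blocks instead of $n_3$, and the conjugate-symmetry pairing of blocks is the one in \cref{lemma:conjegateComplexSymmetry_realTensors} rather than its $3$rd-order special case. Once these substitutions are made the $3$rd-order argument of \cite{tensor_tSVD} applies verbatim. The only genuinely non-cosmetic point — and the main (small) obstacle — is confirming that the extremal objects produced by the matrix argument, here the threshold truncations $\mQ_\tau$ (equivalently, the maximizers appearing in a conjugate-function version of the argument), respect \eqref{eq:conjugateComplexSymmetryCondition} and therefore correspond to real tensors; this is precisely the consistency built into the t-SVD computation in \cref{alg:tSVD}.
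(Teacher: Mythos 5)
Your proof is correct, and it takes a genuinely different route from the one the paper relies on. The paper itself gives no argument for this lemma: it merely cites \cite{tensor_tSVD} and notes that the $d=3$ proof there (which is a biconjugate computation in the spirit of Fazel — one computes the Fenchel conjugate $g^*$ of $\rank_{\textnormal{a}}+\chi_C$ and then the biconjugate $g^{**}$, showing it equals $\Vert\cdot\Vert_*$ blockwise) carries over with only notational changes. You instead give a direct, self-contained argument: the lower bound from $\Vert\mX\Vert_* \le \rank_{\textnormal{a}}(\mX)$ on $C$ plus convexity of the norm, and the upper bound from an explicit convex combination built via the layer-cake identity $\mX_0=\int_0^\infty\mQ_\tau\,d\tau$, whose average-rank cost integrates exactly to $\Vert\mX_0\Vert_*$. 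Both methods are standard ways to identify a convex envelope, but yours is constructive and makes visible where the only genuinely tensor-specific difficulty lies — checking that the thresholded partial isometries $\mQ_\tau$ respect the conjugate-complex symmetry condition \eqref{eq:conjugateComplexSymmetryCondition} and hence come from real tensors — which you correctly tie back to the conjugate-consistent choice of singular vectors in \cref{alg:tSVD} (and to the fact that conjugate-paired blocks of $\barX_0$ share singular values, so the threshold keeps the same number of components in both). The biconjugate route hides this point inside the verification that the maximizer in $g^{**}$ lies in the admissible real-tensor subspace. One small presentational remark: you say you "mimic Fazel's argument," but what you actually carry out is the layer-cake construction rather than the conjugate computation; the two are dual to one another, and your parenthetical acknowledging the conjugate version shows you are aware of this, but the phrasing could mislead a reader into expecting the $g^*$/$g^{**}$ calculation.
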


%
%
%
%

In the following lemma we define the full subdifferential set of the tensor nuclear norm. The proof is given in \cref{sec:App:proofLemma100}.

\begin{lemma} \label{lemma:subdifferentialSet}
Let $\mX\in\reals^{n_1\times\cdots\times n_d}$ and let $\mX=\mU*\mS*\mV^{\top}$ denote its skinny t-SVD. Then the subdifferential set of $\Vert\mX\Vert_*$ is 
\begin{align*}
\partial\Vert\mX\Vert_* = \lbrace\mU*\mV^{\top}+\mW\ \vert\ \mU^{\top}*\mW=\mathbf{0},\ \mW*\mV=\mathbf{0},\ \Vert\mW\Vert_2\le1\rbrace.
\end{align*}
\end{lemma}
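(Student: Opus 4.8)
The plan is to reduce the computation of the subdifferential of the tensor nuclear norm to the well-understood matrix case by passing to the Fourier domain, and then carefully translate the resulting characterization back to tensor language. The starting point is the identity $\Vert\mX\Vert_* = \frac{1}{N}\Vert\barX\Vert_*$ from \eqref{eq:nuclearNormEquivalence}, together with \cref{lemma:orderPinnerProductEquivalence}, which tells us that the tensor inner product equals $\frac{1}{N}$ times the inner product of the corresponding block-diagonal matrices. Consequently, $\mG\in\partial\Vert\mX\Vert_*$ if and only if for every real-valued tensor $\mY$ we have $\Vert\mY\Vert_* \ge \Vert\mX\Vert_* + \langle\mG,\mY-\mX\rangle$, which after multiplying by $N$ and using the two lemmas becomes $\Vert\barY\Vert_* \ge \Vert\barX\Vert_* + \langle\barG,\barY-\barX\rangle$. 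The subtlety is that $\barY$ does not range over \emph{all} block-diagonal matrices but only over those arising from real-valued tensors, i.e., those satisfying the conjugate-complex symmetry condition of \cref{lemma:conjegateComplexSymmetry_realTensors}; I would need to argue that this restriction does not shrink the subdifferential, which follows because the symmetry-respecting matrices form a real-linear subspace that is ``large enough'' — in particular it contains $\barX$ itself and, crucially, the subspace is exactly the image of the (injective, up to scaling unitary) block-diagonalization map, so optimality against symmetric perturbations is equivalent to optimality against all perturbations when the candidate $\barG$ is itself symmetry-respecting.

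The core computation is then the matrix subdifferential of the nuclear norm applied blockwise. The classical result (Watson) states that for a matrix $\M$ with thin SVD $\M = \P\bSigma\Q^{\tc}$, one has $\partial\Vert\M\Vert_* = \{\P\Q^{\tc} + \Z : \P^{\tc}\Z = 0,\ \Z\Q = 0,\ \Vert\Z\Vert_2 \le 1\}$. Since $\barX = \bdiag(\overbar{\mX})$ is block diagonal, its nuclear norm is the sum of the blockwise nuclear norms, so $\partial\Vert\barX\Vert_*$ consists precisely of the block-diagonal matrices whose $(i_3,\ldots,i_d)$ block lies in the matrix subdifferential of the corresponding block of $\barX$. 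Here is where \cref{def:skinnytSVD} matters: the skinny t-SVD is designed so that $\barU_r^{(i_3,\ldots,i_d)}$ and $\barV_r^{(i_3,\ldots,i_d)}$ each have $r$ columns, of which only the leading $r_{i_3,\ldots,i_d}$ are ``active'' (the rest being padding with orthonormal columns from the remaining left/right singular spaces), and the conditions ${\barU_r^{(i_3,\ldots,i_d)}}^{\tc}\barU_r^{(i_3,\ldots,i_d)} = \diag(\I_{r_{i_3,\ldots,i_d}},\mathbf{0})$ guarantee that the product $\barU_r^{(i_3,\ldots,i_d)}{\barV_r^{(i_3,\ldots,i_d)}}^{\tc}$ recovers exactly $\P^{(i_3,\ldots,i_d)}{\Q^{(i_3,\ldots,i_d)}}^{\tc}$ for the genuine thin SVD of that block. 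This is exactly the point the authors flag in the remark before the lemma — that the narrower skinny t-SVD in \cite{tensor_tSVD} is insufficient — so I would verify this reconciliation explicitly.

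Finally I would repackage the blockwise description into tensor notation. The ``fixed part'' $\bdiag$-block $\barU_r{}^{(i_3,\ldots,i_d)}{\barV_r^{(i_3,\ldots,i_d)}}^{\tc}$ across all blocks is exactly $\bdiag(\overbar{\mU*\mV^{\top}})$ by \cref{lemma:tproduct_bdiagMult}, so the fixed part of the tensor subdifferential is $\mU*\mV^{\top}$. For the free part, writing $\barW$ for the block-diagonal matrix with blocks $\Z^{(i_3,\ldots,i_d)}$, the orthogonality conditions $\barU_r^{\tc}\barW = 0$ and $\barW\barV_r = 0$ translate via \cref{lemma:tproduct_bdiagMult} into $\mU^{\top}*\mW = \mathbf{0}$ and $\mW*\mV = \mathbf{0}$ (using that $\barU_r$ contains the same column space information as $\barU$ padded appropriately — one must check the padding columns don't impose spurious constraints, which they don't because they lie in the orthogonal complement where $\Z$ is already required to vanish), and the blockwise constraint $\max_{i_3,\ldots,i_d}\Vert\Z^{(i_3,\ldots,i_d)}\Vert_2\le 1$ is precisely $\Vert\barW\Vert_2 \le 1 = \Vert\mW\Vert_2$ by the definition of the tensor spectral norm. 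One must also confirm that $\mW$ can be taken real-valued, i.e., that $\barW$ respects the conjugate-complex symmetry; this is where I expect the main obstacle to lie — ensuring that the symmetry constraint can be imposed on the free part $\mW$ without loss, which should follow by a symmetrization argument: given any complex block-diagonal $\barW$ satisfying the constraints, its symmetrization (averaging $\barW^{(i_3,\ldots,i_d)}$ with $\conj(\barW^{(i'_3,\ldots,i'_d)})$) still satisfies them because the SVD blocks themselves are conjugate-symmetric by construction of \cref{alg:tSVD}, the constraint set is convex, and the symmetrization does not change the relevant inner products with real tensors. Assembling these pieces yields exactly the claimed set $\{\mU*\mV^{\top}+\mW : \mU^{\top}*\mW=\mathbf{0},\ \mW*\mV=\mathbf{0},\ \Vert\mW\Vert_2\le1\}$.
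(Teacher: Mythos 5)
Your route is genuinely different from the paper's. You argue from the subgradient inequality $\Vert\mY\Vert_* \ge \Vert\mX\Vert_* + \langle\mG,\mY-\mX\rangle$, transfer it to the Fourier domain, apply Watson's explicit formula for $\partial\Vert\cdot\Vert_*$ blockwise, and translate back while handling the reality constraint. The paper instead invokes Watson's \emph{characterization}: $\mG\in\partial\Vert\mX\Vert_*$ if and only if $\langle\mX,\mG\rangle=\Vert\mX\Vert_*$ and $\Vert\mG\Vert_2\le 1$ (the dual-norm bound, available via \cref{lemma:dualNorms}). Since this is a characterization of the subdifferential of a norm on the underlying real tensor space, it applies as-is and the perturbation-space issue never arises; the forward inclusion is then a short Fourier-domain computation, and the reverse direction shows the inner-product condition strictly fails whenever $\barG$ lies outside the claimed set.

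The gap in your proposal is precisely at the step you flag but do not resolve. If $\barG$ obeys the subgradient inequality only for $\barY$ in the real subspace $\mathcal{M}$ of block-diagonal matrices satisfying the conjugate-symmetry condition \eqref{eq:conjugateComplexSymmetryCondition}, that is \emph{a priori weaker} than the unrestricted inequality, so the tensor subdifferential could strictly contain the blockwise Watson set. Your stated reason --- that $\mathcal{M}$ is the image of the injective block-diagonalization map --- does not supply the needed lift. What does: the block-diagonal nuclear norm is invariant under the $\reals$-linear involution $\Theta$ sending the $(i_3,\ldots,i_d)$ block to the conjugate of the $(i_3',\ldots,i_d')$ block; $\mathcal{M}$ is its fixed-point set, with orthogonal projection $P_{\mathcal{M}}\barY=\tfrac12(\barY+\Theta(\barY))$, whence for $\barG,\barX\in\mathcal{M}$,
\begin{align*}
\langle\barG,\barY-\barX\rangle = \langle\barG,P_{\mathcal{M}}\barY-\barX\rangle \le \Vert P_{\mathcal{M}}\barY\Vert_*-\Vert\barX\Vert_* \le \tfrac12\bigl(\Vert\barY\Vert_*+\Vert\Theta(\barY)\Vert_*\bigr)-\Vert\barX\Vert_* = \Vert\barY\Vert_*-\Vert\barX\Vert_*.
\end{align*}
Without some such argument the lift is unjustified. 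A secondary slip: in \cref{def:skinnytSVD} the trailing columns of $\barU_r^{(i_3,\ldots,i_d)}$ and $\barV_r^{(i_3,\ldots,i_d)}$ are \emph{zero} (forced by $\barU_r^{\tc}\barU_r=\diag(\I_{r_{i_3,\ldots,i_d}},\mathbf{0})$), not orthonormal columns from the remaining singular subspaces as you describe; padding with orthonormal columns would make $\barU_r\barV_r^{\tc}$ pick up contributions from zero singular values, which is exactly the defect in the definition of \cite{tensor_tSVD} that \cref{def:skinnytSVD} corrects.
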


Solving Problem \eqref{mainModel} using projected gradient methods requires computing projections onto the TNN ball. The projection of a real-valued tensor $\mX\in\reals^{n_1\times\cdots\times n_d}$ onto the TNN ball is similar to projecting a matrix onto the matrix nuclear norm ball, since it amounts to projecting the matrix $\barX=\bdiag(\overbar{\mX})$ onto a matrix nuclear norm ball. 
We recall that the projection a matrix $\X\in\complex^{m\times n}$ with a singular value decomposition $\X=\sum_{i=1}^{\min\lbrace m,n\rbrace}\sigma_i\u_i\v_i^{\tc}$ onto the matrix nuclear norm ball of radius $\tau$ takes the form:
\begin{align*}
\Pi_{\lbrace\Y\in\complex^{m\times n}~|~\Vert\Y\Vert_*\le\tau\rbrace}[\X] = \sum_{i=1}^{\min\lbrace m,n\rbrace}\max\lbrace0,\sigma_i-\sigma\rbrace\u_i\v_i^{\tc},
\end{align*}
where $\sigma\ge0$ satisfies  $\sum_{i=1}^{\min\lbrace m,n\rbrace}\max\lbrace0,\sigma_i-\sigma\rbrace=\tau$.

In \cref{alg:projectionTNN} we describe the procedure for computing the projection onto the TNN ball of radius $\tau$. The most expensive part of the computation is computing the t-SVD of the input tensor $\mX$, as described in Algorithm \ref{alg:tSVD}. 
\begin{algorithm}
	\caption{Projection onto the tensor nuclear norm ball of radius $\tau$}\label{alg:projectionTNN}
	\begin{algorithmic}	
		\STATE \textbf{Input:} $\mX\in\reals^{n_1\times\cdots\times n_d}$, $\tau\ge0$
		\STATE Compute the t-SVD $\mX=\mU*\mS*\mV^{\top}$
        \STATE Find $\sigma\ge0$ such that $\frac{1}{n_3\cdots n_d}\sum_{i=1}^{\rank_{\textnormal{t}}(\mX)}\sum_{i_3=1}^{n_3}\cdots\sum_{i_d=1}^{n_d} \max\lbrace0,\sigma_i(\barX^{(i_3,\ldots,i_d)})-\sigma\rbrace=\tau$
        \STATE Compute $\overbar{\mS}_{\tau}$ such that $\overbar{\mS}_{\tau}(i_1,\ldots,i_d)=\Bigg\lbrace\begin{array}{ll}0, & \textrm{if } i_1\not=i_2 \\ \max\lbrace0,\sigma_{i_1}(\barX^{(i_3,\ldots,i_d)})-\sigma\rbrace, & \textrm{if } i_1=i_2\end{array}$
        \STATE Return $\Pi_{\lbrace\Vert\mY\Vert_*\le\tau\rbrace}[\mX] = \mU*\ifft(\overbar{\mS}_{\tau})*\mV^{\top}$
	\end{algorithmic}
\end{algorithm}

\begin{lemma}[Projection onto the tensor nuclear norm ball]
\label{lemma:projectionOntoTNN}
Let $\mX\in\reals^{n_1\times\cdots\times n_d}$. The Euclidean projection of $\mX$ onto the TNN ball of radius $\tau\ge0$ can be computed by the steps described in \cref{alg:projectionTNN}.

Moreover, if $\rank_{\textnormal{t}}(\Pi_{\lbrace\Vert\mY\Vert_*\le\tau\rbrace}[\mX])  \leq r$, then the t-SVD computation in the algorithm can be replaced with the rank-$r$ t-SVD (Definition \ref{def:rankRtSVD}) and the summation from $1$ to $\rank_{\textnormal{t}}(\mX)$ in the computation of $\sigma$ can be replaced with a summation from $1$ to $r$.
\end{lemma}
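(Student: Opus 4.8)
The plan is to reduce the projection onto the TNN ball to a projection of the block-diagonal matrix $\barX = \bdiag(\overbar{\mX})$ onto a matrix nuclear norm ball, using \cref{lemma:orderPinnerProductEquivalence} and \eqref{eq:nuclearNormEquivalence}. Concretely, since $\Vert\mX\Vert_F = \frac{1}{\sqrt{N}}\Vert\barX\Vert_F$ and $\Vert\mX\Vert_* = \frac{1}{N}\Vert\barX\Vert_*$, minimizing $\Vert\mX - \mY\Vert_F^2$ over $\{\Vert\mY\Vert_*\le\tau\}$ is equivalent (up to the constant $1/N$ in the objective) to minimizing $\Vert\barX - \barY\Vert_F^2$ over $\{\barY : \Vert\barY\Vert_* \le N\tau\}$, subject to the constraint that $\barY$ is block-diagonal and that its blocks satisfy the conjugate-complex symmetry condition of \cref{lemma:conjegateComplexSymmetry_realTensors} (so that $\mY = \ifft^{-1}$-of-it is real-valued). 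First I would argue that the unconstrained-over-block-structure projection automatically respects both side conditions: the Euclidean projection of a block-diagonal matrix onto the nuclear norm ball is itself block-diagonal (soft-thresholding the singular values acts blockwise), and because soft-thresholding commutes with taking complex conjugates of a block, if $\barX$ satisfies the symmetry condition \eqref{eq:conjugateComplexSymmetryCondition} then so does the thresholded matrix — hence the minimizer over block-diagonal symmetric matrices coincides with the minimizer over all matrices, and is attained inside the feasible structured set. This is exactly what \cref{alg:tSVD} exploits by only computing SVDs of the "independent" blocks and conjugating for the paired ones.

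Next I would identify the threshold: the standard matrix result recalled just before \cref{alg:projectionTNN} says the projection of $\barX$ onto the nuclear ball of radius $N\tau$ soft-thresholds all singular values of $\barX$ by a common $\sigma\ge0$ chosen so that $\sum_{i,i_3,\dots,i_d}\max\{0,\sigma_i(\barX^{(i_3,\dots,i_d)})-\sigma\} = N\tau$, and dividing by $N = n_3\cdots n_d$ gives precisely the defining equation for $\sigma$ in \cref{alg:projectionTNN}. Summing only up to $\rank_{\textnormal{t}}(\mX)$ rather than $\min\{n_1,n_2\}$ changes nothing since all larger-index singular values vanish. Then I would translate back: forming $\overbar{\mS}_\tau$ with entries $\max\{0,\sigma_{i_1}(\barX^{(i_3,\dots,i_d)})-\sigma\}$ on the diagonal and reconstructing $\mU*\ifft(\overbar{\mS}_\tau)*\mV^\top$ is, by \cref{lemma:tproduct_bdiagMult}, exactly applying $\barU\,(\text{soft-thresholded }\barS)\,\barV^{\tc}$ in the Fourier domain, which is the matrix projection; invertibility of $\fft$ and the symmetry argument ensure the output is the claimed real-valued tensor $\Pi_{\{\Vert\mY\Vert_*\le\tau\}}[\mX]$.

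For the "moreover" part I would observe that if $\rank_{\textnormal{t}}(\Pi_{\{\Vert\mY\Vert_*\le\tau\}}[\mX]) \le r$, then in every block $\barX^{(i_3,\dots,i_d)}$ the soft-thresholding kills all but at most the top $r$ singular values, i.e. $\sigma_i(\barX^{(i_3,\dots,i_d)}) \le \sigma$ for all $i > r$. Consequently the quantities $\max\{0,\sigma_i(\barX^{(i_3,\dots,i_d)})-\sigma\}$ vanish for $i>r$, so both the sum defining $\sigma$ and the construction of $\overbar{\mS}_\tau$ only ever reference $\sigma_1,\dots,\sigma_r$ and the corresponding singular vectors of each block — these are exactly the quantities produced by the rank-$r$ t-SVD of \cref{def:rankRtSVD}. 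Hence replacing the full t-SVD with the rank-$r$ t-SVD and the summation upper limit $\rank_{\textnormal{t}}(\mX)$ with $r$ yields an identical output. The one subtlety to handle carefully — and the main thing requiring care rather than difficulty — is that this is a statement about \emph{the} projection being computable with rank-$r$ data given as hypothesis that the projection \emph{has} tubal rank $\le r$; I should make sure the argument does not circularly assume we already know the thresholded singular values before computing them. This is resolved by noting that for a fixed $\sigma$, the equation is monotone in $\sigma$ and the value of $\sigma$ solving it is the same whether we sum to $r$ or to $\rank_{\textnormal{t}}(\mX)$ precisely because the extra terms are zero under the rank hypothesis; one can also verify a posteriori (as the paragraph following the lemma in the paper notes) that the computed $\sigma$ indeed dominates $\sigma_{r+1}$ of every block, certifying correctness.
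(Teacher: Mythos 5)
Your proposal is correct and follows essentially the same route as the paper's proof: reduce to the matrix nuclear-norm projection of $\barX=\bdiag(\overbar{\mX})$ onto the ball of radius $N\tau$ via the norm equivalences, observe that soft-thresholding preserves both the block-diagonal structure and the conjugate-complex symmetry (so the constrained minimizer coincides with the unconstrained one and yields a real-valued tensor upon inverse transform), translate back using the t-product/bdiag correspondence, and note for the rank-$r$ claim that all thresholded values with index $>r$ vanish under the tubal-rank hypothesis, so the equation for $\sigma$ and the construction of $\overbar{\mS}_\tau$ involve only the top-$r$ data of each block. The only stylistic difference is that you argue the symmetry preservation via ``soft-thresholding commutes with conjugation'' whereas the paper explicitly verifies $\barS_\tau^{(i_3,\ldots,i_d)}=\barS_\tau^{(i_3',\ldots,i_d')}$ from the shared singular values and a global $\sigma$; these are the same fact.
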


The proof is given in \cref{sec:App:proofLemma11}.

\subsection{Complexity of tensor operations}
\label{sec:complexityAnalysis}

Since we are interested in the efficiency of applying projected-gradient methods for solving Problem \eqref{mainModel}, we now turn to discuss the complexity of computing the associated projected gradient mapping $\mX\rightarrow\Pi_{\lbrace\Vert\mY\Vert_*\le\tau\rbrace}[\mX-\eta\nabla{}f(\mX)]$ and related operations. 


As discussed above, given a tensor $\mX\in\reals^{n_1\times n_2\times\cdots\times n_d}$ and the corresponding gradient tensor $\nabla{}f(\mX)$, the most expensive step in computing the projection onto the unit TNN ball (as given in Algorithm \ref{alg:projectionTNN}), is the computation of the t-SVD of the tensor to project $\mX-\eta\nabla{}f(\mX)$. According to Algorithm \ref{alg:tSVD}, computing the t-SVD requires first computing the Fourier transformation of the tensor $\mX-\eta\nabla{}f(\mX)$, which takes $\mathcal{O}(Nn_1n_2(\log(n_3)+\cdots + \log(n_d)))$ runtime (the Fourier transformation along a dimension $j\in\lbrace3,\ldots,d\rbrace$ involves computing $n_1n_2N/n_j$ multiplications between $\F_{n_j}$ and some vector of length $n_j$, as in \cref{def:fourierJdim},  each takes $O(n_j\log(n_j))$ runtime, and this is computed along all dimensions $j\in\lbrace3,\ldots,d\rbrace$ to obtain the full transformation). This is followed by computing approximately half of the SVD of all $N$ frontal slices of $\overbar{\mX-\eta\nabla{}f(\mX)}$, which takes $\mathcal{O}(Nn_1n_2\min\lbrace n_1,n_2\rbrace)$ runtime (recall each such frontal slice is a $n_1\times n_2$ matrix). 
The last part in the t-SVD algorithm is to compute the inverse Fourier transform of the obtained SVD matrix components of the projection $\barU,\barS,\barV^{\tc}$, to obtain $\mU$,$\mS$,$\mV^{\top}$, which takes the same runtime as the Fourier transformation.

In case the projected gradient mapping $\Pi_{\lbrace\Vert\mY\Vert_*\le\tau\rbrace}[\mX-\eta\nabla{}f(\mX)]$ is known to be of low tubal rank  $r\ll\min\lbrace n_1,n_2\rbrace$, then by the second part of Lemma \ref{lemma:projectionOntoTNN}, it suffices to compute only the rank-$r$ t-SVD (\cref{def:rankRtSVD}) of the tensor $\mX-\eta\nabla{}f(\mX)$, which amounts to computing only the rank-$r$ matrix SVDs of the frontal slices of $\overbar{\mX-\eta\nabla{}f(\mX)}$. This is far more efficient when $r \ll \min\{n_1,n_2\}$, since each such matrix SVD can be computed in roughly $\mathcal{O}(n_1n_2r)$ runtime using fast iterative methods for matrix SVD, e.g., \cite{rankrSVD1,rankrSVD2,rankrSVD3}.  This runtime is  simplified since it omits the worst-case dependency of such fast iterative methods for matrix SVD on the condition number of the matrix and the desired accuracy, however such methods often converge very quickly in practice, and so the runtime is often dominated by the runtime of the associated matrix-matrix products which is $\mathcal{O}(n_1n_2r)$. 
Therefore, prior knowledge of low tubal rank can significantly reduce the runtime of computing the SVD of $\bdiag(\overbar{\mX-\eta\nabla{}f(\mX)})$ to roughly $\mathcal{O}(Nn_1n_2r)$, instead of the worst case $\mathcal{O}(Nn_1n_2\min\lbrace n_1,n_2\rbrace)$.  

Note the above discussions regarding runtime assumes batch computation of the SVDs of the frontal slices, which is the most expensive step in the computation of the projection. Additional obvious improvements in the overall runtime could be obtained by performing these decompositions in parallel.

The above discussion also motivates the following definition of the rank-$r$ truncated projection of a tensor onto the TNN ball, which upper-bounds the tubal rank of the projection by $r$.
\begin{definition}[rank-$r$ truncated projection onto the tensor nuclear norm ball] 
\label{def:lowRankProjection}
Let $\mX\in\reals^{n_1\times n_2\times\cdots\times n_d}$ and let $r\le\min\lbrace n_1,n_2\rbrace$. The Euclidean rank-r truncated projection of $\mX$ onto the TNN ball of radius $\tau\ge0$, is computed by performing the steps in \cref{alg:projectionTNN} with the modifications listed in the second part of Lemma \ref{lemma:projectionOntoTNN}, i.e., when only the first $r$ components in the SVD of each frontal slice of $\mX$ are computed.
\end{definition}

For convenience, Table \ref{table:operationComplexity} records some tensor operations of interest and their associated runtimes.

It is interesting to note that, as opposed to our discussion above on the benefit of prior knowledge of low tubal rank, prior knowledge of  low average rank does not seem to help in significantly reducing the runtime of computing the projection. The average rank  contains information about the  rank of all frontal slices combined, and not information on the rank of any specific frontal slice. Thus, it does not seem to help in reducing the computational cost of Algorithm \ref{alg:tSVD}, in particular due to the conjugate-complex symmetry conditions  \eqref{eq:conjugateComplexSymmetryCondition} that must be satisfied so that the resulting tensor is indeed real-valued.

\begin{table*}\renewcommand{\arraystretch}{1.3}
{\footnotesize
\begin{center}
  \begin{tabular}{| l | c | } \hline 
  operation &  runtime \\ \hline
  $\fft(\mX)$, $\ifft(\mX)$ & $n_1 n_2 N\sum_{i=3}^d\log(n_i)$ \\ \hline
  t-SVD of $\mX$ & $n_1 n_2 N (\sum_{i=3}^d\log(n_i)+ \min\lbrace n_1,n_2\rbrace)$ \\ \hline
  rank-$r$ t-SVD of $\mX$ 
   & $n_1 n_2 N (\sum_{i=3}^d\log(n_i)+ r)$   \\ \hline
    $\mX\rightarrow\Pi_{\lbrace\Vert\mY\Vert_*\le\tau\rbrace}[\mX-\eta\nabla{}f(\mX)]$ & $n_1 n_2 N (\sum_{i=3}^d\log(n_i)+ \min\lbrace n_1,n_2\rbrace)$ \\ \hline
   $\mX\rightarrow\Pi_{\lbrace\Vert\mY\Vert_*\le\tau\rbrace}[\mX-\eta\nabla{}f(\mX)]$
    & $n_1 n_2 N (\sum_{i=3}^d\log(n_i)+ r)$ \\  if $\rank_{\textnormal{t}}\left(\Pi_{\lbrace\Vert\mY\Vert_*\le\tau\rbrace}[\mX-\eta\nabla{}f(\mX)]\right)\le r$ & 
   \\ \hline
   \end{tabular}
  \caption{Runtimes of certain tensor operations for tensors in $\reals^{n_1\times\cdots\times{}n_d}$. For all runtimes we omit the universal constants and lower-order terms. The runtimes for computing t-SVDs and projected gradient mappings assume batch computation of matrix SVDs (as applied in Algorithm \ref{alg:tSVD}), and are given in simplified form which omits the dependency of iterative matrix SVD algorithms on the condition number and desired accuracy. For the projected gradient mapping  it is assumed that the gradient is  given.
  }\label{table:operationComplexity}
\end{center}
}
\vskip -0.2in
\end{table*}\renewcommand{\arraystretch}{1}


\section{Strict Complementarity for Smooth Problems}
\label{sec:SCsmooth}
In this section we present our main results for the case in which the objective function in Problem \eqref{mainModel} is smooth and strict complementarity holds. We begin by formally introducing and motivating the strict complementarity condition for Problem \eqref{mainModel}  in Section \ref{sec:SCdefSmooth}. Then, in Section \ref{sec:QGtheorem} we present our quadratic growth result which facilitates linear convergence rates for first-order methods, and in Section \ref{sec:lowTubalRankProjectionsSmooth} we present our result that establishes that under SC (or relaxed notions of), inside a certain ball around optimal solutions with low tubal rank, the projected gradient mapping always admits low tubal rank which, per the discussion in  Section \ref{sec:complexityAnalysis}, implies significantly improved runtimes for computing the projected-gradient mapping.

\subsection{Definition and motivation}
\label{sec:SCdefSmooth}

Strict complementarity for constrained optimization problems is a standard assumption in many settings of interest and has been thoroughly studied in recent years, e.g., \cite{nondegeneracy, drusvyatskiy2018error, strictComplementarity,strictComplementarity2}. 

\begin{definition}[strict complementarity \cite{strictComplementarity,strictComplementarity2}]
We say an optimal solution $\mX^*$ of Problem \eqref{mainModel} satisfies strict complementarity
if
\begin{align} \label{strictComplementarityDef}
\mathbf{0}\in \nabla{}f(\mX^*) + \ri(\mathcal{N}_{\lbrace\Vert\mX\Vert_*\le1\rbrace}(\mX^*))
\end{align}
\footnote{$\ri(\mathcal{S})$ denotes the relative interior of the set $\mathcal{S}$. The normal cone of a set $\mathcal{S}$ at a point $\x\in\mathcal{S}$ is $\mathcal{N}_{\mathcal{S}}(\x):=\lbrace\y~|~\langle\y,\z-\x\rangle\le0,\ \forall\z\in\mathcal{S}\rbrace$.} 
and the complementarity measure $\delta$ is defined as\begin{align} \label{complementarityMeasure}
\delta: & = \min\lbrace\langle\mZ-\mX^*,\nabla{}f(\mX^*)\rangle\ \vert\ \Vert\mZ\Vert_*\le1,\ \mU^{\top}*\mZ=\mathbf{0},\ \mZ*\mV=\mathbf{0}\rbrace.
\end{align}
\end{definition}
 
The normal cone of an atomic norm at some point $\x$ can also be written as the conic hull of its subdifferential set at $\x$, where a conic hull of a set is obtained  by taking nonnegative linear combinations of elements of the set. 
The relationship between the subdifferential set of the tensor nuclear norm, which we present in \cref{lemma:subdifferentialSet}, and the normal cone of the unit TNN ball gives an intuition for the connection between the definition of  strict complementarity in \eqref{strictComplementarityDef} and the complementarity measure in \eqref{complementarityMeasure}.

To present an equivalent condition for strict complementarity w.r.t. the unit TNN ball,  we first state the following lemma which connects between the SVDs of the frontal slices of the block diagonal matrices of the Fourier transforms of the optimal solution and its corresponding gradient direction. 
This lemma is an extension of a similar argument that holds for optimization over the matrix nuclear norm ball, given in Lemma 2 in \cite{garberNuclearNormMatrices}. 
The proof follows from the first-order optimality condition for Problem \eqref{mainModel} and is given in \cref{sec:App:proofLemma199}.

\begin{lemma} \label{lemma:svd_opt_grad_d_dim}
Let $\mX^*$ be an optimal solution to Problem \eqref{mainModel} and let $\overbar{\mX^*}$ denote its Fourier transform as defined in \cref{def:fft}. For each $i_3\in[n_3],\ldots,i_d\in[n_d]$, denote the SVD of the frontal slice ${\overbar{\X^*}}^{(i_3,\ldots,i_d)}$ as ${\overbar{\X^*}}^{(i_3,\ldots,i_d)}=\sum_{i=1}^{r_{i_3,\ldots,i_d}}\sigma_i^{(i_3,\ldots,i_d)}\u_i^{(i_3,\ldots,i_d)}{\v_i^{(i_3,\ldots,i_d)}}^{\tc}$, where $r_{i_3,\ldots,i_d}=\rank({\overbar{\X^*}}^{(i_3,\ldots,i_d)})$.
Then, each frontal slice of the Fourier transform of the gradient vector $\overbar{\nabla{}f(\mX^*)}^{(i_3,\ldots,i_d)}$ admits a SVD such that the set of pairs of vectors $\lbrace -\u_i^{(i_3,\ldots,i_d)}, \v_i^{(i_3,\ldots,i_d)}\rbrace_{i=1}^{r_{i_3,\ldots,i_d}}$ is a
set of top singular-vector pairs of $\overbar{\nabla{}f(\mX^*)}^{(i_3,\ldots,i_d)}$ which corresponds to the largest singular value $\sigma_1(\overbar{\nabla{}f(\mX^*)}^{(i_3,\ldots,i_d)})$. Furthermore, the top singular values of all nonzero slices are equal, that is,
$$\sigma_1(\overbar{\nabla{}f(\mX^*)})=\sigma_1(\overbar{\nabla{}f(\mX^*)}^{(i_3,\ldots,i_d)}),$$
for all $i_3\in[n_3],\ldots,i_d\in[n_d]$ such that $\sigma_1(\overbar{\nabla{}f(\mX^*)}^{(i_3,\ldots,i_d)})\not=0$.
\end{lemma}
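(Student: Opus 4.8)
The plan is to translate the first-order optimality condition for Problem~\eqref{mainModel} into the Fourier domain and then argue slice-by-slice. First I would write down the optimality condition: since $\mX^*$ is optimal, $-\nabla f(\mX^*) \in \mathcal{N}_{\lbrace\Vert\mX\Vert_*\le1\rbrace}(\mX^*)$, and because the norm ball has radius $1$ and $\mX^*$ lies on its boundary (the case $\Vert\mX^*\Vert_*<1$ forces $\nabla f(\mX^*)=\mathbf{0}$ and the claim is trivial), this normal cone is the conic hull of $\partial\Vert\mX^*\Vert_*$. Hence there is $\lambda\ge0$ with $-\nabla f(\mX^*) = \lambda(\mU*\mV^{\top} + \mW)$ for some $\mW$ with $\mU^{\top}*\mW=\mathbf{0}$, $\mW*\mV=\mathbf{0}$, $\Vert\mW\Vert_2\le1$, using \cref{lemma:subdifferentialSet}. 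The value $\lambda$ equals $\Vert\nabla f(\mX^*)\Vert_2$ when the subgradient decomposition is chosen appropriately (or at least $\lambda = \langle \nabla f(\mX^*), \mX^*\rangle$ up to sign).

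Next I would pass to the Fourier domain. Applying $\bdiag(\fft(\cdot))$ and using \cref{lemma:tproduct_bdiagMult}, the relation $-\nabla f(\mX^*) = \lambda(\mU*\mV^{\top}+\mW)$ becomes $-\overbar{\nabla f(\mX^*)}^{(i_3,\ldots,i_d)} = \lambda\big(\barU^{(i_3,\ldots,i_d)}(\barV^{(i_3,\ldots,i_d)})^{\tc} + \barW^{(i_3,\ldots,i_d)}\big)$ for every choice of indices $i_3\in[n_3],\ldots,i_d\in[n_d]$, where the orthogonality constraints $\mU^{\top}*\mW=\mathbf{0}$, $\mW*\mV=\mathbf{0}$ become the matrix conditions $(\barU^{(i_3,\ldots,i_d)})^{\tc}\barW^{(i_3,\ldots,i_d)}=\mathbf{0}$ and $\barW^{(i_3,\ldots,i_d)}\barV^{(i_3,\ldots,i_d)}=\mathbf{0}$, and $\Vert\mW\Vert_2\le1$ translates to $\Vert\barW^{(i_3,\ldots,i_d)}\Vert_2\le1$ for all slices (via the identity $\Vert\mW\Vert_2=\Vert\barW\Vert_2$ and block-diagonality). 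Now fix one slice. Writing $\barU_r = [\u_1,\ldots,\u_{r_{i_3,\ldots,i_d}}]$ and $\barV_r = [\v_1,\ldots,\v_{r_{i_3,\ldots,i_d}}]$ for the leading singular vectors of ${\overbar{\X^*}}^{(i_3,\ldots,i_d)}$, we have $\barU^{(i_3,\ldots,i_d)}(\barV^{(i_3,\ldots,i_d)})^{\tc} = \sum_{i=1}^{r_{i_3,\ldots,i_d}}\u_i\v_i^{\tc}$ (the skinny t-SVD, \cref{def:skinnytSVD}), and $\barW^{(i_3,\ldots,i_d)}$ has column space orthogonal to $\mathrm{span}(\u_i)$ and row space orthogonal to $\mathrm{span}(\v_i)$. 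Therefore $-\overbar{\nabla f(\mX^*)}^{(i_3,\ldots,i_d)} = \lambda\sum_i \u_i\v_i^{\tc} + \lambda\barW^{(i_3,\ldots,i_d)}$ is precisely an SVD-compatible decomposition: the first term contributes $r_{i_3,\ldots,i_d}$ singular values all equal to $\lambda$ with singular vectors $(\u_i,\v_i)$, and the orthogonal tail $\lambda\barW^{(i_3,\ldots,i_d)}$ has spectral norm at most $\lambda$. Consequently $\sigma_1(-\overbar{\nabla f(\mX^*)}^{(i_3,\ldots,i_d)}) = \lambda$ whenever $r_{i_3,\ldots,i_d}\ge1$, and $\lbrace(-\u_i,\v_i)\rbrace_{i=1}^{r_{i_3,\ldots,i_d}}$ is a set of top singular-vector pairs of $\overbar{\nabla f(\mX^*)}^{(i_3,\ldots,i_d)}$ (the sign flip accounting for $-\nabla f$ vs $\nabla f$, giving the stated pairs $\lbrace-\u_i,\v_i\rbrace$).

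For the last assertion — that the top singular value agrees across all nonzero slices — I would note that $\lambda$ is a single scalar independent of $(i_3,\ldots,i_d)$, so $\sigma_1(\overbar{\nabla f(\mX^*)}^{(i_3,\ldots,i_d)}) = \lambda$ for every slice with $r_{i_3,\ldots,i_d}\ge1$; for slices where $\mX^*$ vanishes but $\nabla f(\mX^*)$ does not, the decomposition reads $-\overbar{\nabla f(\mX^*)}^{(i_3,\ldots,i_d)} = \lambda\barW^{(i_3,\ldots,i_d)}$ with $\Vert\barW^{(i_3,\ldots,i_d)}\Vert_2\le1$, so $\sigma_1 \le \lambda$ — and I would then invoke the definition $\sigma_1(\overbar{\nabla f(\mX^*)}) := \Vert\overbar{\nabla f(\mX^*)}\Vert_2 = \max_{i_3,\ldots,i_d}\sigma_1(\overbar{\nabla f(\mX^*)}^{(i_3,\ldots,i_d)})$ together with the fact that the maximum $\lambda$ is attained (on any slice where $\mX^*$ has full local tubal rank) to conclude all nonzero slices share the common value $\lambda = \sigma_1(\overbar{\nabla f(\mX^*)})$. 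The main obstacle I anticipate is purely bookkeeping: carefully justifying that the subgradient decomposition can be taken with $\mW$ block-diagonalizing compatibly with $\mU,\mV$ and that the spectral-norm and orthogonality constraints survive the Fourier transform cleanly for arbitrary order $d$ (rather than just $d=3$); this requires leaning on \cref{lemma:tproduct_bdiagMult} and the skinny t-SVD structure from \cref{def:skinnytSVD}, but no genuinely hard estimate is involved.
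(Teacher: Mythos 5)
Your proposal follows the paper's own argument step for step: invoke first-order optimality, write the normal cone as the conic hull of the subdifferential set from Lemma~\ref{lemma:subdifferentialSet} to get $-\nabla f(\mX^*)=\lambda(\mU*\mV^{\top}+\mW)$, pass to the Fourier domain via Lemma~\ref{lemma:tproduct_bdiagMult} to obtain the block-wise identity, and read off the top singular vectors and value from the orthogonality of $\barU\barV^{\tc}$ and $\barW$ together with $\Vert\barW\Vert_2\le1$. The only wrinkle is the corner case you yourself raise — slices where ${\overbar{\X^*}}^{(i_3,\ldots,i_d)}=\mathbf{0}$ but $\overbar{\nabla f(\mX^*)}^{(i_3,\ldots,i_d)}\neq\mathbf{0}$ — where your derivation (correctly) yields only $\sigma_1\le\lambda$ rather than equality; the paper's proof leaves this same gap untreated, so you are at least as careful, and no additional idea beyond the paper's is missing from your plan.
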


The following lemma presents an easily computable equivalent condition for strict complementarity w.r.t. the unit TNN ball. The proof also establishes that \eqref{strictComplementarityDef} holds if and only if the complementarity measure \eqref{complementarityMeasure} satisfies that $\delta>0$. The proof is given in \ref{sec:App:proofLemma14}.

\begin{lemma} \label{lemma:SCequivalence}
Let $\mX^*\in\reals^{n_1\times\cdots\times n_d}$ be an optimal solution to Problem \eqref{mainModel} for which $\rank_{\textnormal{a}}(\mX^*)=r
<\min\lbrace n_1,n_2\rbrace$. $\mX^*$ satisfies the strict complementarity condition with some $\delta>0$ if and only if
\begin{align*}
\delta=\sigma_1(\overbar{\nabla{}f(\mX^*)}) - \sigma_{rN+1}(\overbar{\nabla{}f(\mX^*)}) >0.
\end{align*} 
\end{lemma}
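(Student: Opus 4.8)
The plan is to work in the Fourier domain and reduce strict complementarity for the tensor problem to a spectral-gap condition on the block diagonal matrix $\overbar{\nabla f(\mX^*)}$. First I would use \cref{lemma:subdifferentialSet} to rewrite the normal cone $\mathcal{N}_{\{\|\mX\|_*\le 1\}}(\mX^*)$ as the conic hull of $\partial\|\mX^*\|_*$, so that condition \eqref{strictComplementarityDef} becomes the statement that $-\nabla f(\mX^*)$ lies in the relative interior of the cone generated by $\{\mU*\mV^\top + \mW : \mU^\top*\mW = \mathbf{0},\ \mW*\mV=\mathbf{0},\ \|\mW\|_2\le 1\}$. By the standard duality between the relative-interior membership and strict positivity of the support-function-type minimum, this is equivalent to $\delta > 0$ where $\delta$ is exactly the quantity in \eqref{complementarityMeasure}; this equivalence is what the lemma claims the proof also establishes, and it follows from a routine convex-analysis argument (the linear functional $\langle \mZ - \mX^*, \nabla f(\mX^*)\rangle$ is nonnegative on the feasible polytope by first-order optimality, and attains value zero in the relative interior iff the optimum of \eqref{complementarityMeasure} is zero).

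Next I would translate \eqref{complementarityMeasure} to the Fourier/block-diagonal picture. Using \cref{lemma:orderPinnerProductEquivalence}(i) and \eqref{eq:nuclearNormEquivalence}, the constraints $\|\mZ\|_*\le 1$, $\mU^\top*\mZ = \mathbf{0}$, $\mZ*\mV = \mathbf{0}$ become (via \cref{lemma:tproduct_bdiagMult}) the matrix constraints $\frac{1}{N}\|\barZ\|_* \le 1$ and $\barU^{\tc}\barZ = \mathbf{0}$, $\barZ\barV = \mathbf{0}$ on the block diagonal matrix $\barZ = \bdiag(\overbar{\mZ})$, while the objective becomes $\frac{1}{N}\langle \barZ - \barX^*, \overbar{\nabla f(\mX^*)}\rangle$. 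Here I must be careful that $\barZ$ ranges over block diagonal matrices satisfying the conjugate-complex symmetry condition \eqref{eq:conjugateComplexSymmetryCondition} (so that $\mZ$ is real); I would argue, as in the matrix case \cite{garberNuclearNormMatrices}, that the optimal $\barZ$ can be taken block diagonal and that the symmetry constraint does not change the optimal value — the relevant spectral quantities of $\overbar{\nabla f(\mX^*)}$ are preserved under the conjugation pairing $i_j \leftrightarrow i_j'$ by \cref{lemma:svd_opt_grad_d_dim}. On the block diagonal matrix, the objective decouples across the $N$ frontal slices.

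The heart of the argument is then a per-slice computation combined with \cref{lemma:svd_opt_grad_d_dim}. By that lemma, for each nonzero slice the singular vectors of $\overbar{\X^*}^{(i_3,\ldots,i_d)}$ are precisely a set of top singular-vector pairs (up to sign) of $\overbar{\nabla f(\mX^*)}^{(i_3,\ldots,i_d)}$, and the top singular value $\sigma_1(\overbar{\nabla f(\mX^*)})$ is common to all nonzero slices; the constraints $\barU^{\tc}\barZ = \barZ\barV = \mathbf{0}$ force each block of $\barZ$ to act only on the orthogonal complement of the top singular subspace. A direct optimization — minimize $\langle \barZ, \overbar{\nabla f(\mX^*)}\rangle$ over matrices supported off the top singular subspace with total nuclear norm budget $N$, minus the fixed term $\langle \barX^*, \overbar{\nabla f(\mX^*)}\rangle = N\sigma_1(\overbar{\nabla f(\mX^*)})\rank_{\textnormal{a}}(\mX^*) = N r \sigma_1$ — shows the optimal $\barZ$ puts its entire nuclear-norm budget $N$ on the single largest available singular direction across all slices, whose singular value is $\sigma_{rN+1}(\overbar{\nabla f(\mX^*)})$ (since the top singular subspace, summed over slices, has total dimension $rN$, each of the $N$ slices contributing dimension $r$ to the $\sigma_1$-eigenspace structure — here I use $\rank_{\textnormal{a}}(\mX^*) = r$ so $\sum_{i_3,\ldots,i_d}\rank(\overbar{\X^*}^{(i_3,\ldots,i_d)}) = rN$, and \cref{lemma:svd_opt_grad_d_dim} to identify these exactly with the top singular vectors of the gradient). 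This yields $\delta = \frac{1}{N}\big(N\cdot(-\sigma_{rN+1}(\overbar{\nabla f(\mX^*)})) + Nr\sigma_1 - \langle\barX^*,\overbar{\nabla f(\mX^*)}\rangle\big)$, wait — more carefully, $\langle\barZ-\barX^*,\overbar{\nabla f(\mX^*)}\rangle$ with the optimal $\barZ$ having $\langle\barZ,\overbar{\nabla f(\mX^*)}\rangle = -N\sigma_{rN+1}$ would need reconciling with the fixed term; the bookkeeping should produce exactly $\delta = \sigma_1(\overbar{\nabla f(\mX^*)}) - \sigma_{rN+1}(\overbar{\nabla f(\mX^*)})$.

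The main obstacle I anticipate is the careful accounting of the conjugate-complex symmetry constraint: one must verify that restricting $\barZ$ to satisfy \eqref{eq:conjugateComplexSymmetryCondition} (equivalently, that $\mZ$ is real) does not strictly increase the minimum in \eqref{complementarityMeasure} above $\sigma_1 - \sigma_{rN+1}$. This requires showing that an optimal complex $\barZ$ can be symmetrized — averaging $\barZ$ with its conjugate-paired reflection — without changing the objective value (which holds because $\overbar{\nabla f(\mX^*)}$ itself satisfies the symmetry, being the Fourier transform of a real tensor) and without violating the norm and orthogonality constraints (which are preserved under this averaging since they are convex and symmetric). The second, more technical point is handling slices where $\overbar{\X^*}^{(i_3,\ldots,i_d)}$ has rank strictly less than $r$: there the top-singular-value structure of the gradient still holds by \cref{lemma:svd_opt_grad_d_dim}, but one must track that the "forbidden" top subspace in such a slice has dimension equal to $\#\sigma_1(\overbar{\nabla f(\mX^*)}^{(i_3,\ldots,i_d)})$ rather than $r$ — this is exactly why the relevant threshold is $\sigma_{rN+1}$ of the \emph{global} block diagonal matrix rather than a per-slice quantity, and the averaging over the average rank $r$ is what makes the count come out to $rN$.
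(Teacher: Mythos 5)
Your approach is essentially the same as the paper's: pass to the Fourier domain, rewrite the normal cone via \cref{lemma:subdifferentialSet}, use \cref{lemma:svd_opt_grad_d_dim} to align the singular vectors of $\overbar{\X^*}=\bdiag(\overbar{\mX^*})$ with the top singular subspace of $\overbar{\nabla{}f(\mX^*)}$, and compute the minimum in \eqref{complementarityMeasure} explicitly to identify $\delta$ with the spectral gap (the paper's version of your ``standard duality'' step is the explicit calculation that $\overbar{\nabla{}f(\mX^*)} = -\lambda(\barU\barV^{\tc}+\barW)$ with $\Vert\barW\Vert_2 = 1 - \delta/\lambda$, so that $\Vert\barW\Vert_2<1\iff\delta>0$). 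The one bookkeeping slip, which you flagged yourself, is that the fixed term is $-\langle\overbar{\X^*},\overbar{\nabla{}f(\mX^*)}\rangle = N\sigma_1(\overbar{\nabla{}f(\mX^*)})$, not $Nr\sigma_1$: while the gradient has $rN$ top singular values all equal to $\sigma_1$, the corresponding $rN$ singular values of $\overbar{\X^*}$ sum to $\Vert\overbar{\X^*}\Vert_* = N\Vert\mX^*\Vert_* = N$, not $Nr$; with that fix and with $\min\langle\barZ,\overbar{\nabla{}f(\mX^*)}\rangle = -N\sigma_{rN+1}(\overbar{\nabla{}f(\mX^*)})$ over the constrained set, the identity $\delta = \sigma_1 - \sigma_{rN+1}$ comes out directly. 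Your symmetrization argument for the conjugate-complex constraint is correct, and in fact addresses a point the paper's own proof leaves implicit.
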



The following two lemmas motivate the strict complementarity assumption. The first lemma shows that strict complementarity is necessary and sufficient for a certain notion of robustness of the average rank of optimal solutions to arbitrarily small perturbations in the radius of the TNN ball, to hold. That is, without strict complementarity, the low-rankness of optimal solutions is not robust, under the projected gradient mapping, to the slightest misspecification in the TNN radius of the convex relaxation \eqref{mainModel}. This lemma is analogues to Lemma 3 in \cite{garberNuclearNormMatrices} which considered optimization over the matrix nuclear norm ball. The proof is given in \cref{sec:App:proofLemma17}.

\begin{lemma} \label{lemma:motivateSCrobustness}
Let $\mX^*\in\reals^{n_1\times\cdots\times n_d}$ be an optimal solution to Problem \eqref{mainModel} such that $\nabla{}f(\mX^*)\not=0$ and let $\varepsilon\ge0$. Then, for any step-size $\eta>0$, it holds that
$$\rank_{\textnormal{a}}(\Pi_{\lbrace\Vert\mY\Vert_*\le 1+\varepsilon\rbrace}[\mX^*-\eta\nabla{}f(\mX^*)])>r$$
if and only if $\varepsilon>\eta\left(\sigma_1(\overbar{\nabla{}f(\mX^*)})-\sigma_{rN+1}(\overbar{\nabla{}f(\mX^*)})\right)$.
\end{lemma}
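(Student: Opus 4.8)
The plan is to reduce the statement about the tensor projection to a statement about projecting the block-diagonal matrix $\barM := \bdiag(\overbar{\mX^* - \eta\nabla f(\mX^*)})$ onto a matrix nuclear norm ball, and then analyze the rank of that matrix projection in terms of the singular values of $\overbar{\nabla f(\mX^*)}$. First I would use the fact (from \cref{lemma:projectionOntoTNN} and \eqref{eq:averageRankWithBar}--\eqref{eq:nuclearNormEquivalence}, together with $\Vert\mY\Vert_* = \frac{1}{N}\Vert\barY\Vert_*$ and $\langle\mX,\mY\rangle = \frac1N\langle\barX,\barY\rangle$) that
\[
\rank_{\textnormal{a}}\bigl(\Pi_{\lbrace\Vert\mY\Vert_*\le 1+\varepsilon\rbrace}[\mX^*-\eta\nabla f(\mX^*)]\bigr) = \frac1N \rank\bigl(\Pi_{\lbrace\Vert\M\Vert_*\le (1+\varepsilon)N\rbrace}[\barM]\bigr),
\]
so that the claimed strict inequality $\rank_{\textnormal{a}}(\cdot) > r$ is equivalent to $\rank(\Pi_{\lbrace\Vert\M\Vert_*\le(1+\varepsilon)N\rbrace}[\barM]) > rN$.

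Next I would bring in \cref{lemma:svd_opt_grad_d_dim}: since $\mX^*$ is optimal with $\rank_{\textnormal{a}}(\mX^*)=r$, i.e. $\rank(\barX^*) = rN$, that lemma tells us the singular vectors of $\barM = \barX^* - \eta\,\overbar{\nabla f(\mX^*)}$ align with those of $\overbar{\nabla f(\mX^*)}$ in a controlled way: on the top-$rN$ singular subspace of $\barX^*$, the gradient acts with its \emph{largest} singular value $\sigma_1(\overbar{\nabla f(\mX^*)})$ (with a sign flip so the contributions add), and on the orthogonal complement the gradient's singular values are at most $\sigma_{rN+1}(\overbar{\nabla f(\mX^*)})$. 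Consequently the singular values of $\barM$ split cleanly: the top $rN$ of them equal $\sigma_i(\barX^*) + \eta\sigma_1(\overbar{\nabla f(\mX^*)})$ for $i=1,\dots,rN$, and the remaining ones equal $\eta\sigma_{j}(\overbar{\nabla f(\mX^*)})$ for $j \ge rN+1$, hence are all $\le \eta\sigma_{rN+1}(\overbar{\nabla f(\mX^*)})$. (This is exactly the structure already exploited in the proof of \cref{lemma:SCequivalence}, so I would cite/reuse it rather than rederive it.)

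Then I would compute the soft-thresholding level $\sigma = \sigma(\varepsilon)$ defining the projection $\Pi_{\lbrace\Vert\M\Vert_*\le(1+\varepsilon)N\rbrace}[\barM]$, i.e. the value with $\sum_i \max\{0,\sigma_i(\barM) - \sigma\} = (1+\varepsilon)N$. At $\varepsilon = 0$ the constraint $\Vert\barX^*\Vert_* = N$ (since $\mX^*$ is on the unit TNN ball — I should first note that strict complementarity together with $\nabla f(\mX^*)\neq 0$ forces $\mX^*$ to lie on the boundary, which follows from first-order optimality) pins down $\sigma(0)$, and the key computation is to show that the projection keeps exactly the top $rN$ singular directions iff $\sigma(\varepsilon)$ lies strictly between $\eta\sigma_{rN+1}(\overbar{\nabla f(\mX^*)})$ and $\sigma_{rN}(\barX^*) + \eta\sigma_1(\overbar{\nabla f(\mX^*)})$; a monotonicity/continuity argument in $\varepsilon$ then shows the rank exceeds $rN$ precisely when the increased budget $(1+\varepsilon)N$ forces $\sigma(\varepsilon) \le \eta\sigma_{rN+1}(\overbar{\nabla f(\mX^*)})$, and a short linear bookkeeping with the $rN$ surviving singular values (each getting its share of the extra $\varepsilon N$ of budget, plus any slack) yields the threshold $\varepsilon > \eta(\sigma_1(\overbar{\nabla f(\mX^*)}) - \sigma_{rN+1}(\overbar{\nabla f(\mX^*)}))$. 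I expect the main obstacle to be this last bookkeeping step: carefully tracking how the nuclear-norm budget $(1+\varepsilon)N$ is distributed among the surviving singular values versus how much is "wasted" relative to the $(rN+1)$-th singular value, and handling potential ties at the $rN$/$rN+1$ boundary and the degenerate case where fewer than $rN$ nonzero singular values remain — these edge cases are where the strict-vs-nonstrict inequality in the statement is decided, and the argument from the matrix case (Lemma 3 of \cite{garberNuclearNormMatrices}) should transfer once the block-diagonal reduction is in place.
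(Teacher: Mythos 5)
Your proposal takes essentially the same route as the paper: reduce to a block-diagonal matrix projection, use Lemma~\ref{lemma:svd_opt_grad_d_dim} to write the singular values of $\barP^* = \overbar{\X^*}-\eta\overbar{\nabla f(\mX^*)}$ as $\sigma_i(\overbar{\X^*})+\eta\sigma_1(\overbar{\nabla f(\mX^*)})$ for $i\le rN$ and $\eta\sigma_i(\overbar{\nabla f(\mX^*)})$ for $i>rN$, note that $\Vert\mX^*\Vert_*=1$ from $\nabla f(\mX^*)\neq 0$ and first-order optimality, and track the soft-thresholding level $\sigma(\varepsilon)$ against $\eta\sigma_{rN+1}(\overbar{\nabla f(\mX^*)})$. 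Two small notes: strict complementarity is not a hypothesis here and not needed (first-order optimality plus $\nabla f(\mX^*)\neq 0$ already pins $\mX^*$ to the boundary, which you also say); and the rank exceeds $rN$ precisely when $\sigma(\varepsilon)$ is \emph{strictly} less than $\eta\sigma_{rN+1}(\overbar{\nabla f(\mX^*)})$, not $\le$ — a boundary case you correctly flag as needing care, and which the paper handles via the dichotomy $\sigma\ge\sigma_{rN+1}(\barP^*)$ (rank $\le r$) versus $\sigma<\sigma_{rN+1}(\barP^*)$ (rank $>r$).
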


The next lemma establishes that for the family of functions $f(\mX)=g(\mX)+\langle\mC,\mX\rangle$, for almost any tensor $\mC$, strict complementarity holds. This result is analogues to Lemma 8 in \cite{SpectralFrankWolfe} (which was in turn inspired by \cite{nondegeneracy}), where the authors proved a similar result for optimization over the spectrahedron. The proof is given in \cref{sec:App:proofLemma18}.

\begin{lemma} \label{lemma:motivateSCalmostAll}
Assume $f(\mX) = g(\mX) + \langle\mC,\mX\rangle$. Then, for almost all $\mC\in\reals^{n_1\times\cdots\times n_d}$, Problem \eqref{mainModel} admits a unique minimizer which furthermore satisfies strict complementarity.
\end{lemma}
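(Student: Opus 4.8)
The plan is to reduce the statement about tensors to the corresponding known statement about matrices via the Fourier/block-diagonal correspondence, and then invoke a genericity argument for the matrix nuclear norm ball in the style of Lemma 8 in \cite{SpectralFrankWolfe}. Recall from \eqref{eq:averageRankWithBar}, \eqref{eq:nuclearNormEquivalence}, and \cref{lemma:orderPinnerProductEquivalence} that the map $\mX\mapsto\barX=\bdiag(\fft(\mX))$ is, up to the fixed scaling $1/N$ on inner products and norms, a linear isometry from $\reals^{n_1\times\cdots\times n_d}$ onto the subspace $\mathcal{H}\subseteq\complex^{n_1N\times n_2N}$ of block-diagonal matrices whose blocks satisfy the conjugate-complex symmetry condition \eqref{eq:conjugateComplexSymmetryCondition} of \cref{lemma:conjegateComplexSymmetry_realTensors}, and that $\|\mX\|_*=\frac1N\|\barX\|_*$. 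Hence Problem \eqref{mainModel} with $f(\mX)=g(\mX)+\langle\mC,\mX\rangle$ is equivalent to a convex minimization over a (scaled) nuclear-norm ball intersected with the subspace $\mathcal{H}$; the term $\langle\mC,\mX\rangle$ becomes $\frac1N\langle\barC,\barX\rangle$ where $\barC$ ranges over $\mathcal{H}$ as $\mC$ ranges over $\reals^{n_1\times\cdots\times n_d}$, and "almost all $\mC$" corresponds to "almost all $\barC\in\mathcal{H}$" since the correspondence is a linear bijection between real vector spaces of the same dimension.

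First I would reduce to uniqueness. Fix any strongly convex $g$ (if $g$ is only convex we can still argue, but the cleanest route is to note $f$ is convex and the perturbation $\langle\mC,\mX\rangle$ is linear): the set of $\mC$ for which Problem \eqref{mainModel} has a non-unique minimizer is contained in the set where two distinct extreme points of the feasible ball, or two distinct faces, achieve the same objective value, which is a finite union of proper affine subspaces in $\mC$ and hence Lebesgue-null; alternatively, invoke the standard fact (e.g.\ \cite{nondegeneracy}) that the subdifferential-based argument below simultaneously yields uniqueness for generic $\mC$. Second, for the strict complementarity claim I would work with the equivalent condition from \cref{lemma:SCequivalence}: with $r=\rank_{\textnormal a}(\mX^*)$ we need $\sigma_1(\overbar{\nabla f(\mX^*)})>\sigma_{rN+1}(\overbar{\nabla f(\mX^*)})$, i.e.\ a spectral gap of $\barG:=\bdiag(\fft(\nabla f(\mX^*)))=-\,(\text{normalized dual certificate})$ at position $rN$. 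The complement of strict complementarity is the set of $\mC$ for which the certificate $\barG$ — which by \cref{lemma:svd_opt_grad_d_dim} and the first-order optimality condition is pinned down by $\mX^*$ and $\mC$ — has $\sigma_1(\barG)=\sigma_{rN+1}(\barG)$, meaning the active face of the ball determined by $\mX^*$ is "larger than it needs to be." I would show this degenerate set has measure zero by a dimension-counting / Sard-type argument: parametrize optimal solutions by their rank $r$, their t-SVD factors, and the multiplier structure, and observe that requiring the extra singular-value coincidence $\sigma_{rN+1}(\barG)=\sigma_1(\barG)$ imposes a nontrivial polynomial (hence positive-codimension) constraint on $\barC$, exactly as in the proof of Lemma 8 in \cite{SpectralFrankWolfe} for the spectrahedron.

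The main obstacle I anticipate is handling the subspace constraint $\mathcal{H}$ correctly: the matrix genericity results in \cite{SpectralFrankWolfe,nondegeneracy} are stated for unconstrained matrix optimization, whereas here $\barX$ is forced to lie in the symmetry subspace $\mathcal{H}$ and $\barC$ is likewise constrained, so I cannot simply cite the matrix lemma as a black box. The fix is to carry out the dimension count intrinsically in $\mathcal{H}$: one checks that the conjugate-complex symmetry pairs up the blocks $\barX^{(i_3,\ldots,i_d)}$ with $\barX^{(i'_3,\ldots,i'_d)}$ (notation of \eqref{def:i_j_tag}), so the real dimension of $\mathcal{H}$ is $2n_1n_2\cdot(\text{number of conjugate pairs})+n_1n_2\cdot(\text{number of self-conjugate slices})$, matching $\dim_{\reals}\reals^{n_1\times\cdots\times n_d}=n_1n_2N$, and that the "bad" event still cuts out a positive-codimension real-algebraic subset of $\mathcal{H}$ because the constraint is non-vacuous there (exhibit one $\mC$ with a strict gap, e.g.\ any $\mC$ making $\mX^*$ a rank-$1$ face with a simple top singular value of the certificate). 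Apart from this bookkeeping, the remaining steps — writing the first-order optimality condition, extracting the dual certificate via \cref{lemma:subdifferentialSet}, and invoking that a nonzero real-analytic function vanishes on a null set — are routine. A fully rigorous treatment would spell out the stratification of optimal solutions by tubal/average rank and apply the argument on each stratum, but the measure-zero conclusion follows since there are finitely many strata.
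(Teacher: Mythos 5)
Your proposal takes a genuinely different and substantially more laborious route than the paper, and it also contains a real gap.

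The paper's proof is essentially a one-liner: it defines $\psi_{\mC}(\mX)=g(\mX)+\langle\mC,\mX\rangle+\chi_{\lbrace\Vert\mX\Vert_*\le1\rbrace}(\mX)$ and invokes Corollary 3.5 of \cite{nondegeneracy}, which is a \emph{general} convex-analysis result asserting that for almost every linear perturbation $\mC$, a convex function plus the indicator of a compact convex set has a unique minimizer $\mX^*$ at which $\mathbf{0}\in\ri\bigl(\partial\psi_{\mC}(\mX^*)\bigr)$. Unwinding the relative interior via the subdifferential sum rule gives exactly the definition \eqref{strictComplementarityDef}. No Fourier transform, no block-diagonalization, no reduction to matrices: strict complementarity as defined in \eqref{strictComplementarityDef} is an intrinsic condition on the normal cone of the TNN ball sitting inside the real Euclidean space $\reals^{n_1\times\cdots\times n_d}$, and the cited theorem applies there directly. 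Your entire detour through $\barX=\bdiag(\fft(\mX))$, the symmetry subspace $\mathcal{H}$, and the intrinsic dimension count in $\mathcal{H}$ is unnecessary precisely because the result you need from \cite{nondegeneracy} is not a matrix-specific fact about the spectrahedron (you appear to conflate it with Lemma~8 of \cite{SpectralFrankWolfe}) but a statement valid over any compact convex constraint in any finite-dimensional space.

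Beyond being a roundabout route, your sketch has a concrete gap in the uniqueness step. You claim that the set of $\mC$ yielding a non-unique minimizer is ``contained in the set where two distinct extreme points of the feasible ball, or two distinct faces, achieve the same objective value, which is a finite union of proper affine subspaces in $\mC$.'' This is false: the unit TNN ball is not a polytope, it has a continuum of extreme points and exposed faces, so this set is not a finite union of affine subspaces and the ``hence Lebesgue-null'' conclusion does not follow by that reasoning. The correct way to obtain generic uniqueness is precisely the theorem the paper cites (or one of its standard relatives), which you mention only parenthetically as an ``alternative'' without realizing it also handles the strict-complementarity conclusion and makes the dimension-counting / Sard-type stratification argument unnecessary. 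If you insisted on completing your route, you would also need to actually carry out the positive-codimension argument on each rank stratum inside $\mathcal{H}$ rather than gesture at it; as written this is a plan, not a proof.
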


\subsection{Quadratic growth}
\label{sec:QGtheorem}

It is well known that the quadratic growth property is sufficient in many cases to achieve linear convergence rates for first-order methods  \cite{drusvyatskiy2018error, nesterov_LCwithQG}. 
In recent years there have been several works that showed that in a variety of settings strict complementarity implies quadratic growth, e.g., \cite{zhou2017unified, garberRankOne, SpectralFrankWolfe, ding2020kfw}. In this section we prove that under strict complementarity such a result can also be obtained for our Problem \eqref{mainModel}. Our proof is based on the ideas in \cite{ding2020kfw}, where a similar result was obtained for the matrix nuclear norm ball, by using a  dilation argument to rephrase the problem as optimization over a certain spectrahedron for which a quadratic growth result has been already established in \cite{SpectralFrankWolfe}. Our tensor setting however  is  substantially more involved as it requires dealing with complex matrices with the additional conjugate-complex symmetry conditions, as defined in \eqref{eq:conjugateComplexSymmetryCondition}, and so, the reduction to the real-valued spectrahedron setting (as in \cite{SpectralFrankWolfe}) is more challenging. 

Here we only give a sketch of the main ideas of the proof, and the full proof is given in \cref{sec:App:proofTheoremQG}.

\begin{theorem}[quadratic growth] \label{thm:QG}
Let $f(\mX)=g(\mA(\mX))+\langle\mC,\mX\rangle$, where $g$ is $\alpha$-strongly convex and $\mA:\reals^{n_1\times\cdots\times n_d}\rightarrow\reals^m$ is a linear map. Assume there exist a unique optimal solution $\mX^*\in\reals^{n_1\times\cdots\times n_d}$ to Problem \eqref{mainModel}, and that it satisfies strict complementarity. Then, there exists a constant $\gamma>0$ such that for every $\mX\in\lbrace\mX\in\reals^{n_1\times\cdots\times n_d}\ \vert\ \Vert\mX\Vert_*\le1\rbrace$, it holds that
\begin{align*}
f(\mX)-f(\mX^*) \ge \gamma\Vert\mX-\mX^*\Vert_F^2.
\end{align*}
\end{theorem}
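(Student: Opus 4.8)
\emph{Proof plan.} The plan is to reduce Problem \eqref{mainModel} to a quadratic-growth statement for a strongly-convex-composed-with-linear objective over a spectrahedron, where such a bound is already available (this is the matrix strategy of \cite{ding2020kfw}, which in turn invokes the spectrahedron quadratic-growth theorem of \cite{SpectralFrankWolfe}), and then to transport the resulting inequality back through the reduction. First I would pass to the Fourier domain. By \cref{lemma:bdiag_bcirc_equality} the matrices $\bcirc(\mX)$ and $\barX=\bdiag(\fft(\mX))$ are unitarily equivalent (the Kronecker factors $\tfrac{1}{\sqrt{N}}\F_{n_d}\otimes\cdots\otimes\F_{n_3}$ are unitary), so by \cref{lemma:orderPinnerProductEquivalence} and \eqref{eq:nuclearNormEquivalence} the constraint $\Vert\mX\Vert_*\le1$ is equivalent to $\Vert\barX\Vert_*\le N$, and $\Vert\mX-\mX^*\Vert_F^2=\tfrac1N\Vert\barX-\barX^*\Vert_F^2$. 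Moreover $f(\mX)=g(\mA(\mX))+\tfrac1N\langle\barC,\barX\rangle$, and $\mA(\mX)$ is a linear function of $\barX$. Since by \cref{lemma:conjegateComplexSymmetry_realTensors} the real tensors are exactly those whose block-diagonal Fourier matrix lies in the subspace cut out by \eqref{eq:conjugateComplexSymmetryCondition}, and such a conjugate-complex-symmetric block-diagonal matrix is unitarily equivalent (via the standard embedding of conjugate-paired complex blocks into $2\times2$ real blocks) to a real matrix, Problem \eqref{mainModel} becomes, up to fixed scalars, an instance of $\min\{g(\tilde\mA(\M))+\langle\tilde\C,\M\rangle : \M\in\mathcal{C},\ \Vert\M\Vert_*\le N\}$ over a subspace $\mathcal{C}$ of real matrices, with $g$ still $\alpha$-strongly convex and $\tilde\mA$ linear; the optimum is the image of $\mX^*$ and is unique. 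A quadratic-growth bound for this matrix problem at that optimum would immediately give the theorem.

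Next I would apply the dilation argument. The map $\M\mapsto\mathcal{D}(\M):=\left(\begin{smallmatrix}\mathbf{0}&\M\\\M^{\top}&\mathbf{0}\end{smallmatrix}\right)$, together with free diagonal blocks, embeds $\{\Vert\M\Vert_*\le N\}$ into the real spectrahedron $\{\Z\succeq\mathbf{0}:\trace(\Z)\le2N\}$ with a lifted objective that depends only on the off-diagonal block $\M$ and hence is again of the form (strongly convex) $\circ$ (linear) $+$ linear; it sends the singular values of $\M$ to the $\pm$ eigenvalues of $\mathcal{D}(\M)$, so the rank condition $\rank(\barX^*)=rN$ (see \eqref{eq:averageRankWithBar}) becomes an eigenvalue-multiplicity condition on the lifted optimum. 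Crucially, \cref{lemma:SCequivalence} expresses strict complementarity for \eqref{mainModel} as the spectral gap $\sigma_1(\overbar{\nabla{}f(\mX^*)})-\sigma_{rN+1}(\overbar{\nabla{}f(\mX^*)})=\delta>0$, which is precisely the eigenvalue-gap form of strict complementarity required by the spectrahedron quadratic-growth theorem at the lifted optimum; together with uniqueness of $\mX^*$ this supplies all of the hypotheses of \cite{SpectralFrankWolfe}. Invoking that theorem and then undoing the dilation, the realification, and the Fourier transform, while tracking the scalars $1/N$ and $2N$, produces the constant $\gamma>0$ and the claimed bound $f(\mX)-f(\mX^*)\ge\gamma\Vert\mX-\mX^*\Vert_F^2$.

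The hard part will be the bookkeeping around the complex/Hermitian structure: one must verify that after realifying the conjugate-complex-symmetric block-diagonal matrices and then dilating, the linear map $\tilde\mA$ is well defined on the realified space, the strong-convexity modulus of $g$ is preserved up to an explicit constant, and the optimal solution remains unique; and one must check that the strict-complementarity measure $\delta$ of \cref{lemma:SCequivalence} maps to a strictly positive eigenvalue gap for the dilated problem even though the frontal slices of $\overbar{\nabla{}f(\mX^*)}$ may have different ranks. A secondary point is that the lifted feasible set is only a subset (determined by $\mathcal{C}$ and the off-diagonal constraint) of the full spectrahedron; since a quadratic-growth bound established on the larger set restricts to any convex subset containing the minimizer, this costs nothing, but one should confirm that strict complementarity holds for the \emph{unrestricted} lifted problem so that this restriction step is legitimate.
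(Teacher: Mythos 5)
Your high-level plan — pass to the Fourier domain, realify the conjugate-complex-symmetric block-diagonal matrices, dilate to a spectrahedron, and mimic \cite{ding2020kfw} — matches the direction the paper takes. But you have already put your finger on the place where the plan does not close, and the resolution you propose for it does not work: you say the restriction of the lifted problem to the subspace $\mathcal{C}$ (the realified conjugate-symmetry and block-diagonality constraints, plus the off-diagonal block structure) "costs nothing" because a quadratic-growth bound on the larger set restricts to any convex subset containing the minimizer. That is true only if the minimizer of the \emph{unrestricted} lifted spectrahedron problem already lies in the subspace and is unique. Neither holds here. The lifted objective $h(\X)=g(\mcB(\X))+\tfrac1{2N}\langle\widetilde{\C}^{\lozenge},\X\rangle$ depends on the diagonal blocks of $\X$ not at all, so over the full spectrahedron the minimizer is badly non-unique (any admissible completion of the off-diagonal block gives the same value); the uniqueness hypothesis of the spectrahedron QG theorem in \cite{SpectralFrankWolfe} therefore fails, and there is no single minimizer whose membership in $\mathcal{C}$ one could check. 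Restricting first and then trying to quote \cite{SpectralFrankWolfe} is not an option either, since that theorem is stated for the unconstrained spectrahedron.

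The paper instead avoids invoking \cite{SpectralFrankWolfe} as a black box. It formulates the lifted problem \emph{with} the linear constraints $\mcQ(\X)=\mathbf{0}$, $\mathcal{E}(\X)=\mathbf{0}$ included, proves that \emph{this} constrained problem has the unique solution ${{\overbar{\X^*}}^{\sharp}}^{\lozenge}$ (Lemma~\ref{lemma:QGequivalenceOfProblems}), and then splits $\Vert\mX-\mX^*\Vert_F^2\le 2\Vert\mW-\mX\Vert_F^2+2\Vert\mW-\mX^*\Vert_F^2$ around an auxiliary matrix $\W$ built from the dilated $\barX$. The first term is controlled by a self-contained spectrahedron inequality (Lemma~\ref{lemma:spectrahedronLemmaQuadraticGrowth}, playing the role of a tailored version of the spectral-gap estimate from \cite{SpectralFrankWolfe}), and the second is controlled by an error bound derived from the \emph{injectivity} of the restricted operator $\mcP_V$ on $\mathbb{S}^{2r}$ — this injectivity is deduced from uniqueness of the constrained lifted problem (Lemma~\ref{lemma:technicalLemma3QG}) and is precisely what replaces your "restriction costs nothing" step. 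Only after these two lemmas are combined does the strong convexity of $g$ enter, turning the resulting linear-plus-quadratic lower bound into quadratic growth. So the gap in your proposal is not a bookkeeping issue; it is the need for the injectivity/error-bound argument on the subspace, without which the chain from the spectrahedron result back to the tensor problem does not go through.
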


\begin{remark}
\label{remark:QGparameter}
Our proof of \cref{thm:QG} establishes the existence of a positive constant for which quadratic growth holds. The constant $\gamma$ in \cref{thm:QG} can be written as
\begin{align*}
\gamma=\min\left\lbrace\frac{\delta}{8 N}
\left(1+\frac{2\sigma_{\max}^2(\mcP)}{\sigma_{\min}^2(\mcP_{V})}\right)^{-1}, \frac{\sigma_{\min}^2(\mcP_{V}) N\alpha}{8}\right\rbrace,
\end{align*}
and is unfortunately not very intuitive to understand. It can be of interest in future work to study this constant and establish whether there exists a simpler and more interpretable bound for $\gamma$. 
The parameter $\delta$ is the strict complementarity measure. The linear operator $\mcP$ upon the vector space $\mathbb{S}^{2(n_1+n_2) N}$ is dependent on the mapping $\mA$ and additional linear operators that ensure that a specific folding of a matrix in $\mathbb{S}^{2(n_1+n_2) N}$ into a tensor will return a real-valued tensor within the TNN ball.
Denote $\overbar{\X^*}=\overbar{\U^*}\overbar{\S^*}{\overbar{\V^*}}^{\tc}$ to be the SVD of $\overbar{\X^*}=\bdiag(\overbar{\mX^*})$ and denote 
$\widetilde{\V}_r:={2}^{-1/2}\left[\begin{array}{cc}\overbar{\U^*}^{\tc} & -\overbar{\V^*}^{\tc} \end{array}\right]^{\tc}$.
Then, we define the mapping $\mcP_{V}$ upon the vector space $\mathbb{S}^{2r}$ such that
\begin{align*}
\mcP_{V}(\S)=\mcP\left(\left[\begin{array}{cc} \real(\widetilde{\V}_r) & \im(\widetilde{\V}_r)
\\ \im(\widetilde{\V}_r) & -\real(\widetilde{\V}_r) \end{array}\right]\S\left[\begin{array}{cc} \real(\widetilde{\V}_r) & \im(\widetilde{\V}_r)
\\ \im(\widetilde{\V}_r) & -\real(\widetilde{\V}_r) \end{array}\right]^{\top}\right).
\end{align*}
Using these notations we denote the constants $\sigma_{\min}(\mcP_{V}) := \min_{\Vert\S\Vert_F=1}\Vert\mcP_{V}(\S)\Vert_2$  and $\sigma_{\max}(\mcP) := \max_{\Vert\S\Vert_F=1}\Vert\mcP(\S)\Vert_2$.%
\end{remark}

\begin{proof}[Proof sketch of \cref{thm:QG}]
We reformulate the problem of minimizing a function of the form $f(\mX)=g(\mA(\mX))+\langle\mC,\mX\rangle$ over the unit TNN ball, to minimization over the intersection of a real spectrahedron of higher dimension and radius $2N$, i.e., the set $\{\X\in\mbS^{2(n_1+n_2)N} ~|~\X\succeq 0, \trace(\X) = 2N\}$ and a certain linear subspace (which forces the constraint that a matrix in $\mbS^{2(n_1+n_2)N}$ could be transformed back into a real-valued tensor in $\reals^{n_1\times\dots\times{}n_d}$). We then establish that if the original problem has a unique solution so does the new problem.


Let $\mX\in\reals^{n_1\times\cdots\times n_d}$. Denoting the optimal solution of the equivalent problem over the constrained spectrahedron by $\X^*$ and by $\mcP$ a linear operator which is dependent on the mapping $\mA$, it can be showed using the uniqueness of $\X^*$ that there exists a matrix $\W$, which is dependent on $\mX$, such that 
\begin{align} \label{ineq:inProof99444}
\Vert\W-\X^*\Vert_F
& \le c\Vert\mcP(\W)-\mcP(\X^*)\Vert_2,
\end{align}
for some constant $c>0$. 
This follows since the uniqueness of $\X^*$ implies that for the set of matrices of similar  structure to $\W$ (the eigenvectors of $\W$ are related to those of $\X^*$ in a certain way), the mapping $\mcP$ is injective, which in turn lower-bounds the norm of this mapping over the set of such matrices, and so, $c$ is strictly positive.

Denote $\mW$ the tensor whose dilation is the matrix $\W$ mentioned above. Thanks to the specific structure of $\W$ (i.e., the fact that its eigenvectors are related to those of $\X^*$),  it can be shown that there exist a constant $\tilde{c}>0$ such that
\begin{align} \label{ineq:inProof993333}
\Vert\mW-\mX\Vert_F^2 \le \frac{\tilde{c}}{\delta}\langle\mX-\mX^*,\nabla{}f(\mX^*)\rangle.
\end{align}

By returning back to the original tensor problem and plugging-in both results  \eqref{ineq:inProof99444} and \eqref{ineq:inProof993333}, we obtain that 
\begin{align} \label{ineq:inProof995555}
\Vert\mX-\mX^*\Vert_F^2 & \le 2\Vert\mW-\mX\Vert_F^2+2\Vert\mW-\mX^*\Vert_F^2 \nonumber
\\ & \le \frac{1}{\gamma}
\left(\langle\nabla{}f(\mX^*),\mX-\mX^*\rangle + \frac{\alpha}{2}\Vert\mA(\mX)-\mA(\mX^*)\Vert_2^2\right) \nonumber
\\ & = \frac{1}{\gamma}
\left(\langle\mA^{\top}\nabla{}g(\mA\mX^*)+\mC,\mX-\mX^*\rangle + \frac{\alpha}{2}\Vert\mA(\mX)-\mA(\mX^*)\Vert_2^2\right),
\end{align}

for some $\gamma>0$ which is dependent on $\delta$, $c$, and $\tilde{c}$. The bound of the second term the RHS of \eqref{ineq:inProof995555} follows from \eqref{ineq:inProof99444} due to the relationship between $\mcP$ and $\mA$ and the connection between the norms of the tensors to the norms of their matrix dilations.

Using the special  structure of the objective function $f$ and standard strong convexity arguments to bound the RHS of \eqref{ineq:inProof995555}, it follows that  
\begin{align*}
f(\mX)-f(\mX^*) = g(\mA(\mX))-g(\mA(\mX^*))+\langle\mC,\mX-\mX^*\rangle \ge \gamma\Vert\mX-\mX^*\Vert_F^2.
\end{align*}

\end{proof}

\subsection{Low tubal rank of the projected gradient mapping near low-tubal rank minimizers}
\label{sec:lowTubalRankProjectionsSmooth}

In this section we prove that under strict complementarity (or even relaxed notions of), there exists a radius around low tubal rank optimal solutions within which the projected gradient mapping always admits low tubal rank. Thanks to this property we will be able to prove in \cref{sec:algorithmicConsequences} that highly popular projected gradient methods, when initialized in the proximity of a low tubal rank optimal solution, require only efficient low-rank (matrix) SVD computations (in order to project onto the unit TNN ball) throughout their run.  More precisely, we present a natural tradeoff that allows to increase the radius of the ball inside-which the projected gradient mapping admits low tubal rank (by considering more relaxed notions of the SC condition) in favor of increasing also the matrix rank of the SVDs required to compute the projection (which naturally increases the runtime required to compute the projection).



Motivation for the plausibility that an optimal solution to Problem \eqref{mainModel} will indeed have low tubal rank is given by \cref{lemma:rankRelationships} which states that the tubal rank is upper-bounded by the CP-rank. Indeed low CP-rank is a standard assumption in many low-rank tensor recovery problems. In addition,  \cite{tensor_tSVD, candes2011robust} established (under suitable assumptions) formal recovery results of low tubal rank tensors from noisy observations based on the TNN for the problems of tensor robust principal component analysis and low rank tensor completion, respectively. They also demonstrated these results empirically. In \cref{sec:experiments} we also bring empirical evidence that the convex relaxation \eqref{mainModel} indeed admits optimal solutions of low tubal rank for such tasks.

Towards obtaining our main result for this section, the following lemma establishes a sufficient and necessary condition on the singular values of the block diagonal matrix of the Fourier transformation of a tensor,  so that its projection onto the TNN ball of radius $\tau$ will have  low tubal rank.
The proof follows from the structure of the projection onto a TNN ball described in \cref{alg:projectionTNN} and is given in \cref{sec:App:proofLemma122}.

\begin{lemma} \label{lemma:iffLowRankfCondiotionOrderP}
Let $\mX\in\reals^{n_1\times\cdots\times n_d}$ and let $\mX=\mU*\mS*\mV^{\top}$ denote its t-SVD. For every $j\in\{1,\dots,\min\{n_1,n_2\}\}$, denote $\sigma_j^{\max}(\barX) = \max_{k_3\in[n_3],\ldots,k_d\in[n_d]}\sigma_{j}(\barX^{(k_3,\ldots,k_d)})$. Also, for every $j\in\{1,\dots,\min\{n_1,n_2\}\}$ and $i_3\in[n_3],\ldots,i_d\in[n_d]$, denote $\#\sigma_{>{}j}^{(i_3,\dots,i_d)}(\mX) =\#\left\lbrace i\ \bigg\vert\ \sigma_i(\barX^{(i_3,\ldots,i_d)})>\sigma_{j}^{\max}(\barX)\right\rbrace\le j-1$. Let $r$ such that $\min\lbrace n_1,n_2\rbrace>r\ge0$.
Then, $\rank_{\textnormal{t}}(\Pi_{\lbrace\Vert\mY\Vert_*\le\tau\rbrace}[\mX])\le r$ if and only if
\begin{align} \label{ineq:conditionOfLowTubalRank}
\frac{1}{N}\sum_{i_d=1}^{n_d}\cdots\sum_{i_3=1}^{n_3}\left(\sum_{i=1}^{\#\sigma_{>{}r+1}^{(i_3,\dots,i_d)}(\mX)}\sigma_{i}(\barX^{(i_3,\ldots,i_d)})-\#\sigma_{>{}r+1}^{(i_3,\dots,i_d)} (\mX)\cdot \sigma_{r+1}^{\max}(\barX)\right)\ge\tau.
\end{align}
\end{lemma}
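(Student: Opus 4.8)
The plan is to reduce the tensor statement entirely to a matrix statement about block-diagonal matrices, using the machinery already established earlier in the excerpt. Recall from \eqref{eq:nuclearNormEquivalence} and \cref{lemma:orderPinnerProductEquivalence} that $\Vert\mX\Vert_* = \frac{1}{N}\Vert\barX\Vert_*$ and that the Euclidean projection of $\mX$ onto the TNN ball of radius $\tau$ is obtained (\cref{lemma:projectionOntoTNN}, \cref{alg:projectionTNN}) by soft-thresholding the singular values of every frontal slice $\barX^{(i_3,\dots,i_d)}$ of $\barX$ by a common threshold $\sigma\ge0$ chosen so that $\frac{1}{N}\sum_{i_3,\dots,i_d}\sum_{i}\max\{0,\sigma_i(\barX^{(i_3,\dots,i_d)})-\sigma\} = \tau$. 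Since the tubal rank of the projection equals $\max_{i_3,\dots,i_d}\rank$ of the corresponding thresholded frontal slice, the condition $\rank_{\textnormal{t}}(\Pi_{\{\Vert\mY\Vert_*\le\tau\}}[\mX])\le r$ is equivalent to saying that after soft-thresholding by $\sigma$, every frontal slice has at most $r$ surviving singular values, i.e. $\sigma\ge\sigma_{r+1}(\barX^{(i_3,\dots,i_d)})$ for all $i_3,\dots,i_d$, which is exactly $\sigma\ge\sigma_{r+1}^{\max}(\barX)$.

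First I would make the "only if" direction precise: if $\rank_{\textnormal{t}}$ of the projection is at most $r$, then the threshold $\sigma$ used in \cref{alg:projectionTNN} must satisfy $\sigma\ge\sigma_{r+1}^{\max}(\barX)$ (otherwise some frontal slice would retain an $(r+1)$-st nonzero singular value). Plugging this into the defining equation $\tau = \frac{1}{N}\sum_{i_3,\dots,i_d}\sum_i\max\{0,\sigma_i(\barX^{(i_3,\dots,i_d)})-\sigma\}$ and using that $\max\{0,x-\sigma\}$ is nonincreasing in $\sigma$, we get
\begin{align*}
\tau \le \frac{1}{N}\sum_{i_d=1}^{n_d}\cdots\sum_{i_3=1}^{n_3}\sum_{i=1}^{\min\{n_1,n_2\}}\max\{0,\sigma_i(\barX^{(i_3,\dots,i_d)})-\sigma_{r+1}^{\max}(\barX)\},
\end{align*}
and the inner sum, by definition of $\#\sigma_{>r+1}^{(i_3,\dots,i_d)}(\mX)$ as the number of singular values of $\barX^{(i_3,\dots,i_d)}$ strictly exceeding $\sigma_{r+1}^{\max}(\barX)$, collapses exactly to $\sum_{i=1}^{\#\sigma_{>r+1}^{(i_3,\dots,i_d)}(\mX)}\sigma_i(\barX^{(i_3,\dots,i_d)}) - \#\sigma_{>r+1}^{(i_3,\dots,i_d)}(\mX)\cdot\sigma_{r+1}^{\max}(\barX)$, which is \eqref{ineq:conditionOfLowTubalRank}. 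One small subtlety to handle carefully is the difference between "$\ge\sigma_{r+1}^{\max}$" and "$>\sigma_{r+1}^{\max}$" when there are ties — hence the paper phrases things via $\#\sigma_{>r+1}$ and the claimed bound $\#\sigma_{>r+1}^{(i_3,\dots,i_d)}(\mX)\le r$ (so that at most $r$ singular values strictly exceed the threshold); I would verify that bound directly from $\sigma_{r+1}^{\max}(\barX)\ge\sigma_{r+1}(\barX^{(i_3,\dots,i_d)})$.

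For the "if" direction I would argue contrapositively, or directly: assume \eqref{ineq:conditionOfLowTubalRank} holds. The right-hand side of the displayed inequality above, evaluated at the threshold value $\sigma_{r+1}^{\max}(\barX)$, is at least $\tau$ by \eqref{ineq:conditionOfLowTubalRank}; since $\sigma\mapsto\frac{1}{N}\sum\sum\max\{0,\sigma_i-\sigma\}$ is continuous, nonincreasing, equals $\Vert\mX\Vert_*\cdot$(something) $\ge\tau$-ish at $0$ and $0$ at large $\sigma$, the threshold $\sigma$ solving the equation $=\tau$ satisfies $\sigma\ge\sigma_{r+1}^{\max}(\barX)$ (here one uses that if $\mX$ is already inside the ball the projection is $\mX$ itself and a separate trivial check applies, or one notes $\tau\le\Vert\mX\Vert_*$ is forced by \eqref{ineq:conditionOfLowTubalRank} when $r+1\le\min\{n_1,n_2\}$). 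Consequently after soft-thresholding by $\sigma$, each frontal slice of $\overbar{\Pi[\mX]}$ has at most $r$ nonzero singular values, so $\rank_{\textnormal{t}}(\Pi_{\{\Vert\mY\Vert_*\le\tau\}}[\mX])\le r$.

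The main obstacle I anticipate is not conceptual but bookkeeping: carefully managing the tie-breaking between strict and non-strict inequalities among singular values (the role of $\sigma_{r+1}^{\max}(\barX)$ versus $\sigma_{r}^{\max}(\barX)$, and which slices attain the max), and handling the degenerate edge cases — when $\mX$ already lies in the TNN ball (so no thresholding occurs and the claimed inequality must still be checked to be consistent), when $\tau = 0$, and when several slices share the critical singular value so that the threshold $\sigma$ equals $\sigma_{r+1}^{\max}(\barX)$ exactly and one must decide whether the borderline singular values survive. These are exactly the points where the soft-thresholding equation in \cref{alg:projectionTNN} must be invoked at the precise value $\sigma=\sigma_{r+1}^{\max}(\barX)$ and monotonicity used in the right direction; everything else is a direct translation through \eqref{eq:nuclearNormEquivalence} and \cref{lemma:projectionOntoTNN}.
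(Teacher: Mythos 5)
Your proposal is correct and takes essentially the same route as the paper's proof: both use that the Euclidean projection soft-thresholds the frontal-slice singular values of $\barX$ at a common scalar $\sigma$, that $\rank_{\textnormal{t}}(\Pi_{\lbrace\Vert\mY\Vert_*\le\tau\rbrace}[\mX])\le r$ holds precisely when $\sigma\ge\sigma_{r+1}^{\max}(\barX)$, and then relate this to \eqref{ineq:conditionOfLowTubalRank} through the equation defining $\sigma$. Your ``if'' direction via monotonicity of the soft-thresholding sum is a mild streamlining of the paper's algebraic chain of inequalities (and your ``only if'' direction avoids the paper's slightly more elaborate per-slice radii $\tau_{i_3,\ldots,i_d}$), while the boundary case you flag---$\mX$ already strictly inside the ball, so that no $\sigma\ge0$ solves the defining equation---is indeed glossed over by the paper's own proof as well and is closed exactly as you suggest, by noting that \eqref{ineq:conditionOfLowTubalRank} forces $\tau\le\Vert\mX\Vert_*$.
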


The radius around an optimal solution $\mX^*$ within-which the projected gradient mapping will be guaranteed to be of low tubal rank, is dependent on the existence of a spectral gap in the block diagonal matrix of the Fourier transformation of the gradient at the optimal solution $\overbar{\nabla{}f(\mX^*)}$. 
As shown in \cref{lemma:SCequivalence}, the complementarity measure $\delta$ of an optimal solution $\mX^*$ indeed corresponds to the magnitude of the spectral gap between the largest singular value $\sigma_{1}(\overbar{\nabla{}f(\mX^*)})$ and the second largest singular value (which is strictly smaller) $\sigma_{rN+1}(\overbar{\nabla{}f(\mX^*)})$, where here $r$ denotes the average rank of $\mX^*$ (recall that according to Lemma \ref{lemma:svd_opt_grad_d_dim}, $\sigma_{1}(\overbar{\nabla{}f(\mX^*)}) = \sigma_{2}(\overbar{\nabla{}f(\mX^*)}) = \cdots = \sigma_{rN}(\overbar{\nabla{}f(\mX^*)})$). As we shall see, considering spectral gaps between $\sigma_{1}(\overbar{\nabla{}f(\mX^*)})$ and even lower singular values, will allow us to increase the radius in which the projected gradient mapping admits low tubal rank. This motivates the following definition.


\begin{definition}[generalized complementarity measure] \label{def:GSC}
Let $\mX^*$ be an optimal solution to Problem \eqref{mainModel}.
Then, for any $r$ such that $\min\lbrace n_1,n_2\rbrace>r\ge\max_{i_3\in[n_3],\ldots,i_d\in[n_d]}{\#\sigma_1(\overbar{\nabla{}f(\mX^*)}^{(i_3,\ldots,i_d)})}$, the generalized complementarity measure is defined as
\begin{align*}
\delta(r):=\sigma_1(\overbar{\nabla{}f(\mX^*)})-\max_{i_3\in[n_3],\ldots,i_d\in[n_d]}\sigma_{r+1}(\overbar{\nabla{}f(\mX^*)}^{(i_3,\ldots,i_d)}).
\end{align*}
\end{definition}

Since our focus is on Euclidean algorithms (by considering projections w.r.t. the Euclidean norm), our results regarding the low tubal rank of the projected gradient mapping in the proximity of an optimal solution will naturally be presented in terms of the Euclidean distance from the optimal solution. Nevertheless, in some cases it might be more appealing to measure this distance in spectral norm. Towards this we denote by $\beta_2$ the smoothness parameter of $f(\cdot)$ with respect to the spectral norm. That is, 
\begin{align*}
\forall \mX,\mY\in\{\mW\in\reals^{n_1\times\cdots\times n_d}~|~\Vert{\mW}\Vert_*\leq 1\}: \quad \Vert\nabla{}f(\mX)-\nabla{}f(\mY)\Vert_2\le\beta_2\Vert\mX-\mY\Vert_2.
\end{align*}
In the sequel we define the number of non-zero diagonal blocks of a block diagonal matrix $\barX\in\complex^{n_1N\times n_2N}$ as
\begin{align*}
\nnzb(\barX):=\#\lbrace(i_3,\dots,i_d)~|~\barX^{(i_3,\ldots,i_d)}\not=0\rbrace.
\end{align*}

We are now ready to present our main result for this section.
\begin{theorem} \label{lemma:RadiusForProjectedGradient_d_dim}
Assume $\nabla{}f$ is non-zero over the unit TNN ball and fix some optimal solution $\mX^*$ to Problem \eqref{mainModel}. Denote $ {\#\sigma_1}^{\max}:=\max_{i_3\in[n_3],\ldots,i_d\in[n_d]}\#\sigma_1(\overbar{\nabla{}f(\mX^*)}^{(i_3,\ldots,i_d)})$ and assume ${\#\sigma_1}^{\max}<\min\lbrace n_1,n_2\rbrace$.
Then, for any $\eta\ge0$, $\min\lbrace n_1,n_2\rbrace> r\ge{\#\sigma_1}^{\max}$, and $\mX\in\reals^{n_1\times{}\cdots\times{}n_d}$, if 
\begin{align} \label{ineq:FrobRadiusBound}
& \Vert\mX-\mX^*\Vert_F \le \nonumber
\\ &  \frac{\eta}{\sqrt{N}(1+\eta\beta)}\max\left\lbrace\frac{\delta(r)}{1+\frac{\sqrt{{\#\sigma_1}^{\max}\cdot\nnzb(\overbar{\nabla{}f(\mX^*)})}}{\#\sigma_1(\overbar{\nabla{}f(\mX^*)})}},\frac{\delta(r-{\#\sigma_1}^{\max}+1)}{\frac{1}{\sqrt{{\#\sigma_1}^{\max}}}+\frac{\sqrt{{\#\sigma_1}^{\max}\cdot\nnzb(\overbar{\nabla{}f(\mX^*)})}}{\#\sigma_1(\overbar{\nabla{}f(\mX^*)})}}\right\rbrace,
\end{align}
or  
\begin{align} \label{ineq:spectralRadiusBound}
\Vert\mX-\mX^*\Vert_2 \le \frac{\eta}{2(1+\eta\beta_2)}\delta(r),
\end{align}
then $\rank_{\textnormal{t}}(\Pi_{\lbrace\Vert\mY\Vert_*\le1\rbrace}[\mX-\eta\nabla{}f(\mX)])\le r$.
\end{theorem}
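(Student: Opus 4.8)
The strategy is to apply the characterization in \cref{lemma:iffLowRankfCondiotionOrderP} to the tensor $\mY:=\mX-\eta\nabla f(\mX)$ (with $\tau=1$), and to control the singular values appearing there by perturbing around the reference point $\mY^*:=\mX^*-\eta\nabla f(\mX^*)$. Since $\bdiag(\fft(\cdot))$ is linear, $\barY^*=\barX^*-\eta\,\overbar{\nabla f(\mX^*)}$ as block-diagonal matrices; and since $\nabla f$ is non-zero over the unit TNN ball, $\mX^*$ lies on the boundary, so $\Vert\mX^*\Vert_*=1$, i.e.\ $\tfrac1N\Vert\barX^*\Vert_*=1$ by \eqref{eq:nuclearNormEquivalence}, while first-order optimality of $\mX^*$ gives $\Pi_{\{\Vert\mY\Vert_*\le1\}}[\mY^*]=\mX^*$. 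Writing $G^*:=\sigma_1(\overbar{\nabla f(\mX^*)})$ and $g(\theta):=\tfrac1N\sum_{i_3,\dots,i_d}\sum_i\max\{0,\sigma_i(\barY^{(i_3,\dots,i_d)})-\theta\}$, one checks that the left-hand side of \eqref{ineq:conditionOfLowTubalRank} for $\mY$ equals $g(\sigma_{r+1}^{\max}(\barY))$ and that $g$ is non-increasing in $\theta$; hence it suffices to exhibit a scalar $\bar\theta\ge\sigma_{r+1}^{\max}(\barY)$ with $g(\bar\theta)\ge 1$ and conclude by \cref{lemma:iffLowRankfCondiotionOrderP}.

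The second step reads off the exact per-block singular spectrum of $\barY^{*(k)}=\barX^{*(k)}-\eta\,\overbar{\nabla f(\mX^*)}^{(k)}$, where $k=(i_3,\dots,i_d)$ and $r_k:=\rank(\barX^{*(k)})$. By \cref{lemma:svd_opt_grad_d_dim}, the $r_k$ leading singular-vector pairs of $\barX^{*(k)}$ are leading singular-vector pairs of $\overbar{\nabla f(\mX^*)}^{(k)}$ at the common value $G^*$; subtracting these rank-one pieces leaves a remainder supported on the orthogonal complements on both sides, so the singular values of $\barY^{*(k)}$ are precisely $\{\sigma_i(\barX^{*(k)})+\eta G^*\}_{i\le r_k}$ together with $\{\eta\,\sigma_j(\overbar{\nabla f(\mX^*)}^{(k)})\}_{j>r_k}$. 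Because $r_k\le\#\sigma_1(\overbar{\nabla f(\mX^*)}^{(k)})\le{\#\sigma_1}^{\max}\le r$, this yields $\sigma_m(\barY^{*(k)})=\eta\,\sigma_m(\overbar{\nabla f(\mX^*)}^{(k)})$ for all $m>r_k$, and hence, invoking \cref{def:GSC}, $\sigma_{r+1}(\barY^{*(k)})\le\eta(G^*-\delta(r))$ for every $k$, and also $\sigma_{r-{\#\sigma_1}^{\max}+2}(\barY^{*(k)})\le\eta(G^*-\delta(r-{\#\sigma_1}^{\max}+1))$ whenever $\delta(r-{\#\sigma_1}^{\max}+1)$ is defined (in that regime $r-{\#\sigma_1}^{\max}+2>{\#\sigma_1}^{\max}\ge r_k$, so this index indeed falls in the second group). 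Summing the first group over the non-zero blocks gives $\sum_k\sum_{i\le\#\sigma_1(\overbar{\nabla f(\mX^*)}^{(k)})}\sigma_i(\barY^{*(k)})=\Vert\barX^*\Vert_*+\eta G^*\,\#\sigma_1(\overbar{\nabla f(\mX^*)})=N+\eta G^*\,\#\sigma_1(\overbar{\nabla f(\mX^*)})$.

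The third step combines perturbation estimates with the two bounds above. Set $\Delta:=\Vert\barY-\barY^*\Vert_F=\sqrt N\,\Vert\mY-\mY^*\Vert_F$ (using \cref{lemma:orderPinnerProductEquivalence}) and $\Delta_2:=\Vert\barY-\barY^*\Vert_2=\Vert\mY-\mY^*\Vert_2$; $\beta$- resp.\ $\beta_2$-smoothness give $\Vert\mY-\mY^*\Vert_F\le(1+\eta\beta)\Vert\mX-\mX^*\Vert_F$ and $\Vert\mY-\mY^*\Vert_2\le(1+\eta\beta_2)\Vert\mX-\mX^*\Vert_2$, so hypothesis \eqref{ineq:FrobRadiusBound} forces $\Delta\le\eta\cdot(\text{the displayed maximum})$ and \eqref{ineq:spectralRadiusBound} forces $\Delta_2\le\tfrac{\eta}{2}\delta(r)$. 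For the upper bound on $\sigma_{r+1}^{\max}(\barY)$ I apply Weyl's inequality block-wise: $\bar\theta=\eta(G^*-\delta(r))+\Delta_2$ in the spectral case; $\bar\theta=\eta(G^*-\delta(r))+\Delta$ in the first Frobenius case; and in the second Frobenius case the index split $\sigma_{r+1}(\barY^{(k)})\le\sigma_{r-{\#\sigma_1}^{\max}+2}(\barY^{*(k)})+\sigma_{{\#\sigma_1}^{\max}}(\barY^{(k)}-\barY^{*(k)})$ together with $\sigma_{{\#\sigma_1}^{\max}}(\barY^{(k)}-\barY^{*(k)})\le\Vert\barY^{(k)}-\barY^{*(k)}\Vert_F/\sqrt{{\#\sigma_1}^{\max}}$ and $\max_k\Vert\barY^{(k)}-\barY^{*(k)}\Vert_F\le\Delta$ gives $\bar\theta=\eta(G^*-\delta(r-{\#\sigma_1}^{\max}+1))+\Delta/\sqrt{{\#\sigma_1}^{\max}}$. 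For the lower bound I use $g(\bar\theta)\ge\tfrac1N\bigl(\sum_k\sum_{i\le\#\sigma_1(\overbar{\nabla f(\mX^*)}^{(k)})}\sigma_i(\barY^{(k)})-\bar\theta\,\#\sigma_1(\overbar{\nabla f(\mX^*)})\bigr)$, and bound the sum below by $N+\eta G^*\#\sigma_1(\overbar{\nabla f(\mX^*)})$ minus an error term, which is at most $\#\sigma_1(\overbar{\nabla f(\mX^*)})\,\Delta_2$ in the spectral case (Weyl per index) and at most $\sqrt{{\#\sigma_1}^{\max}\cdot\nnzb(\overbar{\nabla f(\mX^*)})}\cdot\Delta$ in the Frobenius cases (Hoffman--Wielandt/Mirsky on each block, then Cauchy--Schwarz over the $\nnzb(\overbar{\nabla f(\mX^*)})$ non-zero blocks). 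Substituting $\bar\theta$ and cancelling the common term $\eta G^*\#\sigma_1(\overbar{\nabla f(\mX^*)})$, each of the three computations collapses to an elementary scalar inequality of the form $\Delta\le\eta\,\delta(\cdot)/(\text{the matching denominator})$ (resp.\ $\Delta_2\le\eta\delta(r)/2$), which is exactly what the hypothesis supplies; hence $g(\bar\theta)\ge1$. Therefore $g(\sigma_{r+1}^{\max}(\barY))\ge g(\bar\theta)\ge1$, and \cref{lemma:iffLowRankfCondiotionOrderP} gives $\rank_{\textnormal{t}}(\Pi_{\{\Vert\mY\Vert_*\le1\}}[\mX-\eta\nabla f(\mX)])\le r$.

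I expect the main obstacle to be the second step: the tensor setting forces us to treat the whole family of frontal blocks simultaneously, so the argument hinges on extracting the exact singular spectrum of $\barX^*-\eta\,\overbar{\nabla f(\mX^*)}$ from \cref{lemma:svd_opt_grad_d_dim} while carefully tracking the block multiplicities $\#\sigma_1(\overbar{\nabla f(\mX^*)}^{(k)})$, their maximum ${\#\sigma_1}^{\max}$, and their total $\#\sigma_1(\overbar{\nabla f(\mX^*)})$ (the latter being precisely the quantity entering through $\tfrac1N\Vert\barX^*\Vert_*=1$), and on choosing the index split in Weyl's inequality so that, in the regime where $\delta(r-{\#\sigma_1}^{\max}+1)$ is defined, the perturbed index still lands strictly below $\eta G^*$. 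Once this bookkeeping is set up, the remaining inequalities are routine.
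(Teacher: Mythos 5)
Your argument is correct and follows essentially the same route as the paper: you invoke \cref{lemma:iffLowRankfCondiotionOrderP} as the core criterion, extract the exact block-wise singular spectrum of $\overbar{\P^*}=\overbar{\X^*}-\eta\overbar{\nabla{}f(\mX^*)}$ from \cref{lemma:svd_opt_grad_d_dim}, and absorb the perturbation $\Vert\barP-\overbar{\P^*}\Vert$ via block-wise Weyl / Ky-Fan bounds and Cauchy--Schwarz into the positive slack $\eta\delta(\cdot)\cdot\#\sigma_1(\overbar{\nabla{}f(\mX^*)})$, closing with the smoothness estimate $\Vert\barP-\overbar{\P^*}\Vert_F\le\sqrt{N}(1+\eta\beta)\Vert\mX-\mX^*\Vert_F$ (and the $\beta_2$ analogue for the spectral radius). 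The only departure is presentational: you package the criterion as a monotone auxiliary function $g(\theta)$ and prove $g(\bar\theta)\ge1$ for a Weyl-controlled $\bar\theta\ge\sigma_{r+1}^{\max}(\barP)$, whereas the paper manipulates the same quantities directly through \eqref{ineq:inProof9_d_dim}--\eqref{ineq:inProof8_d_dim}.
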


\begin{remark}
The Frobenius radius around the optimal solution in \eqref{ineq:FrobRadiusBound} behaves as $\Omega(\eta\delta(r)/\sqrt{N})$. This holds since the second term in the denominator (in both terms inside the maximum) satisfies that 
\begin{align*}
& \sqrt{{\#\sigma_1}^{\max}\cdot\nnzb(\overbar{\nabla{}f(\mX^*)})}\Big/\#\sigma_1(\overbar{\nabla{}f(\mX^*)})
\\ & =\sqrt{{\#\sigma_1}^{\max}/\#\sigma_1(\overbar{\nabla{}f(\mX^*)})}\sqrt{\nnzb(\overbar{\nabla{}f(\mX^*)})/\#\sigma_1(\overbar{\nabla{}f(\mX^*)})}\le1.
\end{align*}
Furthermore, it is important to note that the division by $\sqrt{N}$ arises from the definition of the Fourier transformation. In the Fourier matrix space, correspondingly to the matrix setting in \cite{garberNuclearNormMatrices}, the distance $\Vert\barX-\overbar{\X^*}\Vert_F$ is  bounded by a constant proportional to $\delta(r)$ without the division by $\sqrt{N}$.
\end{remark}

\begin{remark}
Note that there are situations where for a large enough value $r$ the second term in the RHS of \eqref{ineq:FrobRadiusBound} can be much larger than the first one. As an example, consider the case where all the frontal slices of $\overbar{\nabla{}f(\mX^*)}$ are nonzero, i.e., $\nnzb(\overbar{\nabla{}f(\mX^*)})=N$, and the multiplicities of the top singular value of all frontal slices are all equal, that is they are all equal to ${\#\sigma_1}^{\max}$. In this case, the second term in the RHS  of \eqref{ineq:FrobRadiusBound} grows with an extra factor of $\sqrt{{\#\sigma_1}^{\max}}$, which can be quite significant.
\end{remark}

\begin{remark}
The bound w.r.t. the spectral norm in \eqref{ineq:spectralRadiusBound} does not scale with $1/\sqrt{N}$, and thus might be considerably larger than its Frobenius counterpart in \eqref{ineq:FrobRadiusBound}. The Euclidean algorithms considered in this work naturally depend on Euclidean distances, which is also our focus when discussing concrete algorithmic results in \cref{sec:algorithmicConsequences}. Nevertheless, if these algorithms could be initialized in a way that  guarantees that all iterates remain within the spectral ball corresponding to \eqref{ineq:spectralRadiusBound} (e.g., by considering the appropriate level set of $f(\cdot)$), this may lead to significantly more relaxed initialization requirements, which may be easier to satisfy in practice.
\end{remark}

Here we only give a sketch of the main ideas in the proof of \cref{lemma:RadiusForProjectedGradient_d_dim}. The full proof is given in \cref{sec:App:proofTheoremSmoothRadius}. 

\begin{proof}[Proof sketch of \cref{lemma:RadiusForProjectedGradient_d_dim}]
From \cref{lemma:iffLowRankfCondiotionOrderP} it follows that for any tensor $\mP\in\reals^{n_1\times\cdots\times n_d}$ the condition $\rank_{\textnormal{t}}(\Pi_{\lbrace\Vert\mY\Vert_*\le1\rbrace}(\mP))\le r$ holds if and only if 
\begin{align} \label{ineq:conditionProofSketch}
\frac{1}{N}\sum_{i_d=1}^{n_d}\cdots\sum_{i_3=1}^{n_3}\left(\sum_{i=1}^{\#\sigma_{>{}r+1}^{(i_3,\dots,i_d)}(\mP)}\sigma_{i}(\barP^{(i_3,\ldots,i_d)})-\#\sigma_{>{}r+1}^{(i_3,\dots,i_d)}(\mP) \cdot \sigma_{r+1}^{\max}(\barP)\right)\ge1,
\end{align}
where $\barP=\bdiag(\overbar{\mP})$, and we denote $\sigma_{r+1}^{\max}(\barP) = \max_{k_3\in[n_3],\ldots,k_d\in[n_d]}\sigma_{r+1}(\barP^{(k_3,\ldots,k_d)})$ and $\#\sigma_{>r+1}^{(i_3,\dots,i_d)}(\mP) =\left|\left\lbrace i\ \bigg\vert\ \sigma_i(\barP^{(i_3,\ldots,i_d)})>\sigma_{r+1}^{\max}(\barP)\right\rbrace\right|$.

Denote the tensor $\mP^* = \mX^* - \eta\nabla{}f(\mX^*)$ for some optimal solution $\mX^*$, and note that for all $i_3\in[n_3],\ldots,i_d\in[n_d]$,  ${\overbar{\P^*}}^{(i_3,\ldots,i_d)}:= {\overbar{\X^*}}^{(i_3,\ldots,i_d)} - \eta\overbar{\nabla{}f(\mX^*)}^{(i_3,\ldots,i_d)}$. Invoking \cref{lemma:svd_opt_grad_d_dim}, we have that for any $i_3\in[n_3],\ldots,i_d\in[n_d]$,
\begin{align} \label{eq:svOpt_d_dim1_sketch}
\forall i\le\rank({\overbar{\X^*}}^{(i_3,\ldots,i_d)}):\quad & \sigma_i({\overbar{\P^*}}^{(i_3,\ldots,i_d)}) = \sigma_i({\overbar{\X^*}}^{(i_3,\ldots,i_d)}) + \eta \sigma_1(\overbar{\nabla{}f(\mX^*)}), \nonumber
\\ \forall i>\rank({\overbar{\X^*}}^{(i_3,\ldots,i_d)}):\quad & \sigma_i({\overbar{\P^*}}^{(i_3,\ldots,i_d)}) = \eta \sigma_i(\overbar{\nabla{}f(\mX^*)}^{(i_3,\ldots,i_d)}).
\end{align}

Therefore, since $\#\sigma_{>{}r+1}^{(i_3,\dots,i_d)}(\mP^*)\le r$, it follows that
\begin{align} \label{eq:PositiveSlackAtOpt}
& \frac{1}{N}\sum_{i_d=1}^{n_d}\cdots\sum_{i_3=1}^{n_3}\left(\sum_{i=1}^{\#\sigma_{>{}r+1}^{(i_3,\dots,i_d)}(\mP^*)}\sigma_{i}({\overbar{\P^*}}^{(i_3,\ldots,i_d)})-\#\sigma_{>{}r+1}^{(i_3,\dots,i_d)}(\mP^*)\cdot\sigma_{r+1}^{\max}(\overbar{\P^*})\right) \nonumber
\\ & \underset{(a)}{=} \frac{1}{N}\sum_{i_d=1}^{n_d}\cdots\sum_{i_3=1}^{n_3}\left(\sum_{i=1}^{\#\sigma_{>{}r+1}^{(i_3,\dots,i_d)}(\mP^*)}\left(\sigma_i({\overbar{\X^*}}^{(i_3,\ldots,i_d)}) +\eta \sigma_1(\overbar{\nabla{}f(\mX^*)})\right)\right) \nonumber
\\ & \ \ \ -\frac{1}{N}\sum_{i_d=1}^{n_d}\cdots\sum_{i_3=1}^{n_3}\left(\#\sigma_{>{}r+1}^{(i_3,\dots,i_d)}(\mP^*)\cdot\eta\max_{i_3\in[n_3],\ldots,i_d\in[n_d]}\sigma_{r+1}(\overbar{\nabla{}f(\mX^*)}^{(i_3,\ldots,i_d)})\right) \nonumber
\\ & = \frac{1}{N}\sum_{i_d=1}^{n_d}\cdots\sum_{i_3=1}^{n_3}\left(\sum_{i=1}^{\rank({\overbar{\X^*}}^{(i_3,\ldots,i_d)})}\sigma_{i}({\overbar{\X^*}}^{(i_3,\ldots,i_d)})+\#\sigma_{>{}r+1}^{(i_3,\dots,i_d)}(\mP^*)\cdot\eta\delta(r)\right) \nonumber
\\ & = \Vert\mX^*\Vert_*+\eta\delta(r)\frac{1}{N}\sum_{i_d=1}^{n_d}\cdots\sum_{i_3=1}^{n_3}\#\sigma_{>{}r+1}^{(i_3,\dots,i_d)}(\mP^*) \nonumber 
\\ & \underset{(b)}{=} 1+\eta\delta(r)\frac{1}{N}\sum_{i_d=1}^{n_d}\cdots\sum_{i_3=1}^{n_3}\#\sigma_{>{}r+1}^{(i_3,\dots,i_d)}(\mP^*),
\end{align}
where (a) follows from plugging in \eqref{eq:svOpt_d_dim1_sketch} and (b) follows from the assumption that $\nabla{}f(\mX^*)\not=0$, which implies that $\Vert{\mX^*}\Vert_*=1$.

Thus, not only does $\mP^*$ satisfies the condition \eqref{ineq:conditionProofSketch} w.r.t. to the parameter $r$, but it actually satisfies it with an additional positive slack of 
\begin{align*} 
\frac{\eta\delta(r)}{N}\sum_{i_d=1}^{n_d}\cdots\sum_{i_3=1}^{n_3}\#\sigma_{>{}r+1}^{(i_3,\dots,i_d)}(\mP^*).
\end{align*}
To see why the sum over $\#\sigma_{>{}r+1}^{(i_3,\dots,i_d)}(\mP^*)$ is guaranteed to be positive, note that using 
 \eqref{eq:svOpt_d_dim1_sketch} together with the assumption that $r\ge{\#\sigma_1}^{\max}$, we have that
\begin{align*}
\sigma_1(\overbar{\P^*}) & = \sigma_1(\overbar{\X^*})+\eta \sigma_1(\overbar{\nabla{}f(\X^*)}) \geq \eta \sigma_1(\overbar{\nabla{}f(\X^*)}) 
 > \eta \max_{i_3\in[n_3],\ldots,i_d\in[n_d]}\sigma_{r+1}(\overbar{\nabla{}f(\mX^*)}^{(i_3,\ldots,i_d)}) 
 \\ &= \max_{i_3\in[n_3],\ldots,i_d\in[n_d]}\sigma_{r+1}(\overbar{\P^*}^{(i_3,\ldots,i_d)}) = \sigma_{r+1}^{\max}(\overbar{\P^*}).
\end{align*}
There exists at least one frontal slice of $\overbar{\mP}$, for some $i_3\in[n_3],\ldots,i_d\in[n_d]$, whose top singular value is also the top singular value of  $\overbar{\P^*}$, and thus, this frontal slice satisfies that %
$\#\sigma_{>{}r+1}^{(i_3,\dots,i_d)}(\mP^*)\ge1$, which implies that the sum over all frontal slices is positive.

The smoothness of $f$ implies that for any tensor $\mX$ close to $\mX^*$, the tensor $\mP=\mX-\eta\nabla{}f(\mX)$  is also close to $\mP^*$, up to an additional multiplicative factor. 
Therefore, by applying standard perturbation bounds for the singular values of all frontal slices of $\mP^*$ in Eq. \eqref{eq:PositiveSlackAtOpt}, and using the fact that the positive slack in the RHS of \eqref{eq:PositiveSlackAtOpt} allows to absorb sufficiently small errors (due to the use of these perturbation bounds),
we can establish that for such a tensor $\mX$, sufficiently close to $\mX^*$, the tensor $\mP = \mX-\eta\nabla{}f(\mX)$ also satisfies condition  \eqref{ineq:conditionProofSketch} w.r.t. the parameter $r$, meaning that $\rank_{\textnormal{t}}(\Pi_{\lbrace\Vert\mY\Vert_*\le1\rbrace}[\mP])\le r$.
\end{proof}

In connection with our discussion in Section \ref{sec:complexityAnalysis}, the result in  \cref{lemma:RadiusForProjectedGradient_d_dim} implies that projected-gradient methods for solving Problem \eqref{mainModel}, when initialized sufficiently close to an optimal solution, could be implemented using only low-rank SVDs to compute the projected gradient mapping w.r.t. the unit TNN ball, without changing their outputs. This is captured in the following corollary.

\begin{corollary}
Fix an optimal solution $\mX^*$ to Problem \eqref{mainModel}.
Let $r$ such that $\min\lbrace n_1,n_2\rbrace>r\ge{\#\sigma_1}^{\max}$, where ${\#\sigma_1}^{\max}:=\max_{i_3\in[n_3],\ldots,i_d\in[n_d]}\#\sigma_1(\overbar{\nabla{}f(\mX^*)}^{(i_3,\ldots,i_d)})$. 
Consider a projected gradient method, i.e., a method that relies on computing $\Pi_{\Vert{\mY}\Vert_*\leq 1}[\mX-\eta\nabla{}f(\mX)]$ for some input tensor $\mX$, initialized such that all points $\lbrace\mX_t\rbrace_{t\ge1}$ to which the projected-gradient mapping is applied  satisfy that $\Vert\mX_t-\mX^*\Vert_F \le R_0(r,\eta)$, where $R_0(r,\eta)$ is the RHS of \eqref{ineq:FrobRadiusBound}.
Then, throughout the run of the method, all projections onto the unit TNN ball could be replaced with their rank-$r$ counterparts (see \cref{def:lowRankProjection}) without changing their outputs 
\end{corollary}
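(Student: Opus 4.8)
The plan is to obtain this as an immediate consequence of \cref{lemma:RadiusForProjectedGradient_d_dim} applied pointwise along the trajectory, combined with the second part of \cref{lemma:projectionOntoTNN}. First I would fix an arbitrary iterate $\mX_t$ to which the projected-gradient mapping is applied. By hypothesis $\Vert\mX_t-\mX^*\Vert_F\le R_0(r,\eta)$, and $R_0(r,\eta)$ is by definition the right-hand side of \eqref{ineq:FrobRadiusBound}; the remaining hypotheses of the theorem, namely that $\nabla{}f$ is non-zero over the unit TNN ball and that ${\#\sigma_1}^{\max}<\min\lbrace n_1,n_2\rbrace$, are among the standing assumptions here, and $r$ is chosen in the admissible range $\min\lbrace n_1,n_2\rbrace>r\ge{\#\sigma_1}^{\max}$. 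Hence \cref{lemma:RadiusForProjectedGradient_d_dim} applies and gives $\rank_{\textnormal{t}}\big(\Pi_{\lbrace\Vert\mY\Vert_*\le1\rbrace}[\mX_t-\eta\nabla{}f(\mX_t)]\big)\le r$.

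Next I would feed this tubal-rank bound into the second statement of \cref{lemma:projectionOntoTNN}: since the tubal rank of $\Pi_{\lbrace\Vert\mY\Vert_*\le1\rbrace}[\mX_t-\eta\nabla{}f(\mX_t)]$ is at most $r$, the t-SVD computation inside \cref{alg:projectionTNN} can be replaced by the rank-$r$ t-SVD (\cref{def:rankRtSVD}), and the thresholding sum can run only from $1$ to $r$, without altering the returned tensor. By \cref{def:lowRankProjection} this modified computation is exactly the rank-$r$ truncated projection, so the full and truncated projections of $\mX_t-\eta\nabla{}f(\mX_t)$ coincide. Since $t$ was arbitrary and every point to which the projected-gradient mapping is applied satisfies the stated radius bound, the replacement is valid at every step of the method, and the sequence of iterates is identical to the one produced with full-rank projections; in particular only rank-$r$ matrix SVDs of the $n_1\times n_2$ frontal slices are ever required.

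The only points needing care are bookkeeping rather than substance: (i) the theorem must be invoked with the same fixed step-size $\eta$ appearing in $R_0(r,\eta)$; (ii) for methods with extrapolation or momentum (e.g.\ accelerated variants) the hypothesis is correctly phrased in terms of the points actually passed to the projected-gradient mapping, not the iterates themselves; and (iii) one should recall from the proof of \cref{lemma:projectionOntoTNN} that a tubal rank of at most $r$ forces $\sigma_{r+1}(\barX^{(i_3,\ldots,i_d)})\le\sigma$ in every frontal slice, so the discarded components contribute nothing to either the threshold equation determining $\sigma$ or the reconstructed tensor, which is precisely why truncating to rank $r$ is exact. Given \cref{lemma:RadiusForProjectedGradient_d_dim} and \cref{lemma:projectionOntoTNN}, I do not expect any real obstacle: this is a direct corollary, and its role is to translate the geometric statement of the theorem into the algorithmic guarantee recorded in \cref{table:bottomLine}.
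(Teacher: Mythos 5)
Your proof is correct and is precisely the argument the paper intends: invoke \cref{lemma:RadiusForProjectedGradient_d_dim} pointwise at each $\mX_t$ to get the tubal-rank bound, then use the second part of \cref{lemma:projectionOntoTNN} together with \cref{def:lowRankProjection} to conclude that the rank-$r$ truncated projection coincides with the exact one at every step. The bookkeeping remarks (fixed $\eta$, hypothesis phrased in terms of points fed to the mapping, and the standing non-degeneracy assumption on $\nabla f$) are all appropriate and match the paper's setup.
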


\subsection{Some concrete algorithmic implications}
Our results obtained thus far regarding the quadratic growth bound and low-rank projections imply, mostly in a straightforward manner, the algorithmic results detailed in Table  \ref{table:bottomLine} regarding  the smooth case. A formal description of these results and full proofs are given for completeness in \cref{sec:algorithmicConsequencesSmooth}.

\subsubsection{Computing certificates for low tubal rank projections}
Since \cref{lemma:RadiusForProjectedGradient_d_dim} only applies in some neighborhood of an optimal solution, it is of interest to have a practical procedure for verifying if the rank-$r$ truncated projection of a given tensor onto the unit TNN ball indeed equals the exact Euclidean projection. In addition, from a practical point of view, it does not matter whether the conditions of  \cref{lemma:RadiusForProjectedGradient_d_dim} hold or not. As long as the rank-$r$ truncated projection equals its exact counterpart, we are guaranteed that the method converges with its original convergence guarantees (since there is no change to the sequence of iterates it produces), while only efficient low-rank matrix SVDs are required to compute the projections. Note that the condition in \cref{lemma:iffLowRankfCondiotionOrderP}, which characterizes the structure of the Euclidean projection onto the unit TNN ball, yields exactly such a verification procedure. By simply increasing the rank of the matrix SVDs of the frontal slices by one i.e., computing for each frontal slice of the Fourier transform of the tensor to project a rank-$(r+1)$ SVD, instead of only a rank-$r$ SVD, we can exactly check whether the condition \cref{lemma:iffLowRankfCondiotionOrderP} holds or not with respect to the tubal rank parameter $r$.

\section{The Nonsmooth Case}
\label{sec:nonsmoothCase}

In this section we turn to consider Problem \eqref{mainModel} in the important case that $f(\cdot)$ is nonsmooth. Our main goal here is to establish that the nonsmooth problem could also be solved via first-order methods that, at least in the proximity of an optimal solution which satisfies the appropriate strict complementarity condition, only require low-rank matrix SVDs to compute the projection onto the unit TNN ball. However, this ambition is complicated by the fact that in \cite{ourExtragradient}, which considered a related setting of nonsmooth low-rank matrix recovery problems, it was established that a result in the spirit of our \cref{lemma:RadiusForProjectedGradient_d_dim} cannot be obtained for the projected subgradient descent method, which is perhaps the simplest nonsmooth first-order method to consider. That is, in the matrix setting of  \cite{ourExtragradient}, it was established that in any proximity of a low-rank optimal solution which satisfies strict complementarity (when appropriately defined for the nonsmooth problem), the projected subgradient descent mapping could result in a matrix of higher rank than that of the optimal solution.  To circumvent this difficulty, \cite{ourExtragradient} proposed to consider nonsmooth objectives, such that the nonsmooth minimization problem could be written as a smooth saddle-point problem, and they considered the application of the projected Extragradient method for the saddle-point problem. They established that indeed in the proximity of low-rank optimal solutions which satisfy SC, the Extragradient method is guaranteed to produce low-rank iterates. This is also the approach we take here w.r.t. to our nonsmooth tensor optimization problem.

 \subsection{Generalized strict complementarity for nonsmooth problems}
As in \cref{sec:lowTubalRankProjectionsSmooth} which considered the smooth setting, here we also consider a generalized version of the strict complementarity condition.
We follow the definition of generalized strict complementarity established in \cite{ourExtragradient} for nonsmooth low-rank matrix problems. We begin by recalling the first-order optimality condition in case $f$ is nonsmooth.

\begin{lemma}[first-order optimality condition, see \cite{beckOptimizationBook}] \label{optCondition}
Let $f:\reals^{n_1\times\cdots\times n_d}\rightarrow\reals$ be a convex function.  $\mX^*\in\lbrace\mX\in\reals^{n_1\times\cdots\times n_d}\ \vert\ \Vert\mX\Vert_*\le1\rbrace$ minimizes $f$ over $\lbrace\mX\in\reals^{n_1\times\cdots\times n_d}\ \vert\ \Vert\mX\Vert_*\le1\rbrace$ if and only if there exists a subgradient $\mG^*\in\partial f(\mX^*)$ such that $\langle \mX-\mX^*,\mG^*\rangle\ge0$ for all $\mX\in\lbrace\mX\in\reals^{n_1\times\cdots\times n_d}\ \vert\ \Vert\mX\Vert_*\le1\rbrace$.
\end{lemma}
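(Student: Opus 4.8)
The plan is to obtain this as the standard first-order optimality characterization for minimizing a convex function over a convex set, specialized to $C:=\lbrace\mX\in\reals^{n_1\times\cdots\times n_d}\ \vert\ \Vert\mX\Vert_*\le1\rbrace$, which is nonempty, closed and convex since it is a norm ball.

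The easy direction is ``if'': assuming $\mG^*\in\partial f(\mX^*)$ satisfies $\langle\mX-\mX^*,\mG^*\rangle\ge0$ for every $\mX\in C$, the subgradient inequality gives $f(\mX)\ge f(\mX^*)+\langle\mX-\mX^*,\mG^*\rangle\ge f(\mX^*)$ for all $\mX\in C$, so $\mX^*$ minimizes $f$ over $C$.

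For the ``only if'' direction I would first rephrase the constrained problem as the unconstrained minimization of $F:=f+\iota_C$ over $\reals^{n_1\times\cdots\times n_d}$, where $\iota_C$ is the indicator function of $C$. By Fermat's rule for convex functions, $\mX^*$ minimizes $F$ if and only if $\mathbf{0}\in\partial F(\mX^*)$. The crucial step is the Moreau--Rockafellar additivity rule $\partial F(\mX^*)=\partial f(\mX^*)+\partial\iota_C(\mX^*)$, combined with $\partial\iota_C(\mX^*)=\mathcal{N}_C(\mX^*)$, the normal cone of $C$ at $\mX^*$. These together say that $\mathbf{0}\in\partial F(\mX^*)$ is equivalent to the existence of some $\mG^*\in\partial f(\mX^*)$ with $-\mG^*\in\mathcal{N}_C(\mX^*)$, i.e.\ $\langle-\mG^*,\mX-\mX^*\rangle\le0$ for all $\mX\in C$, which is exactly $\langle\mX-\mX^*,\mG^*\rangle\ge0$ for all $\mX\in C$.

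The only point requiring care --- the main obstacle --- is the constraint qualification needed for the additivity rule $\partial(f+\iota_C)=\partial f+\partial\iota_C$. Here it is essentially automatic: $f$ is convex and finite (real-valued) on all of $\reals^{n_1\times\cdots\times n_d}$, hence continuous (in fact locally Lipschitz) at every point, in particular at every point of $\textrm{dom}\,\iota_C=C$, which is all that the sum rule demands. Thus the result follows by invoking the standard subdifferential sum rule, as in \cite{beckOptimizationBook}, and everything else is routine bookkeeping.
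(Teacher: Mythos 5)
Your proof is correct. The paper itself does not prove this lemma; it simply cites it as a standard fact from \cite{beckOptimizationBook}, and your argument --- the easy subgradient-inequality direction, then Fermat's rule combined with the Moreau--Rockafellar sum rule $\partial(f+\iota_C)=\partial f+\mathcal{N}_C$ (valid because a real-valued convex $f$ is continuous everywhere, so the constraint qualification is automatic) --- is precisely the canonical textbook proof that the citation points to.
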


\begin{definition}[Generalized strict complementarity for nonsmooth problems] \label{lemma:SCnonsmooth}
Let $\mX^*\in\lbrace\mX\in\reals^{n_1\times\cdots\times n_d}\ \vert\ \Vert\mX\Vert_*\le1\rbrace$ be an optimal solution to Problem \eqref{mainModel}. $\mX^*$ satisfies the generalized strict complementarity assumption with measure $\delta(r)$ if there exists a subgradient $\mG^*\in\partial f(\mX^*)$ such that $\langle \mX-\mX^*,\mG^*\rangle\ge0$ for all $\mX\in\lbrace\mX\in\reals^{n_1\times\cdots\times n_d}\ \vert\ \Vert\mX\Vert_*\le1\rbrace$ and $r$ such that $\min\lbrace n_1,n_2\rbrace>r\ge\max_{i_3\in[n_3],\ldots,i_d\in[n_d]}{\#\sigma_1(\overbar{\mG^*}^{(i_3,\ldots,i_d)})}$, and
\begin{align*}
\delta(r) := \sigma_1(\overbar{\G^*}) - \max_{i_3\in[n_3],\ldots,i_d\in[n_d]}\sigma_{r+1}(\overbar{\G^*}^{(i_3,\ldots,i_d)}) > 0 ,
\end{align*}
where $\overbar{\G^*}:=\bdiag(\overbar{\mG^*})$. 
\end{definition}
Note that generalized strict complementarity in the nonsmooth setting takes the same form as the one in the corresponding smooth setting, when replacing the gradient direction at the optimal solution in \cref{def:GSC} with a subgradient which satisfies the first-order optimality condition in \cref{optCondition}.
 
\subsection{From nonsmooth to saddle-point formulation}

We assume the nonsmooth problem  \eqref{mainModel} can be written as a maximum of smooth functions,
i.e., $f(\mX) = \max_{\y\in\mK} F(\mX,\y)$, where $\mK\subset\mathbb{Y}$ is a convex and compact subset of a finite linear space $\mathbb{Y}$ over $\reals$. We assume there exists an efficient method of computing Euclidean projections onto $\mK$. We also assume $F(\cdot,\y)$ is convex for all $\y\in\mK$, and $F(\mX,\cdot)$ is concave for all $\mX\in\lbrace\mX\in\reals^{n_1\times\cdots\times n_d}\ \vert\ \Vert\mX\Vert_*\le1\rbrace$. Under these assumptions we can reformulate Problem \eqref{mainModel} as the following saddle-point problem:
\begin{align} \label{saddlePointProblem}
\min_{\Vert\mX\Vert_*\le\tau}\max_{\y\in\mK} F(\mX,\y).
\end{align}
We also assume that $F$ is smooth with respect to all components, that is, that there exists constants $\beta_{X},\beta_{y},\beta_{Xy},\beta_{yX}\ge0$ such that for any $\mX,\boldsymbol{\widetilde{\mX}}\in\lbrace\mX\in\reals^{n_1\times\cdots\times n_d}\ \vert\ \Vert\mX\Vert_*\le1\rbrace$ and  $\y,\widetilde{\y}\in\mK$, it holds that
\begin{align} \label{def:betas}
& \Vert\nabla_{\mX}F(\mX,\y)-\nabla_{\mX}F(\boldsymbol{\widetilde{\mX}},\y)\Vert_F \le \beta_{X}\Vert\mX-\boldsymbol{\widetilde{\mX}}\Vert_F, \nonumber
\\ & \Vert\nabla_{\y}F(\mX,\y)-\nabla_{\y}F(\mX,\widetilde{\y})\Vert_2 \le \beta_{y}\Vert\y-\widetilde{\y}\Vert_2, \nonumber
\\ & \Vert\nabla_{\mX}F(\mX,\y)-\nabla_{\mX}F(\mX,\widetilde{\y})\Vert_F \le \beta_{Xy}\Vert\y-\widetilde{\y}\Vert_2, \nonumber
\\ & \Vert\nabla_{\y}F(\mX,\y)-\nabla_{\y}F(\boldsymbol{\widetilde{\mX}},\y)\Vert_2 \le \beta_{yX}\Vert\mX-\boldsymbol{\widetilde{\mX}}\Vert_F, 
\end{align}
where $\nabla_{\mX}F=\frac{\partial F}{\partial\mX}$ and  $\nabla_{\y}F=\frac{\partial F}{\partial\y}$. 

Throughout the rest of this section we denote $\Vert\cdot\Vert$ to be the Euclidean norm over the product space $\reals^{n_1\times\cdots\times n_d}\times\mathbb{Y}$. 

Based on the definition of generalized strict complementarity for matrix saddle-point problems that has been established in \cite{ourExtragradient}, we define generalized strict complementarity for tensor saddle-point problems.
\begin{definition}[Generalized strict complementarity for saddle-point problems]
\label{def:GSCsaddlePoint}
Let $(\mX^*,\y^*)$ be a saddle point of Problem \eqref{saddlePointProblem}.  $(\mX^*,\y^*)$ satisfies the generalized strict complementarity assumption with measure $\delta(r)$, 
for  $r$ such that $\max_{i_3\in[n_3],\ldots,i_d\in[n_d]}{\#\sigma_1(\overbar{\nabla_{\mX}F(\mX^*,\y^*)}^{(i_3,\ldots,i_d)})}\le r  <\min\lbrace n_1,n_2\rbrace$, if
\begin{align*}
\delta(r):=\sigma_1(\overbar{\nabla_{\mX}F(\mX^*,\y^*)}) - \max_{i_3\in[n_3],\ldots,i_d\in[n_d]}\sigma_{r+1}(\overbar{\nabla_{\mX}F(\mX^*,\y^*)}^{(i_3,\ldots,i_d)})>0.
\end{align*}
\end{definition}

In \cite{ourExtragradient} the authors showed that under a fairly mild additional structural assumption on the objective function $f(\cdot)$, which holds for many nonsmooth functions of interest, generalized strict complementarity for Problem \eqref{saddlePointProblem} is equivalent to generalized strict complementarity for Problem \eqref{mainModel}. 

\begin{assumption} \label{lemma:structureOfNonsmoothAssumption}
$f(\mX)$ is of the form $f(\mX)= h(\mX) + \max_{\y\in\mK} \y^{\top}(\mathcal{A}(\mX)- \b)$, where $h(\cdot)$
is smooth and convex, and $\mathcal{A}$ is a linear map.
\end{assumption}

The following lemma for the tensor setting and its proof are identical to the matrix case in \cite{ourExtragradient}.

\begin{lemma}[Lemma 6 in \cite{ourExtragradient}] \label{lemma:connectionSubgradientNonsmoothAndSaddlePoint}
If $(\mX^*,\y^*)$ is a saddle-point of Problem \eqref{saddlePointProblem} then $\mX^*$ is an optimal solution to Problem \eqref{mainModel},  $\nabla_{\mX}F(\mX^*,\y^*)\in\partial f(\mX^*)$,  and for all $\mX\in\lbrace\mX\in\reals^{n_1\times\cdots\times n_d}\ \vert\ \Vert\mX\Vert_*\le1\rbrace$ it holds that $\langle\mX-\mX^*,\nabla_{\mX}F(\mX^*,\y^*)\rangle\ge0$.
Conversely, under \cref{lemma:structureOfNonsmoothAssumption}, if $\mX^*$ is an optimal solution to Problem \eqref{mainModel}, and $\mG^*\in\partial{}f(\mX^*)$ which satisfies $\langle\mX-\mX^*,\mG^*\rangle\ge0$  for all $\mX\in\lbrace\mX\in\reals^{n_1\times\cdots\times n_d}\ \vert\ \Vert\mX\Vert_*\le1\rbrace$, then there exists $\y^*\in\argmax_{\y\in\mathcal{K}}F(\mX^*,\y)$ such that $(\mX^*,\y^*)$ is a saddle-point of Problem \eqref{saddlePointProblem}, and $\nabla_{\mX}F(\mX^*,\y^*)=\mG^*$.
\end{lemma}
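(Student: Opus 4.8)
The plan is to prove the two directions separately, relying only on the defining inequalities of a saddle point, the first-order optimality condition of \cref{optCondition}, and standard convex subdifferential calculus; since the feasible set $\lbrace\mX\mid\Vert\mX\Vert_*\le1\rbrace$ enters only as an abstract convex compact set and $F$ only as a smooth convex--concave function, the tensor structure plays no role and the argument reproduces that of \cite{ourExtragradient}. (Here the radius $\tau$ in \eqref{saddlePointProblem} is taken to equal $1$, matching \eqref{mainModel}; nothing in the argument depends on its value.)

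For the forward direction I would begin from the saddle inequalities $F(\mX^*,\y)\le F(\mX^*,\y^*)\le F(\mX,\y^*)$, valid for all feasible $\mX$ and all $\y\in\mK$. The left inequality immediately gives $\y^*\in\argmax_{\y\in\mK}F(\mX^*,\y)$, hence $f(\mX^*)=F(\mX^*,\y^*)$. Then, for any feasible $\mX$, convexity of $F(\cdot,\y^*)$ and its gradient inequality yield $f(\mX)\ge F(\mX,\y^*)\ge F(\mX^*,\y^*)+\langle\mX-\mX^*,\nabla_{\mX}F(\mX^*,\y^*)\rangle=f(\mX^*)+\langle\mX-\mX^*,\nabla_{\mX}F(\mX^*,\y^*)\rangle$, which shows $\nabla_{\mX}F(\mX^*,\y^*)\in\partial f(\mX^*)$. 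The right saddle inequality says $\mX^*$ minimizes the differentiable convex function $F(\cdot,\y^*)$ over the feasible set, so its first-order optimality condition gives $\langle\mX-\mX^*,\nabla_{\mX}F(\mX^*,\y^*)\rangle\ge0$ for every feasible $\mX$; combining this with the subgradient membership just obtained and invoking \cref{optCondition} shows $\mX^*$ is optimal for \eqref{mainModel}.

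For the converse I would use \cref{lemma:structureOfNonsmoothAssumption}, under which $F(\mX,\y)=h(\mX)+\y^{\top}(\mathcal{A}(\mX)-\b)$ and $\nabla_{\mX}F(\mX,\y)=\nabla h(\mX)+\mathcal{A}^{\top}\y$. The key step is the subdifferential identity $\partial f(\mX^*)=\nabla h(\mX^*)+\mathcal{A}^{\top}\big(\argmax_{\y\in\mK}\y^{\top}(\mathcal{A}(\mX^*)-\b)\big)$, which I would obtain from the sum rule (legitimate since $h$ is smooth) together with the Danskin-type formula for the subdifferential of a pointwise maximum of affine functions over a convex compact set, using that the maximizer set is itself convex and $\mathcal{A}^{\top}$ linear (see \cite{beckOptimizationBook}). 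Consequently any $\mG^*\in\partial f(\mX^*)$ can be written $\mG^*=\nabla h(\mX^*)+\mathcal{A}^{\top}\y^*=\nabla_{\mX}F(\mX^*,\y^*)$ for some $\y^*\in\argmax_{\y\in\mK}\y^{\top}(\mathcal{A}(\mX^*)-\b)=\argmax_{\y\in\mK}F(\mX^*,\y)$. I would then verify the two saddle inequalities for this $\y^*$: the left one holds since $\y^*$ attains $\max_{\y\in\mK}F(\mX^*,\y)$; the right one, $\mX^*\in\argmin_{\Vert\mX\Vert_*\le1}F(\mX,\y^*)$, holds because $F(\cdot,\y^*)$ is convex and, by the hypothesis on $\mG^*$, $\langle\mX-\mX^*,\nabla_{\mX}F(\mX^*,\y^*)\rangle=\langle\mX-\mX^*,\mG^*\rangle\ge0$ for all feasible $\mX$, which is the first-order optimality condition for $F(\cdot,\y^*)$ and hence, by convexity, sufficient.

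The argument is essentially routine; the only place that needs any care is the subdifferential calculus in the converse --- splitting off the smooth term $h$ and applying Danskin's formula over the compact convex set $\mK$ --- for which the standard references cited suffice.
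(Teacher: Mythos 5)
Your proof is correct and takes the standard saddle-point argument that the paper itself describes as identical to the matrix-case proof in the cited reference (the paper does not reproduce the proof, so there is nothing explicit to compare against beyond that statement). One small imprecision in the forward direction is worth correcting: you establish the chain $f(\mX)\ge F(\mX,\y^*)\ge F(\mX^*,\y^*)+\langle\mX-\mX^*,\nabla_{\mX}F(\mX^*,\y^*)\rangle$ only ``for any feasible $\mX$'' and then conclude $\nabla_{\mX}F(\mX^*,\y^*)\in\partial f(\mX^*)$, but the subgradient inequality must hold for all $\mX\in\reals^{n_1\times\cdots\times n_d}$, not just those in the TNN ball. The fix is immediate since neither inequality in your chain actually uses feasibility: $f(\mX)=\max_{\y\in\mK}F(\mX,\y)\ge F(\mX,\y^*)$ for all $\mX$, and the gradient inequality for the convex function $F(\cdot,\y^*)$ is also global, so you should simply drop the word ``feasible'' there. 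The rest is sound: in particular your use of the Danskin/max-of-affine subdifferential formula combined with the sum rule for the smooth part $h$ in the converse direction, and the observation that $\mathcal{A}^{\top}(\argmax_{\y\in\mK}\y^{\top}(\mathcal{A}(\mX^*)-\b))$ is already convex so no convex-hull closure is needed, are exactly where the structural Assumption~\ref{lemma:structureOfNonsmoothAssumption} is required and are handled correctly.
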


\begin{remark}
Under \cref{lemma:structureOfNonsmoothAssumption}, \cref{lemma:connectionSubgradientNonsmoothAndSaddlePoint} implies that generalized strict complementarity with measure $\delta(r)$ for some optimal solution $\mX^*$ to Problem \eqref{mainModel} implies that generalized strict complementarity with measure $\delta(r)$ holds for a corresponding saddle-point $(\mX^*,\y^*)$ of Problem \eqref{saddlePointProblem}. Nevertheless, our convergence results for Problem \eqref{saddlePointProblem}, which are stated directly in terms of generalized strict complementarity for saddle-point problems (\cref{def:GSCsaddlePoint}), are not dependent on whether \cref{lemma:structureOfNonsmoothAssumption} holds or not.
\end{remark}

\subsection{Low tubal rank of the extragradient mapping near low tubal rank saddle-points}

The projected extragradient method for solving saddle-point problem \eqref{saddlePointProblem} is given for convenience of the reader as  \cref{alg:EG}.

We are now ready to present our main result for this section which is an analogue of \cref{lemma:RadiusForProjectedGradient_d_dim} for the smooth setting. The theorem states that the primal updates of the extragradient method (the updates of $\mX_{t+1}$ and $\mZ_{t+1}$ in  \cref{alg:EG}) result in low tubal rank tensors at the proximity of saddle-points which satisfy generalized strict complementarity (\cref{def:GSCsaddlePoint}).

\begin{theorem} \label{lemma:RadiusForExtragradient_d_dim}
Assume $\nabla{}_{\mX}F$ is non-zero over the unit TNN ball and fix some saddle-point $(\mX^*,\y^*)$ of Problem \eqref{saddlePointProblem}. 
Denote $\nnzb(\overbar{\nabla_{\mX}F^*}):=\#\lbrace(i_3,\dots,i_d)~|~\overbar{\nabla_{\mX}F(\mX^*,\y^*)}^{(i_3,\ldots,i_d)}\not=0\rbrace$ and ${\#\sigma_1}^{\max}:=\max_{i_3\in[n_3],\ldots,i_d\in[n_d]}\#\sigma_1(\overbar{\nabla_{\mX}F(\mX^*,\y^*)}^{(i_3,\ldots,i_d)})$  and assume ${\#\sigma_1}^{\max}<\min\lbrace n_1,n_2\rbrace$. 
Then, for any $\eta\ge0$, $\min\lbrace n_1,n_2\rbrace>r\ge{\#\sigma_1}^{\max}$, and $(\mX,\y),(\mZ,\w)\in\reals^{n_1\times{}\cdots\times{}n_d}\times\mK$, if 
\begin{align} \label{ineq:boundNonsmooth}
& \max\lbrace\Vert\mX-\mX^*\Vert_F,\Vert(\mZ,\w)-(\mX^*,\y^*)\Vert\rbrace  \nonumber
\\ & \le \frac{\eta}{\sqrt{N}K}
\max\left\lbrace\frac{\delta(r)}{1+\frac{\sqrt{{\#\sigma_1}^{\max}\cdot\nnzb(\overbar{\nabla_{\mX}F^*})}}{\#\sigma_1(\overbar{\nabla_{\mX}F(\mX^*,\y^*)})}},\frac{\delta(r-{\#\sigma_1}^{\max}+1)}{\frac{1}{\sqrt{{\#\sigma_1}^{\max}}}+\frac{\sqrt{{\#\sigma_1}^{\max}\cdot\nnzb(\overbar{\nabla_{\mX}F^*})}}{\#\sigma_1(\overbar{\nabla_{\mX}F(\mX^*,\y^*)})}}\right\rbrace,
\end{align} 
where $K=1+\sqrt{2}\eta\max\lbrace\beta_{X},\beta_{Xy}\rbrace$, then $\rank_{\textnormal{t}}(\Pi_{\lbrace\Vert\mY\Vert_*\le1\rbrace}[\mX-\eta\nabla{}_{\mX}F(\mZ,\w)])\le r$.
\end{theorem}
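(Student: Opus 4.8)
The plan is to mimic the proof of \cref{lemma:RadiusForProjectedGradient_d_dim}, replacing the gradient $\nabla f(\mX)$ with the partial gradient $\nabla_{\mX}F(\mZ,\w)$ and the fixed point $\mP^* = \mX^*-\eta\nabla f(\mX^*)$ with $\mP^* = \mX^*-\eta\nabla_{\mX}F(\mX^*,\y^*)$. First I would invoke \cref{lemma:iffLowRankfCondiotionOrderP} to reduce the claim $\rank_{\textnormal{t}}(\Pi_{\lbrace\Vert\mY\Vert_*\le1\rbrace}[\mX-\eta\nabla_{\mX}F(\mZ,\w)])\le r$ to the single scalar inequality \eqref{ineq:conditionProofSketch} (with $\tau=1$) applied to the tensor $\mP := \mX-\eta\nabla_{\mX}F(\mZ,\w)$. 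Next, I would establish that at the saddle point the analogous tensor $\mP^*$ satisfies this condition with strictly positive slack: since $(\mX^*,\y^*)$ is a saddle point, \cref{lemma:connectionSubgradientNonsmoothAndSaddlePoint} gives $\nabla_{\mX}F(\mX^*,\y^*)\in\partial f(\mX^*)$ and $\langle\mX-\mX^*,\nabla_{\mX}F(\mX^*,\y^*)\rangle\ge 0$ for all feasible $\mX$, which is exactly the first-order optimality condition needed so that \cref{lemma:svd_opt_grad_d_dim} applies with $\nabla_{\mX}F(\mX^*,\y^*)$ in place of $\nabla f(\mX^*)$. This yields the singular value identities \eqref{eq:svOpt_d_dim1_sketch} (again with $\nabla_{\mX}F^*$ replacing $\nabla f(\mX^*)$), and repeating the computation \eqref{eq:PositiveSlackAtOpt} verbatim shows that $\mP^*$ satisfies the condition with slack $\tfrac{\eta\delta(r)}{N}\sum_{i_3,\ldots,i_d}\#\sigma^{(i_3,\ldots,i_d)}_{>r+1}(\mP^*)$, which is positive because $r\ge{\#\sigma_1}^{\max}$ forces $\sigma_1(\overbar{\P^*})>\sigma^{\max}_{r+1}(\overbar{\P^*})$ on the slice achieving the top singular value.

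The only genuinely new ingredient compared to the smooth case is bounding $\Vert\barP-\overbar{\P^*}\Vert_F$, where now $\mP-\mP^* = (\mX-\mX^*) - \eta(\nabla_{\mX}F(\mZ,\w)-\nabla_{\mX}F(\mX^*,\y^*))$. I would split the gradient difference through the intermediate point $(\mX^*,\w)$:
\begin{align*}
\Vert\nabla_{\mX}F(\mZ,\w)-\nabla_{\mX}F(\mX^*,\y^*)\Vert_F \le \Vert\nabla_{\mX}F(\mZ,\w)-\nabla_{\mX}F(\mX^*,\w)\Vert_F + \Vert\nabla_{\mX}F(\mX^*,\w)-\nabla_{\mX}F(\mX^*,\y^*)\Vert_F,
\end{align*}
and apply the smoothness bounds \eqref{def:betas}: the first term is at most $\beta_X\Vert\mZ-\mX^*\Vert_F$ and the second at most $\beta_{Xy}\Vert\w-\y^*\Vert_2$. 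Using $\Vert\mZ-\mX^*\Vert_F,\Vert\w-\y^*\Vert_2 \le \Vert(\mZ,\w)-(\mX^*,\y^*)\Vert$ and the elementary bound $a+b\le\sqrt 2\sqrt{a^2+b^2}$, this gives $\Vert\nabla_{\mX}F(\mZ,\w)-\nabla_{\mX}F(\mX^*,\y^*)\Vert_F \le \sqrt 2\max\{\beta_X,\beta_{Xy}\}\,\Vert(\mZ,\w)-(\mX^*,\y^*)\Vert$. Combining with \cref{lemma:orderPinnerProductEquivalence} (the factor $\sqrt N$ relating tensor and block-diagonal Frobenius norms), I obtain
\begin{align*}
\Vert\barP-\overbar{\P^*}\Vert_F \le \sqrt N\left(\Vert\mX-\mX^*\Vert_F + \sqrt 2\,\eta\max\{\beta_X,\beta_{Xy}\}\Vert(\mZ,\w)-(\mX^*,\y^*)\Vert\right)\le \sqrt N\,K\max\{\Vert\mX-\mX^*\Vert_F,\Vert(\mZ,\w)-(\mX^*,\y^*)\Vert\}
\end{align*}
with $K=1+\sqrt 2\,\eta\max\{\beta_X,\beta_{Xy}\}$, which is precisely the quantity appearing in \eqref{ineq:boundNonsmooth}.

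From here the argument is identical to the smooth case: I would run the two perturbation-bound estimates of the proof of \cref{lemma:RadiusForProjectedGradient_d_dim} — one via Ky Fan's inequality controlling the partial sums $\sum_{i=1}^{\kappa_{i_3,\ldots,i_d}}\sigma_i(\barP^{(i_3,\ldots,i_d)})$ from below, and one via Weyl's inequality controlling $\max_{i_3,\ldots,i_d}\sigma_{r+1}(\barP^{(i_3,\ldots,i_d)})$ from above — feeding in the bound on $\Vert\barP-\overbar{\P^*}\Vert_F$ just derived, and show that the positive slack at $\mP^*$ absorbs the perturbation error whenever $\Vert\barP-\overbar{\P^*}\Vert_F$ is at most the RHS of \eqref{ineq:boundNonsmooth} times $\sqrt N K$. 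Taking the maximum of the two resulting radius bounds (one using $\delta(r)$, the other using $\delta(r-{\#\sigma_1}^{\max}+1)$ with the extra $1/\sqrt{{\#\sigma_1}^{\max}}$ factor) gives exactly the stated bound. The main obstacle — really the only point requiring care — is keeping the bookkeeping of the two intertwined gradient differences ($\mX$ versus $\mZ$, and $\y^*$ versus $\w$) straight so that the constant $K$ comes out exactly as claimed; everything else transfers mechanically from the proof of \cref{lemma:RadiusForProjectedGradient_d_dim}, since the structural facts about $\mX^*$ used there (\cref{lemma:svd_opt_grad_d_dim}, \cref{lemma:iffLowRankfCondiotionOrderP}) depend only on first-order optimality, which \cref{lemma:connectionSubgradientNonsmoothAndSaddlePoint} supplies for $\nabla_{\mX}F(\mX^*,\y^*)$.
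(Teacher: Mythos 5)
Your proposal is correct and follows essentially the same route as the paper: split the gradient difference through the intermediate point $(\mX^*,\w)$, apply the componentwise smoothness constants $\beta_X,\beta_{Xy}$, pass to the product-space norm via $a+b\le\sqrt{2}\sqrt{a^2+b^2}$ to get the constant $K=1+\sqrt{2}\eta\max\{\beta_X,\beta_{Xy}\}$, and then feed $\Vert\barP-\overbar{\P^*}\Vert_F \le \sqrt{N}K\max\{\Vert\mX-\mX^*\Vert_F,\Vert(\mZ,\w)-(\mX^*,\y^*)\Vert\}$ into the two perturbation estimates (Ky Fan and Weyl) from the proof of Theorem~\ref{lemma:RadiusForProjectedGradient_d_dim}, using Lemma~\ref{lemma:connectionSubgradientNonsmoothAndSaddlePoint} to justify applying Lemma~\ref{lemma:svd_opt_grad_d_dim} with $\nabla_{\mX}F(\mX^*,\y^*)$ in place of $\nabla f(\mX^*)$. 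The paper's proof does exactly this, merely working with the Fourier-domain block-diagonal norms throughout rather than converting at the end; the clause about $\Vert\mZ-\mX^*\Vert_F,\Vert\w-\y^*\Vert_2\le\Vert(\mZ,\w)-(\mX^*,\y^*)\Vert$ in your write-up is superfluous (the $\sqrt{2}$ comes from the $a+b\le\sqrt{2}\sqrt{a^2+b^2}$ step alone), but the bound you obtain is the correct one.
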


The proof of \cref{lemma:RadiusForExtragradient_d_dim} follows similar arguments to those used in the proof of  \cref{lemma:RadiusForProjectedGradient_d_dim} for the smooth setting and is given in  \cref{sec:App:proofLemmaNonsmooth}.

\begin{algorithm}
	\caption{Projected extragradient descent for saddle-point problems}\label{alg:EG}
	\begin{algorithmic}
		\STATE \textbf{Input:} step-size $\eta\ge0$
		\STATE \textbf{Initialization:} $(\mX_1,\y_1)\in\lbrace\reals^{n_1\times\cdots\times n_d}\ \vert\ \Vert\mX\Vert_*\le1\rbrace\times\mathcal{K}$
		\FOR {$t=1,2,\ldots$}
			\STATE $\mZ_{t+1} = \Pi_{\lbrace\Vert{\mY}\Vert_*\leq 1\rbrace}[\mX_t-\eta\nabla_{\mX}F(\mX_t,\y_t)]$
			\STATE $\w_{t+1} = \Pi_{\mathcal{K}}[\y_t-\eta\nabla_{\y}F(\mX_t,\y_t)]$
			\STATE $\mX_{t+1} = \Pi_{\lbrace\Vert{\mY}\Vert_*\leq 1\rbrace}[\mX_t-\eta\nabla_{\mX}F(\mZ_{t+1},\w_{t+1})]$
			\STATE $\y_{t+1} = \Pi_{\mathcal{K}}[\y_t-\eta\nabla_{\y}F(\mZ_{t+1},\w_{t+1})]$
		\ENDFOR
	\end{algorithmic}
\end{algorithm}

Applying the result of \cref{lemma:RadiusForExtragradient_d_dim} to the projected extragradient decent method (\cref{alg:EG}), we have that t if the method is initialized within distance $R_0(r)/(1+\sqrt{2})$ from a saddle-point. where $R_0(r)$ is the bound in \eqref{ineq:boundNonsmooth}, then the method converges to a saddle-point with its well known rate: $$\frac{1}{T}\sum_{t=1}^{T}\max_{\y\in\mK}F(\mZ_{t+1},\y)-\frac{1}{T}\sum_{t=1}^{T}\min_{\mX\in\lbrace\mX\ \vert\ \Vert\mX\Vert_*\le1\rbrace}F(\mX,\w_{t+1})=\mathcal{O}(1/T),$$ while only requiring matrix SVDs of rank $r$ for computing the projections onto the unit TNN ball. 

Returning back to the original nonsmooth Problem \eqref{mainModel}, by using the relationship $f(\mX) = \max_{\y\in\mK} F(\mX,\y)$, we can observe that an approximated saddle-point of Problem \eqref{saddlePointProblem} translates back into an approximated optimal solution to Problem \eqref{mainModel}, in the sense that the following convegence rate holds w.r.t. Problem \eqref{mainModel}:
$$\min_{t\in[T]}f(\mZ_{t+1})-f(\mX^*)=\mathcal{O}(1/T).$$
The formal result and proof is given in \cref{sec:algorithmicConsequencesNonmooth}.

\section{Empirical Evidence}
\label{sec:experiments}
In this section we present some empirical evidence in support of our theoretical findings. We consider two tasks: low-rank tensor completion, which can be formulated as an instance of Problem \eqref{mainModel} with smooth $f$, and tensor robust principal component analysis, which can  be formulated as an instance of Problem \eqref{mainModel} with nonsmooth $f$. For both tasks we show that in plausible scenarios in which the relaxation \eqref{mainModel} indeed approximately recovers a low tubal rank ground-truth tensor with small error, strict complementarity  holds. Moreover, using simple initializations schemes is sufficient to initialize standard first-order methods so that only low-rank SVDs are required to compute exact projections onto the TNN ball, from very early stages of their run, which is in accordance with our theoretical findings in \cref{lemma:RadiusForProjectedGradient_d_dim} and \cref{lemma:RadiusForExtragradient_d_dim}. For the smooth tensor completion task we also demonstrate linear convergence rates which is in accordance with the quadratic growth result in \cref{thm:QG}.

For both tasks we denote by $\mM$ the ground-truth tensor to be recovered from a partially observed or noisy tensor. We measure the relative initialization error by $\Vert(\Vert\mM\Vert_*/\tau)\mX_1-\mM\Vert_F^2/\Vert\mM\Vert_F^2$, where $\mX_1$ is the initialization tensor and $\tau$ is the radius of the TNN ball, and the relative recovery error by $\Vert(\Vert\mM\Vert_*/\tau)\mX^*-\mM\Vert_F^2/\Vert\mM\Vert_F^2$, where $\mX^*$ is the best estimate for the optimal solution outputted by the optimization algorithm.

\subsection{Tensor completion}
We consider the low-rank tensor completion problem from \cite{tensor1_krank} which can be written as:
\begin{align*}
\min_{\Vert\mX\Vert_*\le\tau}\left\{f(\mX):=\frac{1}{2}\sum_{ (i_1,\ldots,i_d)\in\mathcal{S}}(\langle\mX,\mE_{i_1,\ldots,i_d}\rangle-\mM_{i_1,\ldots,i_d})^2\right\},
\end{align*}
where $\mathcal{S}\subset[i_1]\times\cdots\times[i_d]$ is the set of indexes of the known entries of $\mM$, and $\mE_{i_1,\ldots,i_d}\in\reals^{n_1\times\cdots\times n_d}$ denotes the tensor
whose components are all zero except for the entry $(i_1,\ldots,i_d)$ which is equal to one.

We set $\mM=\mG\times_1\A_1\times_2\cdots\times_{\textnormal{d}}\A_d$ where the core tensor $\mG\in\reals^{r\times\cdots\times r}$ and all $\A_j\in\reals^{n_j\times r}$ are chosen randomly with normal entries $\mathcal{N}(0,1)$, and the operator $\times_j$ is the $j$-mode product of a tensor \cite{kolda} which we calculate using the code of \cite{nModeProuctCode}. Using this construction we have that $\rank_{\textnormal{t}}(\mM)=r$  with probability 1. We denote by $\rho$ to be the probability of each entry to be observed. 

For the initialization we take $\mX_1$ to be the rank-$r$ truncated projection of the tensor $\mR$ onto the TNN ball of radius $\tau$ (\cref{def:lowRankProjection}), where $\mR\in\reals^{n_1\times\cdots\times n_d}$ is such that 
\begin{align*}
\mR_{i_1,\ldots,i_d} = \bigg\lbrace\begin{array}{ll}
 \mM_{i_1,\ldots,i_d}, & \textrm{if}\ (i_1,\ldots,i_d)\in\mathcal{S}
\\ 0, & \textrm{if}\ (i_1,\ldots,i_d)\not\in\mathcal{S}
\end{array}.
\end{align*}

We test the model using the FISTA algorithm \cite{fista} and we use the t-product toolbox \cite{tensorToolbox} for some of the tensor operations. We set the step-size to $\eta=1$, which is the theoretical step-size and also the step-size that performed best empirically, the number of iterations in each experiment to $T=800$, and $\tau=0.7 \Vert\mM\Vert_*$. For each value of $r$ and $\rho$ we average the measurements over $10$ i.i.d. runs.

To verify whether the obtained solution at each run $\mX^*$ is indeed close to optimal we compute the corresponding dual-gap, which due to the convexity of $f$ is an upper bound on the approximation error w.r.t. function value, and is given by 
\begin{align} \label{eq:dualGap}
\max_{\Vert\mZ\Vert_*\le\tau}\langle\mX^*-\mZ,\nabla{}f(\mX^*)\rangle=\langle\mX^*+\ifft(\bdiag^{-1}((\tau N/\#\sigma_1(\overbar{\nabla{}f(\mX^*)}))\barU\barV^{\top})),\nabla{}f(\mX^*)\rangle,
\end{align} 
 where $\bdiag^{-1}(\cdot)$ is the inverse operator of $\bdiag(\cdot)$ ,and  $\barU$,$\barV$ are the block diagonal matrices such that for all $i_3\in[n_3],\ldots,i_d\in[n_d]$, the frontal slices $\barU^{(i_3,\ldots,i_d)}$ and ${\barV^{(i_3,\ldots,i_d)}}$ which are placed as blocks on the diagonals of $\barU$ and $\barV$, respectively, are all zero except for the columns that correspond to $\sigma_1(\overbar{\nabla{}f(\mX^*)})$ in the SVD of $\overbar{\nabla{}f(\mX^*)}$, in which case we take the corresponding singular vectors of $\overbar{\nabla{}f(\mX^*)}$ (left singular vectors for $\overbar{\U}$ and right singular vectors for $\overbar{\V}$) . It can be seen that the tensor $\bdiag^{-1}((\tau N/\#\sigma_1(\overbar{\nabla{}f(\mX^*)}))\barU\barV^{\top})$ satisfies the conjugate-complex symmetry conditions \eqref{eq:conjugateComplexSymmetryCondition}, and so in particular, the tensor $\ifft(\bdiag^{-1}(\tau N/\#\sigma_1(\overbar{\nabla{}f(\mX^*)}))\barU\barV^{\top}))$ is a maximizer of the inner product which is also guaranteed to be real-valued.

As can be seen in \cref{table:TC}, the recovery error is indeed significantly lower than the initial error. Also, it can be seen that our simple initialization scheme is enough so that the low tubal rank projections, with tubal rank $\leq r = \rank_t(\mM)$, are equal to the corresponding full-rank projections starting from very early stages of the run, which we verified by checking in each iteration whether the condition
\eqref{ineq:conditionOfLowTubalRank} held or not. In addition, it can be seen that strict complementarity indeed seems to hold with a significant measure, which we calculated using: 
\begin{align} \label{eq:strictCompExp}
\min_{i_3\in[n_3],\ldots,i_d\in[n_d]}\sigma_{1}(\overbar{\nabla{}f(\mX^*)}^{(i_3,\ldots,i_d)})-\sigma_{r_{i_3,\ldots,i_d}+1}(\overbar{\nabla{}f(\mX^*)}^{(i_3,\ldots,i_d)}),
\end{align}
where $r_{i_3,\ldots,i_d}=\rank(\overbar{\mX^*}^{(i_3,\ldots,i_d)})$ (recall that according to \cref{lemma:svd_opt_grad_d_dim}, for an exact optimal solution $\sigma_1(\overbar{\nabla{}f(\mX^*)}^{(i_3,\ldots,i_d)})$ is the same for all nonzero frontal slices, and so,  Eq. \eqref{eq:strictCompExp} is a lower bound on the gap as defined in \cref{lemma:SCequivalence}).

In \cref{fig:tableTC} we plot the approximation error in function value w.r.t. the obtained solution $\mX^*$ and the recovery error (both in log scale). Since for all instances the convergence is very rapid, for clarity, we plot only the first 50 iterations.
We observe that in all cases FISTA indeed seems to converge with a linear rate w.r.t. function value, which is in accordance with our quadratic growth result from \cref{thm:QG}. 

\begin{table*}[!htb]\renewcommand{\arraystretch}{1.3}
{\footnotesize
\begin{center}
  \begin{tabular}{| l | c | c | c | c | c | c |} \hline 
& $r=2$, $ \rho=0.6$ & $r=8$, $ \rho=0.6$ &  $r=2$, $ \rho=0.3$ \\
& $\mM\in\reals^{50\times 50\times 50}$ & $\mM\in\reals^{50\times 50\times 50}$ & $\mM\in\reals^{50\times 50\times 50}$ \\ \hline
  initialization error & $0.1521$ & $0.2408$ & $0.5222$ \\ \hline
    recovery error & $0.0561$  & $0.0625$ & $0.0662$ \\ \hline
    dual gap & $1.9\times{10}^{-9}$ & $2.8\times{10}^{-8}$ & $1.4\times{10}^{-9}$ \\ \hline
  strict complementarity (Eq. \eqref{eq:strictCompExp}) & $4.4627$ & $5.2455$ & $1.8960$  \\ \hline 
%
    first iteration from which  all    & $2.8$ & $4.1$ & $8.8$ \\
    projections are of tubal rank $\leq r$ & & &   \\ \hline
& $r=4$, $ \rho=0.4$ & $r=4$, $ \rho=0.4$ &  $r=8$, $ \rho=0.6$ \\
& $\mM\in\reals^{50\times 50\times 50}$ & $\mM\in\reals^{50\times 50\times 50\times 50}$ & $\mM\in\reals^{50\times 50\times 50\times 50}$ \\ \hline
  initialization error & $0.4223$ & $0.4135$ & $0.2339$ \\ \hline
    recovery error & $0.0669$ & $0.0723$ & $0.0659$ \\ \hline
    dual gap & $1.0\times{10}^{-8}$  & $5.7\times{10}^{-6}$ & $4.7\times{10}^{-5}$ \\ \hline
  strict complementarity (Eq. \eqref{eq:strictCompExp}) & $2.9049$ & $1.2415$ & $3.1019$ \\ \hline  
%
    first iteration from which  all  & $7.4$ & $7.8$ & $4.1$ \\ 
   projections are of tubal rank $\leq r$ & & &   \\ \hline
   \end{tabular}
  \caption{Numerical results for the tensor completion problem. Each result is the average of 10 i.i.d. runs.
  }\label{table:TC}
\end{center}
}
\vskip -0.2in
\end{table*}\renewcommand{\arraystretch}{1}

\begin{figure}[H] 
\centering
\begin{subfigure}{0.46\textwidth}
  \begin{subfigure}{0.5\textwidth}
    \includegraphics[width=\textwidth]{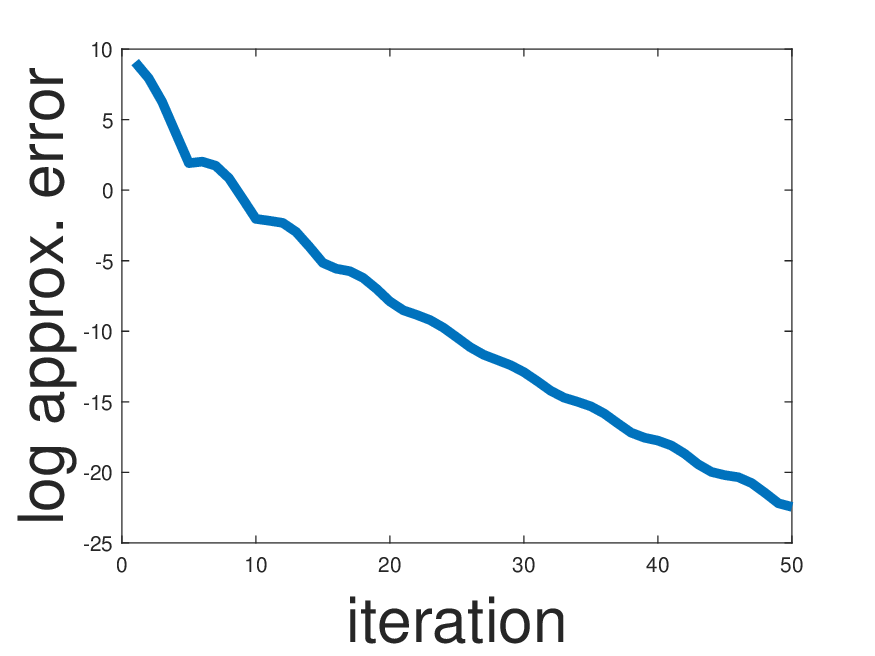}   
  \end{subfigure}\hfil
    \begin{subfigure}{0.5\textwidth}
    \includegraphics[width=\textwidth]{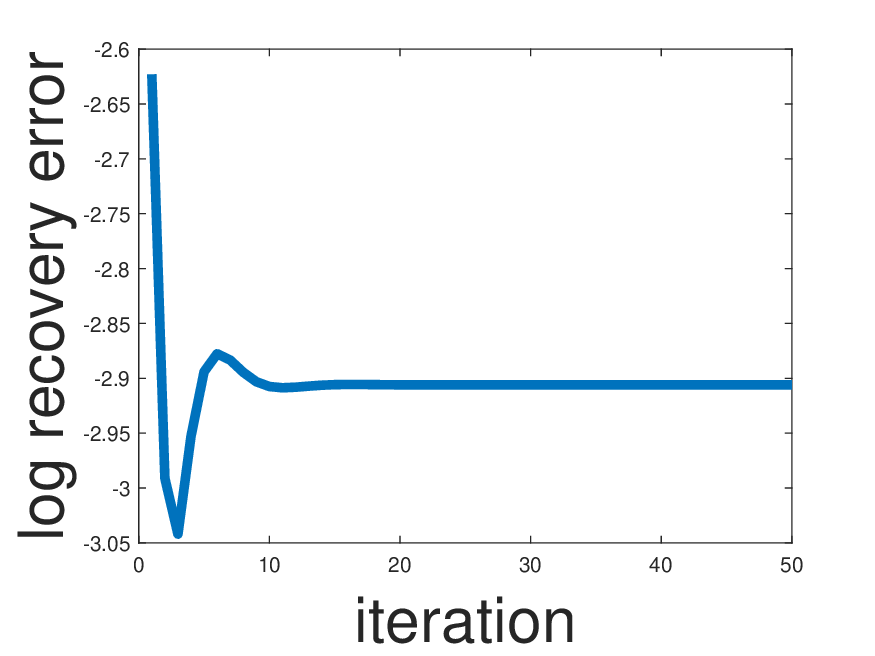} 
  \end{subfigure}
    \caption*{$r=2$, $ \rho=0.6$, $\mM\in\reals^{50\times 50\times 50}$}
\end{subfigure}\hfill
\begin{subfigure}{0.46\textwidth}
    \begin{subfigure}{0.5\textwidth}
    \includegraphics[width=\textwidth]{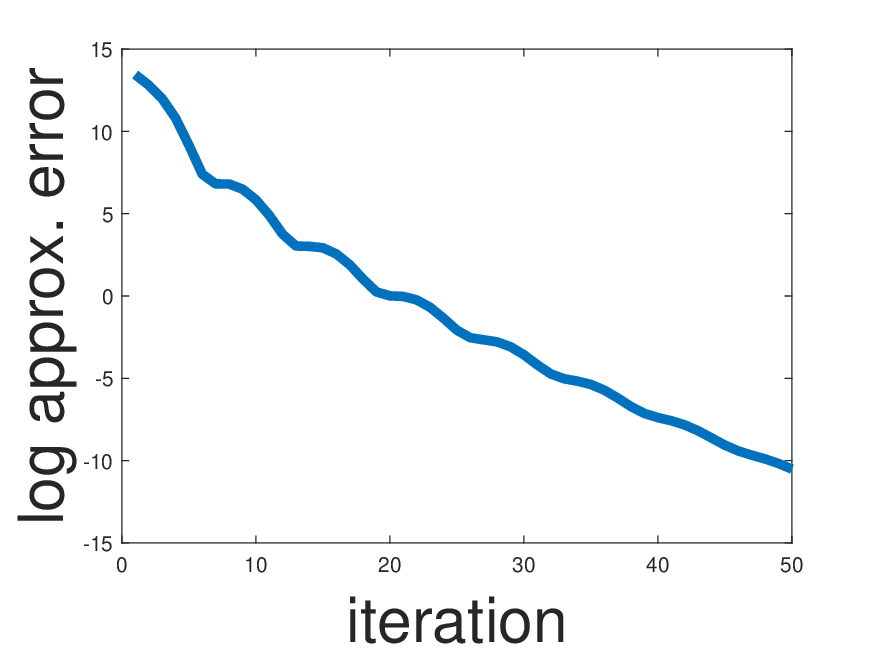}   
  \end{subfigure}\hfil
    \begin{subfigure}{0.5\textwidth}
    \includegraphics[width=\textwidth]{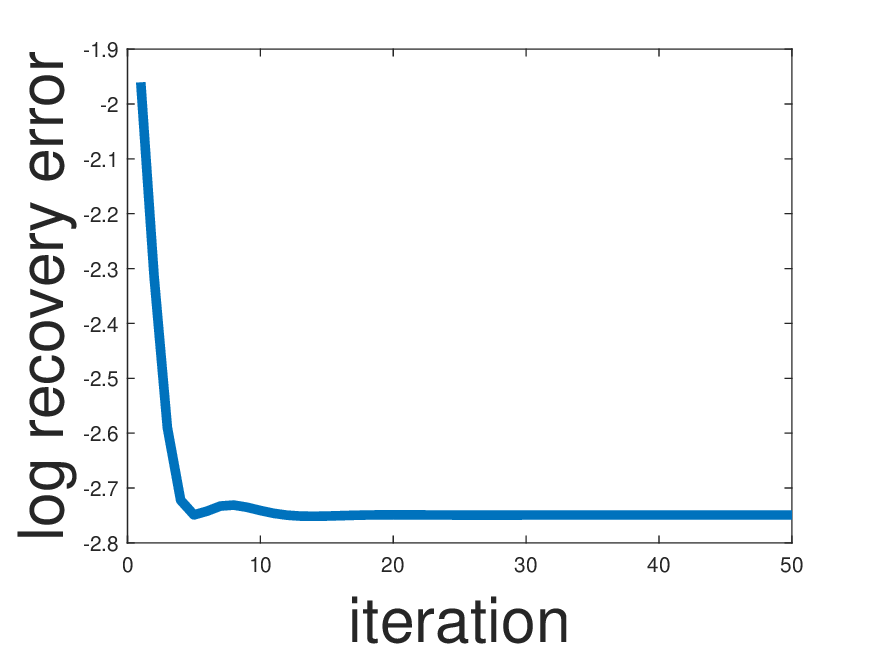} 
  \end{subfigure}
    \caption*{$r=8$, $ \rho=0.6$, $\mM\in\reals^{50\times 50\times 50}$}
     \end{subfigure}
%
    \medskip
    \begin{subfigure}{0.46\textwidth}
  \begin{subfigure}{0.5\textwidth}
    \includegraphics[width=\textwidth]{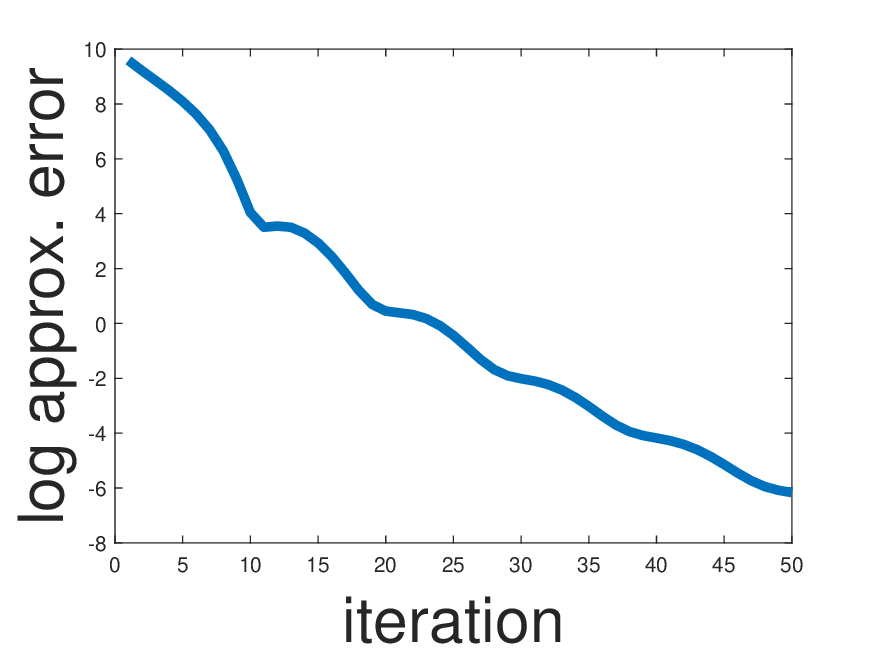}   
  \end{subfigure}\hfil
    \begin{subfigure}{0.5\textwidth}
    \includegraphics[width=\textwidth]{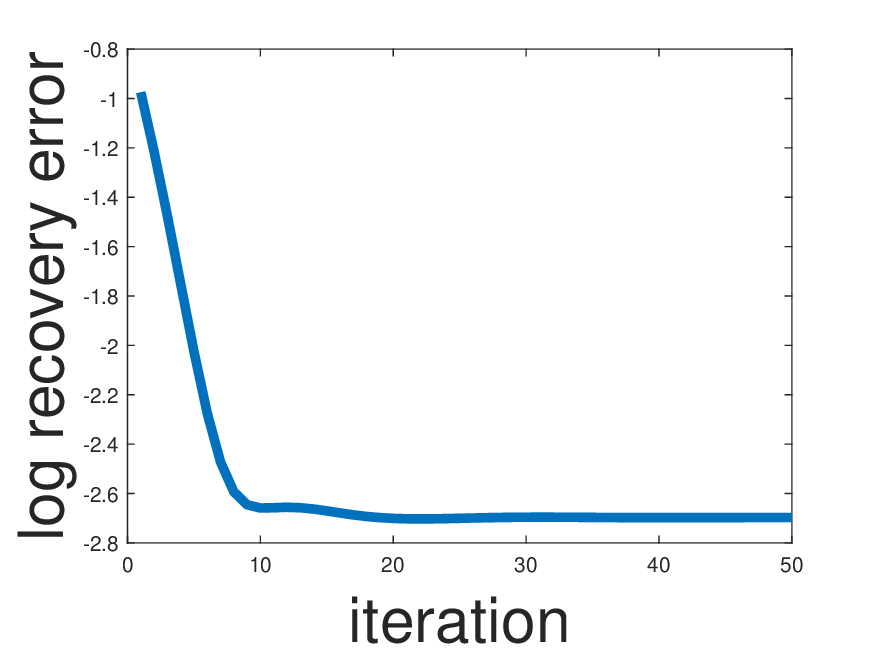} 
  \end{subfigure}
    \caption*{$r=2$, $ \rho=0.3$, $\mM\in\reals^{50\times 50\times 50}$}
\end{subfigure}\hfill
\begin{subfigure}{0.46\textwidth}
    \begin{subfigure}{0.5\textwidth}
    \includegraphics[width=\textwidth]{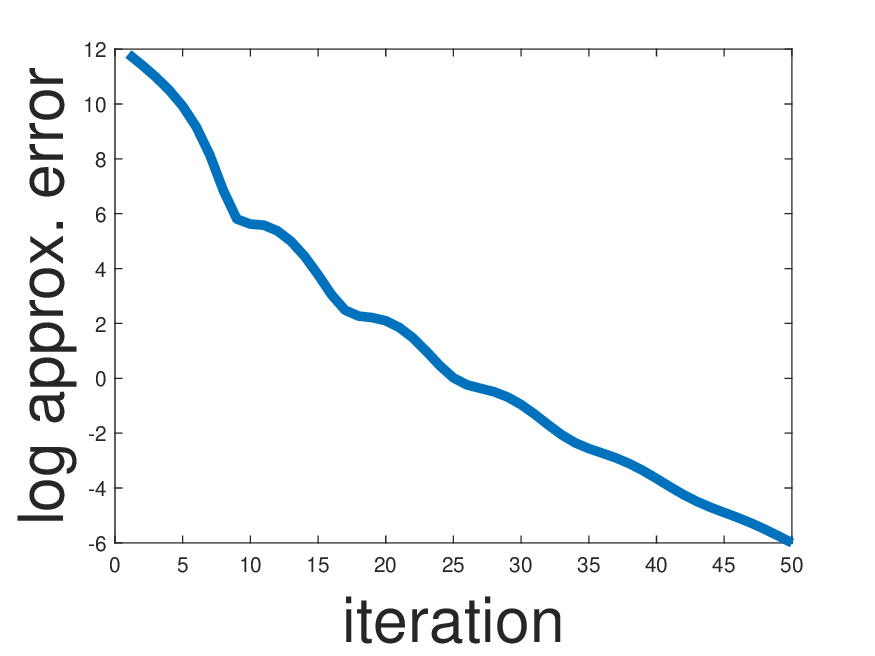}   
  \end{subfigure}\hfil
    \begin{subfigure}{0.5\textwidth}
    \includegraphics[width=\textwidth]{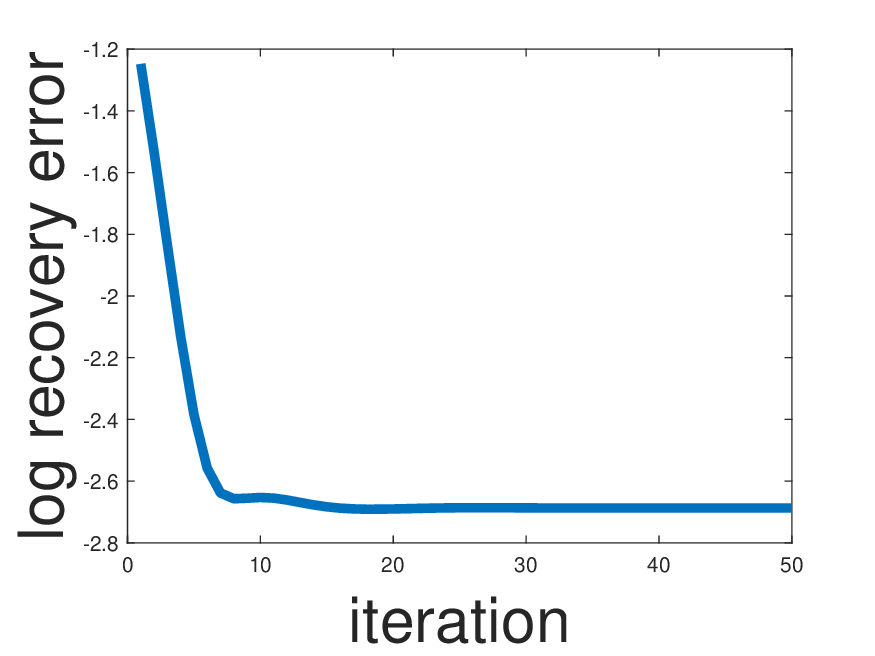} 
  \end{subfigure}
    \caption*{$r=4$, $ \rho=0.4$, $\mM\in\reals^{50\times 50\times 50}$}
     \end{subfigure}
%
    \medskip
    \begin{subfigure}{0.46\textwidth}
  \begin{subfigure}{0.5\textwidth}
    \includegraphics[width=\textwidth]{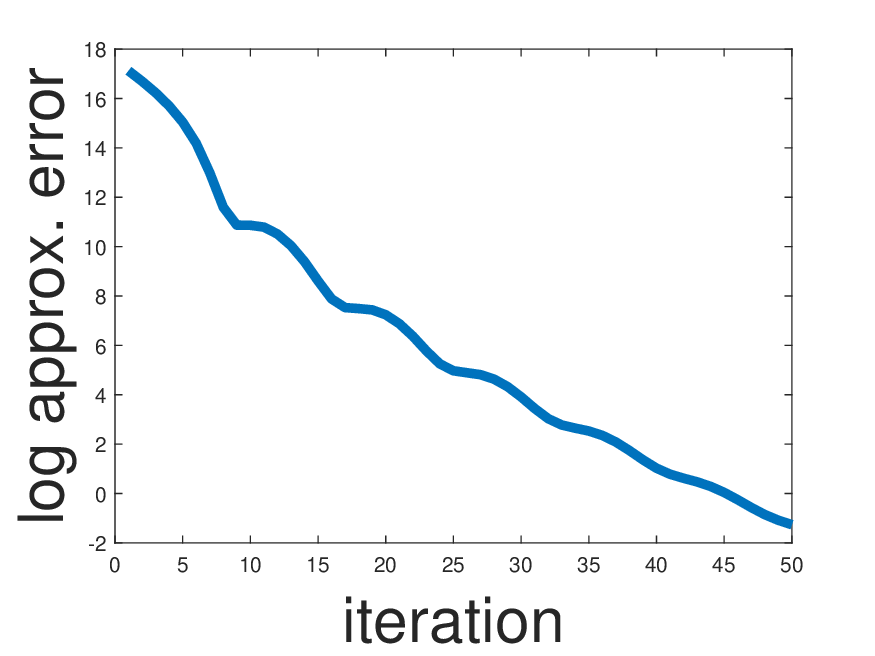}   
  \end{subfigure}\hfil
    \begin{subfigure}{0.5\textwidth}
    \includegraphics[width=\textwidth]{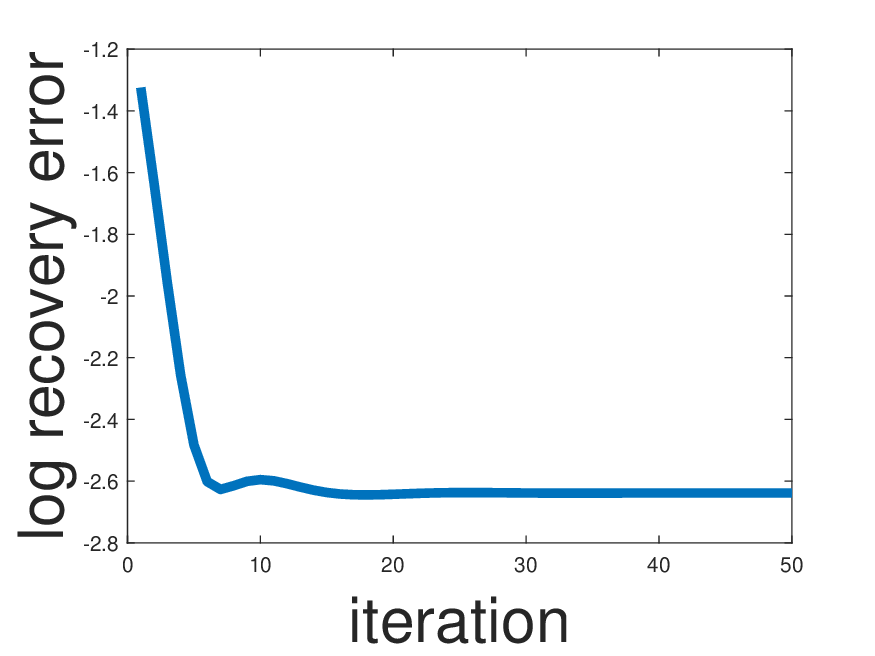} 
  \end{subfigure}
    \caption*{$r=4$, $ \rho=0.4$, $\mM\in\reals^{50\times 50\times 50\times 50}$}
\end{subfigure}\hfill
\begin{subfigure}{0.46\textwidth}
    \begin{subfigure}{0.5\textwidth}
    \includegraphics[width=\textwidth]{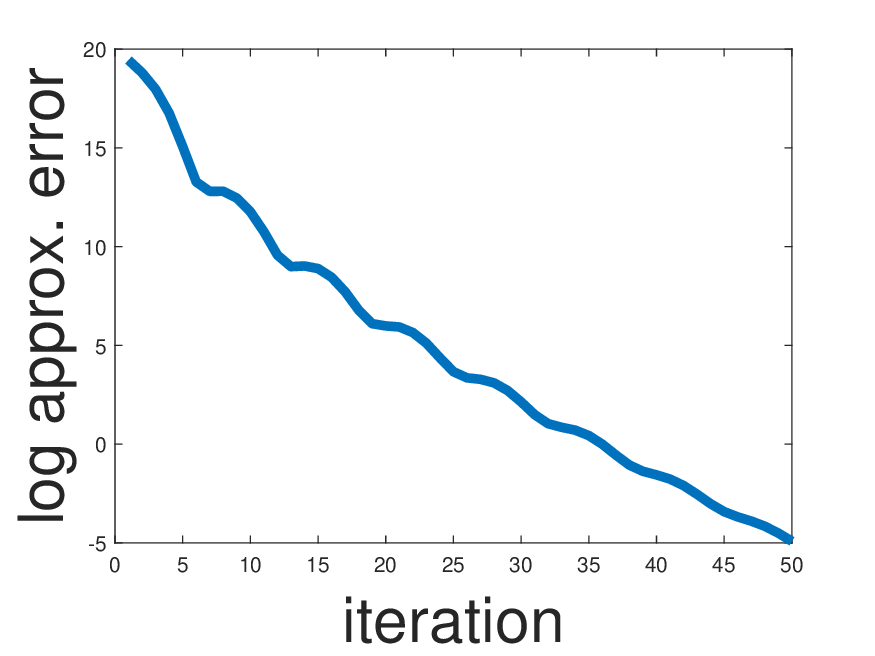}   
  \end{subfigure}\hfil
    \begin{subfigure}{0.5\textwidth}
    \includegraphics[width=\textwidth]{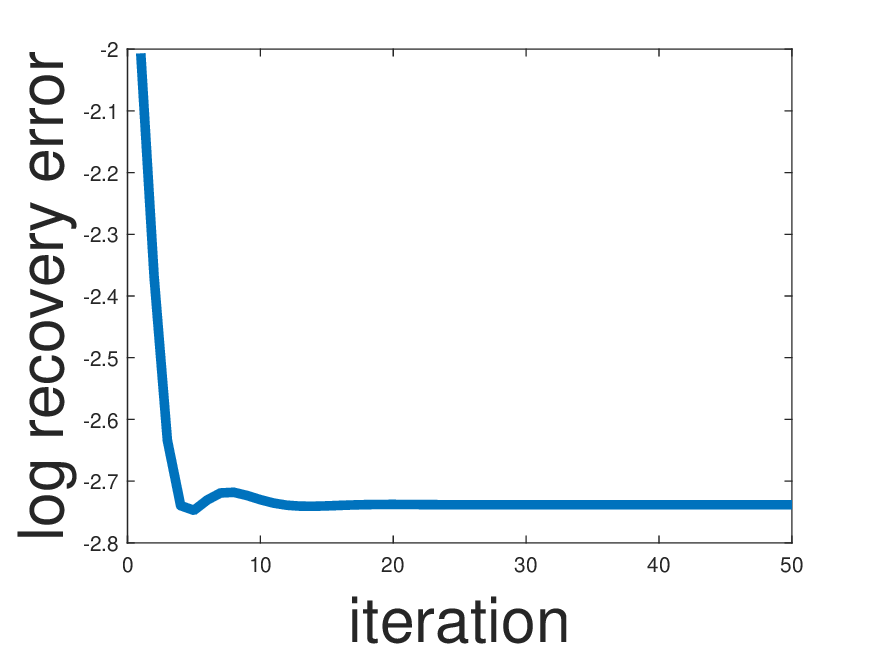} 
  \end{subfigure}
    \caption*{$r=8$, $ \rho=0.6$, $\mM\in\reals^{50\times 50\times 50\times 50}$}
     \end{subfigure}
  \caption{Approximation error w.r.t. function value and recovery error (in log scale) for the tensor completion problem. Each graph is the average of 10 i.i.d. runs.
  } \label{fig:tableTC}
\end{figure}

\subsection{Tensor robust PCA}
We consider the tensor robust PCA problem in the following formulation:
\begin{align*}
\min_{\Vert\mX\Vert_*\le\tau}\left\{f(\mX):=\Vert\mX-\widetilde{\mM}\Vert_1\right\} = \min_{\Vert\mX\Vert_*\le\tau}\max_{\Vert\mY\Vert_{\infty}\le1}\left\{F(\mX,\mY):=\langle\mX-\widetilde{\mM},\mY\rangle\right\}.
\end{align*}
Following the experiments  in \cite{tensor_tSVD} we set $\widetilde{\mM}=\mM+\mN$, where $\mM = \mP*\mQ^{\top}$ and $\mP,\mQ\in\reals^{n\times r\times n}$ are  such that all entries are chosen randomly from $\mathcal{N}(0,1/n)$, and $\mN\in\reals^{n\times n\times n}$ is  such that each entry is  0 with probability $1-m$ and otherwise it is    a Rademacher random variable (i.e., $\pm$1 with probability $1/2$). 

For the initialization we take $\mX_1$ to be the rank-$r$ truncated projection of the tensor $\widetilde{\mM}$ onto the TNN ball of radius $\tau$ (\cref{def:lowRankProjection}), and we set $\mY_1=\sign(\mX_1-\widetilde{\mM})$.

We test the model using the projected extragradient method (\cref{alg:EG}) and use the t-product toolbox \cite{tensorToolbox} for some of the tensor operations. We set the step-size to $\eta=1$, which gave the best empirical performance,  the number of iterations in each experiment to $T=10,000$, and $\tau=0.75 \Vert\mP*\mQ^{\top}\Vert_*$. For each value of $r=\rank_{\textnormal{t}}(\mP*\mQ^{\top})$ and $m$ we average the measurements over $10$ i.i.d. runs.

We choose our candidate for the optimal solution to be the iterate with the lowest dual-gap, which here also is an upper-bound on the approximation error. For saddle-point problems the dual-gap at a point $(\widehat{\mZ},\widehat{\mW})$ is calculated as $\max_{\Vert\mX\Vert_*\le\tau}\langle\widehat{\mZ}-\mX,\nabla_{\mX}F(\widehat{\mZ},\widehat{\mW})\rangle-\min_{{\Vert\mY\Vert_{\infty}\le1}}\langle\widehat{\mW}-\mY,\nabla_{\mY}F(\widehat{\mZ},\widehat{\mW})\rangle$ (see for instance Appendix E in \cite{ourExtragradient}). The maximizer of the first term over the TNN is computed as described in \eqref{eq:dualGap}, and the minimizer of the second term over the $\ell_{\infty}$ ball is the tensor such that $\mY(i_1,\ldots,i_d)=\sign(\nabla_{\mY}F(\widehat{\mZ},\widehat{\mW})(i_1,\ldots,i_d))$ for every $i_1\in[n_1],\ldots,i_d\in[n_d]$.

As can be seen in \cref{table:robustPCArank10}, the model returns a solution with significantly lower recovery error than that of the initialization. The dual gap is fairly large in comparison to the previous tensor completion task which is because the signal-to-noise ratio for the robust PCA task is significantly smaller due the magnitude of the noise and so the dual gap converges much slower. Nevertheless, Figure \ref{table:robustPCArank10} provides evidence for the convergence of the method. It can be seen in \cref{table:robustPCArank10} that the measure of strict complementarity for this task,  which was measured using 
\begin{align} \label{eq:strictCompExpNonsmooth}
\min_{i_3\in[n_3],\ldots,i_d\in[n_d]}\sigma_{1}(\overbar{\nabla_{\mX}F(\mX^*,\mY^*)}^{(i_3,\ldots,i_d)})-\sigma_{r_{i_3,\ldots,i_d}+1}(\overbar{\nabla_{\mX}F(\mX^*,\mY^*)}^{(i_3,\ldots,i_d)}),
\end{align}
where $r_{i_3,\ldots,i_d}=\rank(\overbar{\mX^*}^{(i_3,\ldots,i_d)})$, is positive for all instances and significantly larger than in the tensor completion task. Finally, we observed that in all instances the condition  \eqref{ineq:conditionOfLowTubalRank} held starting from the very first iteration w.r.t. to rank parameter $r$ for both types of primal projected gradient mappings applied in \cref{alg:EG} (i.e., the projections of
$\mX_t-\eta\nabla{}_{\mX}F(\mX_t,\mY_t)$ and $\mX_t-\eta\nabla{}_{\mX}F(\mZ_{t+1},\mW_{t+1})$), which implies 
that throughout the run w.r.t. all instances, all projections onto the TNN ball have tubal rank at most $r$.


In \cref{fig:table1} we plot the function value $f$ and the recovery error (in log scale) w.r.t. to the ergodic series $(1/t)\sum_{i=1}^{t}\Z_{t+1}, t\geq 1$. 

\begin{table*}[!htb]\renewcommand{\arraystretch}{1.3}
{\footnotesize
\begin{center}
  \begin{tabular}{| l | c | c | c | c | c | c | c |  c | } \hline 
   dimension (n) & $100$ & $200$ & $100$ & $200$  \\ \hline 
   & \multicolumn{2}{c|}{$r=0.05n$,} & \multicolumn{2}{c|}{$r=0.05n$,}  \\
   & \multicolumn{2}{c|}{$m=0.05n^3$} & \multicolumn{2}{c|}{$m=0.1n^3$}  \\ \hline
 
  initialization error & $3.6926$ & $5.0538$ & $4.2742$ & $5.9716$ \\ \hline
    recovery error & $0.0092$ & $0.0081$ & $0.0105$ & $0.0096$ \\ \hline
    dual gap & $6.0068$ & $64.1793$ & $5.6119$ & $60.6132$ \\ \hline
  strict complementarity (Eq. \eqref{eq:strictCompExpNonsmooth}) & $208.9066$ & $417.8775
$ & $179.4134$ & $357.6175$ \\ \hline 
    first iteration from which all primal    & $1$ & $1$ & $1$ & $1$ \\
    projections are of tubal rank $\leq r$ & & &  & \\ \hline


& \multicolumn{2}{c|}{$r=0.1n$,} & \multicolumn{2}{c|}{$r=0.1n$,} \\
& \multicolumn{2}{c|}{$m=0.1n^3$} & \multicolumn{2}{c|}{$m=0.2n^3$} \\ \hline

  initialization error & $5.3045$ & $7.3978$ & $6.2372$ & $8.9005$ \\ \hline
    recovery error & $0.0197$ & $0.0195$ & $0.0300$ & $0.0298$ \\ \hline
    dual gap & $4.8666$ & $53.8225$ & $4.3199$ & $47.9408$  \\ \hline
  strict complementarity (Eq. \eqref{eq:strictCompExpNonsmooth}) & $81.6385$ & $169.9079$ & $34.1120$ & $71.8532
$ \\ \hline 
    first iteration from which all primal    & $1$ & $1$ & $1$ & $1$ \\
    projections are of tubal rank $\leq r$ & & &  & \\ \hline

   \end{tabular}
  \caption{Numerical results for the tensor robust PCA problem. Each result is the average of 10 i.i.d. runs.
  }\label{table:robustPCArank10}
\end{center}
}
\vskip -0.2in
\end{table*}\renewcommand{\arraystretch}{1}

\begin{figure}[H] 
\centering
\begin{subfigure}{0.48\textwidth}
  \begin{subfigure}{0.5\textwidth}
    \includegraphics[width=\textwidth]{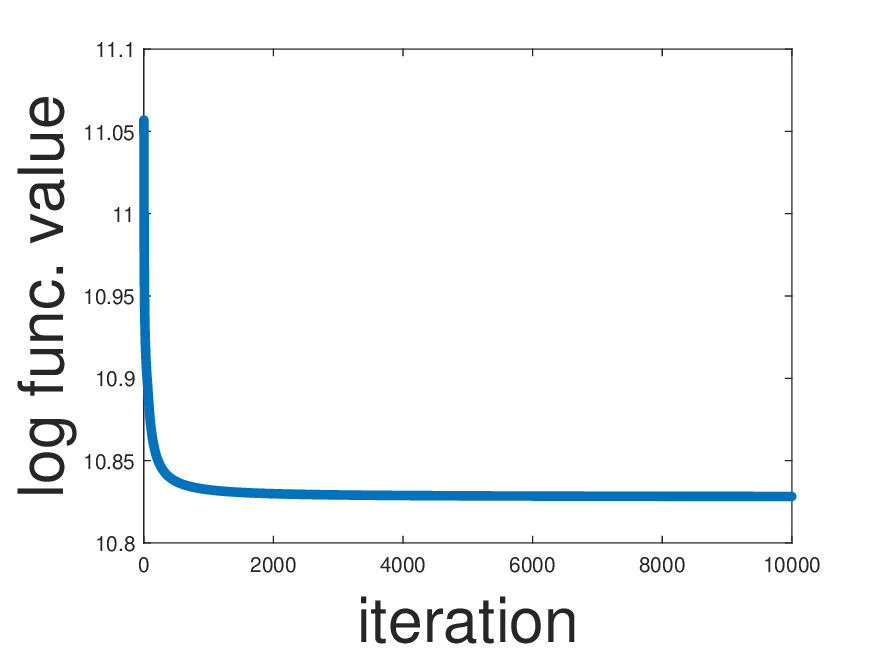}   
  \end{subfigure}\hfil
      \begin{subfigure}{0.5\textwidth}
    \includegraphics[width=\textwidth]{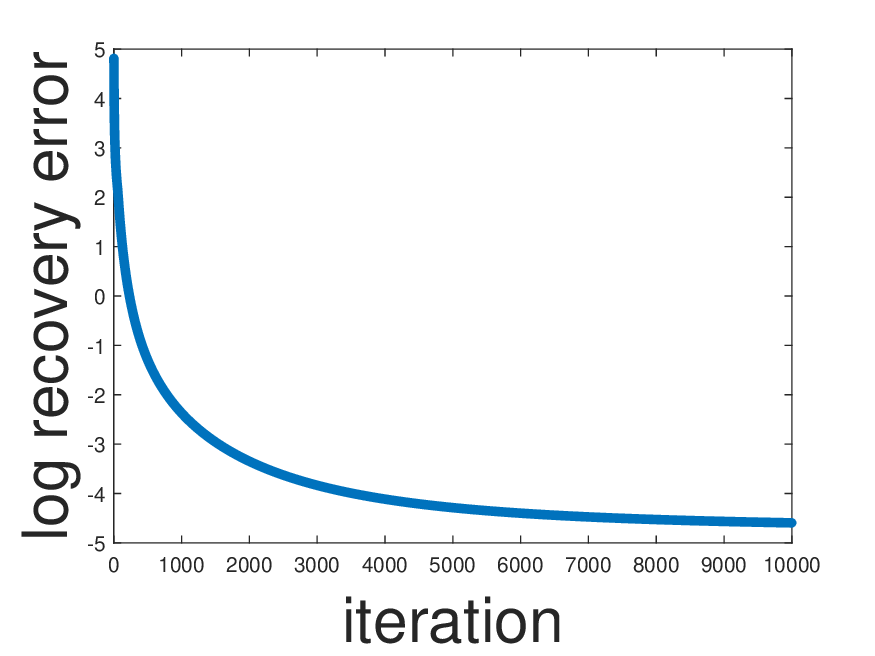} 
  \end{subfigure}\hfil
    \caption*{$n=100$}
\end{subfigure}\hfill
\begin{subfigure}{0.48\textwidth}
  \begin{subfigure}{0.5\textwidth}
    \includegraphics[width=\textwidth]{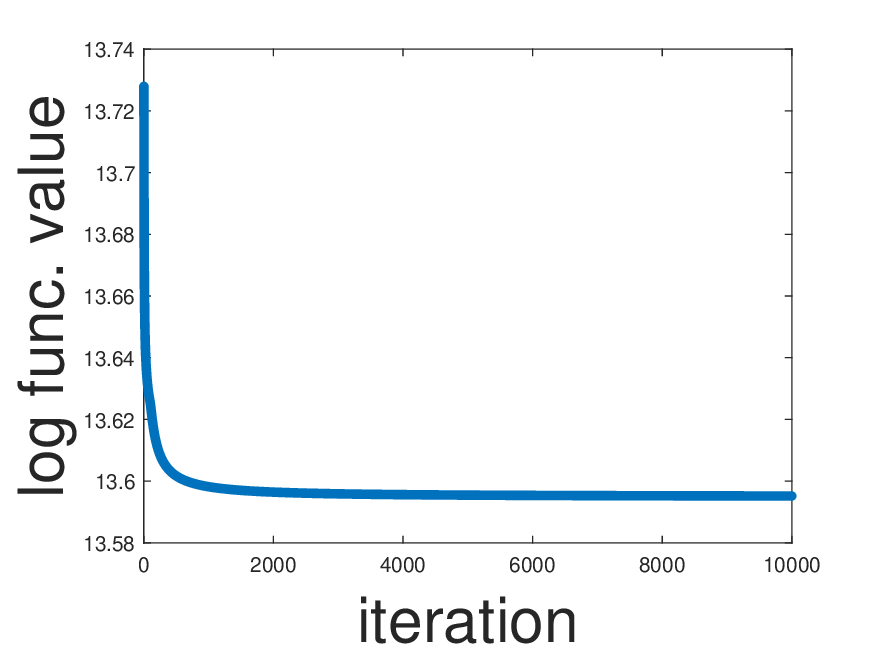}   
  \end{subfigure}\hfil
    \begin{subfigure}{0.5\textwidth}
    \includegraphics[width=\textwidth]{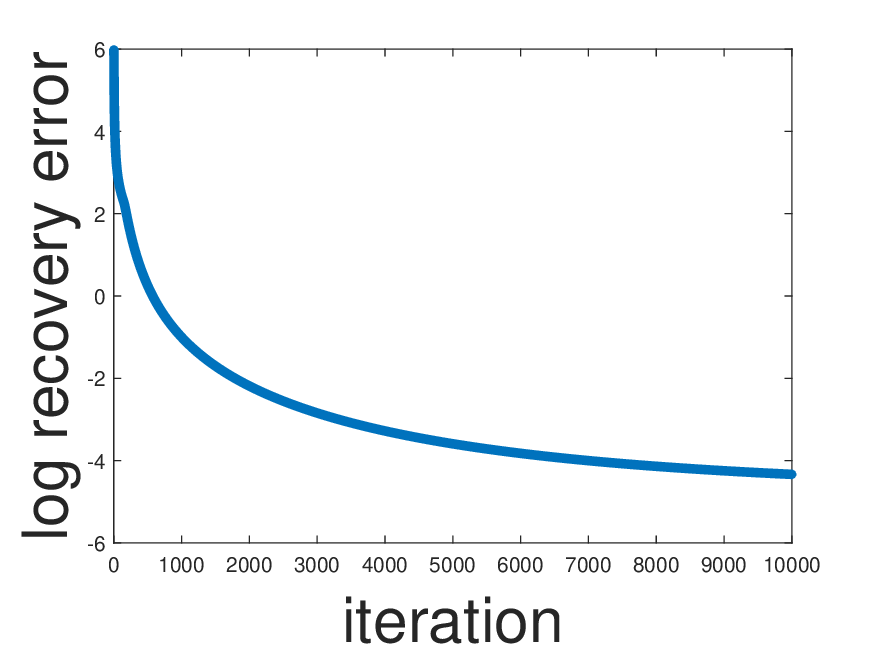} 
  \end{subfigure}\hfill
    \caption*{$n=200$}
     \end{subfigure}
         \caption*{$r=0.05n$, $m=0.05n^3$}
    \medskip
    \begin{subfigure}{0.48\textwidth}
  \begin{subfigure}{0.5\textwidth}
    \includegraphics[width=\textwidth]{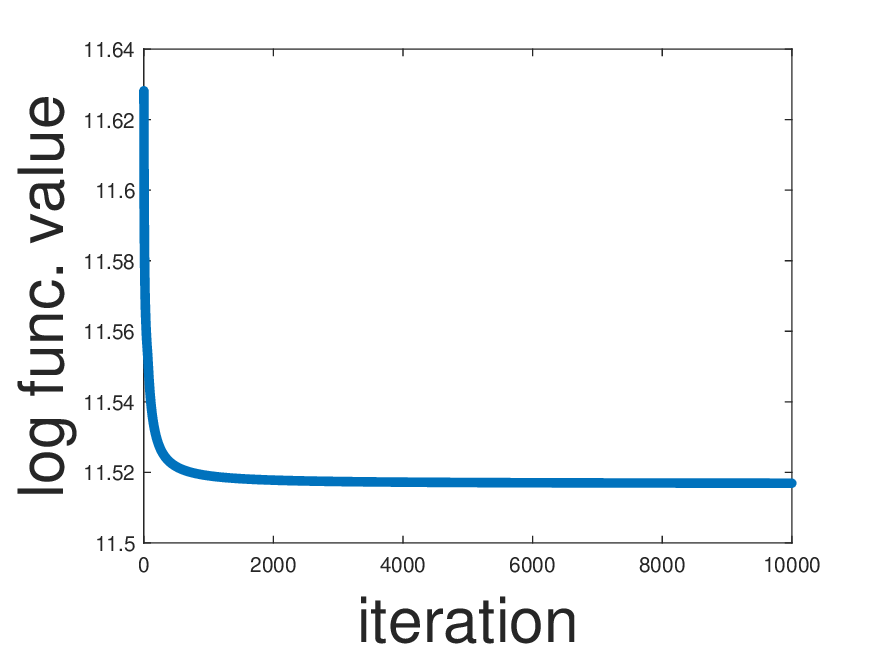} 
  \end{subfigure}\hfill
    \begin{subfigure}{0.5\textwidth}
    \includegraphics[width=\textwidth]{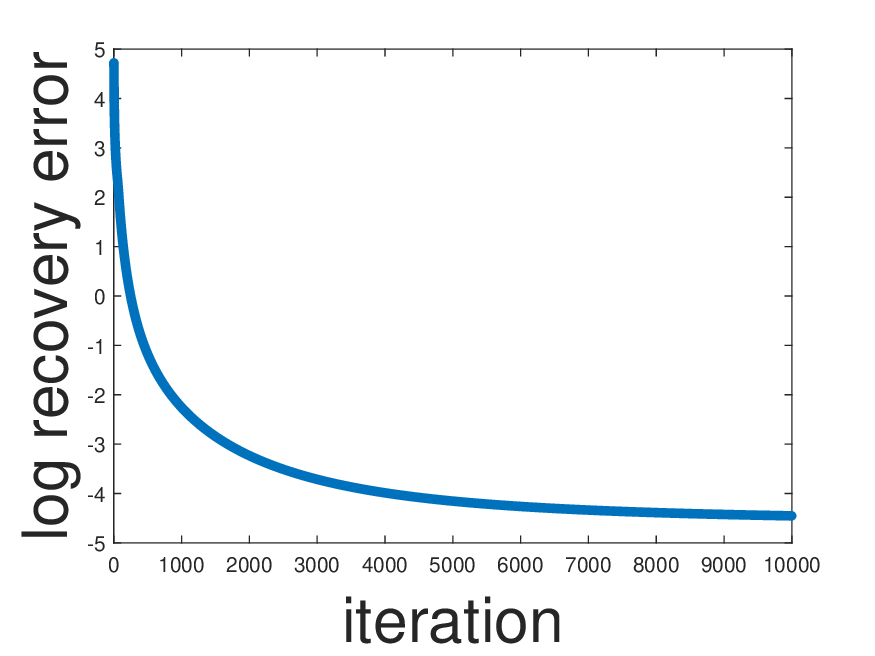} 
  \end{subfigure}\hfill
    \caption*{$n=100$}
    \end{subfigure}\hfill
    \begin{subfigure}{0.48\textwidth}
    \begin{subfigure}{0.5\textwidth}
    \includegraphics[width=\textwidth]{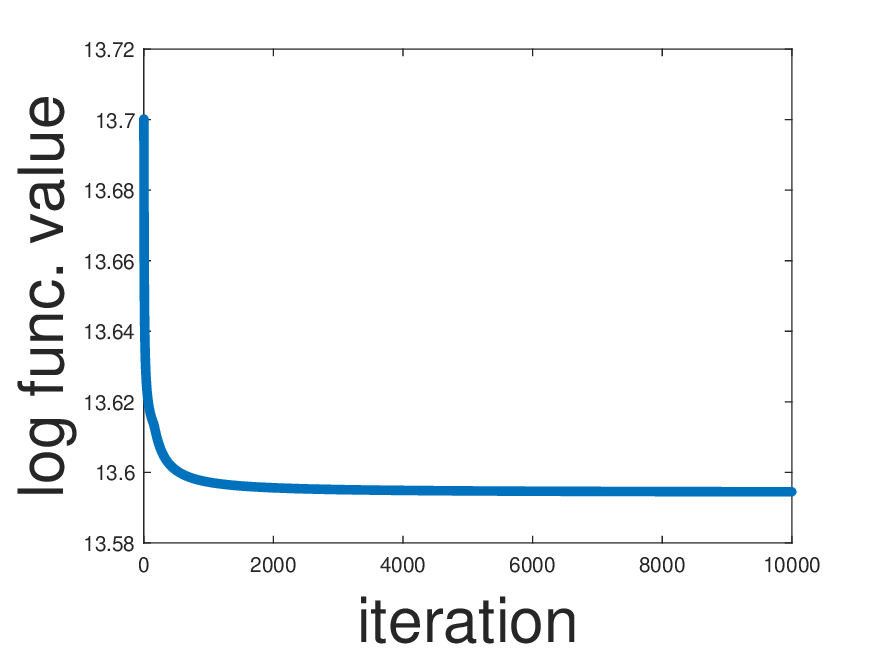} 
  \end{subfigure}\hfill
      \begin{subfigure}{0.5\textwidth}
    \includegraphics[width=\textwidth]{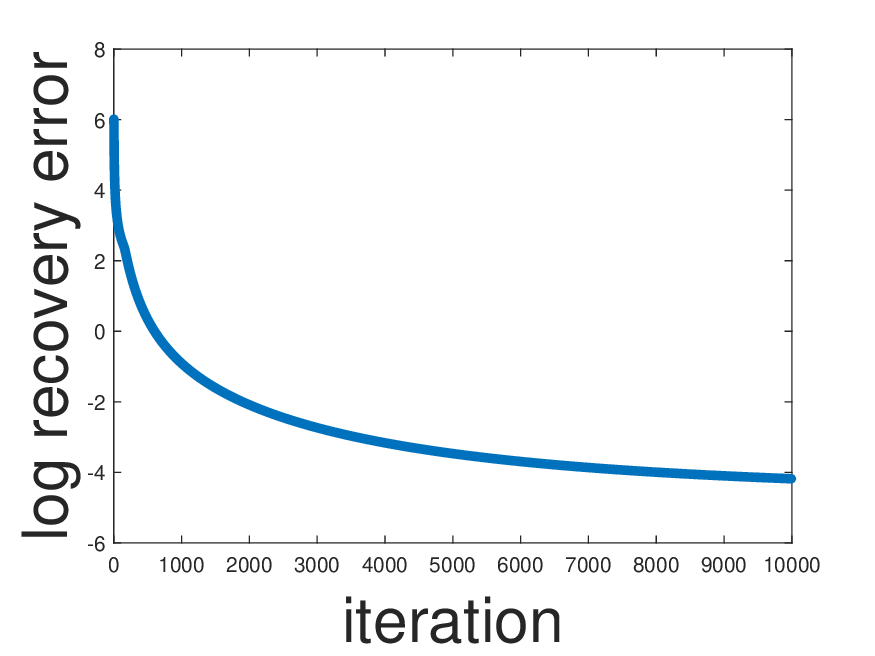} 
  \end{subfigure}\hfill
  \caption*{$n=200$}
  \end{subfigure}
      \caption*{$r=0.05n$, $m=0.1n^3$}
  
  \medskip
  \begin{subfigure}{0.48\textwidth}
   \begin{subfigure}{0.5\textwidth}
    \includegraphics[width=\textwidth]{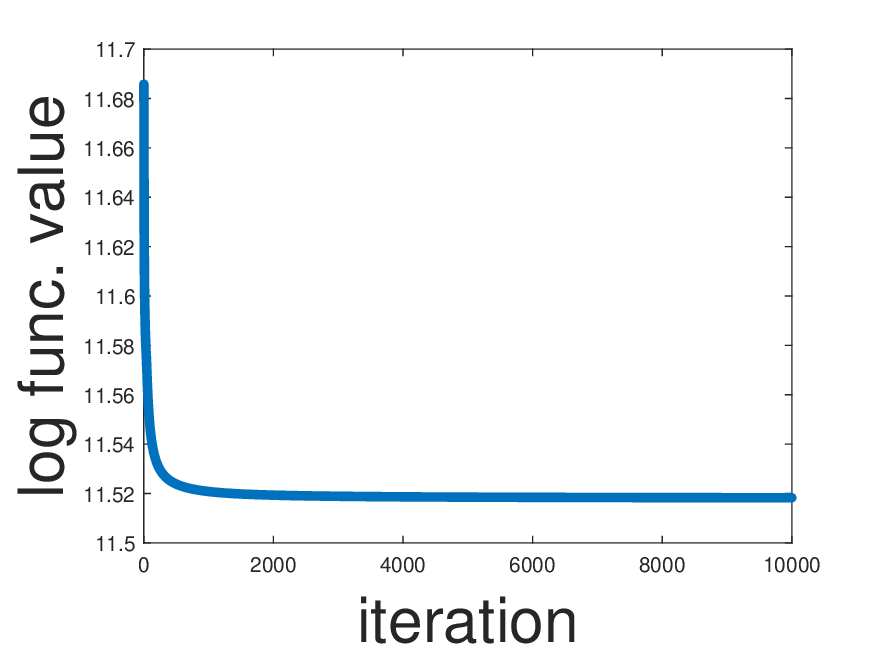} 
  \end{subfigure}\hfill
      \begin{subfigure}{0.5\textwidth}
    \includegraphics[width=\textwidth]{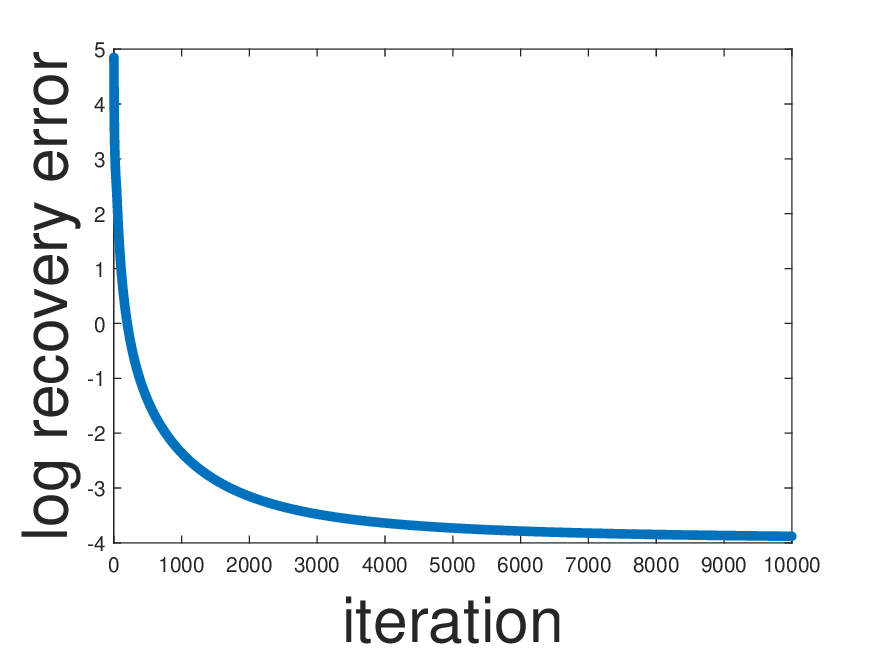} 
  \end{subfigure}\hfill
    \caption*{$n=100$}
\end{subfigure}\hfill
\begin{subfigure}{0.48\textwidth}
   \begin{subfigure}{0.5\textwidth}
    \includegraphics[width=\textwidth]{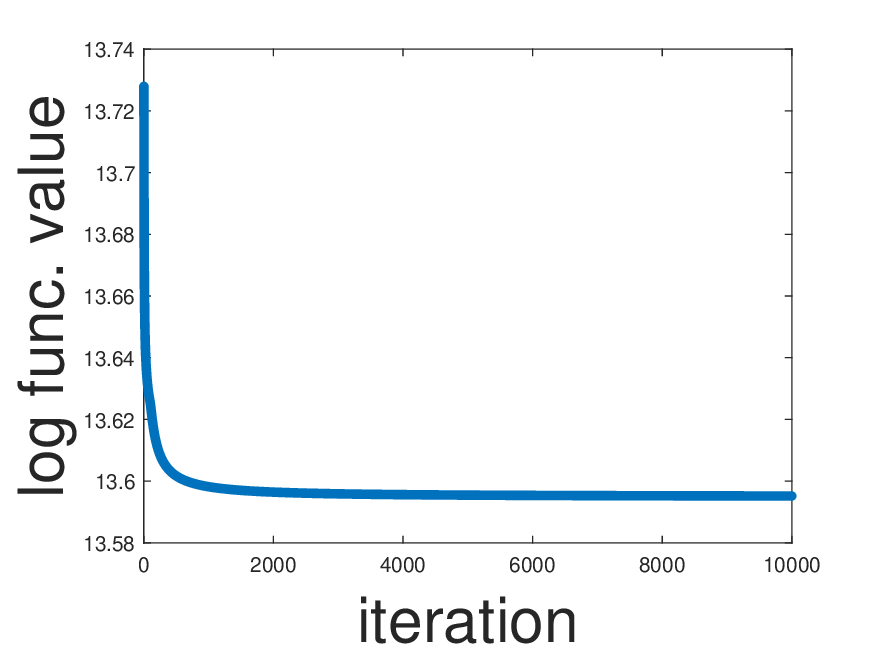} 
  \end{subfigure}\hfill
    \begin{subfigure}{0.5\textwidth}
    \includegraphics[width=\textwidth]{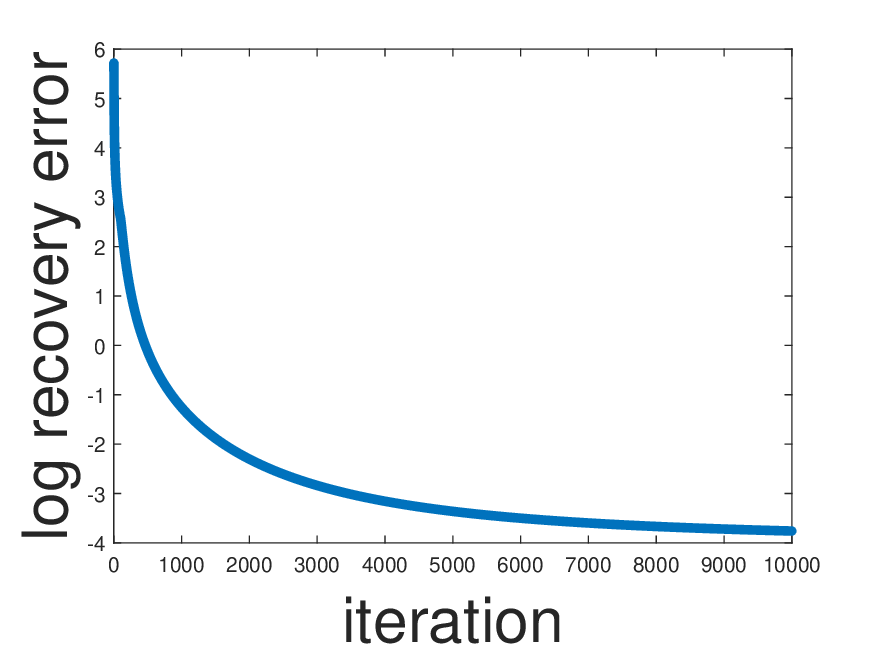} 
  \end{subfigure}\hfill
    \caption*{$n=200$}
    \end{subfigure}
        \caption*{$r=0.1n$, $m=0.1n^3$}    
      \medskip
      \begin{subfigure}{0.48\textwidth}
      \begin{subfigure}{0.5\textwidth}
    \includegraphics[width=\textwidth]{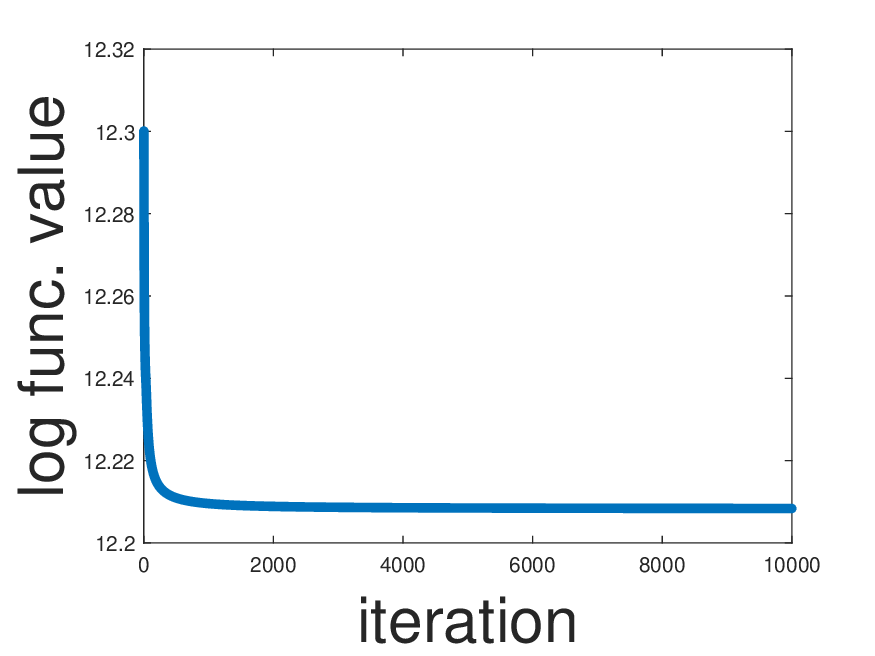} 
  \end{subfigure}\hfill
    \begin{subfigure}{0.5\textwidth}
    \includegraphics[width=\textwidth]{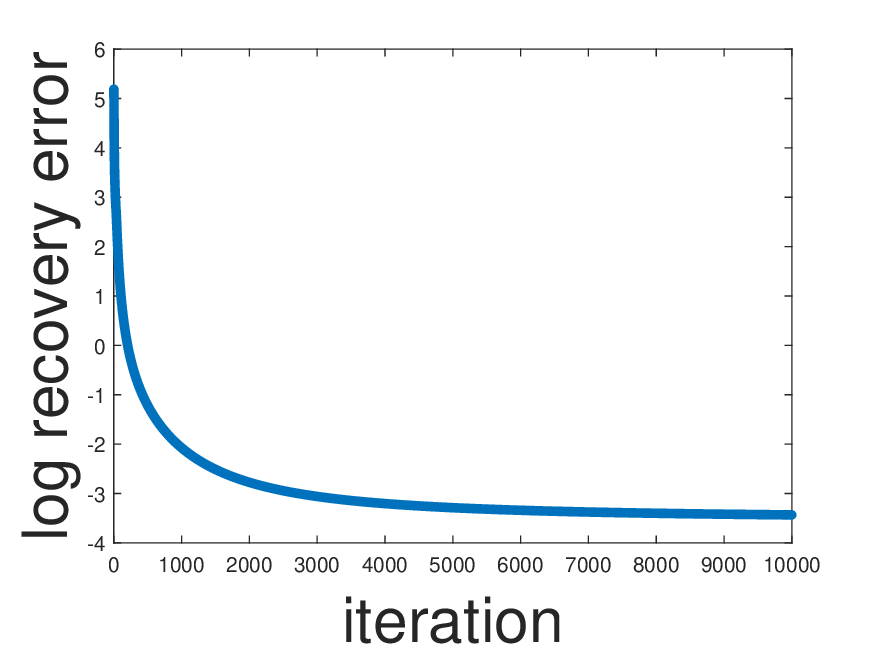} 
  \end{subfigure}\hfill
    \caption*{$n=100$}
\end{subfigure}\hfill
\begin{subfigure}{0.48\textwidth}
      \begin{subfigure}{0.5\textwidth}
    \includegraphics[width=\textwidth]{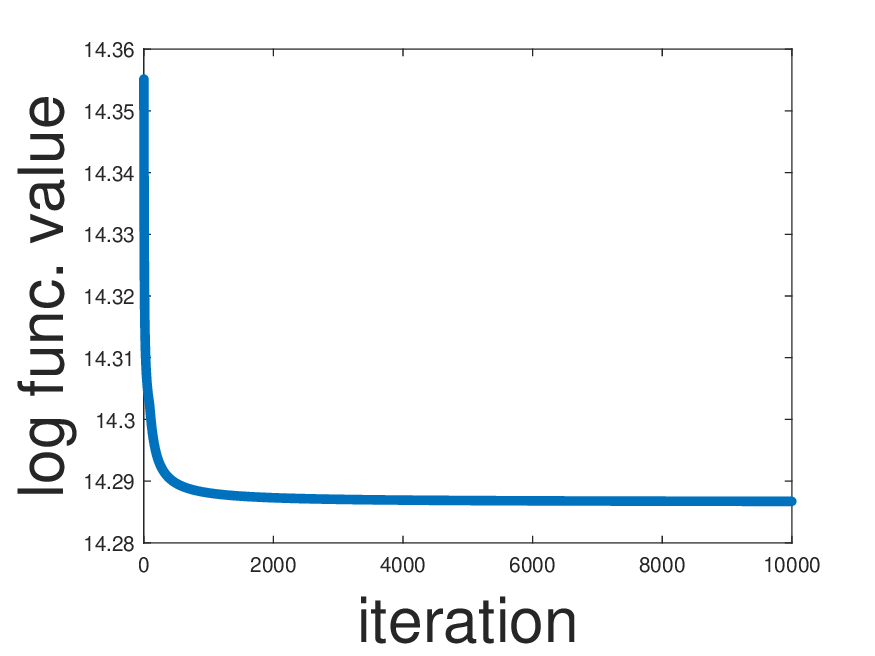} 
  \end{subfigure}\hfill
    \begin{subfigure}{0.5\textwidth}
    \includegraphics[width=\textwidth]{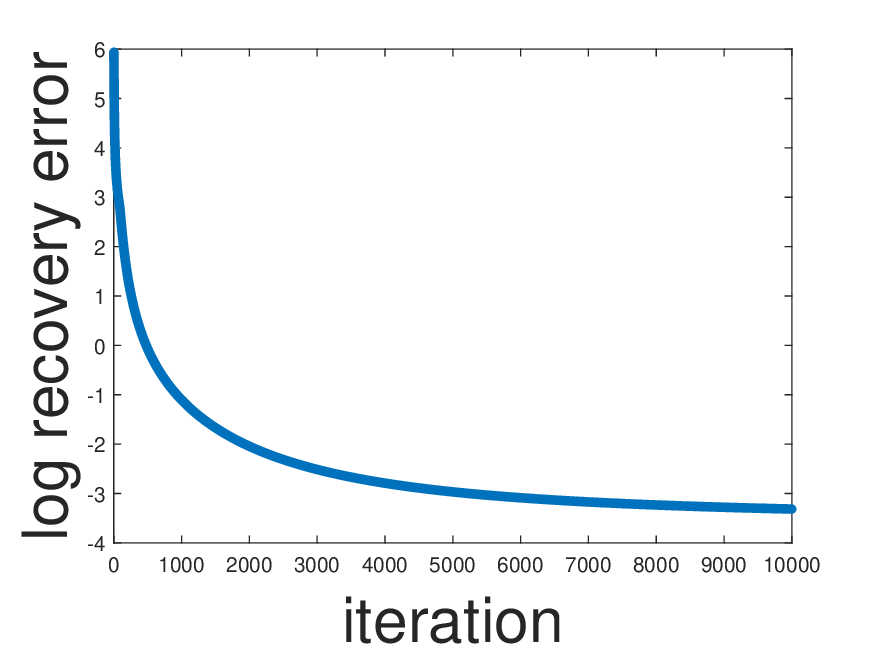} 
  \end{subfigure}\hfill
    \caption*{$n=200$}
    \end{subfigure}
    \caption*{$r=0.1n$, $m=0.2n^3$}
    
  \caption{Function value (w.r.t. $f$) and recovery error (in log scale) for the tensor robust PCA problem.  The plots are with respect to the ergodic series $(1/t)\sum_{i=1}^{t}\Z_{t+1}, t\geq 1$. Each graph is the average of 10 i.i.d. runs.
  } \label{fig:table1}
\end{figure}

\appendix
\appendixpage

\section{Proofs omitted from Section \ref{sec:tensorPreliminaries}}
\label{sec:App:tensorPreliminariesProofs}

For this section we will denote the matrices
\begin{align} \label{def:F_F1}
& \F :=\F_{n_d}\otimes\F_{n_{d-1}}\otimes\cdots\otimes\F_{n_3} \in \complex^{N\times N} \nonumber
\\ & \F^{-1} :=\F^{-1}_{n_d}\otimes\F^{-1}_{n_{d-1}}\otimes\cdots\otimes\F^{-1}_{n_3} \in \complex^{N\times N}. 
\end{align}
These notations will be used in many of the proofs.

For each lemma we will first restate the lemma and then prove it.

\subsection{Proof of \cref{lemma:bdiag_bcirc_equality}}
\label{sec:App:proofLemma1}

\begin{lemma}
Let $\mX\in\complex^{n_1\times \cdots\times n_d}$. 
Then, $\barX=\bdiag(\overbar{\mX})$ if and only if
\begin{align*}
\bcirc(\mX) = (\F^{-1}\otimes\I_{n_1}) \cdot \barX \cdot (\F\otimes\I_{n_2}).
\end{align*}
\end{lemma}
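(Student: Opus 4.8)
The plan is to establish the identity by a direct computation at the level of the defining Kronecker-sum expressions, using the fact that the DFT matrix diagonalizes circulant matrices. First I would recall the scalar fact that for a single dimension $n$, the periodic shift matrix satisfies $\pi_n = \F_n^{-1} \Omega_n \F_n$ where $\Omega_n = \diag(1,\omega,\omega^2,\ldots,\omega^{n-1})$ with $\omega = \textrm{e}^{-2\pi i/n}$, and hence $\pi_n^{k} = \F_n^{-1}\Omega_n^{k}\F_n$ for every integer $k$. This is the standard eigendecomposition of the cyclic shift; I would state it as a preliminary (or cite it as well known), since it is the only nontrivial ingredient.

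Next I would take the definition of $\bcirc(\mX)$ in \eqref{def:bcirc}, namely $\bcirc(\mX) = \sum_{i_d=1}^{n_d}\cdots\sum_{i_3=1}^{n_3} \pi_{n_d}^{i_d-1}\otimes\cdots\otimes\pi_{n_3}^{i_3-1}\otimes\mX(:,:,i_3,\ldots,i_d)$, and substitute $\pi_{n_j}^{i_j-1} = \F_{n_j}^{-1}\Omega_{n_j}^{i_j-1}\F_{n_j}$ for each $j\in\{3,\ldots,d\}$. Using the mixed-product property of the Kronecker product, $(\A_1\B_1)\otimes(\A_2\B_2)\otimes\cdots = (\A_1\otimes\A_2\otimes\cdots)(\B_1\otimes\B_2\otimes\cdots)$, each summand factors as
\[
(\F_{n_d}^{-1}\otimes\cdots\otimes\F_{n_3}^{-1}\otimes\I_{n_1})\bigl(\Omega_{n_d}^{i_d-1}\otimes\cdots\otimes\Omega_{n_3}^{i_3-1}\otimes\mX(:,:,i_3,\ldots,i_d)\bigr)(\F_{n_d}\otimes\cdots\otimes\F_{n_3}\otimes\I_{n_2}).
\]
Pulling the outer factors $(\F^{-1}\otimes\I_{n_1})$ and $(\F\otimes\I_{n_2})$ out of the sum, it remains to identify $\sum_{i_3,\ldots,i_d}\Omega_{n_d}^{i_d-1}\otimes\cdots\otimes\Omega_{n_3}^{i_3-1}\otimes\mX(:,:,i_3,\ldots,i_d)$ with the block-diagonal matrix $\barX = \bdiag(\fft(\mX))$. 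Here I would unwind the definition of $\fft$ (Definition \ref{def:fft}) together with that of $\F_{n_j}$: the diagonal entries of $\Omega_{n_j}^{i_j-1}$ are exactly the entries $\omega^{(k_j-1)(i_j-1)}$ appearing in the $k_j$-th row of $\F_{n_j}$, so the $(k_3,\ldots,k_d)$ diagonal block of the Kronecker-sum is $\sum_{i_3,\ldots,i_d}\bigl(\prod_{j=3}^{d}\omega_{n_j}^{(k_j-1)(i_j-1)}\bigr)\mX(:,:,i_3,\ldots,i_d)$, which is precisely $\overbar{\mX}(:,:,k_3,\ldots,k_d)$ by the iterated definition of the Fourier transform, and the colexicographical ordering of these blocks matches the $\matbdiag$ convention. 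This yields the forward direction; the converse follows immediately since $(\F^{-1}\otimes\I_{n_1})$ and $(\F\otimes\I_{n_2})$ are invertible, so the displayed equation determines $\barX$ uniquely as $(\F\otimes\I_{n_1})\bcirc(\mX)(\F^{-1}\otimes\I_{n_2})$, which by the forward computation equals $\bdiag(\fft(\mX))$.

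I expect the main obstacle to be purely bookkeeping: carefully matching the indices, the ordering of the Kronecker factors (dimension $d$ outermost, dimension $3$ innermost), and the colexicographical block ordering in the definition of $\matbdiag$, so that the diagonal block indexed by $(k_3,\ldots,k_d)$ in the Kronecker-sum lands in the right position and equals the right frontal slice of $\overbar{\mX}$. There is no analytic difficulty — the only real mathematical content is the eigendecomposition of $\pi_n$ and the mixed-product rule for $\otimes$ — so the proof is essentially a verification that the two conventions (the Kronecker-sum convention in $\bcirc$ and the colex-ordered $\bdiag$ convention) are compatible, which is presumably why the definitions were set up the way they were.
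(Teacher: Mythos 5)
Your proposal is correct and follows essentially the same route as the paper's proof: substitute the diagonalization $\pi_n=\F_n^{-1}\D_n\F_n$ into the Kronecker-sum definition of $\bcirc(\mX)$, factor via the mixed-product rule, and identify the remaining sum of Kronecker products of diagonal matrices with $\bdiag(\fft(\mX))$ by unwinding the iterated DFT and the colexicographical block ordering. The only cosmetic difference is notation ($\Omega_n$ versus the paper's $\D_n$); your explicit remark about invertibility for the converse direction is left implicit in the paper but is the same observation.
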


\begin{proof}
It it well known (see for instance Theorem 3.2.1 in \cite{davis1979circulant}) that for any $n\ge1$ the DFT matrix $\F_n$ diagonalizes the matrix periodic downward shift permutation matrix $\pi_n$ as defined in \eqref{def:shiftPermutationMatrix} through
\begin{align} \label{eq:piDiagonalization}
\pi_n=\F_{n}^{-1}\D_{n}\F_{n},
\end{align}
where $\D_{n}=\diag(1,\omega,\ldots,\omega^{n-1})\in\reals^{n\times n}$ and $\omega=\exp\left(-\frac{2\pi i}{n}\right)$.

Plugging \eqref{eq:piDiagonalization} into all shift permutation matrices in the definition of $\bcirc(\mX)$ in \eqref{def:bcirc}, we obtain by using the notation of $\F,\F^{-1}$ in \eqref{def:F_F1} that 
\begin{align}  \label{eq:inproof_bcirc_bdiag}
& \bcirc(\mX) \nonumber
\\ & = \sum_{i_d=1}^{n_d}\cdots\sum_{i_3=1}^{n_3} \left(\F_{n_d}^{-1}\D_{n_d}^{i_d-1}\F_{n_d}\right) \otimes \cdots \otimes \left(\F_{n_3}^{-1}\D_{n_3}^{i_3-1}\F_{n_3}\right) \otimes \mX{(:,:,i_3,\ldots,i_d)} \nonumber
\\ & = \left(\F^{-1}\otimes\I_{n_1}\right)\left(\sum_{i_d=1}^{n_d}\cdots\sum_{i_3=1}^{n_3}\D_{n_d}^{i_d-1} \otimes \cdots \otimes \D_{n_3}^{i_3-1} \otimes \mX{(:,:,i_3,\ldots,i_d)} \right)\left(\F\otimes\I_{n_2}\right),
\end{align}
where the second equality follows from many times applying the property of the Kronecker product that for any matrices $\A,\B,\C,\D$ it holds that $(\A\otimes\B)(\C\otimes\D)=(\A\C)\otimes(\B\D)$ if the matrices are of sizes such that the products $\A\C$ and $\B\D$ exist.

It remains to show that the middle term in the RHS of \eqref{eq:inproof_bcirc_bdiag} is equivalent to $\barX=\bdiag(\overbar{\mX})$. Indeed,

\begin{align*}
& \sum_{i_d=1}^{n_d}\cdots\sum_{i_3=1}^{n_3}\D_{n_d}^{i_d-1} \otimes \cdots \otimes \D_{n_3}^{i_3-1} \otimes \mX{(:,:,i_3,\ldots,i_d)}
\\ & =  \sum_{i_d=1}^{n_d}\cdots\sum_{i_4=1}^{n_4}\D_{n_d}^{i_d-1} \otimes \cdots \otimes \D_{n_4}^{i_4-1}
\otimes \sum_{i_3=1}^{n_3}\matbdiag\left(\begin{array}{l}\mX{(:,:,i_3,\ldots,i_d)},\\ \omega^{(i_3-1)}\mX{(:,:,i_3,\ldots,i_d)}, \\ \vdots\\ \omega^{(i_3-1)(n_3-1)}\mX{(:,:,i_3,\ldots,i_d)}\end{array}\right)
\\ & = \sum_{i_d=1}^{n_d}\cdots\sum_{i_4=1}^{n_4}\D_{n_d}^{i_d-1} \otimes \cdots \otimes \D_{n_4}^{i_4-1} 
\otimes \matbdiag\left(\begin{array}{l} \sum_{i_3=1}^{n_3}\mX{(:,:,i_3,\ldots,i_d)},
\\ \sum_{i_3=1}^{n_3}\omega^{(i_3-1)}\mX{(:,:,i_3,\ldots,i_d)},
\\  \vdots
\\ \sum_{i_3=1}^{n_3}\omega^{(i_3-1)(n_3-1)}\mX{(:,:,i_3,\ldots,i_d)}\end{array}\right)
\\ & = \cdots =
\\ & = \matbdiag\left(\left(\sum_{i_d=1}^{n_d}\cdots\sum_{i_3=1}^{n_3}\omega^{(i_d-1)(k_d-1)}\cdots\omega^{(i_3-1)(k_3-1)}\mX{(:,:,i_3,\ldots,i_d)}\right)_{k_d\in[n_d]
, \ldots , k_3\in[n_3]}\right)
\\ & = \matbdiag\left(\left(\overbar{\mX}(:,:,k_3,\ldots,k_d)\right)_{k_d\in[n_d]
, \ldots , k_3\in[n_3]}\right)=\barX,
\end{align*}
as desired. The second to last equality follows from the definition of the order-d Fourier transform.



\end{proof}

\subsection{Proof of \cref{lemma:conjegateComplexSymmetry_realTensors}}
\label{sec:App:proofLemma2}

\begin{lemma}
A tensor $\mX\in\reals^{n_1\times\cdots\times n_d}$ is real-valued if and only if $\overbar{\mX}=\fft_d(\cdots(\fft_4(\fft_3(\mX))))$ satisfies the conjugate-complex symmetry condition
\begin{align} \label{eq:conjugateComplexSymmetryCondition1}
\overbar{\mX}(:,:,i_3,\ldots,i_d) = \conj(\overbar{\mX}(:,:,i'_3,\ldots,i'_d))
\end{align}
for all $i_3\in\left\lbrace1,\ldots,\left\lceil\frac{n_3+1}{2}\right\rceil\right\rbrace,\ldots,i_d\in\left\lbrace1,\ldots,\left\lceil\frac{n_d+1}{2}\right\rceil\right\rbrace$, where for all $j\in\lbrace3,\ldots,d\rbrace$ 
\begin{align*} 
i'_j=\bigg\lbrace\begin{array}{ll}
1, & i_j=1
\\ n_j-i_j+2, & i_j\in\left\lbrace2,\ldots,\left\lceil\frac{n_j+1}{2}\right\rceil\right\rbrace
\end{array}.
\end{align*}
\end{lemma}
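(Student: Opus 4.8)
\textbf{Proof proposal for \cref{lemma:conjegateComplexSymmetry_realTensors}.}

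The plan is to prove this by induction on the number of dimensions along which the Fourier transform is applied, reducing the general $d$-dimensional statement to repeated application of the one-dimensional DFT conjugate-symmetry fact. The base case for a single dimension is the classical observation: for a vector $\v\in\complex^{n}$, the DFT $\F_n\v$ satisfies $(\F_n\v)(k) = \conj((\F_n\v)(k'))$ for all $k$, where $k'=1$ if $k=1$ and $k'=n-k+2$ otherwise, if and only if $\v$ is real-valued. This follows because the $(k,j)$ entry of $\F_n$ is $\omega^{(k-1)(j-1)}$ with $\omega=\mathrm{e}^{-2\pi i/n}$, and $\conj(\omega^{(k-1)(j-1)}) = \omega^{-(k-1)(j-1)} = \omega^{(n-k+1)(j-1)} = \omega^{(k'-1)(j-1)}$; thus $\conj((\F_n\v)(k)) = \sum_j \omega^{(k'-1)(j-1)}\conj(\v(j))$, and comparing with $(\F_n\v)(k') = \sum_j \omega^{(k'-1)(j-1)}\v(j)$, the two agree for all $k$ exactly when $\conj(\v(j))=\v(j)$ for all $j$, i.e. $\v\in\reals^n$ (using that $\F_n$ is invertible, so the agreement of all Fourier coefficients forces $\v=\conj(\v)$).

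First I would set up the inductive step carefully. Recall from \cref{def:fft} that $\overbar{\mX} = \fft_d(\cdots(\fft_3(\mX)))$, where each $\fft_j$ acts as left-multiplication by $\F_{n_j}$ along fibers in the $j$-th mode (\cref{def:fourierJdim}), and crucially the operators $\fft_3,\ldots,\fft_d$ commute since they act on disjoint modes. I would define intermediate tensors $\mX^{(j)} := \fft_{j}(\cdots(\fft_3(\mX)))$ for $j=2,\ldots,d$ (with $\mX^{(2)}:=\mX$), so $\mX^{(d)}=\overbar{\mX}$. The inductive hypothesis would be: $\mX$ is real-valued iff $\mX^{(j)}$ satisfies the conjugate-complex symmetry condition \eqref{eq:conjugateComplexSymmetryCondition1} restricted to indices $i_3,\ldots,i_j$ (with the trivial convention on the not-yet-transformed modes $i_{j+1},\ldots,i_d$). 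For the step from $j$ to $j+1$, I apply the one-dimensional fact fiberwise along mode $j+1$: since $\fft_{j+1}$ is a bijective linear map on the space of tensors, $\mX^{(j)}$ being real-valued is equivalent to $\mX^{(j+1)}=\fft_{j+1}(\mX^{(j)})$ satisfying the fiberwise conjugate-symmetry along mode $j+1$; and because $\mathbf{F}_{n_{j+1}}$ has real... no, rather because conjugation commutes appropriately with the already-performed transforms $\F_{n_3},\ldots,\F_{n_j}$, the symmetry condition on the remaining modes $3,\ldots,j$ is preserved under $\fft_{j+1}$ and on mode $j+1$ it is exactly the newly-imposed one. This bootstrapping assembles all the index conditions $i_3\in\{1,\ldots,\lceil(n_3+1)/2\rceil\},\ldots,i_d\in\{1,\ldots,\lceil(n_d+1)/2\rceil\}$ with the $i'_j$ defined in \eqref{def:i_j_tag}.

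The main obstacle I anticipate is the bookkeeping in the inductive step: specifically, showing that applying $\fft_{j+1}$ does not destroy the conjugate-symmetry already established in modes $3,\ldots,j$, and simultaneously that real-valuedness is the right hinge. Concretely, I would argue that $\conj\circ\fft_{j+1} = \fft_{j+1}^{\mathrm{rev}}\circ\conj$, where $\fft_{j+1}^{\mathrm{rev}}$ denotes $\fft_{j+1}$ followed by the index-reversal $i_{j+1}\mapsto i'_{j+1}$ along mode $j+1$ (this is just the entrywise identity $\conj(\omega^{ab})=\omega^{(n-a)b}$ from the base case, applied along mode $j+1$ with the reversal absorbing the sign). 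Tracking this identity through the composition $\fft_d\circ\cdots\circ\fft_3$ yields precisely $\conj(\overbar{\mX}) = R(\overbar{\conj(\mX)})$, where $R$ is the composite reversal $R\colon (i_3,\ldots,i_d)\mapsto(i'_3,\ldots,i'_d)$. Since $R$ is an involution, $\overbar{\mX}$ satisfies \eqref{eq:conjugateComplexSymmetryCondition1} (i.e. $\overbar{\mX} = R(\conj(\overbar{\mX}))$) if and only if $\overbar{\mX} = \overbar{\conj(\mX)}$, which by injectivity of $\fft$ is equivalent to $\mX = \conj(\mX)$, i.e. $\mX\in\reals^{n_1\times\cdots\times n_d}$. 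This clean reformulation via the commutation identity $\conj\circ\fft = R\circ\fft\circ\conj$ sidesteps most of the index juggling and is, I expect, the slickest route; the remaining work is just verifying this identity entrywise, which is routine given the base case.
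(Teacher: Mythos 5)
Your proposal is correct, and the final version you land on (via the commutation identity) takes a genuinely different and arguably cleaner route than the paper. The paper's proof works entirely at the level of the expanded multi-sum: for the forward direction it conjugates each term in $\overbar{\mX}(i_1,\dots,i_d)=\sum_{k_3,\dots,k_d}\omega_d^{(k_d-1)(i_d-1)}\cdots\omega_3^{(k_3-1)(i_3-1)}\mX(i_1,i_2,k_3,\dots,k_d)$ using $\omega_j^{(k-1)(i-1)}=\bar\omega_j^{(k-1)(i'-1)}$ and reads off the symmetry; for the reverse direction it expands the \emph{inverse} transform and shows the summands pair up as complex conjugates, so the total is real. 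Your operator-level argument instead establishes $\conj\circ\fft = R\circ\fft\circ\conj$ once (where $R$ is the composite mode-reversal $(i_3,\dots,i_d)\mapsto(i'_3,\dots,i'_d)$, built from the $R_j$'s which commute with the $\fft_k$'s for $k\neq j$), and then both directions fall out simultaneously: the symmetry condition $\overbar{\mX}=R(\conj(\overbar{\mX}))$ rewrites, after applying the identity and $R^2=\mathrm{id}$, to $\fft(\mX)=\fft(\conj(\mX))$, which by invertibility of the DFT is $\mX=\conj(\mX)$. This collapses the two-directional case analysis of the paper into one step and dispenses with the term-pairing argument in the inverse transform. Your opening inductive scaffolding is more elaborate than you ultimately need, as you yourself note --- once the commutation identity is verified entrywise (which is a one-line computation in a single mode, propagated through the composition by commutativity of the $\fft_j$'s), induction is unnecessary. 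One small clean-up: the base case is most usefully stated not as the symmetry-equivalence for vectors but directly as the identity $\conj(\F_n\v)=R_n(\F_n\conj(\v))$ for all $\v\in\complex^n$; the equivalence you state is a downstream consequence of that identity plus invertibility of $\F_n$, exactly mirroring the structure of the full tensor argument.
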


\begin{proof}

For every $j\in\lbrace3,\ldots,d\rbrace$ define $\omega_j=\exp\left(-\frac{2\pi i}{n_j}\right)$ and $\bar{\omega}_j=\exp\left(\frac{2\pi i}{n_j}\right)=\conj(\omega_j)$. For every $i_j\in\left\lbrace2,\ldots,\left\lceil\frac{n_j+1}{2}\right\rceil\right\rbrace$ and $k_j\in[n_j]$ it holds that
\begin{align} \label{eq:caseIndexGreaterThan1}
\omega_j^{(k_j-1)(i_j-1)} & =\exp\left(\frac{-2\pi i(k_j-1)(i_j-1)}{n_j}\right) \nonumber
\\ & = \exp\left(\frac{-2\pi i(k_j-1)(i_j-1)}{n_j}\right)\exp\left(2\pi i(k_j-1)\right) \nonumber
\\ & = \exp\left(\frac{2\pi i(k_j-1)(n_j-i_j+1)}{n_j}\right) = \bar{\omega}_j^{(k_j-1)(n_j-i_j+1)},
\end{align}
where the second equality holds since $\exp\left(2\pi i(k_j-1)\right)=1$.

In addition, for all $j\in\lbrace3,\ldots,d\rbrace$ and $i_j=1$ it holds that
\begin{align} \label{eq:caseIndex1}
\omega_j^{(k_j-1)(i_j-1)}=\exp\left(\frac{-2\pi i(k_j-1)(i_j-1)}{n_j}\right) & = 1.
\end{align}

By the definition of the Fourier transform along all but the first two dimensions (\cref{{def:fft}}), for every $i_1\in[n_1],\ldots,i_d\in[n_d]$ it holds that
\begin{align*}
\overbar{\mX}(i_1,i_2,i_3,\ldots,i_d) 
& = \sum_{k_d=1}^{n_d}\cdots\sum_{k_3=1}^{n_3}\omega_d^{(k_d-1)(i_d-1)}\cdots \omega_3^{(k_3-1)(i_3-1)}\mX(i_1,i_2,k_3,\ldots,k_d)
\\ & = \conj(\overbar{\mX}(i_1,i_2,i'_3,\ldots,i'_d)),
\end{align*}
where the last equality follows from \eqref{eq:caseIndexGreaterThan1} and \eqref{eq:caseIndex1} if $\mX$ is real-valued.

For the second direction, assume the conjugate-complex symmetry conditions in \eqref{eq:conjugateComplexSymmetryCondition1} hold. Then, by the inverse Fourier transform along all but the first two dimensions (\cref{{def:fft}}), for every $i_1\in[n_1],\ldots,i_d\in[n_d]$ it holds that
\begin{align} \label{eq:inverseFFTatPoint}
\mX(i_1,i_2,i_3,\ldots,i_d) 
& = \frac{1}{N}\sum_{k_d=1}^{n_d}\cdots\sum_{k_3=1}^{n_3}\bar{\omega}_d^{(k_d-1)(i_d-1)}\cdots \bar{\omega}_3^{(k_3-1)(i_3-1)}\overbar{\mX}(i_1,i_2,k_3,\ldots,k_d).
\end{align}
For $j\in\lbrace3,\ldots,d\rbrace$, using similar arguments to the ones in \eqref{eq:caseIndexGreaterThan1} and \eqref{eq:caseIndex1}, it can be seen that by denoting 
\begin{align*}
k'_j=\bigg\lbrace\begin{array}{ll}
1, & k_j=1
\\ n_j-k_j+2, & k_j\in\left\lbrace2,\ldots,\left\lceil\frac{n_j+1}{2}\right\rceil\right\rbrace, 
\end{array}
\end{align*}
it holds that $\omega_j^{(k_j-1)(i_j-1)}=\conj\left(\bar{\omega}_j^{(k'_j-1)(i_j-1)}\right)$.
Using this equality and \eqref{eq:conjugateComplexSymmetryCondition1}, it follows that
\begin{align*}
& \bar{\omega}_d^{(k_d-1)(i_d-1)}\cdots \bar{\omega}_3^{(k_3-1)(i_3-1)}\overbar{\mX}(i_1,i_2,k_3,\ldots,k_d)
\\ & = \conj\left(\omega_d^{(k_d-1)(i_d-1)}\right)\cdots \conj\left(\omega_3^{(k_3-1)(i_3-1)}\right)\conj(\overbar{\mX}(i_1,i_2,k'_3,\ldots,k'_d))
\\ & = \conj\left(\bar{\omega}_d^{(k'_d-1)(i_d-1)}\right)\cdots \conj\left(\bar{\omega}_3^{(k'_3-1)(i_3-1)}\right)\conj(\overbar{\mX}(i_1,i_2,k'_3,\ldots,k'_d))
\\ & = \conj\left(\bar{\omega}_d^{(k'_d-1)(i_d-1)}\cdots \bar{\omega}_3^{(k'_3-1)(i_3-1)}\overbar{\mX}(i_1,i_2,k'_3,\ldots,k'_d)\right).
\end{align*}
Therefore, each term in the sum in the RHS of \eqref{eq:inverseFFTatPoint} is either real-valued or has a pair which is its conjugate, and so together their sum is real-valued. All together, the sum in the RHS of \eqref{eq:inverseFFTatPoint} is real-valued.

\end{proof}

\subsection{Proof of \cref{lemma:orderPinnerProductEquivalence}}
\label{sec:App:proofLemma3}.
\begin{lemma}
Let $\mX,\mY\in\complex^{n_1\times\cdots\times n_d}$. Then,
\begin{align*}
(i) \  \langle\mX,\mY\rangle  = \frac{1}{n_3\cdots n_d}\langle\barX,\barY\rangle,
\qquad\qquad (ii) \  \Vert\mX\Vert_F = \frac{1}{\sqrt{n_3\cdots n_d}}\Vert\barX\Vert_F.
\end{align*}
\end{lemma}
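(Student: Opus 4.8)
The plan is to reduce the statement to the corresponding fact about block-diagonal matrices in the Fourier domain, where the key mechanism is the (approximate) unitarity of the DFT matrices. First I would recall from the proof of \cref{lemma:bdiag_bcirc_equality} (and the notation $\F=\F_{n_d}\otimes\cdots\otimes\F_{n_3}$) that $\barX=\bdiag(\overbar{\mX})$ is obtained from $\bcirc(\mX)$ by conjugation with $(\F\otimes\I_{n_2})$ and $(\F^{-1}\otimes\I_{n_1})$; more directly, by the definition of the Fourier transform along dimensions $3,\dots,d$, each frontal slice of $\overbar{\mX}$ is a linear combination of frontal slices of $\mX$ with coefficients given by entries of the $\F_{n_j}$'s. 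The crucial arithmetic fact is that for any $n$, $\F_n^{\tc}\F_n=n\I_n$, i.e. $\F_n$ is unitary up to the factor $\sqrt{n}$; hence $\F=\F_{n_d}\otimes\cdots\otimes\F_{n_3}$ satisfies $\F^{\tc}\F=(n_3\cdots n_d)\I_N = N\I_N$.

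Next I would prove part $(i)$ directly. Writing $\langle\mX,\mY\rangle=\sum_{i_1,\dots,i_d}\mX(i_1,\dots,i_d)\mY(i_1,\dots,i_d)$ and using that taking the Fourier transform along a given dimension $j$ corresponds to left-multiplying the relevant fibers by $\F_{n_j}$, the inner product in the Fourier domain picks up exactly the factor coming from $\F_{n_j}^{\top}\F_{n_j}$ summed over that index — note that here we use the (bilinear, not sesquilinear) tensor inner product, so the relevant identity is actually $\F_{n_j}^{\top}\F_{n_j}$; one checks $\F_{n_j}$ is symmetric, so $\F_{n_j}^{\top}\F_{n_j}=\F_{n_j}^{\tc}\overline{\F_{n_j}}$, and $\F_{n_j}\overline{\F_{n_j}}=n_j \Pi_{n_j}$ for a permutation matrix $\Pi_{n_j}$ (the ``flip'' permutation), which still has the effect that summing $\overbar{\mX}(\cdot)\overbar{\mY}(\cdot)$ over the $j$th index equals $n_j$ times the sum of $\mX(\cdot)\mY(\cdot)$ over that index, because the flip permutation is a bijection. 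Iterating this over $j=3,\dots,d$ gives $\langle\overbar{\mX},\overbar{\mY}\rangle=n_3\cdots n_d\,\langle\mX,\mY\rangle$, and since $\langle\barX,\barY\rangle=\langle\bdiag(\overbar{\mX}),\bdiag(\overbar{\mY})\rangle=\langle\overbar{\mX},\overbar{\mY}\rangle$ (the $\bdiag$ operator merely rearranges the same entries along the diagonal blocks of a larger matrix, adding only zero entries), we obtain $(i)$. An alternative, perhaps cleaner, route is to use the $\bcirc$ characterization from \cref{lemma:bdiag_bcirc_equality} together with $\langle\mX,\mY\rangle=\tfrac1N\langle\bcirc(\mX),\bcirc(\mY)\rangle$ (each entry of $\mX$ appears exactly $N$ times in $\bcirc(\mX)$, in matched positions) and the fact that $(\F^{-1}\otimes\I_{n_1})$ is unitary up to $1/\sqrt{N}$ while $(\F\otimes\I_{n_2})$ is unitary up to $\sqrt{N}$, so the two factors cancel and leave $\langle\bcirc(\mX),\bcirc(\mY)\rangle=\langle\barX,\barY\rangle$; combined with the counting factor $N=n_3\cdots n_d$ this again yields $(i)$.

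Part $(ii)$ is then immediate: apply $(i)$ with $\mY=\mX$ (more precisely, $\Vert\mX\Vert_F^2=\langle\mX,\conj(\mX)\rangle$ in the complex case, but the same unitarity argument goes through verbatim since $\bdiag(\overbar{\conj(\mX)})=\conj(\barX)$), giving $\Vert\mX\Vert_F^2=\tfrac1{n_3\cdots n_d}\Vert\barX\Vert_F^2$, and take square roots. I expect the main obstacle to be bookkeeping care around the bilinear-versus-sesquilinear distinction and the appearance of the flip permutation $\overline{\F_{n_j}}=\F_{n_j}\Pi_{n_j}$: one must be sure that this permutation does not spoil the index-sum identity, which it does not because a permutation is a bijection of the summation range. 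Everything else is routine manipulation of Kronecker products and the already-established relationship between $\bcirc$, $\bdiag$, and the Fourier transform.
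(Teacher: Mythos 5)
Your second, ``alternative'' route --- passing through $\bcirc$, the counting identity $\langle\mX,\mY\rangle=\tfrac{1}{N}\langle\bcirc(\mX),\bcirc(\mY)\rangle$, and the approximate unitarity of $(\F^{-1}\otimes\I_{n_1})$ and $(\F\otimes\I_{n_2})$ --- is exactly the paper's argument and is correct.

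The primary ``direct'' route, however, has a genuine gap at the flip-permutation step. Setting aside a typo (you wrote $\F_{n_j}\overline{\F_{n_j}}=n_j\Pi_{n_j}$, but $\F_{n_j}\overline{\F_{n_j}}=\F_{n_j}\F_{n_j}^{\tc}=n_j\I_{n_j}$; the matrix that equals $n_j\Pi_{n_j}$ is $\F_{n_j}\F_{n_j}=\F_{n_j}^{\top}\F_{n_j}$), the real problem is the assertion that the flip $\pi$ ``does not spoil the index-sum identity, because a permutation is a bijection.'' The bilinear pairing along dimension $j$ produces $n_j\sum_{\ell}\mX(\dots,\ell,\dots)\mY(\dots,\pi(\ell),\dots)$, and a bijection of the index range only gives $\sum_\ell g(\ell)=\sum_\ell g(\pi(\ell))$ for a single-argument $g$ --- it does not allow you to strip the permutation off one factor of a product of two different sequences. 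A concrete counterexample with $n_j=3$ (so $\pi$ fixes $1$ and swaps $2\leftrightarrow3$): for $\x=\y=(0,1,0)^{\top}$ one has $\sum_\ell\x_\ell\y_\ell=1$ but $\sum_\ell\x_\ell\y_{\pi(\ell)}=0$. So under a bilinear inner product the claimed identity is simply false.

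The root cause is insisting on a bilinear tensor inner product. The paper's proof expands $\langle\barX,\barY\rangle=\trace(\barX^{\tc}\barY)$, a sesquilinear form, and the step $\langle\mX,\mY\rangle=\tfrac{1}{N}\langle\bcirc(\mX),\bcirc(\mY)\rangle$ only holds for complex tensors when the tensor-side inner product is likewise sesquilinear, $\langle\mX,\mY\rangle=\sum\conj(\mX(i_1,\dots,i_d))\mY(i_1,\dots,i_d)$; the unconjugated formula in the notation section is stated only for real tensors, where the two conventions coincide. With the sesquilinear convention your direct computation becomes both correct and cleaner than the flip-permutation version: along dimension $j$ you get $(\F_{n_j}\x)^{\tc}(\F_{n_j}\y)=\x^{\tc}(\F_{n_j}^{\tc}\F_{n_j})\y=n_j\,\x^{\tc}\y$, so no permutation appears at all; iterating over $j=3,\dots,d$ gives $\langle\overbar{\mX},\overbar{\mY}\rangle=N\langle\mX,\mY\rangle$, and $\langle\overbar{\mX},\overbar{\mY}\rangle=\langle\barX,\barY\rangle$ since $\bdiag$ merely rearranges entries and pads with zeros. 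Part $(ii)$ then follows by taking $\mY=\mX$ and square roots, as you observe.
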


\begin{proof}
Since $\F\F^{\tc}=\F^{\tc}\F= N\I$, we have that
\begin{align} \label{eq:inproofInnerproduct1}
(\F\otimes\I_{n_2})^{\tc} = \F^{\tc}\otimes\I_{n_2} =  N\F^{-1}\otimes\I_{n_2} =  N(\F^{-1}\otimes\I_{n_2}) =  N(\F\otimes\I_{n_2})^{-1}
\end{align}
and
\begin{align} \label{eq:inproofInnerproduct2}
(\F^{-1}\otimes\I_{n_1})^{\tc} = ({\F^{-1}})^{\tc}\otimes\I_{n_1} = \frac{1}{ N}\F\otimes\I_{n_1} = \frac{1}{ N}(\F\otimes\I_{n_1}) = \frac{1}{ N}(\F^{-1}\otimes\I_{n_1})^{-1}.
\end{align}
Therefore,
\begin{align*}
\langle\mX,\mY\rangle & = \frac{1}{ N}\langle\bcirc(\mX),\bcirc(\mY)\rangle \underset{(a)}{=} \frac{1}{ N}\langle(\F^{-1}\otimes\I_{n_1})\barX(\F\otimes\I_{n_2}),(\F^{-1}\otimes\I_{n_1})\barY(\F\otimes\I_{n_2})\rangle 
\\ & = \frac{1}{ N}\trace\left((\F\otimes\I_{n_2})^{\tc}\barX^{\tc}(\F^{-1}\otimes\I_{n_1})^{\tc}(\F^{-1}\otimes\I_{n_1})\barY(\F\otimes\I_{n_2})\right)
\underset{(b)}{=} \frac{1}{ N}\langle\barX,\barY\rangle,
\end{align*}
where (a) follows from \cref{lemma:bdiag_bcirc_equality} and (b) follows from \eqref{eq:inproofInnerproduct1} and \eqref{eq:inproofInnerproduct2}. This proves $(i)$.

$(ii)$ follows immediately from $(i)$, since $\Vert\mX\Vert_F=\sqrt{\langle\mX,\mX\rangle} = \frac{1}{\sqrt{ N}}\sqrt{\langle\barX,\barX\rangle} = \frac{1}{\sqrt{ N}}\Vert\barX\Vert_F$.


\end{proof}


\subsection{Proof of \cref{lemma:tproduct_bdiagMult}}
\label{sec:App:proofLemma5}

Before proving \cref{lemma:tproduct_bdiagMult} we first prove the following technical lemma which is a property of the t-product.
\begin{lemma} \label{lemma:bcirc_Tproduct}
Let $\mX\in\complex^{n_1\times n_2\times \cdots\times n_d}$ and $\mY\in\complex^{n_2\times \ell\times n_3\times \cdots\times n_d}$. Then,
\begin{align*}
\bcirc(\mX*\mY)=\bcirc(\mX)\bcirc(\mY).
\end{align*}.
\end{lemma}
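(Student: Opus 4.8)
\textbf{Proof proposal for \cref{lemma:bcirc_Tproduct}.}

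The plan is to reduce the claim to the two-dimensional ($3$rd-order) base case and then propagate it upward through the recursive structure of the block-circulant construction. First I would recall the key structural fact that $\bcirc$ is ``recursive in the last dimension'': from \eqref{def:bcirc}, writing $\mX_{i_d} := \mX(:,\ldots,:,i_d) \in \complex^{n_1\times\cdots\times n_{d-1}}$ for the order-$(d-1)$ slices, one has
\begin{align*}
\bcirc(\mX) = \sum_{i_d=1}^{n_d} \pi_{n_d}^{i_d-1}\otimes \bcirc(\mX_{i_d}),
\end{align*}
i.e., $\bcirc(\mX)$ is the $n_d\times n_d$ block-circulant matrix whose first block column is $\big(\bcirc(\mX_1),\bcirc(\mX_2),\ldots,\bcirc(\mX_{n_d})\big)^\top$. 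The analogous statement holds for $\unfold$ and $\fold$ along the last dimension. So the proof is an induction on $d$.

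For the base case $d=3$: here the $t$-product is literally $\mX*\mY = \fold(\bcirc(\mX)\cdot\unfold(\mY))$, and $\unfold(\mX*\mY)$ is by definition the first block column of $\bcirc(\mX*\mY)$, which equals $\bcirc(\mX)\cdot\unfold(\mY)$. One then needs to check that \emph{all} block columns of $\bcirc(\mX*\mY)$ agree with those of $\bcirc(\mX)\bcirc(\mY)$; this is the standard computation that the product of two block-circulant matrices (with blocks commuting with the shift structure — here the shifts are powers of $\pi_{n_3}\otimes\I$) is again block-circulant, and its first block column is determined by the cyclic convolution of the first block columns, which is exactly $\bcirc(\mX)\cdot\unfold(\mY)$. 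Concretely, using $\bcirc(\mX) = \sum_{a}\pi_{n_3}^{a-1}\otimes \X^{(a)}$ and $\bcirc(\mY) = \sum_{b}\pi_{n_3}^{b-1}\otimes \Y^{(b)}$ and the mixed-product property $(\pi^{a-1}\otimes\X^{(a)})(\pi^{b-1}\otimes\Y^{(b)}) = \pi^{a+b-2}\otimes(\X^{(a)}\Y^{(b)})$, one reindexes modulo $n_3$ to recognize $\bcirc(\mX)\bcirc(\mY) = \sum_c \pi_{n_3}^{c-1}\otimes\big(\sum_{a}\X^{(a)}\Y^{((c-a)\bmod n_3\,+1)}\big)$, and the inner sum is exactly the $c$-th frontal slice of $\mX*\mY$.

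For the inductive step, assume the lemma for order $\le d-1$. Using the ``recursive in the last dimension'' decompositions of $\bcirc(\mX)$, $\bcirc(\mY)$ and $\bcirc(\mX*\mY)$ displayed above, together with the observation that the $t$-product is itself computed slicewise-then-block-circulantly in the last dimension (i.e., the order-$(d-1)$ slices of $\mX*\mY$ are obtained by a cyclic convolution, over $i_d$, of the order-$(d-1)$ $t$-products $\mX_{i_d}*\mY_{j_d}$), I would run the identical convolution-reindexing argument as in the base case, but now with the scalar products of matrices replaced by $t$-products of order-$(d-1)$ tensors, and invoke the induction hypothesis $\bcirc(\mX_a * \mY_b) = \bcirc(\mX_a)\bcirc(\mY_b)$ inside the tensor factor. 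The mixed-product property of the Kronecker product, $(\pi_{n_d}^{a-1}\otimes \bcirc(\mX_a))(\pi_{n_d}^{b-1}\otimes\bcirc(\mY_b)) = \pi_{n_d}^{a+b-2}\otimes\bcirc(\mX_a)\bcirc(\mY_b)$, is what glues the two levels together. Summing over $a,b$ and reindexing $c = a+b-1 \bmod n_d$ yields $\bcirc(\mX)\bcirc(\mY) = \sum_c \pi_{n_d}^{c-1}\otimes \bcirc\big(\sum_a \mX_a * \mY_{(c-a)\bmod n_d + 1}\big) = \bcirc(\mX*\mY)$.

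The main obstacle I anticipate is purely bookkeeping rather than conceptual: carefully stating and verifying the ``$t$-product is cyclic convolution in the last dimension of lower-order $t$-products'' fact, and keeping the modular index arithmetic on the $\pi_{n_d}$ powers consistent with the colexicographic ordering convention used in \eqref{def:bcirc} (so that the shift pattern on the left side of $\bcirc(\mX)$ matches the slice pattern). An alternative, cleaner route that I would consider as a fallback — avoiding the convolution algebra entirely — is to pass through the Fourier domain: combine \cref{lemma:bdiag_bcirc_equality} (which says $\bcirc(\mX) = (\F^{-1}\otimes\I_{n_1})\barX(\F\otimes\I_{n_2})$) with \cref{lemma:tproduct_bdiagMult} (which gives $\overline{\mX*\mY}$ has block-diagonal matrix $\barX\,\barY$); then $\bcirc(\mX*\mY) = (\F^{-1}\otimes\I_{n_1})\barX\barY(\F\otimes\I_\ell) = (\F^{-1}\otimes\I_{n_1})\barX(\F\otimes\I_{n_2})\cdot(\F^{-1}\otimes\I_{n_2})\barY(\F\otimes\I_\ell) = \bcirc(\mX)\bcirc(\mY)$, using $(\F\otimes\I_{n_2})(\F^{-1}\otimes\I_{n_2}) = \I$. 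However, since this lemma is stated as ``before proving \cref{lemma:tproduct_bdiagMult}'', I would present the direct inductive proof to avoid circularity, reserving the Fourier argument only if the text's dependency order permits it.
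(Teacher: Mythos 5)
Your inductive proof is correct and is a genuinely different route from the paper's. The base case ($d=3$) convolution computation and the ``cyclic convolution in the last dimension of lower-order $t$-products'' reduction both check out: writing $\mX_a := \mX(:,\ldots,:,a)$, one indeed has $(\mX*\mY)_c = \sum_a \mX_a * \mY_{(c-a)\bmod n_d+1}$, and the mixed-product rule for $\otimes$ lets the induction hypothesis close the gap. The bookkeeping you flag (index arithmetic, matching the $\bcirc$ shift pattern) is the only real work, and you have it right.

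The interesting discrepancy is your circularity worry. You are correct that the Fourier fallback you sketched — $\bcirc(\mX*\mY) = (\F^{-1}\otimes\I_{n_1})\barX\barY(\F\otimes\I_\ell)$ via \cref{lemma:tproduct_bdiagMult} — would be circular, since the paper's proof of \cref{lemma:tproduct_bdiagMult} uses \cref{lemma:bcirc_Tproduct}. But the paper does still go through the Fourier domain, in a way that sidesteps the circularity: it uses only \cref{lemma:bdiag_bcirc_equality} to write $\bcirc(\mX)\bcirc(\mY) = (\F^{-1}\otimes\I_{n_1})\barX\barY(\F\otimes\I_\ell)$, observes that $\barX\barY$ is still block diagonal and hence equals $\bdiag(\overbar{\mB})$ for \emph{some} tensor $\mB$, so $\bcirc(\mX)\bcirc(\mY)=\bcirc(\mB)$ is itself block circulant; then the definition of the $t$-product does the rest, since $\unfold(\mY)=\bcirc(\mY)\C_1$ gives $\bcirc(\mX)\unfold(\mY)=\bcirc(\mB)\C_1=\unfold(\mB)$, hence $\mX*\mY=\mB$ and $\bcirc(\mX*\mY)=\bcirc(\mB)=\bcirc(\mX)\bcirc(\mY)$. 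In short: one does not need to know \emph{which} tensor the block-diagonal product $\barX\barY$ comes from; it suffices that it comes from one. Your induction is more elementary (no DFT at all) and makes the cyclic-convolution structure of $\bcirc$ explicit, at the cost of modular-index bookkeeping; the paper's argument is shorter but hides the convolution behind the diagonalization already paid for in \cref{lemma:bdiag_bcirc_equality}. Both are sound; your choice to avoid the circular version of the Fourier route was correct, you just missed the non-circular variant.
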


\begin{proof}
Invoking \cref{lemma:bdiag_bcirc_equality} we can write
\begin{align*}
\bcirc(\mX)  = (\F^{-1}\otimes\I_{n_1})\barX(\F\otimes\I_{n_2}), \quad \bcirc(\mY) & = (\F^{-1}\otimes\I_{n_2})\barY(\F\otimes\I_{\ell}).
\end{align*}
Therefore,
\begin{align} \label{eq:inProofTproduct_viaFFT}
\bcirc(\mX)\bcirc(\mY) & = (\F^{-1}\otimes\I_{n_1})\barX(\F\otimes\I_{n_2})(\F^{-1}\otimes\I_{n_2})\barY(\F\otimes\I_{\ell}) \nonumber
\\ & =(\F^{-1}\otimes\I_{n_1})\barX\barY(\F\otimes\I_{\ell}).
\end{align}
Since $\barX$ and $\barY$ are block diagonal matrices,  $\barX\barY$ is also block diagonal, and thus there exists some tensor $\mB$ such that $\bdiag(\overbar{\mB})=\barX\barY$. Therefore, by \cref{lemma:bdiag_bcirc_equality}
\begin{align*}
\bcirc(\mX)\bcirc(\mY) & =(\F^{-1}\otimes\I_{n_1})\barX\barY(\F\otimes\I_{\ell})= \bcirc(\mB),
\end{align*}
and hence $\bcirc(\mX)\bcirc(\mY)$ is also a block circulant matrix.
Therefore,
\begin{align*}
\bcirc(\mX*\mY) & = \bcirc(\fold(\bcirc(\mX)\unfold(\mY))) \nonumber
\\ & = \bcirc(\fold(\bcirc(\mX)\bcirc(\mY)\C_1)) \nonumber
\\ & = \bcirc(\fold(\bcirc(\mB)\C_1)) \nonumber
\\ & = \bcirc(\fold(\unfold(\mB))) \nonumber 
\\ & = \bcirc(\mB)= \bcirc(\mX)\bcirc(\mY).
\end{align*}
\end{proof}

We now restate \cref{lemma:tproduct_bdiagMult} and then prove it.

\begin{lemma} 
Let $\mX\in\complex^{n_1\times n_2\times \cdots\times n_d}$ and $\mY\in\complex^{n_2\times \ell\times n_3\times \cdots\times n_d}$, and denote $\barX=\bdiag(\overbar{\mX})$ and $\barY=\bdiag(\overbar{\mY})$. Then, $\mZ=\mX*\mY$ if and only if $\barZ=\barX\barY$, where $\barZ=\bdiag(\overbar{\mZ})$.
\end{lemma}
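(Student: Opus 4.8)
The plan is to derive this lemma as an almost immediate consequence of \cref{lemma:bcirc_Tproduct} together with \cref{lemma:bdiag_bcirc_equality}, exactly mirroring how \cref{lemma:tproduct_bdiagMult} was stated. First I would recall that by \cref{lemma:bdiag_bcirc_equality}, for any tensor $\mW \in \complex^{m_1 \times m_2 \times n_3 \times \cdots \times n_d}$ one has $\barW = \bdiag(\overbar{\mW})$ if and only if $\bcirc(\mW) = (\F^{-1}\otimes\I_{m_1})\barW(\F\otimes\I_{m_2})$. In particular, since the matrices $(\F^{-1}\otimes\I_{m_1})$ and $(\F\otimes\I_{m_2})$ are invertible, the map $\mW \mapsto \bdiag(\overbar{\mW})$ is a bijection between tensors of a fixed shape and their associated block diagonal matrices; so it suffices to prove the matrix identity $\barZ = \barX\barY$ (with $\barZ := \bdiag(\overbar{\mZ})$) is equivalent to the tensor identity $\mZ = \mX*\mY$.

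Next I would carry out the computation. Using \cref{lemma:bdiag_bcirc_equality} on $\mX$, $\mY$, and $\mZ$ respectively, and the Kronecker-product mixed-product rule $(\A\otimes\B)(\C\otimes\D)=(\A\C)\otimes(\B\D)$, I would write
\begin{align*}
\bcirc(\mX)\bcirc(\mY) &= (\F^{-1}\otimes\I_{n_1})\barX(\F\otimes\I_{n_2})(\F^{-1}\otimes\I_{n_2})\barY(\F\otimes\I_{\ell})
\\ &= (\F^{-1}\otimes\I_{n_1})\barX\barY(\F\otimes\I_{\ell}),
\end{align*}
using $(\F\otimes\I_{n_2})(\F^{-1}\otimes\I_{n_2}) = (\F\F^{-1})\otimes\I_{n_2} = \I_{Nn_2}$. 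By \cref{lemma:bcirc_Tproduct}, $\bcirc(\mX*\mY) = \bcirc(\mX)\bcirc(\mY) = (\F^{-1}\otimes\I_{n_1})\barX\barY(\F\otimes\I_{\ell})$. On the other hand, $\barX\barY$ is block diagonal (product of block diagonal matrices), so it equals $\bdiag(\overbar{\mW})$ for a unique tensor $\mW$, and \cref{lemma:bdiag_bcirc_equality} gives $\bcirc(\mW) = (\F^{-1}\otimes\I_{n_1})\barX\barY(\F\otimes\I_{\ell})$; hence $\bcirc(\mX*\mY) = \bcirc(\mW)$, and since $\bcirc(\cdot)$ is injective (it contains the unfolding as its first block column, from which the tensor is recovered by $\fold$), $\mX*\mY = \mW$, i.e. $\bdiag(\overbar{\mX*\mY}) = \barX\barY$.

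Finally I would close the equivalence: if $\mZ = \mX*\mY$ then $\barZ = \bdiag(\overbar{\mZ}) = \bdiag(\overbar{\mX*\mY}) = \barX\barY$; conversely, if $\barZ = \barX\barY$, then $\barZ = \bdiag(\overbar{\mX*\mY})$, so by the injectivity of $\mW \mapsto \bdiag(\overbar{\mW})$ we get $\mZ = \mX*\mY$. I do not expect any genuine obstacle here — the only point requiring mild care is to have \cref{lemma:bcirc_Tproduct} available (it is proved just above in the excerpt) and to justify the two injectivity claims ($\bcirc$ is injective via $\fold\circ\unfold = \mathrm{id}$, and $\bdiag\circ\fft$ is injective since both $\fft$ and $\bdiag$ are invertible on tensors/block-diagonal matrices of fixed shape). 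This last bookkeeping is the part most likely to need an extra sentence, but it is routine.
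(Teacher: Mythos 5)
Your proof is correct and follows essentially the same route as the paper's: both apply \cref{lemma:bdiag_bcirc_equality} to express $\bcirc(\cdot)$ and $\bdiag(\fft(\cdot))$ in terms of each other, multiply, invoke \cref{lemma:bcirc_Tproduct}, and read off the equivalence. Your extra sentences spelling out the injectivity of $\bcirc$ and of $\mW\mapsto\bdiag(\overbar{\mW})$ make explicit what the paper leaves implicit but do not constitute a different argument.
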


\begin{proof}

Invoking \cref{lemma:bdiag_bcirc_equality} we can write
\begin{align*}
\barX  = (\F\otimes\I_{n_1})\bcirc(\mX)(\F^{-1}\otimes\I_{n_2}), \quad \barY & = (\F\otimes\I_{n_2})\bcirc(\mY)(\F^{-1}\otimes\I_{\ell}).
\end{align*}
Plugging these in, we have that
\begin{align*} 
\barX\barY & = (\F\otimes\I_{n_1})\bcirc(\mX)(\F^{-1}\otimes\I_{n_2})(\F\otimes\I_{n_2})\bcirc(\mY)(\F^{-1}\otimes\I_{\ell}) 
\\ & = (\F\otimes\I_{n_1})\bcirc(\mX)\bcirc(\mY)(\F^{-1}\otimes\I_{\ell}) 
\\ & = (\F\otimes\I_{n_1})\bcirc(\mX*\mY)(\F^{-1}\otimes\I_{\ell}),
\end{align*}
where the last equality follows from \cref{lemma:bcirc_Tproduct}.
Therefore, by \cref{lemma:bdiag_bcirc_equality}, $\mZ=\mX*\mY$ if and only if $\barZ=\barX\barY$, where $\barZ=\bdiag(\overbar{\mZ})$.
\end{proof}

\subsection{Proof of \cref{lemma:tSVD}}
\label{sec:App:proofLemma51}
\begin{lemma}[t-SVD]
Let $\mX\in\reals^{n_1\times \cdots\times n_d}$. Then, it can be factorized as
\begin{align*}
\mX = \mU*\mS*\mV^{\top}
\end{align*}
where $\mU\in\reals^{n_1\times n_1\times n_3\times \cdots\times n_d}$ and $\mV\in\reals^{n_2\times n_2\times n_3\times \cdots\times n_d}$ are orthogonal, and  $\mS\in\reals^{n_1\times \cdots\times n_d}$ is a f-diagonal tensor.
\end{lemma}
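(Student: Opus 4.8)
# Proof Proposal for Lemma~\ref{lemma:tSVD} (T-SVD)

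The plan is to prove the T-SVD factorization constructively by passing to the Fourier domain, computing ordinary matrix SVDs of the frontal slices of $\overbar{\mX}$, and then showing that the resulting component tensors are real-valued. This mirrors the algorithm already stated as \cref{alg:tSVD}.

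First I would compute $\overbar{\mX} = \fft(\mX)$ and form the block diagonal matrix $\barX = \bdiag(\overbar{\mX})$. For each choice of indices $i_3\in[n_3],\ldots,i_d\in[n_d]$, take an ordinary (thin or full) SVD of the frontal slice $\barX^{(i_3,\ldots,i_d)} = \barU^{(i_3,\ldots,i_d)}\barS^{(i_3,\ldots,i_d)}{\barV^{(i_3,\ldots,i_d)}}^{\tc}$. To guarantee that the reconstructed tensors end up real-valued, I would not choose these SVDs independently: whenever the conjugate-symmetric partner slice $\barX^{(i_3',\ldots,i_d')}$ (with $i_j'$ as in \eqref{def:i_j_tag}) has already been assigned an SVD, I would set $\barU^{(i_3,\ldots,i_d)} = \conj(\barU^{(i_3',\ldots,i_d')})$, and similarly for $\barS$ and $\barV$; this is legitimate because by \cref{lemma:conjegateComplexSymmetry_realTensors} we have $\barX^{(i_3,\ldots,i_d)} = \conj(\barX^{(i_3',\ldots,i_d')})$, so conjugating a valid SVD of the partner slice yields a valid SVD of the current slice. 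For the self-conjugate slices (those with $i_j = i_j'$ for all $j$, in particular the all-ones slice), the slice is real, so we may and do pick a real SVD. Assembling these blocks gives block diagonal matrices $\barU, \barS, \barV$, which we declare to be $\bdiag(\overbar{\mU})$, $\bdiag(\overbar{\mS})$, $\bdiag(\overbar{\mV})$ respectively, defining $\mU = \ifft(\overbar{\mU})$, $\mS = \ifft(\overbar{\mS})$, $\mV = \ifft(\overbar{\mV})$.

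Next I would verify the three required properties. (a) \emph{Factorization:} By construction $\barX = \barU\,\barS\,\barV^{\tc}$ blockwise, and since $\barV^{\tc} = \bdiag(\conj(\overbar{\mV}))$ corresponds to the tensor $\mV^{\top}$ (the conjugate transpose in the Fourier domain realizes the tensor transpose of a real tensor), \cref{lemma:tproduct_bdiagMult} applied twice gives $\mX = \mU*\mS*\mV^{\top}$. (b) \emph{Orthogonality:} each block $\barU^{(i_3,\ldots,i_d)}$ is unitary, hence $\barU^{\tc}\barU = \barU\barU^{\tc} = \I_{n_1 N}$; since the identity tensor $\mI$ satisfies $\bdiag(\overbar{\mI}) = \I_{n_1 N}$, \cref{lemma:tproduct_bdiagMult} yields $\mU^{\tc}*\mU = \mU*\mU^{\tc} = \mI$, and similarly for $\mV$. (c) \emph{f-diagonality:} each $\barS^{(i_3,\ldots,i_d)}$ is a (nonnegative) diagonal matrix, and conjugation preserves diagonality, so every frontal slice of $\overbar{\mS}$ is diagonal; taking the inverse Fourier transform, each frontal slice $\mS(:,:,i_3,\ldots,i_d)$ is a linear combination of diagonal matrices, hence diagonal, so $\mS$ is f-diagonal.

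The main obstacle — and the one point genuinely requiring care beyond bookkeeping — is showing that $\mU, \mS, \mV$ are \emph{real-valued} (the gap in \cite{highOrderTSVD1} that the lemma is designed to close). This reduces to checking that $\overbar{\mU}$, $\overbar{\mS}$, $\overbar{\mV}$ satisfy the conjugate-complex symmetry condition \eqref{eq:conjugateComplexSymmetryCondition}, after which \cref{lemma:conjegateComplexSymmetry_realTensors} (applied to each of $\mU$, $\mS$, $\mV$) finishes the job. For $\overbar{\mS}$ and $\overbar{\mU}$, $\overbar{\mV}$ this is immediate from the coupling rule imposed above: for a pair of indices $(i_3,\ldots,i_d)$ and its partner $(i_3',\ldots,i_d')$, we set the blocks to be complex conjugates of one another, which is exactly \eqref{eq:conjugateComplexSymmetryCondition}; and for self-conjugate indices the chosen SVD is real, so \eqref{eq:conjugateComplexSymmetryCondition} holds trivially. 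The only subtlety is the consistency of this assignment — one must check the map $(i_3,\ldots,i_d)\mapsto(i_3',\ldots,i_d')$ is an involution partitioning the index set into fixed points and two-element orbits, so that processing orbits in the order of \cref{alg:tSVD} (compute a fresh SVD for the first member, conjugate it for the second) is well-defined and never conflicts. This is a straightforward consequence of the formula \eqref{def:i_j_tag}, which I would record explicitly before assembling the tensors.
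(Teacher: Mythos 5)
Your proposal is correct and follows essentially the same route as the paper's proof: compute (thin) SVDs of the frontal slices of $\overbar{\mX}$, enforce the conjugate coupling between partner slices so that the assembled tensors $\overbar{\mU},\overbar{\mS},\overbar{\mV}^{\tc}$ satisfy the conjugate-complex symmetry condition \eqref{eq:conjugateComplexSymmetryCondition}, invoke \cref{lemma:conjegateComplexSymmetry_realTensors} to deduce $\mU,\mS,\mV$ are real, and verify orthogonality and f-diagonality via \cref{lemma:tproduct_bdiagMult} and the block-diagonal structure. You are in fact slightly more careful than the paper in explicitly choosing a \emph{real} SVD on self-conjugate slices (a step the paper leaves implicit but which is needed for the symmetry condition to hold there); the one small slip is the identity $\barV^{\tc}=\bdiag(\conj(\overbar{\mV}))$, which omits the per-block transpose --- the correct relation is that $\barV^{\tc}$ is block diagonal with blocks $(\overbar{\mV}(:,:,i_3,\ldots,i_d))^{\tc}$, i.e.\ the tensor the paper denotes $\overbar{\mV}^{\tc}$.
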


\begin{proof}
The proof is by construction. We begin by considering the SVD of each frontal slice $\barX^{(i_3,\ldots,i_d)}$, which we denote by $\barX^{(i_3,\ldots,i_d)}=\barU^{(i_3,\ldots,i_d)}\barS^{(i_3,\ldots,i_d)}{\barV^{(i_3,\ldots,i_d)}}^{\tc}$. Since $\mX$ is real-valued, by \cref{lemma:conjegateComplexSymmetry_realTensors} the frontal slices must satisfy the conjegate-complex symmetry conditions in \eqref{eq:conjugateComplexSymmetryCondition}. Therefore, by permuting over all indexes $i_3\in[n_3],\ldots,i_d\in[n_d]$, for any index $i_3\in[n_3],\ldots,i_d\in[n_d]$ for which the SVD of $\barX^{(i_3',\ldots,i_d')}$ has been already computed, where $i_j'$ is as defined in \eqref{def:i_j_tag}, we merely need to compute $\barX^{(i_3,\ldots,i_d)}=\conj(\barU^{(i_3',\ldots,i_d')})\barS^{(i_3',\ldots,i_d')}{\conj(\barV^{(i_3',\ldots,i_d')}}){}^{\tc}$ so that $\barX^{(i_3,\ldots,i_d)}$ will be the conjugate of the appropriate slice.


By constructing $\barU, \barV$, and $\barS$ as block diagonal matrices such that the frontal slices $\barU^{(i_3,\ldots,i_d)}$, ${\barV^{(i_3,\ldots,i_d)}}$, and $\barS^{(i_3,\ldots,i_d)}$ for all $i_3\in[n_3],\ldots,i_d\in[n_d]$ are placed as blocks on the diagonals of $\barU, \barV$, and $\barS$ respectively, we obtain that the full  SVD of $\barX$ can be written as $\barX=\barU\barS\barV^{\tc}$.

Denote by $\overbar{\mU},\overbar{\mS},\overbar{\mV}^{\tc}$ the tensors such that $\bdiag(\overbar{\mU})=\barU$, $\bdiag(\overbar{\mS})=\barS$, and $\bdiag(\overbar{\mV}^{\tc})=\barV^{\tc}$. Then, by our construction, $\overbar{\mU},\overbar{\mS},\overbar{\mV}^{\tc}$ all satisfy the conjugate-complex symmetry conditions of \eqref{eq:conjugateComplexSymmetryCondition}. Therefore, by \cref{lemma:conjegateComplexSymmetry_realTensors} the tensors generated from them by computing the inverse Fourier transforms $\mU,\mS,\mV^{\top}$, are all real-valued. It can be seen that $\mU$ is orthogonal since $\barU^{\tc}\barU=\barU\barU^{\tc}=\I$, which by \cref{lemma:tproduct_bdiagMult} implies that $\mU^{\top}*\mU=\mU*\mU^{\top}=\mI$. Similarly, $\mV$ is also orthogonal. Since $\barS$ is diagonal,  it can be seen that invoking the inverse Fourier transform as in \eqref{eq:inverseFFTatPoint}, $\mS$ must be f-diagonal.

In addition, from \cref{lemma:bdiag_bcirc_equality} we can write
\begin{align} \label{eq:inproofTSVD}
& \bcirc(\mU)  = (\F^{-1}\otimes\I_{n_1})\barU(\F\otimes\I_{n_1}), \nonumber
\\ &  \bcirc(\mS) = (\F^{-1}\otimes\I_{n_1})\barS(\F\otimes\I_{n_2}), \nonumber
\\ & \bcirc(\mV^{\top}) = (\F^{-1}\otimes\I_{n_2})\barV^{\tc}(\F\otimes\I_{n_2}),
\end{align}
where $\F:=\F_{n_d}\otimes\F_{n_{d-1}}\otimes\cdots\otimes\F_{n_3}$ and $\F^{-1}:=\F^{-1}_{n_d}\otimes\F^{-1}_{n_{d-1}}\otimes\cdots\otimes\F^{-1}_{n_3}$.

Also by \cref{lemma:bdiag_bcirc_equality}, we have that 
\begin{align*}
\bcirc(\mX) & = (\F^{-1}\otimes\I_{n_1})\barX(\F\otimes\I_{n_2}) = (\F^{-1}\otimes\I_{n_1})\barU\barS\barV^{\tc}(\F\otimes\I_{n_2})
\\ & = (\F^{-1}\otimes\I_{n_1})\barU(\F\otimes\I_{n_1})(\F^{-1}\otimes\I_{n_1})\barS(\F\otimes\I_{n_2})(\F^{-1}\otimes\I_{n_2})\barV^{\tc}(\F\otimes\I_{n_2})
\\ & \underset{(a)}{=}\bcirc(\mU)\bcirc(\mS)\bcirc(\mV^{\top}) \underset{(b)}{=} \bcirc(\mU*\mS*\mV^{\top}),
\end{align*}
where (a) follows from plugging in \eqref{eq:inproofTSVD}, and (b) follows from \cref{lemma:bcirc_Tproduct}.
Folding both sides of the equation back into tensors we obtain that indeed $\mX=\mU*\mS*\mV^{\top}$.
\end{proof}

\subsection{Proof of \cref{lemma:rankRelationships}}
\label{sec:App:proofLemma13}

\begin{lemma}
Let $\mX\in\reals^{n_1\times\cdots\times n_d}$. The following inequalities hold.
\begin{align*}
(i)\ & \rank_{\textnormal{a}}(\mX)\le\rank_{\textnormal{t}}(\mX)
\\ (ii)\ & \rank_{\textnormal{t}}(\mX)\le\rank_{\textnormal{cp}}(\mX)
\\ (iii)\ & \rank_{\textnormal{t}}(\mX)\le\min\left\lbrace\rank(\X^{\lbrace1\rbrace}),\rank(\X^{\lbrace2\rbrace})\right\rbrace,
\end{align*}
where $\rank_{\textnormal{cp}}(\mX)$ is the CP-rank of $\mX$, and $\X^{\lbrace j\rbrace}$ is the mode-$j$ matricization of $\mX$ \cite{kolda}.
\end{lemma}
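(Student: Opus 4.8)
The plan is to work throughout in the Fourier domain, using the characterizations $\rank_{\textnormal{a}}(\mX)=\frac{1}{N}\rank(\barX)$ from \eqref{eq:averageRankWithBar} and $\rank_{\textnormal{t}}(\mX)=\max_{i_3,\ldots,i_d}\rank(\barX^{(i_3,\ldots,i_d)})$ from the definition of the tubal rank, where $\barX=\bdiag(\overbar{\mX})$ is block diagonal with $N=n_3\cdots n_d$ frontal slices $\barX^{(i_3,\ldots,i_d)}$ on its diagonal. The three parts are then essentially independent linear-algebra facts.

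For part $(i)$, since $\barX$ is block diagonal, $\rank(\barX)=\sum_{i_3,\ldots,i_d}\rank(\barX^{(i_3,\ldots,i_d)})\le N\cdot\max_{i_3,\ldots,i_d}\rank(\barX^{(i_3,\ldots,i_d)})$, and dividing by $N$ gives $\rank_{\textnormal{a}}(\mX)\le\rank_{\textnormal{t}}(\mX)$ immediately. For part $(iii)$, I would argue that each frontal slice of $\overbar{\mX}$ is obtained from the frontal slices of $\mX$ by taking fixed linear combinations (the DFT along the last $d-2$ modes), so the column space of each $\barX^{(i_3,\ldots,i_d)}=\overbar{\mX}(:,:,i_3,\ldots,i_d)$ is contained in the span of the columns of all frontal slices $\mX(:,:,k_3,\ldots,k_d)$ of $\mX$ itself; this span is exactly the column space of the mode-$1$ matricization $\X^{\{1\}}$ (whose columns are the mode-$1$ fibers of $\mX$, i.e., the columns of all frontal slices). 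Hence $\rank(\barX^{(i_3,\ldots,i_d)})\le\rank(\X^{\{1\}})$ for every block, and taking the max gives $\rank_{\textnormal{t}}(\mX)\le\rank(\X^{\{1\}})$; the bound by $\rank(\X^{\{2\}})$ follows by the symmetric argument applied to rows (equivalently, apply the same reasoning to $\mX^{\top}$, whose tubal rank equals that of $\mX$).

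For part $(ii)$, suppose $\mX$ has CP-rank $\rho$, so $\mX=\sum_{k=1}^{\rho}\a_k^{(1)}\circ\a_k^{(2)}\circ\cdots\circ\a_k^{(d)}$ for vectors $\a_k^{(j)}\in\reals^{n_j}$. The key observation is that for a rank-one tensor $\mathcal{R}=\a^{(1)}\circ\cdots\circ\a^{(d)}$, every frontal slice is $\mathcal{R}(:,:,i_3,\ldots,i_d)=\a_{i_3}^{(3)}\cdots\a_{i_d}^{(d)}\,\a^{(1)}(\a^{(2)})^{\top}$, a scalar multiple of the same rank-$\le1$ matrix; applying the DFT (which acts by scalar linear combinations along modes $3,\ldots,d$, as in Definition \ref{def:fourierJdim}) preserves this, so every frontal slice of $\overbar{\mathcal{R}}$ still has rank at most $1$, hence $\rank_{\textnormal{t}}(\mathcal{R})\le 1$. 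Then I would use subadditivity of the tubal rank under tensor addition — which itself follows from subadditivity of matrix rank applied blockwise to the block diagonal matrices (the Fourier transform and $\bdiag$ are linear, so $\overline{\mX+\mY}=\overbar{\mX}+\overbar{\mY}$) — to conclude $\rank_{\textnormal{t}}(\mX)\le\sum_{k=1}^{\rho}\rank_{\textnormal{t}}(\a_k^{(1)}\circ\cdots\circ\a_k^{(d)})\le\rho=\rank_{\textnormal{cp}}(\mX)$.

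The routine parts here are genuinely routine; the one place requiring a little care is making precise, in part $(iii)$, the claim that the DFT-along-last-modes operation only takes linear combinations of frontal slices of $\mX$ and therefore cannot enlarge column spaces beyond $\X^{\{1\}}$ — one must be careful that $\X^{\{1\}}$ is indexed so that its columns really are all the mode-$1$ fibers, matching the frontal-slice columns. I expect this bookkeeping with matricizations and fiber orderings to be the only mild obstacle; everything else reduces to blockwise rank (sub)additivity and the scalar action of the DFT along the trailing modes.
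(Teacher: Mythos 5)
Your proposal is correct on all three parts, and the underlying mechanism — push everything to the Fourier domain and exploit that each frontal slice $\barX^{(i_3,\ldots,i_d)}$ is a fixed linear combination of the frontal slices of $\mX$ — matches the paper. The differences are in packaging. For $(i)$ your argument is the paper's verbatim. For $(ii)$ you isolate the claim that a CP rank-one tensor has tubal rank at most one and then invoke blockwise subadditivity of rank; the paper instead shows directly that $\fft$ applied mode-by-mode sends a CP decomposition of $\mX$ to a CP decomposition of $\overbar{\mX}$ of the same length (each factor $\a^{(j,k)}$ is just multiplied by $\F_{n_j}$), so that every slice $\barX^{(i_3,\ldots,i_d)}=\sum_{k=1}^{\rho}(\bar{\a}^{(3,k)}_{i_3}\cdots\bar{\a}^{(d,k)}_{i_d})\,\a^{(1,k)}(\a^{(2,k)})^{\top}$ is visibly a sum of $\rho$ rank-one matrices — same computation, stated without the subadditivity step. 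The genuine divergence is in $(iii)$: the paper routes through the Tucker decomposition, showing $\overbar{\mX}=\mG\times_1\U_1\times_2\U_2\times_3(\F_{n_3}\U_3)\cdots\times_d(\F_{n_d}\U_d)$ inherits Tucker rank $\le(r_1,\ldots,r_d)$ and then reads off that each block has rank at most $\min\{r_1,r_2\}$. Your version skips the Tucker machinery and argues directly that each column of $\barX^{(i_3,\ldots,i_d)}$ is a linear combination of mode-1 fibers of $\mX$, hence lies in the column space of $\X^{\{1\}}$, and dually for rows and $\X^{\{2\}}$. That is cleaner and more elementary; the one thing you should state explicitly (rather than leave to the reader, as in your last paragraph) is that the columns of $\X^{\{1\}}$ are precisely the mode-1 fibers, i.e., precisely the columns of all frontal slices of $\mX$, so that the containment of column spaces really does hold — which you have essentially done. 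The parenthetical appeal to transpose-invariance of tubal rank for the $\X^{\{2\}}$ bound is unnecessary (the row argument stands on its own) but not wrong.
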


\begin{proof}
Using the equivalent definition of the tensor average rank given in \eqref{eq:averageRankWithBar} we have that
\begin{align*}
\rank_{\textnormal{a}}(\mX) = \frac{1}{N}\rank(\barX) \le \max_{i_3\in[n_3],\ldots,i_d\in[n_d]} \rank(\barX^{(i_3,\ldots,i_d)}) = \rank_{\textnormal{t}}(\mX),
\end{align*}
which proves $(i)$.


To prove $(ii)$, let $\mX$ be of CP-rank $r$, and let $\mX=\sum_{k=1}^r\a^{(1,k)}\circ \a^{(2,k)}\circ\cdots\circ\a^{(d,k)}$ denote its CP-decomposition, where $\a^{(j,k)}\in\reals^{n_j}$ and $\circ$ denotes the outer product \cite{kolda}.
Then, every element can be written as
$\mX_{i_1,\ldots,i_d}=\sum_{k=1}^r \a^{(1,k)}_{i_1}\a^{(2,k)}_{i_2}\cdots\a^{(d,k)}_{i_d}$. Thus, computing a Fourier transformation of $\mX$ along the $j^{th}$ dimension can be done by multiplying just $\a^{(j,1)},\ldots,\a^{(j,r)}$ by $\F_{n_j}$, i.e., $$\fft_j(\mX)=\sum_{k=1}^r\a^{(1,k)}\circ \a^{(2,k)}\circ\cdots\circ\a^{(j-1,k)}\circ\bar{\a}^{(j,k)}\circ\a^{(j+1,k)}\circ\cdots\circ\a^{(d,k)},$$
where $\bar{\a}^{(j,k)}=\F_{n_j}\cdot{\a}^{(j,k)}$. Therefore, by computing a Fourier transformation of $\mX$ along all but the first two dimensions, we obtain that
$$\overbar{\mX}=\fft(\mX)=\sum_{k=1}^r\a^{(1,k)}\circ \a^{(2,k)}\circ \bar{\a}^{(3,k)}\circ\cdots\circ\bar{\a}^{(d,k)},$$
and therefore, $\rank_{\textnormal{cp}}(\overbar{\mX})=r$. This implies that each frontal slice of $\overbar{\mX}$ is the sum of $r$ rank-one matrices given as $\barX^{(i_3,\ldots,i_d)}= \sum_{k=1}^r (\bar{\a}^{(3,k)}_{i_3}\cdots\bar{\a}^{(d,k)}_{i_d})\cdot(\a^{(1,k)}\circ\a^{(2,k)})$, and hence of rank at most $r$. Therefore,  the tubal rank of $\mX$ is at most $r$.

To prove $(iii)$, denote the Tucker decomposition of a rank-$(r_1,\ldots,r_d)$ tensor $\mX$ as
\begin{align*}
\mX = \mG\times_1\U_1\times_2\U_2\cdots\times_{\textnormal{d}}\U_d,
\end{align*}
where $\mG\in\reals^{r_1,\ldots,r_d}$ is the core tensor, $\U_j\in\reals^{n_j\times r_j}$ are the left singular vectors from the SVD of $\mX^{\lbrace j\rbrace}$, and the operator $\times_n$ is the mode-$n$ product for tensors \cite{kolda}. Computing the Fourier transformation along the $j^{th}$ dimension of $\mX$ can also be written as
\begin{align*}
\fft_j(\mX)=\mX\times_j\F_{n_j}.
\end{align*}
Therefore, we obtain that
\begin{align*}
\overbar{\mX} = \fft(\mX) & = \mX\times_3\F_{n_3}\times_4\F_{n_4}\cdots\times_{\textnormal{d}}\F_{n_d}
\\ & = \mG\times_1\U_1\times_2\U_2\times_3(\F_{n_3}\U_3)\times_4(\F_{n_4}\U_4)\cdots\times_{\textnormal{d}}(\F_{n_d}\U_d),
\end{align*}
where the second equality follows from the properties of the $n$-mode product (see section 2.5 in \cite{kolda}). 

Since the dimensions of $\mG$ are also $r_1\times\cdots\times r_d$, we obtain that the Tucker rank of $\overbar{\mX}$ is also at most $(r_1,\ldots,r_d)$.  
In particular, it follows that $\rank(\X^{\lbrace1\rbrace})=r_1$, which by definition implies that there are a maximum of $r_1$ linear independent mode-$1$ fibers of $\overbar{\mX}$. 
By placing these mode-$1$ fibers as the rows of the block diagonal matrix $\barX=\bdiag(\overbar{\mX})$ we obtain that each block is of rank at most $r_1$. Similarly, since $\rank(\X^{\lbrace2\rbrace})=r_2$, it follows that there are a maximum of $r_2$ linear independent mode-$2$ fibers of $\overbar{\mX}$, and so placing the mode-$2$ fibers as the columns of the diagonal blocks of $\barX$ we obtain that each block is of rank at most $r_2$. Together we have that each block is of rank no larger than $\min\lbrace r_1,r_2\rbrace$, which proves $(iii)$. 
\end{proof}

\section{Proofs omitted from \cref{sec:TNN}}

We first restate each lemma and then prove it.

\subsection{Proof of \cref{lemma:dualNorms}}
\label{sec:App:proofLemma12}

\begin{lemma}
The tensor nuclear norm $\Vert\cdot\Vert_*$ is the dual norm of the spectral norm $\Vert\cdot\Vert_2$.
\end{lemma}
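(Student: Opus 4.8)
The plan is to reduce the tensor duality claim to the corresponding, already-established matrix fact via the block-diagonalization identity. Recall from \cref{lemma:bdiag_bcirc_equality} that $\bcirc(\mX)$ is related to $\barX = \bdiag(\overbar{\mX})$ by left- and right-multiplication with the Kronecker-structured matrices $(\F^{-1}\otimes\I_{n_1})$ and $(\F\otimes\I_{n_2})$, which are unitary up to the scalar factors $1/N$ and $N$ respectively (as recorded in \eqref{eq:inproofInnerproduct1}--\eqref{eq:inproofInnerproduct2}). From \cref{lemma:orderPinnerProductEquivalence} we have $\langle\mX,\mY\rangle = \frac{1}{N}\langle\barX,\barY\rangle$, and by the definition of the tensor spectral norm $\Vert\mY\Vert_2 = \Vert\barY\Vert_2$, and from \eqref{eq:nuclearNormEquivalence} the tensor nuclear norm satisfies $\Vert\mX\Vert_* = \frac{1}{N}\Vert\barX\Vert_*$.

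The key steps, in order, would be: (1) Write the dual norm of the tensor spectral norm as $\Vert\mX\Vert_2^\circ := \sup_{\Vert\mY\Vert_2 \le 1} \langle\mX,\mY\rangle$, and substitute the identities above to get $\Vert\mX\Vert_2^\circ = \sup_{\Vert\barY\Vert_2\le1}\frac{1}{N}\langle\barX,\barY\rangle$, where the supremum is over all real tensors $\mY$; (2) Observe that the map $\mY\mapsto\barY$ is a linear bijection from real tensors onto the set of complex block-diagonal matrices whose blocks satisfy the conjugate-complex symmetry conditions \eqref{eq:conjugateComplexSymmetryCondition} (by \cref{lemma:conjegateComplexSymmetry_realTensors}); (3) Show that the constrained supremum over this symmetric, block-diagonal set equals the unconstrained matrix supremum $\sup_{\Vert\Z\Vert_2\le1}\langle\barX,\Z\rangle$ over all complex matrices of the same dimensions, which by the standard matrix spectral/nuclear duality equals $\Vert\barX\Vert_*$; then $\Vert\mX\Vert_2^\circ = \frac{1}{N}\Vert\barX\Vert_* = \Vert\mX\Vert_*$.

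The main obstacle is step (3): the supremum defining the tensor dual norm is over a \emph{restricted} class of matrices $\barY$ (block-diagonal, and Hermitian-symmetric across conjugate slice-pairs), whereas the matrix nuclear norm is the supremum over \emph{all} complex matrices. I would handle this in two parts. First, for the block-diagonal restriction: since $\barX$ is itself block-diagonal, $\langle\barX,\Z\rangle$ depends only on the block-diagonal part of $\Z$, and projecting $\Z$ onto its block-diagonal part does not increase its spectral norm (it is a compression to a submatrix, hence singular values do not increase, or equivalently it is an averaging/conditional-expectation type contraction); so we may restrict to block-diagonal $\Z$ without loss. Second, for the conjugate-symmetry restriction: given any block-diagonal $\Z$ with $\Vert\Z\Vert_2\le1$ achieving (or approaching) the matrix nuclear norm of $\barX$, I would symmetrize it by setting $\barY$ to have blocks $\Z^{(i_3,\dots,i_d)}$ on the ``first half'' of the index set and $\conj(\Z^{(i'_3,\dots,i'_d)})$ on the conjugate partner slices. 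Using that $\barX$ itself obeys the conjugate-complex symmetry (because $\mX$ is real), one checks $\langle\barX,\barY\rangle = \real\langle\barX,\Z\rangle$ is real and equals $\langle\barX,\Z\rangle$ when the latter is already real (which one can arrange by an overall phase on $\Z$), while $\Vert\barY\Vert_2 = \Vert\Z\Vert_2 \le 1$ since $\barY$'s blocks are either blocks of $\Z$ or conjugates thereof, and conjugation preserves singular values. This realizes the matrix optimum with an admissible $\barY$, giving $\Vert\mX\Vert_2^\circ \ge \frac{1}{N}\Vert\barX\Vert_*$; the reverse inequality is immediate since the tensor supremum is over a subset. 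Combining, $\Vert\mX\Vert_2^\circ = \frac{1}{N}\Vert\barX\Vert_* = \Vert\mX\Vert_*$, which is exactly the claim.
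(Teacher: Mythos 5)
Your upper bound $\phi(\mX)\le\frac{1}{N}\Vert\barX\Vert_*$ is obtained exactly as in the paper (relax the supremum from the image of $\mY\mapsto\barY$ to all complex matrices with unit spectral norm); the difference is in the achievability direction. The paper simply takes $\mY = \mU*\mV^\top$ from the t-SVD $\mX=\mU*\mS*\mV^\top$ of $\mX$ (which is guaranteed real-valued by \cref{lemma:tSVD}), notes $\Vert\mY\Vert_2=\Vert\barU\barV^{\tc}\Vert_2=1$, and computes $\langle\mX,\mY\rangle=\frac{1}{N}\trace(\barS)=\Vert\mX\Vert_*$ directly via \cref{lemma:orderPinnerProductEquivalence,lemma:tproduct_bdiagMult}. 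You instead argue abstractly that restricting the matrix-nuclear-norm supremum to the conjugate-symmetric block-diagonal subspace is lossless, by symmetrizing a near-optimal $\Z$. Both routes are valid. The paper's explicit construction is shorter here because the t-SVD machinery is already available, and it avoids the bookkeeping your symmetrization needs: in particular, your "replace each block by the conjugate of its partner" construction does not automatically produce a $\barY$ whose self-conjugate slices (those with $i_j\in\{1,i'_j\}$ for all $j$) are real; the safe version is the averaging $\barY^{(i)}=\tfrac12\bigl(\Z^{(i)}+\conj(\Z^{(i')})\bigr)$, which gives $\Vert\barY\Vert_2\le\Vert\Z\Vert_2$ (rather than equality) and $\langle\barX,\barY\rangle=\real\langle\barX,\Z\rangle$; after the phase adjustment you mention, this still closes the gap. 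What your argument buys conceptually is a reusable principle — a Frobenius inner product against a conjugate-symmetric object is maximized on the conjugate-symmetric subspace — whereas the paper's argument is a one-shot verification tailored to the t-SVD; but for this lemma the paper's route is the more economical one.
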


\begin{proof}
Denote $\phi(\mX)$ to be the dual norm to the spectral norm. We will show that $\phi(\mX)=\Vert\mX\Vert_*$.

By the definition of the dual norm, for any tensor $\mX\in\reals^{n_1\times\cdots\times n_d}$ it holds that
\begin{align} \label{ineq:dualNormProof}
\phi(\mX) & = \sup_{\Vert\mY\Vert_2\le1}\langle\mX,\mY\rangle \underset{(a)}{=} \frac{1}{ N}\sup_{\Vert\barY\Vert_2\le1}\langle\barX,\barY\rangle \nonumber
\\ & \le \frac{1}{ N}\sup_{\substack{\Vert\Y\Vert_2\le1
\\ \Y\in\complex^{n_1N\times n_2N}}}\vert\langle\barX,\Y\rangle\vert 
=\frac{1}{ N}\Vert\barX\Vert_*=\Vert\mX\Vert_*,
\end{align}
where (a) follows from \cref{lemma:orderPinnerProductEquivalence}.

Denote the t-SVD of $\mX$ as $\mX=\mU*\mS*\mV^{\top}$. We will show that for $\mY=\mU*\mV^{\top}$, \eqref{ineq:dualNormProof} holds with equality. Indeed,
\begin{align*}
\langle\mX,\mY\rangle & = \langle\mU*\mS*\mV^{\top},\mU*\mV^{\top}\rangle  \underset{(a)}{=} \frac{1}{ N}\langle\barU\boldsymbol{\bar{\S}}\barV^{\tc},\barU\barV^{\tc}\rangle = \frac{1}{ N}\trace(\boldsymbol{\bar{\S}}) = \frac{1}{ N}\Vert\barX\Vert_*=\Vert\mX\Vert_*,
\end{align*}
where (a) follows from \cref{lemma:orderPinnerProductEquivalence} and \cref{lemma:tproduct_bdiagMult}.
\end{proof}

\subsection{Proof of \cref{lemma:subdifferentialSet}}
\label{sec:App:proofLemma100}

\begin{lemma}
Let $\mX\in\reals^{n_1\times\cdots\times n_d}$ and let $\mX=\mU*\mS*\mV^{\top}$ denote its skinny t-SVD as defined in \cref{def:skinnytSVD}. The subdifferential set of the TNN at $\mX$ is given by 
\begin{align*}
\partial\Vert\mX\Vert_* = \lbrace\mU*\mV^{\top}+\mW\ \vert\ \mU^{\top}*\mW=\mathbf{0},\ \mW*\mV=\mathbf{0},\ \Vert\mW\Vert_2\le1\rbrace.
\end{align*}
\end{lemma}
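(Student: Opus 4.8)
The plan is to reduce the tensor statement to the known matrix result for the subdifferential of the matrix nuclear norm, applied to the block-diagonal matrix $\barX=\bdiag(\overbar{\mX})$, and then pull the characterization back to the tensor domain using the dictionary already established in the paper. Recall from \eqref{eq:nuclearNormEquivalence} that $\Vert\mX\Vert_*=\tfrac1N\Vert\barX\Vert_*$, and from \cref{lemma:orderPinnerProductEquivalence} that $\langle\mX,\mY\rangle=\tfrac1N\langle\barX,\barY\rangle$. The classical fact (Watson) is that for a matrix $\M$ with compact (skinny) SVD $\M=\P\bSigma\Q^{\tc}$,
\begin{align*}
\partial\Vert\M\Vert_* = \lbrace \P\Q^{\tc}+\N \ \vert\ \P^{\tc}\N=\mathbf{0},\ \N\Q=\mathbf{0},\ \Vert\N\Vert_2\le1\rbrace.
\end{align*}
First I would make this precise in the Fourier/block-diagonal setting: writing the block-diagonal skinny SVD $\barX=\barU\barS\barV^{\tc}$ coming from \cref{def:skinnytSVD}, the subgradient set of $\Vert\cdot\Vert_*$ at $\barX$ over the \emph{ambient complex matrix space} $\complex^{n_1N\times n_2N}$ is exactly the Watson set with $\P=\barU$, $\Q=\barV$.

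The core of the argument is then the translation between subgradients. A tensor $\mG\in\reals^{n_1\times\cdots\times n_d}$ is a subgradient of $\Vert\cdot\Vert_*$ at $\mX$ iff $\Vert\mY\Vert_*\ge\Vert\mX\Vert_*+\langle\mG,\mY-\mX\rangle$ for all real tensors $\mY$; by \eqref{eq:nuclearNormEquivalence} and \cref{lemma:orderPinnerProductEquivalence}, this is equivalent to $\Vert\barY\Vert_*\ge\Vert\barX\Vert_*+\langle\barG,\barY-\barX\rangle$ for all \emph{real} tensors $\mY$, i.e.\ for all $\barY$ of the form $\bdiag(\overbar{\mY})$ with $\mY$ real. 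The subtlety (and the main obstacle) is that this inequality is only required over the subset of block-diagonal matrices satisfying the conjugate-complex symmetry condition \eqref{eq:conjugateComplexSymmetryCondition}, not over all of $\complex^{n_1N\times n_2N}$, so a priori the tensor subdifferential could be strictly larger than the image of the matrix subdifferential. I would resolve this by a symmetrization argument: given any matrix subgradient $\barG\in\partial\Vert\barX\Vert_*$, its ``conjugate-symmetrized'' version (obtained by averaging $\barG$ with the block-permuted conjugate dictated by \eqref{def:i_j_tag}) is still a matrix subgradient, because $\barX$ itself is conjugate-symmetric and both the nuclear norm and the relevant inner product are invariant under this symmetrization; this symmetrized matrix corresponds, via \cref{lemma:conjegateComplexSymmetry_realTensors} and the inverse Fourier transform, to a real tensor, and it still lies in the Watson set. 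Conversely, any tensor subgradient $\mG$ gives $\barG$ satisfying the subgradient inequality over conjugate-symmetric test matrices; extending to all test matrices again uses that one may symmetrize any test matrix without changing either side. Hence the real-tensor subdifferential is precisely $\ifft$ applied (block-diagonally) to $\lbrace\barU\barV^{\tc}+\N\ \vert\ \barU^{\tc}\N=\mathbf 0,\ \N\barV=\mathbf 0,\ \Vert\N\Vert_2\le1\rbrace$ intersected with conjugate-symmetric matrices.

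The last step is to rewrite the matrix conditions $\barU^{\tc}\N=\mathbf 0$, $\N\barV=\mathbf 0$, $\Vert\N\Vert_2\le1$ in tensor language. By \cref{lemma:tproduct_bdiagMult}, if $\mW$ is the tensor with $\barW=\N$ (which is real exactly when $\N$ is conjugate-symmetric), then $\mU^{\top}*\mW=\mathbf 0 \iff \barU^{\tc}\barW=\mathbf 0$, and $\mW*\mV=\mathbf 0 \iff \barW\barV=\mathbf 0$; and by the definition of the tensor spectral norm, $\Vert\mW\Vert_2=\Vert\barW\Vert_2\le1$. Finally $\barU\barV^{\tc}$ is the block-diagonal matrix of $\overbar{\mU*\mV^{\top}}$, again by \cref{lemma:tproduct_bdiagMult}, so the leading term $\barU\barV^{\tc}+\N$ folds back to $\mU*\mV^{\top}+\mW$. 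One must double-check that $\mU*\mV^{\top}$ is itself real-valued and that $\barU\barV^{\tc}$ is conjugate-symmetric (both follow since $\mU,\mV$ are real orthogonal tensors, via \cref{lemma:conjegateComplexSymmetry_realTensors}), and that the skinny-SVD normalization in \cref{def:skinnytSVD} — where different blocks may carry different numbers of columns — is exactly what makes $\barU^{\tc}\barU$ and $\barV^{\tc}\barV$ block-wise partial identities, which is the structural property that the Watson characterization needs. Assembling these pieces yields the claimed formula. I expect the conjugate-symmetry bookkeeping in the second paragraph to be the main obstacle, since it is precisely the point that the earlier papers (per the discussion preceding \cref{lemma:subdifferentialSet}) got wrong by using too narrow a notion of skinny t-SVD.
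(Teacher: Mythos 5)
Your proposal is correct in outcome but takes a genuinely different route from the paper, and the difference is instructive. The paper never invokes the subgradient \emph{inequality} $\Vert\mY\Vert_*\ge\Vert\mX\Vert_*+\langle\mG,\mY-\mX\rangle$ over a family of test points $\mY$; instead it uses the \emph{pointwise} Watson-style characterization $\mG\in\partial\Vert\mX\Vert_*\iff\langle\mX,\mG\rangle=\Vert\mX\Vert_*$ and $\Vert\mG\Vert_2\le1$ (i.e.\ exactness in H\"older's inequality for the dual pair established in \cref{lemma:dualNorms}). Because both conditions are scalar equalities/inequalities involving only $\mX$ and $\mG$ themselves, they translate \emph{verbatim} to conditions on $\barX$ and $\barG$ via \cref{lemma:orderPinnerProductEquivalence} and \eqref{eq:nuclearNormEquivalence}, with no need to worry about which test matrices $\barY$ are admissible. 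The ``main obstacle'' you identify — that the subgradient inequality is a priori only being tested against conjugate-symmetric block-diagonal matrices, so the tensor subdifferential could be larger than the image of the matrix one — simply never arises in the paper's argument. The paper then shows set equality by verifying (i)--(ii) for elements of $\mathcal{S}(\barX)$, and by a short contradiction argument (some singular-vector pair of $\barX$ failing to be a top pair of $\barG$ forces $\langle\barX,\barG\rangle<\Vert\barX\Vert_*$) for the reverse inclusion, essentially re-deriving Watson's parametrized form rather than citing it.

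Your route, via the inequality definition plus a conjugate-symmetrization argument, should also work, but it shoulders extra burden that you have sketched rather than discharged. In particular, the symmetrization operator $\sigma$ (block permutation composed with entrywise conjugation) is real-linear but conjugate-linear over $\complex$, so the step ``averaging a subgradient with its $\sigma$-image stays a subgradient'' requires one to track that $\langle\sigma(\A),\sigma(\B)\rangle=\conj(\langle\A,\B\rangle)$ and to work consistently with the \emph{real} part of the complex Frobenius pairing (which is the inner product that makes $\complex^{n_1N\times n_2N}$ a real inner-product space in which one can speak of subgradients). Similarly, the claim that $\sigma$ preserves $\Vert\cdot\Vert_*$ and $\Vert\cdot\Vert_2$ (true, since singular values are invariant under conjugation and block permutation) should be stated explicitly. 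None of these are errors, but they are exactly the kind of complex-vs-real bookkeeping that the paper's pointwise characterization sidesteps. Your final translation back to tensor language — identifying $\barU\barV^{\tc}$ with $\bdiag(\overbar{\mU*\mV^{\top}})$ via \cref{lemma:tproduct_bdiagMult}, observing that $\N=\barG-\barU\barV^{\tc}$ is automatically conjugate-symmetric since both $\barG$ and $\barU\barV^{\tc}$ are, and noting the role of the broader \cref{def:skinnytSVD} in making $\barU^{\tc}\barU$, $\barV^{\tc}\barV$ block-wise partial identities — matches the paper's and is on point.
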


\begin{proof}
It is well known (see for instance \cite{WATSON199233})  that 
$\mG\in\partial\Vert\mX\Vert_*$ if and only if the following two condition hold:
\begin{align*}
(i)   \ \Vert\mX\Vert_* = \langle \mX,\mG\rangle, \quad (ii) &  \ \Vert\mG\Vert_2\le1.
\end{align*}

Fix $\mX\in\reals^{n_1\times\cdots\times n_d}$. Denote $\barX=\barU\barS\barV^{\tc}$ as the skinny SVD of $\barX$, and denote
\begin{align*}
\mathcal{S}(\barX) := \lbrace\barU\barV^{\tc}+\barW\ \vert\ \barU^{\tc}\barW=\mathbf{0},\ \barW\barV=\mathbf{0},\ \Vert\barW\Vert_2\le1\rbrace.
\end{align*}
We will show that $\partial\Vert\mX\Vert_*= \lbrace\mG\in\reals^{n_1\times\cdots\times n_d}\ \vert\ \barG:=\bdiag(\overbar{\mG})\in\mathcal{S}(\barX)\rbrace$.

We  begin by proving that $\lbrace\mG\in\reals^{n_1\times\cdots\times n_d}\ \vert\ \barG:=\bdiag(\overbar{\mG})\in\mathcal{S}(\barX)\rbrace \subseteq\partial\Vert\mX\Vert_*$. Let $\mG$ such that $\barG=\barU\barV^{\tc}+\barW\in\mathcal{S}(\barX)$. Since $\Vert\barW\Vert_2\le1$, it follows that $\Vert\mG\Vert_2 = \Vert\barG\Vert_2 =1$, which proves $(ii)$. In addition, it holds that 
\begin{align*}
\langle\mX,\mG\rangle & = \frac{1}{ N}\langle\barX,\barG\rangle = \frac{1}{ N}\langle\barU\barS\barV^{\tc},\barU\barV^{\tc}+\barW\rangle = \frac{1}{ N}\trace(\barS) = \frac{1}{ N}\Vert\barX\Vert_* = \Vert\mX\Vert_*,
\end{align*}
which proves $(i)$. Therefore, $\mG\in\partial\Vert\mX\Vert_*$.

For the second direction, assume there exists $\mG\in\partial\Vert\mX\Vert_*$ for which $\barG\not\in\mathcal{S}(\barX)$.
Therefore, there exists at least one singular vector pair $\u_j,\v_j$ of $\barX$ that is either not a singular vector pair of $\barG$ all together or not a singular vector pair of $\barG$ corresponding to the leading singular value $\sigma_1(\barG)$. This implies that $\u_j^{\tc}\barG\v_j<\sigma_1(\barG)=1$, whereas for all $i\not=j$ it holds that $\u_i^{\tc}\barG\v_i\le\sigma_1(\barG)=1$. Therefore, in either case we have that
\begin{align*}
\langle\mX,\mG\rangle & = \frac{1}{ N}\langle\barX,\barG\rangle = \frac{1}{ N}\sum_{i=1}^{\rank(\barX)}\sigma_i(\barX)\u_i^{\tc}\barG\v_i < \frac{1}{ N}\sum_{i=1}^{\rank(\barX)}\sigma_i(\barX)
= \frac{1}{ N}\Vert\barX\Vert_* = \Vert\mX\Vert_*,
\end{align*}
which contradicts $(i)$.

Finally, notice that the subdifferential set can be written equivalently as
\begin{align*}
& \partial\Vert\mX\Vert_* 
 = \lbrace\mG\in\reals^{n_1\times\cdots\times n_d}\ \vert\ \barG:=\bdiag(\overbar{\mG})\in\mathcal{S}(\barX)\rbrace
\\ & = \lbrace\mG\in\reals^{n_1\times\cdots\times n_d}\ \vert\ \barG=\barU\barV^{\tc}+\barW,\ \barU^{\tc}\barW=\mathbf{0},\ \barW\barV=\mathbf{0},\ \Vert\barW\Vert_2\le1, \barG=\bdiag(\overbar{\mG})\rbrace
\\ & = \lbrace\mU*\mV^{\top}+\mW\ \vert\ \mU^{\top}*\mW=\mathbf{0},\ \mW*\mV=\mathbf{0},\ \Vert\mW\Vert_2\le1\rbrace,
\end{align*}
where $\mX=\mU*\mS*\mV^{\top}$ is the skinny SVD of $\mX$.
\end{proof}

\subsection{Proof of \cref{lemma:projectionOntoTNN}}
\label{sec:App:proofLemma11}

\begin{lemma}[Projection onto the tensor nuclear norm]
Let $\mX\in\reals^{n_1\times\cdots\times n_d}$. The Euclidean projection of $\mX$ onto the tensor nuclear norm ball of radius $\tau\ge0$ can be computed by the steps described in \cref{alg:projectionTNN}.

Moreover, if $\rank_{\textnormal{t}}(\Pi_{\lbrace\Vert\mY\Vert_*\le\tau\rbrace}[\mX])  \leq r$, then the t-SVD computation in the algorithm could be replaced with the rank-$r$ t-SVD (Definition \ref{def:rankRtSVD}) and the summation from $1$ to $\rank_{\textnormal{t}}(\mX)$ in the computation of $\sigma$ could be replaced with a summation from $1$ to $r$.
\end{lemma}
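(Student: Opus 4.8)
The plan is to reduce the tensor projection to a standard matrix nuclear norm ball projection in the Fourier domain, and then to translate the answer back. Concretely, I would use the map $\mY\mapsto\barY:=\bdiag(\fft(\mY))$, which by \cref{lemma:conjegateComplexSymmetry_realTensors} is a bijection between real tensors in $\reals^{n_1\times\cdots\times n_d}$ and block-diagonal complex matrices (of the appropriate block sizes) that satisfy the conjugate-complex symmetry condition \eqref{eq:conjugateComplexSymmetryCondition}. By \cref{lemma:orderPinnerProductEquivalence}(ii), $\Vert\mX-\mY\Vert_F^2=\frac1N\Vert\barX-\barY\Vert_F^2$, and by \eqref{eq:nuclearNormEquivalence}, $\Vert\mY\Vert_*\le\tau$ iff $\Vert\barY\Vert_*\le N\tau$. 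Hence $\Pi_{\lbrace\Vert\mY\Vert_*\le\tau\rbrace}[\mX]$ corresponds, under this bijection, to the minimizer of $\Vert\barX-\barY\Vert_F$ over block-diagonal matrices $\barY$ satisfying \eqref{eq:conjugateComplexSymmetryCondition} and $\Vert\barY\Vert_*\le N\tau$. The first substantive step is to argue that the symmetry constraint \eqref{eq:conjugateComplexSymmetryCondition} is free: since $\mX$ is real, $\barX$ already satisfies it, so $\barX^{(i_3,\ldots,i_d)}=\conj(\barX^{(i'_3,\ldots,i'_d)})$ for the index map $i'_j$ of \eqref{def:i_j_tag}; using the closed form for the projection onto a matrix nuclear norm ball stated just before \cref{alg:projectionTNN}, together with the fact that a block-diagonal matrix admits a block-diagonal SVD (take the SVD of each block), this projection acts on each block by soft-thresholding its singular values by a common threshold $\sigma\ge0$. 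Conjugate blocks of $\barX$ share singular values and have conjugate singular vectors, so the thresholded blocks remain conjugate, and therefore the unconstrained minimizer over all block-diagonal complex matrices automatically satisfies \eqref{eq:conjugateComplexSymmetryCondition}.

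Next I would pin down the common threshold and translate back. Since the singular values of a block-diagonal matrix are the concatenation, with multiplicity, of those of its blocks, the threshold is the $\sigma\ge0$ for which $\sum_{i_3,\ldots,i_d}\sum_i\max\lbrace0,\sigma_i(\barX^{(i_3,\ldots,i_d)})-\sigma\rbrace=N\tau$ (with $\sigma=0$ and no thresholding when $\mX$ already lies in the ball); dividing by $N$ and using $\sigma_i(\barX^{(i_3,\ldots,i_d)})=0$ for $i>\rank_{\textnormal{t}}(\mX)$ recovers exactly the equation solved in \cref{alg:projectionTNN}. The resulting soft-thresholded block-diagonal matrix equals $\barU\,\bdiag(\overbar{\mS}_\tau)\,\barV^{\tc}$, where $\barU,\barS,\barV$ are the block-diagonal SVD factors of $\barX$ and $\overbar{\mS}_\tau$ is the f-diagonal tensor defined in the algorithm; by \cref{lemma:tproduct_bdiagMult} this is $\bdiag(\overbar{\mU*\mS_\tau*\mV^{\top}})$ with $\mS_\tau=\ifft(\overbar{\mS}_\tau)$, and since $\overbar{\mS}_\tau$ obeys \eqref{eq:conjugateComplexSymmetryCondition} (its entries are real and equal across conjugate blocks), $\mU*\mS_\tau*\mV^{\top}$ is real. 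Undoing the bijection then gives $\Pi_{\lbrace\Vert\mY\Vert_*\le\tau\rbrace}[\mX]=\mU*\ifft(\overbar{\mS}_\tau)*\mV^{\top}$, which is precisely the output of \cref{alg:projectionTNN}.

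For the ``moreover'' claim, suppose $\rank_{\textnormal{t}}(\Pi_{\lbrace\Vert\mY\Vert_*\le\tau\rbrace}[\mX])\le r$. Then every soft-thresholded block has rank at most $r$, which forces $\sigma_{r+1}(\barX^{(i_3,\ldots,i_d)})\le\sigma$ for all $(i_3,\ldots,i_d)$, hence $\max\lbrace0,\sigma_i(\barX^{(i_3,\ldots,i_d)})-\sigma\rbrace=0$ for all $i>r$. Thus the defining equation for $\sigma$ is unchanged if the inner sums are truncated at $r$; since its left-hand side is continuous and nonincreasing in $\sigma$ (strictly decreasing while positive), the truncated equation has the same solution $\sigma$, so $\sigma$ can be determined from the rank-$r$ t-SVD alone. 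The thresholded blocks $\barU^{(i_3,\ldots,i_d)}\diag(\ldots){\barV^{(i_3,\ldots,i_d)}}^{\tc}$ likewise depend only on the top $r$ singular components of each block, i.e., on the rank-$r$ t-SVD of $\mX$ (\cref{def:rankRtSVD}), which justifies the stated modification of \cref{alg:projectionTNN}.

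I expect the main obstacle to be the first step: carefully verifying that the projection of $\barX$ onto the matrix nuclear norm ball --- which couples all the diagonal blocks through the single threshold $\sigma$ --- nevertheless preserves the conjugate-complex symmetry \eqref{eq:conjugateComplexSymmetryCondition}, so that the unconstrained matrix-level projection automatically yields a real tensor and no separate projection onto the symmetry subspace is needed. The remaining steps are routine manipulations with the Fourier/block-diagonal correspondence established in \cref{sec:tensorPreliminaries}.
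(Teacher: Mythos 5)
Your proof is correct and follows essentially the same route as the paper's: pass to the Fourier/block-diagonal domain via Lemma \ref{lemma:orderPinnerProductEquivalence} and Eq.~\eqref{eq:nuclearNormEquivalence}, observe that the unconstrained matrix nuclear-norm projection of $\barX$ soft-thresholds each block by a common $\sigma$ and, because conjugate blocks share singular values and have conjugate singular vectors, automatically lands in the set $\mcM$ of block-diagonal matrices satisfying \eqref{eq:conjugateComplexSymmetryCondition}, then invert the correspondence. The ``moreover'' part is argued the same way in both proofs, by noting that $\sigma_{r+1}(\barX^{(i_3,\ldots,i_d)})\le\sigma$ kills all terms beyond rank $r$.
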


\begin{proof}
The problem of projecting a tensor $\mX\in\reals^{n_1\times\cdots\times n_d}$ onto the TNN ball of radius $\tau\ge0$ can be written
as the following optimization problem:
\begin{align} \label{eq:projOverTNN}
\Pi_{\lbrace\Vert{\mY}\Vert_*\leq \tau\rbrace}[\mX]=\argmin_{\Vert\mY\Vert_*\le\tau}\frac{1}{2}\Vert\mY-\mX\Vert_F^2.
\end{align}

Denote $\bdiag^{-1}(\cdot)$ to be the inverse operator of $\bdiag(\cdot)$ such that for any $\overbar{\mX}\in\complex^{n_1\times\cdots\times n_d}$ $\bdiag^{-1}(\bdiag(\overbar{\mX}))=\overbar{\mX}$.
Consider the following optimization problem over the block diagonal matrix domain
\begin{align} \label{optOverBarX}
\min_{\substack{\Vert\Y\Vert_*\le N\tau \\ \Y\in\mcM}}\frac{1}{2 N}\Vert\Y-\barX\Vert_F^2,
\end{align}
where $\barX=\bdiag(\overbar{\mX})$ and $\mcM\subset\complex^{n_1 N\times n_2 N}$ denotes the subset of $\complex^{n_1 N\times n_2 N}$ which contains all the block diagonal matrices $\Y$ such that $\bdiag^{-1}(\Y)$ satisfies the conjugate-complex symmetry condition in \eqref{eq:conjugateComplexSymmetryCondition}\footnote{note this set is closed and convex}. 

We will show that since $\mX$ is real-valued, the projection of $\barX$ onto the ball $\lbrace\Y\in\complex^{n_1\times\cdots\times n_d}\  \vert\ \Vert\Y\Vert_*\le N\tau\rbrace$, which we will denote by $\Y^*$, also satisfies that $\Y^*\in\mcM$, and thus, is the optimal solution to \eqref{optOverBarX}.


 It is well known that  the projection of a matrix $\barX$ onto the matrix nuclear norm ball of radius $\tau N$ can be written as $\Y^* = \barU\barS_{\tau}\barV^{\tc}$, where $\barX=\barU\barS\barV^{\tc}$ is the SVD of $\barX$ and $\barS_{\tau}$ is the matrix obtained by projecting the diagonal of $\barS$ onto the simplex of radius $\tau N$.

The t-SVD of $\mX$ can be written as $\mX=\mU*\mS*\mV^{\top}$, where $\mU=\ifft(\bdiag^{-1}(\barU))$, $\mS=\ifft(\bdiag^{-1}(\barS))$, and $\mV^{\top}=\ifft(\bdiag^{-1}(\barV^{\tc}))$. Since $\mX$ is real-valued, $\mU,\mS,\mV^{\top}$ are also all real-valued. Therefore, by \cref{lemma:conjegateComplexSymmetry_realTensors}, the tensors $\overbar{\mU},\overbar{\mS},\overbar{\mV}^{\tc}$ all satisfy the conjugate-complex symmetry condition in \eqref{eq:conjugateComplexSymmetryCondition}.
Therefore, the matrices $\barU,\barS,\barV$ are block diagonal and they satisfy that for any $i_3\in[n_3],\ldots,i_d\in[n_d]$, $\barU^{(i_3,\ldots,i_d)}=\conj(\barU^{(i_3',\ldots,i_d')})$,  $\barV^{(i_3,\ldots,i_d)}=\conj(\barV^{(i_3',\ldots,i_d')})$, and $\barS^{(i_3,\ldots,i_d)}=\barS^{(i_3',\ldots,i_d')}$ where $i_j'$ is as defined in \eqref{def:i_j_tag} for all $j\in{3,\ldots,d}$.

To project $\diag(\barS)$ onto the the simplex of radius $\tau N$ we need to find the unique $\sigma\ge0$ for which $\sum_{i=1}^{\rank_{\textnormal{t}}(\mX)}\sum_{i_3=1}^{n_3}\cdots\sum_{i_d=1}^{n_d} \max\lbrace0,\sigma_i(\barX^{(i_3,\ldots,i_d)})-\sigma\rbrace=\tau N$ \cite{projectOntoSimplex}. Since $\barS^{(i_3,\ldots,i_d)}=\barS^{(i_3',\ldots,i_d')}$, also $\max\lbrace0,\sigma_i(\barX^{(i_3,\ldots,i_d)})-\sigma\rbrace = \max\lbrace0,\sigma_i(\barX^{(i_3',\ldots,i_d')})-\sigma\rbrace$ for any $i_3\in[n_3],\ldots$, $i_d\in[n_d]$, and thus, $\barS_{\tau}^{(i_3,\ldots,i_d)}=\barS_{\tau}^{(i_3',\ldots,i_d')}$. Therefore, $\bdiag^{-1}(\Y^*)$ also satisfies the conjugate-complex symmetry condition in \eqref{eq:conjugateComplexSymmetryCondition}, and so, $\Y^*\in\mcM$ as desired. 

It remains to show that $\mY^*:=\ifft(\bdiag^{-1}(\Y^*))=\mU*\mS_{\tau}*\mV^{\top}$ where $\mS_{\tau}=\ifft(\bdiag^{-1}(\barS_{\tau}))$ is the optimal solution to \eqref{eq:projOverTNN}. First, since $\Y^*\in\mcM$,  by \cref{lemma:conjegateComplexSymmetry_realTensors} $\mY^*$ is real-valued. In addition, by \eqref{eq:nuclearNormEquivalence} we know that $\Vert\mY^*\Vert_*\le\tau$. Therefore $\mY^*$ is feasible for Problem \eqref{eq:projOverTNN}. Now, for every $\mY\in\lbrace\mY\in\reals^{n_1\times\cdots\times n_d}\  \vert\ \Vert\mY\Vert_*\le\tau\rbrace$ it holds that $\barY:=\bdiag(\overbar{\mY})\in\lbrace\barY\in\mcM\  \vert\ \Vert\barY\Vert_*\le\tau N\rbrace$. Therefore, we obtain that 
\begin{align*}
\frac{1}{2}\Vert\mX-\mY\Vert_F^2 \underset{(a)}{=} \frac{1}{2 N}\Vert\barX-\barY\Vert_F^2 \underset{(b)}{\ge} \frac{1}{2 N}\Vert\barX-\barY^*\Vert_F^2 \underset{(c)}{=} \frac{1}{2}\Vert\mX-\mY^*\Vert_F^2,
\end{align*}
where (a) and (c) follow from \cref{lemma:orderPinnerProductEquivalence}, and (b) follows from the optimality of $\barY^*$. Therefore, $\mY^*$ is optimal for  \eqref{eq:projOverTNN} and $\Pi_{\lbrace\Vert\mZ\Vert_*\le\tau\rbrace}[\mX]=\mY^*$.

For the second part of the lemma, notice that if we know that $\rank_{\textnormal{t}}(\Pi_{\lbrace\Vert\mY\Vert_*\le\tau\rbrace}[\mX])  \leq r$, then we know that for any $i_3\in[n_3],\ldots,i_d\in[n_d]$ and $r<i\le\rank_{\textnormal{t}}(\mX)$, the value of $\max\lbrace0,\sigma_i(\barX^{(i_3,\ldots,i_d)})-\sigma\rbrace$ will be zero. Therefore, the computation of any component of the t-SVD of $\mX$ that does not correspond to one of the leading $r$ components of one of its frontal slices is unnecessary and can be skipped.

\end{proof}


\section{Proofs Omitted from \cref{sec:SCsmooth}}
We first restate each lemma and then prove it.

\subsection{Proof of \cref{lemma:svd_opt_grad_d_dim}}
\label{sec:App:proofLemma199}

\begin{lemma}
Let $\mX^*$ be an optimal solution to Problem \eqref{mainModel} and let $\overbar{\mX^*}$ denote its Fourier transform as defined in \cref{def:fft}. For each $i_3\in[n_3],\ldots,i_d\in[n_d]$, denote the SVD of the frontal slice ${\overbar{\X^*}}^{(i_3,\ldots,i_d)}$ as ${\overbar{\X^*}}^{(i_3,\ldots,i_d)}=\sum_{i=1}^{r_{i_3,\ldots,i_d}}\sigma_i^{(i_3,\ldots,i_d)}\u_i^{(i_3,\ldots,i_d)}{\v_i^{(i_3,\ldots,i_d)}}^{\tc}$, where $r_{i_3,\ldots,i_d}=\rank({\overbar{\X^*}}^{(i_3,\ldots,i_d)})$.
Then, each frontal slice of the Fourier transform of the gradient vector $\overbar{\nabla{}f(\mX^*)}^{(i_3,\ldots,i_d)}$ admits a SVD such that the set of pairs of vectors $\lbrace -\u_i^{(i_3,\ldots,i_d)}, \v_i^{(i_3,\ldots,i_d)}\rbrace_{i=1}^{r_{i_3,\ldots,i_d}}$ is a
set of top singular-vector pairs of $\overbar{\nabla{}f(\mX^*)}^{(i_3,\ldots,i_d)}$ which corresponds to the largest singular value $\sigma_1(\overbar{\nabla{}f(\mX^*)}^{(i_3,\ldots,i_d)})$. Furthermore, the top singular values of all nonzero slices are equal, that is,
$$\sigma_1(\overbar{\nabla{}f(\mX^*)})=\sigma_1(\overbar{\nabla{}f(\mX^*)}^{(i_3,\ldots,i_d)}),$$
for all $i_3\in[n_3],\ldots,i_d\in[n_d]$ such that $\sigma_1(\overbar{\nabla{}f(\mX^*)}^{(i_3,\ldots,i_d)})\not=0$.
\end{lemma}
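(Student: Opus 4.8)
\textbf{Proof proposal for \cref{lemma:svd_opt_grad_d_dim}.}

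The plan is to start from the first-order optimality condition for Problem \eqref{mainModel} at $\mX^*$, transfer it to the Fourier domain, and then read off the singular-vector structure block by block. First I would write down the optimality condition: since $\mX^*$ minimizes the smooth convex $f$ over the unit TNN ball, there exists $\mG\in\partial\Vert\mX^*\Vert_*$ with $\nabla f(\mX^*) = -\mu\,\mG$ for some $\mu\ge 0$; if $\nabla f(\mX^*)\ne 0$ then necessarily $\Vert\mX^*\Vert_*=1$ and $\mu>0$, and by \cref{lemma:subdifferentialSet} applied to the skinny t-SVD $\mX^*=\mU*\mS*\mV^{\top}$ we have $\mG = \mU*\mV^{\top}+\mW$ with $\mU^{\top}*\mW=\mathbf 0$, $\mW*\mV=\mathbf 0$, $\Vert\mW\Vert_2\le 1$. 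Passing to the Fourier domain via \cref{lemma:tproduct_bdiagMult} and \cref{lemma:bdiag_bcirc_equality}, this reads $\overbar{\nabla f(\mX^*)} = -\mu(\barU\barV^{\tc}+\barW)$ as block diagonal matrices, hence frontal-slice-wise: $\overbar{\nabla f(\mX^*)}^{(i_3,\dots,i_d)} = -\mu\big(\barU^{(i_3,\dots,i_d)}{\barV^{(i_3,\dots,i_d)}}^{\tc} + \barW^{(i_3,\dots,i_d)}\big)$, with the orthogonality relations ${\barU^{(i_3,\dots,i_d)}}^{\tc}\barW^{(i_3,\dots,i_d)}=0$, $\barW^{(i_3,\dots,i_d)}\barV^{(i_3,\dots,i_d)}=0$, and $\Vert\barW^{(i_3,\dots,i_d)}\Vert_2\le 1$ (the last from $\Vert\mW\Vert_2=\Vert\barW\Vert_2\le 1$).

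Next, for each fixed slice I would invoke the matrix-level fact — exactly Lemma 2 in \cite{garberNuclearNormMatrices} — that a matrix of the form $-\mu(\barU^{(i_3,\dots,i_d)}{\barV^{(i_3,\dots,i_d)}}^{\tc}+\barW^{(i_3,\dots,i_d)})$ with these orthogonality and norm constraints has the columns of $\barU^{(i_3,\dots,i_d)}$ (negated) and of $\barV^{(i_3,\dots,i_d)}$ as a set of top singular-vector pairs, all belonging to the singular value $\mu$, because the range of $\barU^{(i_3,\dots,i_d)}{\barV^{(i_3,\dots,i_d)}}^{\tc}$ and the range of $\barW^{(i_3,\dots,i_d)}$ are mutually orthogonal (both on the left, via the column spaces, and on the right). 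More concretely: restricted to the column space of $\barU^{(i_3,\dots,i_d)}$ and the column space of $\barV^{(i_3,\dots,i_d)}$, the matrix acts as $-\mu$ times a partial isometry, contributing singular value $\mu$ with multiplicity $r_{i_3,\dots,i_d}$; on the orthogonal complement it acts as $-\mu\barW^{(i_3,\dots,i_d)}$, whose spectral norm is $\le\mu$. Hence $\{-\u_i^{(i_3,\dots,i_d)},\v_i^{(i_3,\dots,i_d)}\}_{i=1}^{r_{i_3,\dots,i_d}}$ is a set of singular-vector pairs of $\overbar{\nabla f(\mX^*)}^{(i_3,\dots,i_d)}$ corresponding to its largest singular value $\sigma_1(\overbar{\nabla f(\mX^*)}^{(i_3,\dots,i_d)})=\mu$ whenever $r_{i_3,\dots,i_d}\ge 1$.

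Finally, for the "equal top singular values" claim: the constant $\mu$ above is the same for all slices, since it came from the single scalar multiplier in the optimality condition (equivalently, $\mu = \Vert\mG\Vert_2 = \Vert\overbar{\nabla f(\mX^*)}\Vert_2/\mathrm{(scaling)}$; more directly, $\sigma_1(\overbar{\nabla f(\mX^*)}) = \mu$ since $\mu$ is the largest singular value of the whole block diagonal matrix, attained on at least one nonzero slice). Therefore every nonzero slice has top singular value exactly $\mu = \sigma_1(\overbar{\nabla f(\mX^*)})$, which is the stated equality. I would also handle the degenerate case $\nabla f(\mX^*)=0$ separately — then the statement is vacuous since all singular values vanish — though in the intended applications $\nabla f$ is assumed nonzero. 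The main obstacle I anticipate is bookkeeping: carefully justifying that the Fourier-domain translation preserves the orthogonality structure of $\partial\Vert\cdot\Vert_*$ slice by slice (this uses \cref{lemma:tproduct_bdiagMult} to turn t-products into block-diagonal matrix products, plus the fact that the skinny t-SVD is defined precisely so that $\barU,\barV$ have the right per-block column-orthonormality from \cref{def:skinnytSVD}), and checking that the multiplicity argument is consistent with the possibly different ranks $r_{i_3,\dots,i_d}$ across slices. The rest is a direct application of the matrix result from \cite{garberNuclearNormMatrices} to each diagonal block.
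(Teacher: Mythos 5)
Your proposal is correct and follows essentially the same route as the paper's proof: start from the first-order optimality condition, use the characterization of the subdifferential of the TNN from \cref{lemma:subdifferentialSet} (the paper phrases this via the normal cone, which is the conic hull of the subdifferential, so the single scalar $\mu$ in your $\nabla f(\mX^*) = -\mu\mG$ is exactly the $\lambda$ in the paper), pass to the Fourier domain via \cref{lemma:tproduct_bdiagMult}, and read off the block-wise singular-vector structure. The matrix-level orthogonality/partial-isometry argument you describe is precisely what the paper does slice by slice, and your handling of the uniform constant $\mu$ across slices is the same as the paper's.
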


\begin{proof}
From the first-order optimality condition it holds that
\begin{align} \label{firstOrderOpt_d_dim}
\mathbf{0}\in \nabla{}f(\mX^*) + \mathcal{N}_{\lbrace\Vert\mX\Vert_*\le\tau\rbrace}(\mX^*).
\end{align}

Let $\mX^*=\mU*\mS*\mV^{\top}$ denote the skinny t-SVD of $\mX^*$. The normal cone of the tensor nuclear norm at $\mX^*$ can be written as the conic hull generated by the subdifferential of the tensor nuclear norm at $\mX^*$. Thus, using the characteristic of the subdifferential set in \cref{lemma:subdifferentialSet},
we have that
\begin{align*} 
\mathcal{N}_{\lbrace\Vert\mX\Vert_*\le1\rbrace}(\mX^*) = \lbrace\lambda(\mU*\mV^{\top}+\mW)\ \vert\ \mU^{\top}*\mW=\mathbf{0},\ \mW*\mV=\mathbf{0},\ \Vert\mW\Vert_2\le1,\ \lambda\ge0\rbrace.
\end{align*}
If we consider this set in the Fourier domain we have that
\begin{align} \label{eq:NNfft_d_dim}
\fft(\mathcal{N}_{\lbrace\Vert\mX\Vert_*\le1\rbrace}(\mX^*)) & = \lbrace\lambda(\barU\barV^{\tc}+\barW)\ \vert\ \barU^{\tc}\barW=\mathbf{0},\ \barW\barV=\mathbf{0},\ \Vert\barW\Vert_2\le1,\ \lambda\ge0\rbrace,
\end{align}
where $\barU$ and $\barV$ are  matrices such that for every $i_3\in[n_3],\ldots,i_d\in[n_d]$, it holds that ${\barU^{(i_3,\ldots,i_d)}}^{\tc}{\barU^{(i_3,\ldots,i_d)}}=\diag(\I_{r_{i_3,\ldots,i_d}},\mathbf{0})$ and ${\barV^{(i_3,\ldots,i_d)}}^{\tc}{\barV^{(i_3,\ldots,i_d)}}=\diag(\I_{r_{i_3,\ldots,i_d}},\mathbf{0})$.

For \eqref{firstOrderOpt_d_dim} to hold, in the Fourier domain it must hold that $\mathbf{0}\in \overbar{\nabla{}f(\mX^*)} + \fft(\mathcal{N}_{\lbrace\Vert\mX\Vert_*\le1\rbrace}(\mX^*))$, which implies that there must exist $\barW$ and $\lambda$ for which the conditions in the  RHS of \eqref{eq:NNfft_d_dim} hold and for which 
\begin{align*}
\overbar{\nabla{}f(\mX^*)} = \lambda(-\barU\barV^{\tc}-\barW).
\end{align*}
Writing this equality for each block separately, we obtain that for every $i_3\in[n_3],\ldots,i_d\in[n_d]$, it holds that
\begin{align*}
\overbar{\nabla{}f(\mX^*)}^{(i_3,\ldots,i_d)} = \lambda(-\barU^{(i_3,\ldots,i_d)}{\barV^{(i_3,\ldots,i_d)}}^{\tc}-\barW^{(i_3,\ldots,i_d)}).
\end{align*}
By the condition on $\barW$ in \eqref{eq:NNfft_d_dim} that $\Vert\barW\Vert_2\le1$, we have that $\Vert\barW^{(i_3,\ldots,i_d)}\Vert_2 \le \Vert\barW\Vert_2 \le1$. Additionally,  we know that $\Vert\barU^{(i_3,\ldots,i_d)}{\barV^{(i_3,\ldots,i_d)}}^{\tc}\Vert_2=1$. Since $\barW$ is orthogonal to $\barU\barV^{\tc}$, we have that the $r_{i_3,\ldots,i_d}$ nonzero columns of $-\barU^{(i_3,\ldots,i_d)}$ and $\barV^{(i_3,\ldots,i_d)}$ are top singular-vector pairs of  $\overbar{\nabla{}f(\mX^*)}^{(i_3,\ldots,i_d)}$ which correspond to the largest singular value $\sigma_1(\overbar{\nabla{}f(\mX^*)}^{(i_3,\ldots,i_d)})=\lambda$. Since this holds for all $i_3\in[n_3],\ldots,i_d\in[n_d]$, the top singular value of all frontal slices is equal to  $\lambda$, and so they are all equal to each other. In addition, they are also equal to  $\sigma_1(\overbar{\nabla{}f(\mX^*)})$ since the top singular value of $\overbar{\nabla{}f(\mX^*)}$ is the top singular value out of all the singular values of all the frontal slices.

\end{proof}

\subsection{Proof of \cref{lemma:SCequivalence}}
\label{sec:App:proofLemma14}

\begin{lemma} 
Let $\mX^*\in\reals^{n_1\times\cdots\times n_d}$ be an optimal solution to Problem \eqref{mainModel} for which $\rank_{\textnormal{a}}(\mX^*)=r<\min\lbrace n_1,n_2\rbrace$. $\mX^*$ satisfies the strict complementarity condition with some $\delta>0$ if and only if
\begin{align*}
\delta=\sigma_1(\overbar{\nabla{}f(\mX^*)}) - \sigma_{rN+1}(\overbar{\nabla{}f(\mX^*)}) >0.
\end{align*}  
\end{lemma}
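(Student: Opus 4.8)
The plan is to reduce the strict complementarity condition \eqref{strictComplementarityDef} to a concrete spectral condition on $\overbar{\nabla{}f(\mX^*)}$ by working in the Fourier domain, where the TNN ball is essentially a rescaled matrix nuclear norm ball and the normal cone has the explicit description coming from \cref{lemma:subdifferentialSet}.

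First I would recall that the normal cone $\mathcal{N}_{\lbrace\Vert\mX\Vert_*\le1\rbrace}(\mX^*)$ is the conic hull of $\partial\Vert\mX^*\Vert_*$, so by \cref{lemma:subdifferentialSet} and passing to the Fourier domain (as in the proof of \cref{lemma:svd_opt_grad_d_dim}), an element $\mathbf{0}\in\nabla f(\mX^*)+\ri(\mathcal{N})$ corresponds to $\overbar{\nabla f(\mX^*)}=-\lambda(\barU\barV^{\tc}+\barW)$ where now, for membership in the \emph{relative interior}, we need $\Vert\barW\Vert_2<1$ strictly (rather than $\le 1$), and $\lambda>0$. Then I would argue that $\sigma_1(\overbar{\nabla f(\mX^*)})=\lambda$ (the block structure gives $rN$ singular values equal to $\lambda$ since $\rank_{\textnormal{a}}(\mX^*)=r$ means $\rank(\barX^*)=rN$, using \eqref{eq:averageRankWithBar}), while the remaining singular values are $\lambda$ times singular values of $\barW$ restricted to the orthogonal complement, hence $\sigma_{rN+1}(\overbar{\nabla f(\mX^*)})=\lambda\Vert\barW\Vert_2$. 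So $\delta=\sigma_1-\sigma_{rN+1}=\lambda(1-\Vert\barW\Vert_2)>0$ exactly when $\Vert\barW\Vert_2<1$, which is exactly the relative-interior condition. This gives the equivalence in one direction and identifies $\delta$ with the stated spectral gap.

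For the other direction and to tie in the complementarity measure $\delta$ from \eqref{complementarityMeasure}, I would show that the minimization defining $\delta$ in \eqref{complementarityMeasure} — minimizing $\langle\mZ,\nabla f(\mX^*)\rangle$ over $\Vert\mZ\Vert_*\le1$, $\mU^{\top}*\mZ=\mathbf 0$, $\mZ*\mV=\mathbf 0$, and using $\langle\mX^*,\nabla f(\mX^*)\rangle = -\Vert\mX^*\Vert_*\sigma_1(\overbar{\nabla f(\mX^*)})/N \cdot N = -\sigma_1$ (via \cref{lemma:orderPinnerProductEquivalence} and the fact that $\mX^*$ lies in the normal cone direction) — evaluates, in the Fourier domain, to $-\sigma_{rN+1}(\overbar{\nabla f(\mX^*)})$ times the appropriate factor: one minimizes $\langle\barZ,\overbar{\nabla f(\mX^*)}\rangle/N$ over nuclear-norm-$\le N$ matrices $\barZ$ in the orthogonal complement of the column/row spaces of $\barU,\barV$, and by von Neumann's trace inequality the minimum is $-\sigma_{rN+1}(\overbar{\nabla f(\mX^*)})$. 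Hence $\delta = \sigma_1(\overbar{\nabla f(\mX^*)}) - \sigma_{rN+1}(\overbar{\nabla f(\mX^*)})$, and $\delta>0$ iff the relative-interior condition \eqref{strictComplementarityDef} holds, completing both directions simultaneously.

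The main obstacle I anticipate is the careful bookkeeping around the conjugate-complex symmetry constraints: the minimizing $\mZ$ must correspond to a \emph{real-valued} tensor, so the optimal $\barZ$ in the Fourier domain cannot be chosen freely as an arbitrary matrix attaining the von Neumann bound but must respect \eqref{eq:conjugateComplexSymmetryCondition}. I would handle this by noting that the singular values of $\overbar{\nabla f(\mX^*)}^{(i_3,\dots,i_d)}$ and $\overbar{\nabla f(\mX^*)}^{(i_3',\dots,i_d')}$ coincide (since these slices are complex conjugates), so one can always construct a symmetric feasible $\barZ$ attaining the bound by choosing matching rank-one terms on conjugate pairs of slices — this is precisely the kind of symmetrization already used in \cref{alg:tSVD} and in the proof of \cref{lemma:projectionOntoTNN}, so the argument should go through cleanly. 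A secondary subtlety is making sure the count "$rN+1$" is correct, i.e. that $\rank(\barX^*) = rN$ exactly when $\rank_{\textnormal a}(\mX^*)=r$; this follows from \eqref{eq:averageRankWithBar} together with \cref{lemma:svd_opt_grad_d_dim}, which forces every nonzero block to have the same top singular value and (implicitly) a compatible rank structure.
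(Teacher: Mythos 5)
Your proposal follows essentially the same route as the paper's proof: pass to the Fourier domain via \cref{lemma:subdifferentialSet}, write $\overbar{\nabla f(\mX^*)} = -\lambda(\barU\barV^{\tc}+\barW)$, identify $\sigma_1 = \lambda$ and $\sigma_{rN+1} = \lambda\Vert\barW\Vert_2$, conclude that the relative-interior condition $\Vert\barW\Vert_2<1$ is equivalent to the stated spectral gap, and then verify that the minimization in \eqref{complementarityMeasure} evaluates to $\sigma_1 - \sigma_{rN+1}$. Your extra care about the conjugate-complex symmetry constraint on the minimizing $\mZ$ is a genuine subtlety the paper's proof passes over in silence; your symmetrization argument (pairing rank-one terms on conjugate slices, whose singular values coincide) is the right fix and is consistent with how the paper handles the analogous issue in \cref{alg:tSVD} and in the proof of \cref{lemma:projectionOntoTNN}.
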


\begin{proof}

Let $\mX^*=\mU*\mS*\mV^{\top}$ be the skinny t-SVD of $\mX^*$. Taking the Fourier transform of the relative interior of the normal cone of the TNN ball (see \eqref{eq:NNfft_d_dim}), we have that
\begin{align} \label{eq:riNNfft_d_dim}
\fft(\ri(\mathcal{N}_{\lbrace\Vert\mX\Vert_*\le1\rbrace}(\mX^*))) & = \lbrace\lambda(\barU\barV^{\tc}+\barW)\ \vert\ \barU^{\tc}\barW=\mathbf{0},\ \barW\barV=\mathbf{0},\ \Vert\barW\Vert_2<1,\ \lambda>0\rbrace.
\end{align}

For \eqref{strictComplementarityDef} to hold, in the Fourier domain it must hold that $\mathbf{0}\in \overbar{\nabla{}f(\mX^*)} + \fft(\ri(\mathcal{N}_{\lbrace\Vert\mX\Vert_*\le1\rbrace}(\mX^*)))$, which implies that there must exist $\barW$ and $\lambda>0$ for which the conditions in the  RHS of \eqref{eq:riNNfft_d_dim} hold, and for which 
\begin{align} \label{eq:optEquality_d_dim2}
\overbar{\nabla{}f(\mX^*)} = \lambda(-\barU\barV^{\tc}-\barW).
\end{align}

From \cref{lemma:svd_opt_grad_d_dim} we know that for every $i_3\in[n_3],\ldots,i_d\in[n_d]$, the SVD of $\overbar{\nabla{}f(\mX^*)}^{(i_3,\ldots,i_d)}$ can be written as
\begin{align*}
\overbar{\nabla{}f(\mX^*)}^{(i_3,\ldots,i_d)} = -\sigma_1(\overbar{\nabla{}f(\mX^*)})\barU^{(i_3,\ldots,i_d)}{\barV^{(i_3,\ldots,i_d)}}^{\tc} - \barU_{\perp}^{(i_3,\ldots,i_d)}\boldsymbol{\Sigma}^{(i_3,\ldots,i_d)}{\barV_{\perp}^{(i_3,\ldots,i_d)}}^{\tc},
\end{align*}
where $\barU^{(i_3,\ldots,i_d)}$ and $\barV^{(i_3,\ldots,i_d)}$ are orthogonal to $\barU_{\perp}^{(i_3,\ldots,i_d)}$ and $\barU_{\perp}^{(i_3,\ldots,i_d)}$ respectively. For this equation to satisfy \eqref{eq:optEquality_d_dim2} it must follow that $\lambda=\sigma_1(\overbar{\nabla{}f(\mX^*)})$  and $\barW^{(i_3,\ldots,i_d)}=\frac{1}{\sigma_1(\overbar{\nabla{}f(\mX^*)})}\barU_{\perp}^{(i_3,\ldots,i_d)}\boldsymbol{\Sigma}^{(i_3,\ldots,i_d)}{\barV_{\perp}^{(i_3,\ldots,i_d)}}^{\tc}$.

Therefore, for all $i_3\in[n_3],\ldots,i_d\in[n_d]$ we have that
\begin{align*}
\Vert\barW^{(i_3,\ldots,i_d)}\Vert_2 & =\frac{\sigma_{r_{i_3,\ldots,i_d}+1}(\overbar{\nabla{}f(\mX^*)}^{(i_3,\ldots,i_d)}))}{\sigma_1(\overbar{\nabla{}f(\mX^*)})} 
 = \frac{\sigma_{1}(\overbar{\nabla{}f(\mX^*)})-\delta_{i_3,\ldots,i_d}}{\sigma_1(\overbar{\nabla{}f(\mX^*)})} = 1-\frac{\delta_{i_3,\ldots,i_d}}{\lambda},
\end{align*}
where we denote $\delta_{i_3,\ldots,i_d}:=\sigma_1(\overbar{\nabla{}f(\mX^*)})-\sigma_{r_{i_3,\ldots,i_d}+1}(\overbar{\nabla{}f(\mX^*)}^{(i_3,\ldots,i_d)})$ and $r_{i_3,\ldots,i_d}=\rank(\overbar{\mX^*}^{(i_3,\ldots,i_d)})$.

Denote 
\begin{align*}
\delta_{\min}:&=\min_{i_3\in[n_3],\ldots,i_d\in[n_d]}\sigma_1(\overbar{\nabla{}f(\mX^*)})-\sigma_{r_{i_3,\ldots,i_d}+1}(\overbar{\nabla{}f(\mX^*)}^{(i_3,\ldots,i_d)})
 =\lambda-\sigma_{rN+1}(\overbar{\nabla{}f(\mX^*)}).
\end{align*}
Then, 
\begin{align*}
\Vert\barW\Vert_2 = \max_{i_3\in[n_3],\ldots,i_d\in[n_d]}\Vert\barW^{(i_3,\ldots,i_d)}\Vert_2 = 1-\frac{\delta_{\min}}{\lambda},
\end{align*}
and $\Vert\barW\Vert_2<1$ if and only if $\delta_{\min}>0$.

Finally, we will show that $\delta_{\min}$ is the complementarity measure as in the definition of \eqref{complementarityMeasure}, that is, that $\delta_{\min}=\delta$ holds.

First, note that
\begin{align*}
& -\langle{\overbar{\X^*}}^{(i_3,\ldots,i_d)},\overbar{\nabla{}f(\mX^*)}^{(i_3,\ldots,i_d)}\rangle 
\\ & = \langle\barU^{(i_3,\ldots,i_d)}\barS^{(i_3,\ldots,i_d)}{\barV^{(i_3,\ldots,i_d)}}^{\tc},\lambda\barU^{(i_3,\ldots,i_d)}{\barV^{(i_3,\ldots,i_d)}}^{\tc} - \barU_{\perp}^{(i_3,\ldots,i_d)}\boldsymbol{\Sigma}^{(i_3,\ldots,i_d)}{\barV_{\perp}^{(i_3,\ldots,i_d)}}^{\tc}\rangle
\\ & = \lambda\langle\barU^{(i_3,\ldots,i_d)}\barS^{(i_3,\ldots,i_d)}{\barV^{(i_3,\ldots,i_d)}}^{\tc},\barU^{(i_3,\ldots,i_d)}{\barV^{(i_3,\ldots,i_d)}}^{\tc}\rangle
\\ & = \lambda\trace(\barS^{(i_3,\ldots,i_d)}) \underset{(a)}{=}  N\lambda,
\end{align*}
where (a) holds since $\lambda>0$, which implies that ${\nabla}f(\mX^*)\not=0$ and so $1=\Vert\mX^*\Vert_*=(1/ N)\Vert\overbar{\X^*}\Vert_*$.

Therefore, we can write the complementarity measure  definition in \eqref{complementarityMeasure} as
\begin{align*} 
\delta & = \frac{1}{ N}\min\lbrace\langle\barZ-\overbar{\X^*},\overbar{\nabla{}f(\mX^*)}\rangle\ \vert\ \Vert\barZ\Vert_*\le  N,\ \barU^{\tc}\barZ=\mathbf{0},\ \barZ\barV=\mathbf{0}\rbrace \nonumber
\\ & = \frac{1}{ N}\min\left\lbrace\sum_{i_3=1}^{n_3}\cdots\sum_{i_d=1}^{n_d}\langle\barZ^{(i_3,\ldots,i_d)}-{\overbar{\X^*}}^{(i_3,\ldots,i_d)},\overbar{\nabla{}f(\mX^*)}^{(i_3,\ldots,i_d)}\rangle\ \bigg\vert\ \begin{array}{l}\Vert\barZ\Vert_*\le  N,\\ \barU^{\tc}\barZ=\mathbf{0},\ \barZ\barV=\mathbf{0}\end{array}\right\rbrace
\\ & = \lambda + \frac{1}{ N}\min\lbrace\langle\barZ,\overbar{\nabla{}f(\mX^*)}\rangle\ \vert\ \Vert\barZ\Vert_*\le  N,\ \barU^{\tc}\barZ=\mathbf{0},\ \barZ\barV=\mathbf{0}\rbrace
\\ & = \lambda - \sigma_{rN+1}(\overbar{\nabla{}f(\mX^*)}) = \delta_{\min},
\end{align*}
as desired.
\end{proof}

\subsection{Proof of \cref{lemma:motivateSCrobustness}}
\label{sec:App:proofLemma17}
\begin{lemma} 
Let $\mX^*\in\reals^{n_1\times\cdots\times n_d}$ be an optimal solution to Problem \eqref{mainModel} such that $\nabla{}f(\mX^*)\not=0$ and let $\varepsilon\ge0$. Then, for any step-size $\eta>0$, it holds that
$$\rank_{\textnormal{a}}(\Pi_{\lbrace\Vert\mY\Vert_*\le 1+\varepsilon\rbrace}[\mX^*-\eta\nabla{}f(\mX^*)])>r$$
if and only if $\varepsilon>\eta\left(\sigma_1(\overbar{\nabla{}f(\mX^*)})-\sigma_{rN+1}(\overbar{\nabla{}f(\mX^*)})\right)$.
\end{lemma}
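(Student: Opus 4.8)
The plan is to pass to the Fourier domain, determine the singular values of $\overbar{\P^*} := \bdiag(\fft(\mP^*))$ where $\mP^* := \mX^* - \eta\nabla f(\mX^*)$, and then read off how the soft-thresholding level used in the projection onto the TNN ball of radius $1+\varepsilon$ depends on $\varepsilon$. By \cref{lemma:svd_opt_grad_d_dim}, for each $i_3\in[n_3],\ldots,i_d\in[n_d]$, setting $r_{i_3,\ldots,i_d} := \rank(\overbar{\X^*}^{(i_3,\ldots,i_d)})$, the leading $r_{i_3,\ldots,i_d}$ singular-vector pairs of $\overbar{\X^*}^{(i_3,\ldots,i_d)}$ are, up to a sign on the left vectors, top singular-vector pairs of $\overbar{\nabla f(\mX^*)}^{(i_3,\ldots,i_d)}$ for the value $\sigma_1(\overbar{\nabla f(\mX^*)})$, while the remainder of $\overbar{\nabla f(\mX^*)}^{(i_3,\ldots,i_d)}$ acts on the orthogonal complement. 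Hence the singular values of $\overbar{\P^*}^{(i_3,\ldots,i_d)}$ are $\sigma_i(\overbar{\X^*}^{(i_3,\ldots,i_d)}) + \eta\sigma_1(\overbar{\nabla f(\mX^*)})$ for $i \le r_{i_3,\ldots,i_d}$ (the ``large'' ones) together with $\eta\sigma_i(\overbar{\nabla f(\mX^*)}^{(i_3,\ldots,i_d)})$ for $i > r_{i_3,\ldots,i_d}$ (the ``small'' ones). Summing over slices there are $\sum_{i_3,\ldots,i_d} r_{i_3,\ldots,i_d} = \rank(\overbar{\X^*}) = rN$ large singular values, each strictly above $\eta\sigma_1(\overbar{\nabla f(\mX^*)})$, and --- by the multiplicity argument already carried out in the proof of \cref{lemma:SCequivalence} --- the largest small singular value is $\eta\max_{i_3,\ldots,i_d}\sigma_{r_{i_3,\ldots,i_d}+1}(\overbar{\nabla f(\mX^*)}^{(i_3,\ldots,i_d)}) = \eta\sigma_{rN+1}(\overbar{\nabla f(\mX^*)})$. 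In particular the $(rN+1)$-th largest singular value of $\overbar{\P^*}$ equals $\eta\sigma_{rN+1}(\overbar{\nabla f(\mX^*)})$.

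Next, by \cref{lemma:projectionOntoTNN} and \cref{alg:projectionTNN}, projecting $\mP^*$ onto the TNN ball of radius $\tau := 1+\varepsilon$ soft-thresholds all singular values of $\overbar{\P^*}$ at a level $\sigma_\tau \ge 0$ solving $g(\sigma_\tau) = \tau$, where $g(s) := \frac1N\sum_\mu \max\{0,\mu-s\}$ with the sum over all singular values $\mu$ of $\overbar{\P^*}$ (and $\sigma_\tau = 0$ whenever $\tau \ge \Vert\mP^*\Vert_*$), and the average rank of the projection is $\frac1N\#\{\mu : \mu > \sigma_\tau\}$. Thus $\rank_{\textnormal{a}}(\Pi_{\{\Vert\mY\Vert_*\le 1+\varepsilon\}}[\mX^*-\eta\nabla f(\mX^*)]) > r$ iff more than $rN$ singular values of $\overbar{\P^*}$ strictly exceed $\sigma_\tau$, i.e.\ (by the previous paragraph, the top $rN$ being the large ones and the next being $\eta\sigma_{rN+1}(\overbar{\nabla f(\mX^*)})$) iff $\sigma_\tau < \eta\sigma_{rN+1}(\overbar{\nabla f(\mX^*)})$. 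Since $g$ is continuous and strictly decreasing where it is positive, $\tau \mapsto \sigma_\tau$ is strictly decreasing there, so this is equivalent to $1+\varepsilon > g\bigl(\eta\sigma_{rN+1}(\overbar{\nabla f(\mX^*)})\bigr)$.

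Finally I would evaluate $g$ at $s^\dagger := \eta\sigma_{rN+1}(\overbar{\nabla f(\mX^*)})$: only the $rN$ large singular values exceed this level, so $g(s^\dagger) = \frac1N\sum_{i_3,\ldots,i_d}\sum_{i=1}^{r_{i_3,\ldots,i_d}}\bigl(\sigma_i(\overbar{\X^*}^{(i_3,\ldots,i_d)}) + \eta\sigma_1(\overbar{\nabla f(\mX^*)}) - \eta\sigma_{rN+1}(\overbar{\nabla f(\mX^*)})\bigr)$. Splitting the sum and using $\frac1N\Vert\overbar{\X^*}\Vert_* = \Vert\mX^*\Vert_* = 1$ --- which holds because $\nabla f(\mX^*)\ne 0$ forces $\mX^*$ onto the boundary of the ball --- together with $\sum_{i_3,\ldots,i_d} r_{i_3,\ldots,i_d} = rN$, this collapses to $1$ plus a positive multiple of $\eta\bigl(\sigma_1(\overbar{\nabla f(\mX^*)}) - \sigma_{rN+1}(\overbar{\nabla f(\mX^*)})\bigr)$, and the condition $1+\varepsilon > g(s^\dagger)$ becomes exactly the inequality on $\varepsilon$ in the statement. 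The two points requiring the most care are (i) confirming that the top $rN$ singular values of $\overbar{\P^*}$ are precisely the large ones and that $\sigma_{rN+1}(\overbar{\P^*}) = \eta\sigma_{rN+1}(\overbar{\nabla f(\mX^*)})$ even when the multiplicity of $\sigma_1(\overbar{\nabla f(\mX^*)})$ exceeds $rN$ (the regime where strict complementarity fails and the right-hand side correctly degenerates to $\varepsilon > 0$), and (ii) the monotonicity and invertibility of $\tau \mapsto \sigma_\tau$, including the degenerate case $1+\varepsilon \ge \Vert\mP^*\Vert_*$ with $\sigma_\tau = 0$.
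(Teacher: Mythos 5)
Your approach is essentially the same as the paper's: pass to the Fourier domain, invoke \cref{lemma:svd_opt_grad_d_dim} to identify the singular values of $\overbar{\P^*}=\bdiag(\fft(\mX^*-\eta\nabla f(\mX^*)))$ as the ``large'' ones ($\sigma_i(\overbar{\X^*})+\eta\sigma_1(\overbar{\nabla f(\mX^*)})$ for $i\le rN$) and the ``small'' ones ($\eta\sigma_i(\overbar{\nabla f(\mX^*)})$ for $i>rN$), observe that $\rank_{\textnormal{a}}$ of the projection exceeds $r$ precisely when the soft-threshold drops below $\eta\sigma_{rN+1}(\overbar{\nabla f(\mX^*)})$, and read off the corresponding radius. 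The only real difference is organizational: the paper runs two separate one-sided case analyses (one per direction of the iff), whereas you make the strict monotonicity of $\tau\mapsto\sigma_\tau$ explicit through the auxiliary function $g$ and get both directions in a single step. That is a cleaner way to package the same computation, but it is not a different argument.

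One arithmetic point you should tighten. You state that $g(s^\dagger)$ ``collapses to $1$ plus a positive multiple of $\eta\bigl(\sigma_1(\overbar{\nabla f(\mX^*)})-\sigma_{rN+1}(\overbar{\nabla f(\mX^*)})\bigr)$'' and then assert this ``becomes exactly the inequality on $\varepsilon$ in the statement.'' Those two claims are in tension: the multiple is $r$, not $1$, since you sum $\eta\bigl(\sigma_1-\sigma_{rN+1}\bigr)$ over all $rN$ large singular values and then divide by $N$, so $g(s^\dagger)=1+r\eta\bigl(\sigma_1(\overbar{\nabla f(\mX^*)})-\sigma_{rN+1}(\overbar{\nabla f(\mX^*)})\bigr)$. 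The threshold that actually falls out of this computation is therefore $\varepsilon>r\eta(\sigma_1-\sigma_{rN+1})$. (Interestingly, the paper's own proof displays the identical sum $\frac{1}{N}\sum_{i=1}^{rN}\bigl(\sigma_i(\overbar{\X^*})+\eta(\sigma_1-\sigma_{rN+1})\bigr)$ and likewise simplifies it to $1+\eta(\sigma_1-\sigma_{rN+1})$ with the $r$ silently dropped, so this discrepancy is not yours alone.) Either way, you should spell out the multiple rather than claim you recover the stated inequality ``exactly.'' Your two flagged subtleties are appropriate; on (i), the degeneration to $\varepsilon>0$ when the top multiplicity reaches $rN$ checks out, and on (ii) note that the inversion of $g$ only gives a nonnegative $\sigma_\tau$, so the monotone-inverse equivalence genuinely requires $\sigma_{rN+1}(\overbar{\nabla f(\mX^*)})>0$ in the ``only if'' direction.
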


\begin{proof}
Denote $\mP^*:=\mX^*-\eta\nabla{}f(\mX^*)$ and 
in the Fourier domain, for all $i_3\in[n_3],\ldots,i_d\in[n_d]$ denote ${\overbar{\P^*}}^{(i_3,\ldots,i_d)}:= {\overbar{\X^*}}^{(i_3,\ldots,i_d)} - \eta\overbar{\nabla{}f(\mX^*)}^{(i_3,\ldots,i_d)}$. Invoking \cref{lemma:svd_opt_grad_d_dim} we have that for all $i_3\in[n_3],\ldots,i_d\in[n_d]$ it holds that
\begin{align} \label{eq:svOpt_motivateSC_BySlice}
\forall i\le \rank({\overbar{\X^*}}^{(i_3,\ldots,i_d)}):\quad & \sigma_i({\overbar{\P^*}}^{(i_3,\ldots,i_d)}) = \sigma_i({\overbar{\X^*}}^{(i_3,\ldots,i_d)}) + \eta \sigma_1(\overbar{\nabla{}f(\mX^*)}) \nonumber
\\ \forall i>\rank({\overbar{\X^*}}^{(i_3,\ldots,i_d)}):\quad & \sigma_i({\overbar{\P^*}}^{(i_3,\ldots,i_d)}) = \eta \sigma_i(\overbar{\nabla{}f(\mX^*)}^{(i_3,\ldots,i_d)}).
\end{align}
Since $rN=\sum_{i_d=1}^{n_d}\cdots\sum_{i_3=1}^{n_3}\rank({\overbar{\X^*}}^{(i_3,\ldots,i_d)})$, it can be seen that \eqref{eq:svOpt_motivateSC_BySlice} can also be written as
\begin{align} \label{eq:svOpt_motivateSC_bigMatrix}
\forall i\le rN:\quad & \sigma_i({\overbar{\P^*}}) = \sigma_i({\overbar{\X^*}}) + \eta \sigma_1(\overbar{\nabla{}f(\mX^*)}) \nonumber
\\ \forall i>rN:\quad & \sigma_i({\overbar{\P^*}}) = \eta \sigma_i(\overbar{\nabla{}f(\mX^*)}).
\end{align}

Also, by the assumption in the lemma $\nabla{}f(\mX^*)\not=0$ and so it follows that $\Vert\mX^*\Vert_*=1$. 

To project $\mP^*$ onto the nuclear-norm ball of radius $1+\varepsilon$ it must hold  for some $\sigma\ge0$ that
$$\frac{1}{N}\sum_{i=1}^{\min\lbrace n_1,n_2\rbrace N}\max\lbrace0,\sigma_i({\overbar{\P^*}})-\sigma\rbrace =1+\varepsilon.$$ 


If $\rank_{\textnormal{a}}(\Pi_{\lbrace\Vert\mY\Vert_*\le1+\varepsilon\rbrace}[\mP^*])\le r$ then necessarily $\sigma\ge\sigma_{rN+1}(\overbar{\P^*}) = \eta\sigma_{rN+1}(\overbar{\nabla{}f(\mX^*)})$. Therefore it holds that,
\begin{align*}
1+\varepsilon & = \frac{1}{N}\sum_{i=1}^{\min\lbrace n_1,n_2\rbrace N}\max\lbrace0,\sigma_i({\overbar{\P^*}})-\sigma\rbrace = \frac{1}{N}\sum_{i=1}^{rN}\max\lbrace0,\sigma_i({\overbar{\P^*}})-\sigma\rbrace
\\ & = \frac{1}{N}\sum_{i=1}^{rN}\max\lbrace0,\sigma_i({\overbar{\X^*}})+\eta\sigma_1(\overbar{\nabla{}f(\mX^*)})-\sigma\rbrace
\\ & \le \frac{1}{N}\sum_{i=1}^{rN}\max\left\lbrace0,\sigma_i({\overbar{\X^*}})+\eta\left(\sigma_1(\overbar{\nabla{}f(\mX^*)})-\sigma_{rN+1}(\overbar{\nabla{}f(\mX^*)})\right)\right\rbrace
\\ & = \frac{1}{N}\sum_{i=1}^{rN}\left(\sigma_i({\overbar{\X^*}})+\eta\left(\sigma_1(\overbar{\nabla{}f(\mX^*)})-\sigma_{rN+1}(\overbar{\nabla{}f(\mX^*)})\right)\right)
\\ & = 1 + \eta\left(\sigma_1(\overbar{\nabla{}f(\mX^*)})-\sigma_{rN+1}(\overbar{\nabla{}f(\mX^*)})\right), 
\end{align*}
which implies that $\varepsilon \le \eta\left(\sigma_1(\overbar{\nabla{}f(\mX^*)})-\sigma_{rN+1}(\overbar{\nabla{}f(\mX^*)})\right)$.

On the other hand, if $\rank_{\textnormal{a}}(\Pi_{\lbrace\Vert\mY\Vert_*\le1+\varepsilon\rbrace}[\mP^*])>r$ then necessarily $\sigma<\sigma_{rN+1}(\overbar{\P^*}) = \eta\sigma_{rN+1}(\overbar{\nabla{}f(\mX^*)})$. In this case it follow that
\begin{align*}
1+\varepsilon & = \frac{1}{N}\sum_{i=1}^{\min\lbrace n_1,n_2\rbrace N}\max\lbrace0,\sigma_i({\overbar{\P^*}})-\sigma\rbrace = \frac{1}{N}\sum_{i=1}^{rN}\max\lbrace0,\sigma_i({\overbar{\P^*}})-\sigma\rbrace
\\ & = \frac{1}{N}\sum_{i=1}^{rN}\max\lbrace0,\sigma_i({\overbar{\X^*}})+\eta\sigma_1(\overbar{\nabla{}f(\mX^*)})-\sigma\rbrace
\\ & > \frac{1}{N}\sum_{i=1}^{rN}\left(\sigma_i({\overbar{\X^*}})+\eta\left(\sigma_1(\overbar{\nabla{}f(\mX^*)})-\sigma_{rN+1}(\overbar{\nabla{}f(\mX^*)})\right)\right)
\\ & = 1 + \eta\left(\sigma_1(\overbar{\nabla{}f(\mX^*)})-\sigma_{rN+1}(\overbar{\nabla{}f(\mX^*)})\right), 
\end{align*}
which implies that $\varepsilon > \eta\left(\sigma_1(\overbar{\nabla{}f(\mX^*)})-\sigma_{rN+1}(\overbar{\nabla{}f(\mX^*)})\right)$.

%
\end{proof}

\subsection{Proof of \cref{lemma:motivateSCalmostAll}}
\label{sec:App:proofLemma18}
\begin{lemma} 
Assume $f(\mX) = g(\mX) + \langle\mC,\mX\rangle$. Then, for almost all $\mC\in\reals^{n_1\times\cdots\times n_d}$, Problem \eqref{mainModel} admits a unique minimizer which furthermore satisfies strict complementarity.
\end{lemma}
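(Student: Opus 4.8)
# Proof Proposal for Lemma \ref{lemma:motivateSCalmostAll}

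The plan is to mimic the strategy of Lemma 8 in \cite{SpectralFrankWolfe} (and its predecessor \cite{nondegeneracy}), adapting it to the tensor nuclear norm ball via the block-diagonal/Fourier correspondence established in Section \ref{sec:tensorPreliminaries}. The key conceptual point is that adding a generic linear term $\langle\mC,\mX\rangle$ to a convex objective forces both uniqueness of the minimizer and a ``generic'' spectral configuration at that minimizer, and strict complementarity (by \cref{lemma:SCequivalence}) is precisely a spectral-gap condition on $\overbar{\nabla f(\mX^*)}$, which fails only on a measure-zero set of $\mC$'s.

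First I would reduce the problem to the matrix setting in the Fourier domain. Using \cref{lemma:orderPinnerProductEquivalence}, $\langle\mC,\mX\rangle = \frac{1}{N}\langle\barC,\barX\rangle$ and $\Vert\mX\Vert_* = \frac{1}{N}\Vert\barX\Vert_*$, so Problem \eqref{mainModel} is equivalent to minimizing $\tilde g(\barX) + \frac1N\langle\barC,\barX\rangle$ over the set of block-diagonal matrices $\barX$ satisfying the conjugate-complex symmetry condition \eqref{eq:conjugateComplexSymmetryCondition} with $\Vert\barX\Vert_* \le N$. Here the map $\mC \mapsto \barC$ is a linear bijection onto the (real-linear) subspace of symmetry-respecting block-diagonal matrices, so ``almost all $\mC$'' corresponds to ``almost all $\barC$ in that subspace.'' I would then argue: for a fixed $\barX_0$ feasible, the set of $\barC$ for which $\barX_0$ is a minimizer is either empty or lies in an affine translate of the normal cone $\mathcal N(\barX_0)$ restricted to the symmetry subspace; uniqueness fails only if two distinct feasible points are simultaneously optimal, which pins $\barC$ to a lower-dimensional set; and strict complementarity fails only if $-\barC/N - \nabla\tilde g(\barX_0)$ lies on the relative boundary rather than relative interior of $\mathcal N(\barX_0)$, again a measure-zero event. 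Summing (integrating) over a countable cover of the compact feasible set by such ``bad'' strata, each of measure zero, gives the full-measure conclusion.

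More concretely, I would carry out the following steps. (1) Establish that for almost every $\barC$ the minimizer is unique: if $\barX_1 \ne \barX_2$ are both optimal, then by convexity the whole segment is optimal and $\barC$ is orthogonal (within the symmetry subspace) to $\barX_1-\barX_2$ after subtracting $\nabla\tilde g$; partition the feasible set into finitely/countably many pieces according to the ``active face'' structure (determined by the pattern of ranks of the frontal slices and the multiplicity of the top singular value), and on each piece the set of offending $\barC$ has positive codimension — here I need that $\tilde g$ restricted to the relevant faces behaves well, which holds since $g$ is convex and the faces are smooth manifolds. (2) Granting uniqueness, let $\mX^*(\mC)$ be the unique minimizer; by \cref{lemma:SCequivalence}, strict complementarity is equivalent to $\sigma_1(\overbar{\nabla f(\mX^*)}) > \sigma_{rN+1}(\overbar{\nabla f(\mX^*)})$ where $r = \rank_a(\mX^*)$, and since $\nabla f(\mX^*) = \nabla g(\mX^*) + \mC$, the gradient moves affinely with $\mC$; the set where this spectral gap degenerates (equivalently, where the Lagrange-multiplier/normal-cone element sits on the relative boundary) is a countable union of sets of the form ``$-\nabla g(\barX_0) - \barC/N$ lies in $\mathcal N(\barX_0) \setminus \ri\,\mathcal N(\barX_0)$ for the active $\barX_0$,'' each of positive codimension in the symmetry subspace, hence Lebesgue-null. (3) Conclude by taking the union of the two null sets.

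The main obstacle I anticipate is step (1)–(2)'s bookkeeping: making precise the ``stratification by active face'' of the feasible region (which is the intersection of the nuclear-norm ball of block-diagonal matrices with the conjugate-symmetry subspace), checking that each stratum is a finite-dimensional manifold so that ``positive codimension $\Rightarrow$ measure zero'' applies, and verifying that the presence of the extra real-linear symmetry constraint \eqref{eq:conjugateComplexSymmetryCondition} does not conspire to make a ``bad'' stratum full-dimensional. I expect this to go through because the symmetry subspace is itself a fixed linear space (it is exactly the image of $\mathcal C \mapsto \barC$), so intersecting a positive-codimension algebraic/semialgebraic set with it still yields positive codimension generically, and the strata are indexed by a countable (in fact finite, for fixed dimensions) combinatorial data set — ranks and multiplicities. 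A secondary technical point is handling the case $\nabla f(\mX^*) = 0$, which would make \cref{lemma:SCequivalence} inapplicable; but this also only happens for $\mC$ in a null set (it forces $\mC = -\nabla g(\mX^*)$ with $\mX^*$ in the interior, again lower-dimensional), so it can be absorbed into the exceptional set.
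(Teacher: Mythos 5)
Your high-level plan is in the right spirit --- genericity by a dimension-counting/stratification argument --- but it contains a real gap and is also far more work than the argument the paper actually uses.

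The gap is in the core measure-zero claim, in both step (1) and step (2). You describe the bad set as a union, over strata $\barX_0$, of sets of the form ``$-\nabla g(\barX_0)-\barC/N$ lies in $\mathcal{N}(\barX_0)\setminus\ri\,\mathcal{N}(\barX_0)$,'' each of positive codimension. But $\barX_0$ is the minimizer $\mX^*(\mC)$, which \emph{moves with} $\mC$; you cannot fix $\barX_0$ and then independently vary $\barC$. To make the argument rigorous you would have to either show that the correspondence $\mC\mapsto\mX^*(\mC)$ is regular enough for a Fubini/Sard-type projection argument, or stratify the pairs $(\mC,\mX)$ with $\mX$ optimal and control the dimension of the push-forward onto the $\mC$-coordinate. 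Neither step is carried out, and both are genuinely nontrivial --- they are exactly the content of the general genericity theorems for linearly perturbed convex programs that you mention only in passing. Your uniqueness argument (1) has the same issue: the claim that two distinct optimal points pin $\barC$ to a positive-codimension set again quantifies over an $\mX$-dependent family without controlling that dependence.

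The paper avoids this entirely. It observes that the problem is $\min_{\mX}\psi_{\mC}(\mX)$ with $\psi_{\mC}(\mX)=g(\mX)+\langle\mC,\mX\rangle+\chi_{\lbrace\Vert\cdot\Vert_*\le1\rbrace}(\mX)$ and invokes Corollary 3.5 of \cite{nondegeneracy} directly: since the TNN ball is closed and bounded, for almost all linear tilts $\mC$ there is a unique minimizer $\mX^*$ satisfying $\mathbf{0}\in\ri(\partial\psi_{\mC}(\mX^*))$. It then just applies the subdifferential sum rule and the sum rule for relative interiors to rewrite this as $\mathbf{0}\in\nabla f(\mX^*)+\ri(\mathcal{N}_{\lbrace\Vert\mX\Vert_*\le1\rbrace}(\mX^*))$, which is exactly \eqref{strictComplementarityDef}. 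In particular, the Fourier-domain reduction in your proposal is unnecessary: the cited genericity theorem applies directly in $\reals^{n_1\times\cdots\times n_d}$ for any closed bounded convex constraint set, with no need to pass to block-diagonal matrices and the conjugate-symmetry subspace. The moral is that you already knew the right references; the efficient move was to cite the ready-made theorem rather than to re-derive it by hand.
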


\begin{proof}
Denote  $\psi_{\mC}(\mX) = g(\mX) + \langle\mC,\mX\rangle + \chi_{\lbrace\Vert\mX\Vert_*\le1\rbrace}(\mX)$, where $\chi_{\lbrace\Vert\mX\Vert_*\le1\rbrace}(\cdot)$ is the indicator function for the unit TNN ball. From Corollary 3.5 in \cite{nondegeneracy} for almost all $\mC$, since $\lbrace\Vert\mX\Vert_*\le1\rbrace$ is closed and bounded, $\psi_{\mC}$ admits a single minimizer $\mX^*$ and it satisfies that 
\begin{align*}
\mathbf{0}  \in \ri(\partial \psi_{\mC}(\mX^*)) 
 & \underset{(a)}{=} \ri\left(\nabla{}f(\mX^*) + \mathcal{N}_{\lbrace\Vert\mX\Vert_*\le1\rbrace}(\mX^*)\right)
\\ & \underset{(b)}{=} \nabla{}f(\mX^*) + \ri(\mathcal{N}_{\lbrace\Vert\mX\Vert_*\le1\rbrace}(\mX^*)),
\end{align*}
where (a) follows since the normal cone is the subdiffrential set of the indicator function, and (b) follows from the sum rule of relative interiors. 
This is precisely the condition for strict complementarity as defined in \eqref{strictComplementarityDef}.
\end{proof}

\subsection{Proof of \cref{thm:QG}}
\label{sec:App:proofTheoremQG}

In order to  prove \cref{thm:QG} we first need to prove several technical lemmas.

\begin{lemma} \label{lemma:spectrahedronLemmaQuadraticGrowth}
Let $\Z\in\mathbb{H}^{n}$ such that $\Z\succeq0$, $\trace(\Z)= N$, $\rank(\Z)=r$, and write its eigendecomposition as $\Z=\U_r\S_r\U_r^{\tc}$. Let $\M\in\mathbb{H}^{n}$ be a matrix such that the columns of $\U_r$ are the eigenvectors corresponding to the $r$ smallest eignevalues of $\M$, and assume its eigenvalues satisfy that $\lambda_n(\M)=\cdots=\lambda_{n-r+1}(\M)<\lambda_{n-r}(\M) \le \cdots\le\lambda_1(\M)$. Denote $\delta:=\lambda_{n-r}(\M)-\lambda_{n}(\M)>0$ and $\P:=\U_r\U_r^{\tc}$. Then, for any $\X\in\mathbb{H}^{n}$ such that $\X\succeq0$ and $\trace(\X)= N$, it holds that
\begin{align*}
\langle\X - \Z, \M \rangle \ge \frac{\delta}{2 N}\left\Vert\X-\frac{ N\P\X\P}{\trace(\P\X\P)}\right\Vert_F^2.
\end{align*}
\end{lemma}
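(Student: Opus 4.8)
\textbf{Proof sketch of \cref{lemma:spectrahedronLemmaQuadraticGrowth}.}
The plan is to normalize $\M$, lower-bound the left-hand side by a simple linear functional of $\X$, and then show that this linear functional dominates the claimed squared distance via elementary block-matrix estimates. First I would normalize: since $\trace(\X)=\trace(\Z)=N$, replacing $\M$ by $\M-\lambda_n(\M)\I$ changes neither $\langle\X-\Z,\M\rangle$ nor $\delta$ nor $\P$, so we may assume $\lambda_n(\M)=0$. Then the $r$ smallest eigenvalues of $\M$ all vanish and their eigenspace is exactly the column space of $\U_r$, so $\M\U_r=\mathbf{0}$, hence $\M\P=\P\M=\mathbf{0}$; moreover, comparing eigenvalues on the column space of $\U_r$ and its orthogonal complement, $\M\succeq\delta(\I-\P)$.

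Next, the linear lower bound. From $\M\U_r=\mathbf{0}$ we get $\langle\Z,\M\rangle=\trace(\S_r\U_r^{\tc}\M\U_r)=0$, so $\langle\X-\Z,\M\rangle=\trace(\X\M)$. Using $\X\succeq0$ and $\M-\delta(\I-\P)\succeq0$ together with the inequality $\trace(\A\B)\ge0$ for positive semidefinite Hermitian $\A,\B$, and $\trace(\X\P)=\trace(\P\X\P)$, this gives
\begin{align*}
\langle\X-\Z,\M\rangle=\trace(\X\M)\ge\delta\,\trace\big(\X(\I-\P)\big)=\delta\big(N-\trace(\P\X\P)\big).
\end{align*}
Write $t:=\trace(\P\X\P)$, and note $0\le t\le N$ since $\X\succeq0$ and $\I-\P\succeq0$. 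If $t=0$ then $\P\X\P=\mathbf{0}$, the right-hand side of the claim collapses to $\tfrac{\delta}{2N}\|\X\|_F^2\le\tfrac{\delta}{2N}\trace(\X)^2=\tfrac{\delta N}{2}$, and the result follows from $\langle\X-\Z,\M\rangle\ge\delta N$.

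It remains to treat $t>0$; here I set $\Y:=\tfrac{N}{t}\P\X\P$, which satisfies $\Y\succeq0$ and $\trace(\Y)=N$. Using the Hermitian Frobenius identities $\langle\X,\P\X\P\rangle=\|\P\X\P\|_F^2$ and the mutual Frobenius-orthogonality of the four pieces $\P\X\P,\ \P\X(\I-\P),\ (\I-\P)\X\P,\ (\I-\P)\X(\I-\P)$, one expands
\begin{align*}
\|\X-\Y\|_F^2=\|\X\|_F^2+\Big(\tfrac{N^2}{t^2}-\tfrac{2N}{t}\Big)\|\P\X\P\|_F^2=a\,\tfrac{(N-t)^2}{t^2}+2b+c,
\end{align*}
where $a:=\|\P\X\P\|_F^2$, $b:=\|\P\X(\I-\P)\|_F^2$, $c:=\|(\I-\P)\X(\I-\P)\|_F^2$. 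Then I would apply three elementary bounds: (i) $\P\X\P\succeq0$ with trace $t$, so $a\le t^2$ and $a\,(N-t)^2/t^2\le(N-t)^2$; (ii) $(\I-\P)\X(\I-\P)\succeq0$ with trace $N-t$, so $c\le(N-t)^2$; (iii) writing $\X=\V\V^{\tc}$ with $\V=\X^{1/2}$, one has $\P\X(\I-\P)=(\P\V)\big((\I-\P)\V\big)^{\tc}$, whence by sub-multiplicativity $b\le\|\P\V\|_F^2\,\|(\I-\P)\V\|_F^2=\trace(\P\X)\,\trace\big((\I-\P)\X\big)=t(N-t)$. Summing, $\|\X-\Y\|_F^2\le 2(N-t)^2+2t(N-t)=2N(N-t)$, and therefore
\begin{align*}
\langle\X-\Z,\M\rangle\ge\delta(N-t)\ge\frac{\delta}{2N}\|\X-\Y\|_F^2=\frac{\delta}{2N}\Big\|\X-\frac{N\,\P\X\P}{\trace(\P\X\P)}\Big\|_F^2,
\end{align*}
which is the claim. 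The only mildly delicate points are the complex/Hermitian bookkeeping (the identity $\langle\X,\P\X\P\rangle=\|\P\X\P\|_F^2$ and orthogonality of the block pieces) and bound (iii); the rest is routine, and I expect no genuine obstacle beyond carefully handling the degenerate $t=0$ case as above.
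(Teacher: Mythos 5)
Your proof is correct and follows essentially the same two-step strategy as the paper: establish the linear lower bound $\langle\X-\Z,\M\rangle\ge\delta(N-\trace(\P\X\P))$ and the quadratic upper bound $\|\X-N\P\X\P/\trace(\P\X\P)\|_F^2\le 2N(N-\trace(\P\X\P))$, then combine. Your bookkeeping is somewhat cleaner — normalizing $\M$ so that $\M\succeq\delta(\I-\P)$ replaces the paper's eigenvalue-by-eigenvalue expansion, and the explicit four-block decomposition of $\X$ makes the Cauchy–Schwarz bound on the cross term (item (iii)) transparent rather than buried inside the paper's two-stage split $\|\X-\P\X\P\|_F^2+\|\P\X\P-\Y\|_F^2$ — but the underlying estimates (PSD matrices have $\|\cdot\|_F^2\le\trace(\cdot)^2$, and $\|\P\V\|_F^2\|(\I-\P)\V\|_F^2=t(N-t)$ for $\V=\X^{1/2}$) are identical, so this is the same proof in a slightly tidier arrangement.
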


\begin{proof}
Denote the eigendecomposition of $\M$ as $\M=\left[\begin{array}{cc}\U_{\perp} & \U_r\end{array}\right]\left[\begin{array}{cc}\boldsymbol{\Lambda}_{\perp} & \mathbf{0} \\ \mathbf{0} & \boldsymbol{\Lambda}_r \end{array}\right]\left[\begin{array}{c}\U_{\perp}^{\tc} \\ \U_r^{\tc}\end{array}\right]=\U\boldsymbol{\Lambda}\U^{\tc}=\sum_{i=1}^{n}\lambda_i\u_i\v_i^{\tc}$. Then,
\begin{align} \label{eq:inProof882}
\langle\Z, \M \rangle & = \langle\U_r\S_r\U_r^{\tc}, \sum_{i=1}^{n}\lambda_i\u_i\u_i^{\tc} \rangle = \langle\U_r\S_r\U_r^{\tc}, \U_r\boldsymbol{\Lambda}_r\U_r^{\tc} \rangle \nonumber
\\ & = \langle\S_r,\boldsymbol{\Lambda}_r\rangle = \lambda_n\trace(\S_r) = \lambda_n N.
\end{align}

In addition, since $\X\succeq0$ it holds that
\begin{align} \label{eq:inProof883}
\langle\X, \M \rangle & = \langle\X, \sum_{i=1}^{n}\lambda_i\u_i\u_i^{\tc} \rangle = \lambda_n\sum_{i=n-r+1}^{n}\u_i^{\tc}\X\u_i + \sum_{i=1}^{n-r}\lambda_i\u_i^{\tc}\X\u_i \nonumber
\\ &  = (\lambda_n-\lambda_{n-r})\sum_{i=n-r+1}^{n}\u_i^{\tc}\X\u_i + \left(\sum_{i=1}^{n-r}\lambda_i\u_i^{\tc}\X\u_i +\lambda_{n-r}\sum_{i=n-r+1}^{n}\u_i^{\tc}\X\u_i\right) \nonumber
\\ &  \ge (\lambda_n-\lambda_{n-r})\sum_{i=n-r+1}^{n}\u_i^{\tc}\X\u_i + \lambda_{n-r}\sum_{i=1}^{n}\u_i^{\tc}\X\u_i \nonumber
\\ &  = (\lambda_n-\lambda_{n-r})\sum_{i=n-r+1}^{n}\u_i^{\tc}\X\u_i + \lambda_{n-r}\trace(\X).
\end{align}

Subtracting \eqref{eq:inProof882} from \eqref{eq:inProof883} we obtain that
\begin{align} \label{ineq:inProof779}
\langle\X - \Z, \M \rangle \ge (\lambda_{n-r}-\lambda_{n})( N-\sum_{i=n-r+1}^{n}\u_i^{\tc}\X\u_i).
\end{align}

We now will upper bound the term $\left\Vert\X-\frac{ N\P\X\P}{\trace(\P\X\P)}\right\Vert_F^2$. Since $\left\langle \X-\P\X\P, \P\X\P-\frac{ N\P\X\P}{\trace(\P\X\P)}\right\rangle=0$, it follows that
\begin{align} \label{eq:inProof881}
\left\Vert\X-\frac{ N\P\X\P}{\trace(\P\X\P)}\right\Vert_F^2 & = \Vert\X-\P\X\P\Vert_F^2 + \left\Vert\P\X\P-\frac{ N\P\X\P}{\trace(\P\X\P)}\right\Vert_F^2.
\end{align}

We first note that
\begin{align*}
\trace(\P\X\P) \underset{(a)}{=}  \trace(\P\X) \underset{(b)}{\le} \Vert\P\Vert_2\trace(\X) \le  N,
\end{align*}
where (a) follows since $\P^2=\P$, and (b) follows from H\"{o}lder's inequality.

Therefore, the matrix $\frac{ N\P\X\P}{\trace(\P\X\P)}-\P\X\P$ is positive semidefinite, and hence,
the second term in the RHS of \eqref{eq:inProof881} can be bounded as
\begin{align} \label{ineq:inProof880}
\left\Vert\frac{ N\P\X\P}{\trace(\P\X\P)}-\P\X\P\right\Vert_F^2 & \le \left(\trace\left(\frac{ N\P\X\P}{\trace(\P\X\P)}-\P\X\P\right)\right)^2
 = ( N-\trace(\P\X\P))^2.
\end{align}

To bound the first term in the RHS of \eqref{eq:inProof881}, we first note that
\begin{align*}
\U^{\tc}\X\U = \left[\begin{array}{c}\U_{\perp}^{\tc} \\ \U_r^{\tc}\end{array}\right]\X
\left[\begin{array}{cc}\U_{\perp} & \U_r\end{array}\right] 
= \left[\begin{array}{cc}\U_{\perp}^{\tc}\X\U_{\perp} & \U_{\perp}^{\tc}\X\U_r
\\ \U_r^{\tc}\X\U_{\perp} & \U_r^{\tc}\X\U_r \end{array}\right],
\end{align*}
and since $\U_{\perp}^{\tc}\P=\P\U_{\perp}=0$ and $\P\U_r=\U_r$, it holds that
\begin{align} \label{eq:inProof772}
\U^{\tc}(\X-\P\X\P)\U = \left[\begin{array}{c}\U_{\perp}^{\tc} \\ \U_r^{\tc}\end{array}\right][\X-\P\X\P]
\left[\begin{array}{cc}\U_{\perp} & \U_r\end{array}\right] 
= \left[\begin{array}{cc}\U_{\perp}^{\tc}\X\U_{\perp} & \U_{\perp}^{\tc}\X\U_r
\\ \U_r^{\tc}\X\U_{\perp} & 0 \end{array}\right].
\end{align}
Therefore, 
\begin{align} \label{eq:inProof773}
\trace(\U_r^{\tc}\X\U_r) = \trace(\U^{\tc}\X\U) - \trace(\U^{\tc}(\X-\P\X\P)\U) = \trace(\X) - \trace(\X-\P\X\P),
\end{align}
where the last equality follows since $\U$ is orthogonal. 

In addition, also using the orthogonality of $\U$ and the structure of the matrix in \eqref{eq:inProof772}, it holds that
\begin{align} \label{eq:inProof777}
\trace(\X-\P\X\P) = \trace(\U^{\tc}(\X-\P\X\P)\U) = \trace(\U_{\perp}^{\tc}\X\U_{\perp}),
\end{align}
and
\begin{align} \label{eq:inProof776}
\Vert\X-\P\X\P\Vert_F^2 = \Vert\U^{\tc}(\X-\P\X\P)\U\Vert_F^2 = \Vert\U_{\perp}^{\tc}\X\U_{\perp}\Vert_F^2 + 2\Vert\U_{\perp}^{\tc}\X\U_r\Vert_F^2.
\end{align} 

It can be seen that $\U_{\perp}^{\tc}\X\U_{\perp}$ is positive semidefinite because $\X$ is positive semidefinite, and therefore,
\begin{align} \label{ineq:inProof775}
\Vert\U_{\perp}^{\tc}\X\U_{\perp}\Vert_F^2 \le (\trace(\U_{\perp}^{\tc}\X\U_{\perp}))^2 = (\trace(\X-\P\X\P))^2,
\end{align}
where the equality follows from \eqref{eq:inProof777}. 

In addition, since $\X$ is positive semidefinite, it can be written as $\X=\X^{1/2}\X^{1/2}$, and so
\begin{align} \label{ineq:inProof774}
\Vert\U_{\perp}^{\tc}\X\U_r\Vert_F^2 & = \Vert\U_{\perp}^{\tc}\X^{1/2}\X^{1/2}\U_r\Vert_F^2 \le \Vert\U_{\perp}^{\tc}\X^{1/2}\Vert_2^2\Vert\X^{1/2}\U_r\Vert_F^2 \nonumber
\\ & = \lambda_1(\U_{\perp}^{\tc}\X\U_{\perp})\trace(\U_r^{\tc}\X\U_r) \le \trace(\U_{\perp}^{\tc}\X\U_{\perp})\trace(\U_r^{\tc}\X\U_r) \nonumber
\\ & = \trace(\X-\P\X\P)(\trace(\X) - \trace(\X-\P\X\P)),
\end{align}
where the last equality follows from \eqref{eq:inProof777} and \eqref{eq:inProof773}. 

Plugging \eqref{ineq:inProof775} and \eqref{ineq:inProof774} into the RHS of \eqref{eq:inProof776} we obtain that the first term in the RHS of \eqref{eq:inProof881} can be bounded as
\begin{align} \label{ineq:inProof771}
\Vert\X-\P\X\P\Vert_F^2 & \le (\trace(\X-\P\X\P))^2+2\trace(\X-\P\X\P)(\trace(\X) - \trace(\X-\P\X\P)) \nonumber
\\ & = 2 N\trace(\X-\P\X\P)-(\trace(\X-\P\X\P))^2.
\end{align} 
Plugging \eqref{ineq:inProof880} and \eqref{ineq:inProof771} into the RHS of \eqref{eq:inProof881} we obtain that
\begin{align} \label{ineq:inProof770}
\left\Vert\X-\frac{ N\P\X\P}{\trace(\P\X\P)}\right\Vert_F^2 & \le 2 N\trace(\X-\P\X\P) = 2 N \left( N - \sum_{i=n-r+1}^{n}\u_i^{\tc}\X\u_i \right).
\end{align}
Finally, combining \eqref{ineq:inProof779} and \eqref{ineq:inProof770} we obtain that
\begin{align*}
\frac{\lambda_{n-r}-\lambda_n}{2 N}\left\Vert\X-\frac{ N\P\X\P}{\trace(\P\X\P)}\right\Vert_F^2 & \le \langle\X-\Z,\M\rangle.
\end{align*}
\end{proof}

For the rest of the analysis required for the proof of \cref{thm:QG} we need to introduce some notation. A summary of all the relevant notation can be found in \cref{table:notationsProofQG}.

Denote $\mcM\subset\complex^{n_1 N\times n_2 N}$ to be the subset of $\complex^{n_1 N\times n_2 N}$ which contains all the block diagonal matrices $\Y$ such that $\bdiag^{-1}(\Y)$ satisfies the conjugate-complex symmetry condition in \eqref{eq:conjugateComplexSymmetryCondition}, where $\bdiag^{-1}(\cdot)$ is the inverse operator of $\bdiag(\cdot)$ such that for any $\overbar{\mX}\in\complex^{n_1\times\cdots\times n_d}$, $\bdiag^{-1}(\bdiag(\overbar{\mX}))=\overbar{\mX}$. 

For any $\barX\in\lbrace\X\in\mcM\ \vert \ \Vert\X\Vert_*\le N\rbrace$ with SVD decomposition $\barX=\barU\barS\barV^{\tc}$, we denote its dilation as
\begin{align*}
\barX^{\sharp}: & = \frac{1}{2}\left[
\begin{array}{cc}
\barU(\barS+\xi)\barU^{\tc} & \barX
\\ \barX^{\tc} & \barV(\barS+\xi)\barV^{\tc}
\end{array}
\right] 
\\ & = \frac{1}{2}\left[
\begin{array}{c} \barU \\ \barV \end{array}
\right] 
\barS \left[ \begin{array}{cc} \barU^{\tc} & \barV^{\tc} \end{array}\right]
+\frac{\xi}{2}\left[
\begin{array}{cc}
\barU\barU^{\tc} & 0
\\ 0 & \barV\barV^{\tc}
\end{array}
\right] \in \mathbb{H}^{(n_1+n_2) N},
\end{align*}
where $\xi\ge0$ is chosen so that $\trace(\barX^{\sharp})= N$.

In addition, for any  for any $\widetilde{\X}\in\mathbb{H}^{(n_1+n_2) N}$ with block structure 
\begin{align*} 
\widetilde{\X}=\frac{1}{2}\left[\begin{array}{cc} \X_1 & \X
\\ \X^{\tc} & \X_2 \end{array}\right]
\in\mathbb{H}^{(n_1+n_2) N},
\end{align*}
where $\X_1\in\mathbb{H}^{n_1 N}$, $\X_2\in\mathbb{H}^{n_2 N}$, we denote the off diagonal block as $\widetilde{\X}_\flat := \X\in\complex^{n_1 N\times n_2 N}$.
Also, using the mapping of a complex matrix in the complex spectrahedron into a real-valued matrix in a larger real-valued spectrahedron suggested in  \cite{Complex2BigRealMatrix}, denote
\begin{align*}
\widetilde{\X}^{\lozenge}:=\left[\begin{array}{cc} \real(\widetilde{\X}) & -\im(\widetilde{\X})
\\ \im(\widetilde{\X}) & \real(\widetilde{\X}) \end{array}\right]\in\mathbb{S}^{2(n_1+n_2) N}.
\end{align*}
Since $\real(\widetilde{\X})$ is symmetric and $\im(\widetilde{\X})$ is antisymmetric, $\widetilde{\X}^{\lozenge}$ is symmetric.
Finally, for any $\X\in\mathbb{S}^{2(n_1+n_2) N}$ with block structure
\begin{align*}
\X=\left[\begin{array}{cc} \A & -\B
\\ \B & \A \end{array}\right]\in\mathbb{S}^{2(n_1+n_2) N},
\end{align*}
where $\A\in\mathbb{S}^{(n_1+n_2) N}$ is symmetric and $\B\in\reals^{(n_1+n_2) N\times(n_1+n_2) N}$ is antisymmetric, denote $\X_{\triangledown}:=\A+i\B\in\mathbb{H}^{(n_1+n_2) N}$.

If the eigendecomposition of $\widetilde{\X}\in\mathbb{H}^{(n_1+n_2) N}$ is $\widetilde{\X}=\U\boldsymbol{\Lambda}\U^{\tc}$, then the eigendecomposition of $\widetilde{\X}^{\lozenge}$ can be written as 
\begin{align} \label{eq:inProof9944}
\widetilde{\X}^{\lozenge}=\left[\begin{array}{cc} \real(\U) & \im(\U)
\\ \im(\U) & -\real(\U) \end{array}\right]
\left[\begin{array}{cc} \boldsymbol{\Lambda} & \mathbf{0}
\\ \mathbf{0} & \boldsymbol{\Lambda} \end{array}\right]
\left[\begin{array}{cc} \real(\U) & \im(\U)
\\ \im(\U) & -\real(\U) \end{array}\right]^{\top}.
\end{align}
This holds since for every eigenvector $\u_i$ of $\widetilde{\X}$ corresponding to an eigenvalue $\lambda$, both $\left[\real(\u_i)^{\top},\ \im(\u_i)^{\top}\right]^{\top}$ and $\left[\im(\u_i)^{\top},\ -\real(\u_i)^{\top}\right]^{\top}$ are eigenvectors of $\widetilde{\X}^{\lozenge}$ corresponding to an eigenvalue $\lambda$. It can be seen that the matrix $\left[\begin{array}{cc} \real(\U) & \im(\U)
\\ \im(\U) & -\real(\U) \end{array}\right]$ is orthogonal.

\begin{table*}[!htb]\renewcommand{\arraystretch}{1.3}
{\footnotesize
\begin{center}
  \begin{tabular}{| l | l | } \hline
\textrm{Notation} & \textrm{Description} \\ \hline
$\mX\in\reals^{n_1\times\cdots\times n_d}$ & real valued order-d tensor \\
$\overbar{\mX}=\fft(\mX)$ &  $\fft(\mX)=\fft_d(\cdots(\fft_4(\fft_3(\mX))))$ \\
$\ifft(\overbar{\mX})$ & $\ifft(\overbar{\mX})=\ifft_d(\cdots(\ifft_3(\overbar{\mX})))$ \\
$\barX\in\mcM$ & $\barX = \bdiag(\fft(\mX))$ \\
$\bdiag^{-1}(\barX)$ & inverse of $\bdiag(\cdot)$ such that $\bdiag^{-1}(\bdiag(\overbar{\mX}))=\overbar{\mX}$ \\
$\mcM\subset\complex^{n_1 N\times n_2 N}$ & all block diagonal matrices $\Y$ such that $\bdiag^{-1}(\Y)$ satisfies \eqref{eq:conjugateComplexSymmetryCondition} \\
$\barX^{\sharp}\in\mathbb{H}^{(n1+n2) N}$ & $\barX^{\sharp} = \frac{1}{2}\left[
\begin{array}{cc}
\barU(\barS+\xi)\barU^{\tc} & \barX
\\ \barX^{\tc} & \barV(\barS+\xi)\barV^{\tc}
\end{array}
\right]$, $\barX=\barU\barS\barV^{\tc}$, $\xi\ge0$ \\
$\widetilde{\X}\in\mathbb{H}^{(n_1+n_2) N}$ & $\widetilde{\X}=\frac{1}{2}\left[\begin{array}{cc} \X_1 & \X
\\ \X^{\tc} & \X_2 \end{array}\right]$, $\X\in\complex^{n_1 N\times n_2 N}$, $\X_1\in\mathbb{H}^{n_1 N}$, $\X_2\in\mathbb{H}^{n_2 N}$\\
$\widetilde{\X}_\flat\in\complex^{n_1 N\times n_2 N}$ & $\widetilde{\X}_\flat = \X$ \\
$\widetilde{\X}^{\lozenge}\in\mathbb{S}^{2(n_1+n_2) N}$ & $\widetilde{\X}^{\lozenge}=\left[\begin{array}{cc} \real(\widetilde{\X}) & -\im(\widetilde{\X})
\\ \im(\widetilde{\X}) & \real(\widetilde{\X}) \end{array}\right]$ \\
$\X\in\mathbb{S}^{2(n_1+n_2) N}$ & $\X=\left[\begin{array}{cc} \A & -\B
\\ \B & \A \end{array}\right]$, $\A\in\mathbb{S}^{(n_1+n_2) N}$, $\B\in\reals^{(n_1+n_2) N\times(n_1+n_2) N}$ antisymmetric \\
$\X_{\triangledown}\in\mathbb{H}^{(n_1+n_2) N}$ & $\X_{\triangledown}=\A+i\B$ \\
${{\X}_{\triangledown}}_{\flat}\in\complex^{n_1 N\times n_2 N}$ & ${\X_{\triangledown}}_{\flat}=(\X_{\triangledown})_{\flat}$ \\
${\barX^{\sharp}}^{\lozenge}\in\mathbb{S}^{2(n_1+n_2) N}$ & ${\barX^{\sharp}}^{\lozenge}=\left(\barX^{\sharp}\right)^{\lozenge}$ \\ \hline
\end{tabular}
  \caption{Notations for the proof of \cref{thm:QG}.}\label{table:notationsProofQG}
\end{center}
}
\vskip -0.2in
\end{table*}\renewcommand{\arraystretch}{1}

Next we define several linear operators that will be used. 

We denote the linear operator $\mcQ$ such that for any matrix $\X\in\mathbb{S}^{2(n_1+n_2) N}$, the equation $\mcQ(\X) = \mathbf{0}$ corresponds to the set of linear constraints that ensures ${\X_{\triangledown}}_{\flat}=(\A+i\B)_{\flat}\in\mcM$ by requiring all entries off the main diagonal blocks of $\A_{\flat}$ and $\B_{\flat}$ to be zero, and the constraints ensuring that the entries on the diagonal blocks satisfy that for all $i_3\in[n_3],\ldots,i_d\in[n_d]$, it holds that ${\A_{\flat}}^{(i_3,\ldots,i_d)}={\A_{\flat}}^{(i_3',\ldots,i_d')}$ and  ${\B_{\flat}}^{(i_3,\ldots,i_d)}=-{\B_{\flat}}^{(i_3',\ldots,i_d')}$, where $i_j'$ is defined as in \eqref{def:i_j_tag}.

In addition, we denote by $\mathcal{E}(\X)$ the linear operator upon the matrices in $\mathbb{S}^{2(n_1+n_2) N}$ such that the equation $\mathcal{E}(\X)=\textbf{0}$ corresponds to all the equalities
\begin{align*}
 & \left\langle\left[\begin{array}{cc}  \mathbf{E}_{ij} & \textnormal{\textbf{0}} \nonumber
\\ \textnormal{\textbf{0}} & -\mathbf{E}_{ij} \end{array}\right],\X\right\rangle=0,\ i,j\in[(n_1+n_2) N],\ i<j,
\\ & \left\langle\left[\begin{array}{cc}  \textnormal{\textbf{0}} & \mathbf{E}_{ij} 
\\  \mathbf{E}_{ij} & \textnormal{\textbf{0}} \end{array}\right],\X\right\rangle=0,\ i,j\in[(n_1+n_2) N],\ i<j.
\end{align*}
Here $\mathbf{E}_{ij}=\e_i\e_j^{\top}+\e_j\e_i^{\top}$, where $\e_i$ is the $i$th unit vector. Adding this as a constraint ensures that the solution is of the form $\X:=\left[\begin{array}{cc} \A & -\B
\\ \B & \A \end{array}\right]\in\mathbb{S}^{2(n_1+n_2)N}$ for some symmetric $\A\in\mathbb{S}^{(n_1+n_2)N}$ and antisymmetric $\B\in\reals^{(n_1+n_2)N\times(n_1+n_2)N}$.

Using these notation we will denote the operator $\mcP$ upon the vector space $\mathbb{S}^{2(n_1+n_2) N}$:
$$\mcP(\X):=[\trace(\X),\ \mcQ(\X)^{\top},\ \mathcal{E}(\X)^{\top},\ \mcB(\X)^{\top}]^{\top},$$
where $\mcB(\X) := \mA(\ifft(\bdiag^{-1}({\X_{\triangledown}}_{\flat}))$ and $\mA:\reals^{n_1\times\cdots\times n_d}\rightarrow\reals^m$ is some linear map.

The optimization problem we are interested in solving is of the form:
\begin{align} \label{eq:originalModel11}
\min_{\Vert\mX\Vert_*\le1}f(\mX):=g(\mA(\mX))+\langle\mC,\mX\rangle,
\end{align}
where $g:\reals^m\rightarrow\reals$ is convex. 

Let $\mX^*$ to be the optimal solution to Problem \eqref{eq:originalModel11} such that $\rank_{\textnormal{a}}(\mX^*)=r/N$ and denote $\overbar{\X^*}=\overbar{\U^*}_r\overbar{\S^*}_r{\overbar{\V^*}_r}^{\tc}$ to be the SVD of $\overbar{\X^*}=\bdiag(\overbar{\mX^*})$. Denote 
\begin{align} \label{eq:defVrtilde}
\widetilde{\V}_r:=\frac{1}{\sqrt{2}}\left[\begin{array}{c}\overbar{\U^*}_r \\ -\overbar{\V^*}_r \end{array}\right].
\end{align}
Using these notation we also define the mapping upon the vector space $\mathbb{S}^{2r}$:
\begin{align*}
\mcP_{V}(\S)=\mcP\left(\left[\begin{array}{cc} \real(\widetilde{\V}_r) & \im(\widetilde{\V}_r)
\\ \im(\widetilde{\V}_r) & -\real(\widetilde{\V}_r) \end{array}\right]\S\left[\begin{array}{cc} \real(\widetilde{\V}_r) & \im(\widetilde{\V}_r)
\\ \im(\widetilde{\V}_r) & -\real(\widetilde{\V}_r) \end{array}\right]^{\top}\right).
\end{align*}

In the following lemma we show that the tensor optimization problem over the unit TNN  ball as in Problem \eqref{mainModel}, is equivalent to a matrix optimization problem over a certain spectrahedron with additional linear constraints. 
\begin{lemma} \label{lemma:QGequivalenceOfProblems}
Consider the following two optimization problems: 
\begin{align} \label{eq:originalModel}
\min_{\Vert\mX\Vert_*\le1}f(\mX):=g(\mA(\mX))+\langle\mC,\mX\rangle
\end{align}
where $g:\reals^m\rightarrow\reals$ is convex and $\mA:\reals^{n_1\times\cdots\times n_d}\rightarrow\reals^m$ is a linear map,
and
\begin{align} \label{eq:spectrahedronModelUnComplexed}
\min_{\X\in\mathbb{S}^{2(n_1+n_2) N}}\ & {h(\X):=g(\mcB(\X))+\frac{1}{2 N}\langle\widetilde{\C}^{\lozenge},\X\rangle} 
\\ \textrm{s.t.}\ & \trace(\X)=2 N \nonumber
\\ & \X\succeq0 \nonumber
\\ & \mcQ(\X) = \mathbf{0} \nonumber
\\ & \mathcal{E}(\X) = \mathbf{0} \nonumber,
\end{align}
where $\mcB:\mathbb{S}^{2(n_1+n_2) N}\rightarrow\reals^{m}$ is defined as $\mcB(\X) := \mA(\ifft(\bdiag^{-1}({\X_{\triangledown}}_{\flat}))$ and $\widetilde{\C}^{\lozenge}=\left[\begin{array}{cc} \textnormal{\textbf{0}} & \barC
\\ \barC^{\tc} & \textnormal{\textbf{0}} \end{array}\right]^{\lozenge}$.

If \eqref{eq:originalModel} has a unique solution $\mX^*$ which also satisfies strict complementarity then also \eqref{eq:spectrahedronModelUnComplexed} has a unique solution ${{\overbar{\X^*}}^{\sharp}}^{\lozenge}$.
\end{lemma}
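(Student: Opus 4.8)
The plan is to establish the equivalence of the two optimization problems in two stages: first, a bijective correspondence between feasible points, and second, equality of objective values along this correspondence, after which uniqueness transfers automatically.

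\textbf{Step 1: The feasible sets are in bijection via the maps $\barX\mapsto{{\barX}^{\sharp}}^{\lozenge}$ and $\X\mapsto\ifft(\bdiag^{-1}({\X_{\triangledown}}_{\flat}))$.} First I would show that for any real-valued $\mX$ with $\Vert\mX\Vert_*\le1$, the matrix ${{\overbar{\X}}^{\sharp}}^{\lozenge}$ is feasible for \eqref{eq:spectrahedronModelUnComplexed}. By \eqref{eq:nuclearNormEquivalence}, $\Vert\barX\Vert_*\le N$. The dilation $\barX^{\sharp}$ is Hermitian PSD (a standard fact: for $\barX=\barU\barS\barV^{\tc}$ the matrix with blocks $\barU\barS\barU^{\tc},\barX,\barX^{\tc},\barV\barS\barV^{\tc}$ is PSD, and adding $\xi\ge0$ to the diagonal blocks keeps it PSD), with $\trace(\barX^{\sharp}) = \trace(\barS)+\xi\,\rank(\barX) = N$ by the choice of $\xi$ (using $\Vert\barX\Vert_*\le N$ to ensure $\xi\ge 0$). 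Then the $\lozenge$ operation doubles dimension and trace: ${\barX^{\sharp}}^{\lozenge}\in\mathbb{S}^{2(n_1+n_2)N}$ is symmetric PSD with $\trace=2N$, using the eigendecomposition structure in \eqref{eq:inProof9944}. The constraint $\mathcal{E}({{\barX}^{\sharp}}^{\lozenge})=\mathbf{0}$ holds by construction of the $\lozenge$ operation (it produces exactly the block pattern $\left[\begin{smallmatrix}\A&-\B\\\B&\A\end{smallmatrix}\right]$). The constraint $\mcQ({{\barX}^{\sharp}}^{\lozenge})=\mathbf{0}$ holds because $\mX$ real implies, via \cref{lemma:conjegateComplexSymmetry_realTensors}, that $\overbar{\mX}$ satisfies the conjugate-complex symmetry condition, hence so does the dilation's off-diagonal block $\barX$, and $\mcQ$ encodes exactly this symmetry for $({{\barX}^{\sharp}}^{\lozenge})_{\triangledown\flat}=\barX$. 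Conversely, given $\X$ feasible for \eqref{eq:spectrahedronModelUnComplexed}, the constraint $\mathcal{E}(\X)=\mathbf{0}$ forces $\X=\left[\begin{smallmatrix}\A&-\B\\\B&\A\end{smallmatrix}\right]$ with $\A$ symmetric, $\B$ antisymmetric, so $\X_{\triangledown}$ is Hermitian; PSD-ness and trace transfer back; $\mcQ(\X)=\mathbf{0}$ gives ${\X_{\triangledown}}_{\flat}\in\mcM$, so by \cref{lemma:conjegateComplexSymmetry_realTensors} the tensor $\mX:=\ifft(\bdiag^{-1}({\X_{\triangledown}}_{\flat}))$ is real-valued, and $\Vert\mX\Vert_* = \frac{1}{N}\Vert{\X_{\triangledown}}_{\flat}\Vert_* \le 1$ (the dilation/trace constraint bounds the nuclear norm of the off-diagonal block: for a PSD block matrix with diagonal blocks of trace summing to $\le 2N$, the off-diagonal block has nuclear norm $\le N$). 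I would note the two maps are mutually inverse on the respective feasible sets up to the choice of $\xi$ (which is determined), modulo the subtlety that the backward map forgets the diagonal blocks — this is handled in Step 2/3.

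\textbf{Step 2: Objective values agree.} Along the correspondence, $\mcB({{\barX}^{\sharp}}^{\lozenge}) = \mA(\ifft(\bdiag^{-1}(\barX))) = \mA(\mX)$, so the $g$-terms match. For the linear terms, $\frac{1}{2N}\langle\widetilde{\C}^{\lozenge},{{\barX}^{\sharp}}^{\lozenge}\rangle$: using that $\langle\A^{\lozenge},\B^{\lozenge}\rangle = 2\real\langle\A,\B\rangle$ for Hermitian $\A,\B$ and that $\widetilde{\C}$ has only off-diagonal blocks $\barC,\barC^{\tc}$ while $\barX^{\sharp}$'s off-diagonal block is $\barX$, this reduces to $\frac{1}{2N}\cdot 2\cdot\real\langle\barC,\barX\rangle = \frac{1}{N}\langle\barC,\barX\rangle = \langle\mC,\mX\rangle$ by \cref{lemma:orderPinnerProductEquivalence} (all quantities real). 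Hence $h({{\barX}^{\sharp}}^{\lozenge}) = f(\mX)$.

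\textbf{Step 3: Uniqueness transfers.} From Steps 1--2, $\min$ of \eqref{eq:spectrahedronModelUnComplexed} equals $\min$ of \eqref{eq:originalModel} $= f(\mX^*)$, and ${{\overbar{\X^*}}^{\sharp}}^{\lozenge}$ attains it. Suppose $\X'$ is another minimizer of \eqref{eq:spectrahedronModelUnComplexed}. Its backward image $\mX' = \ifft(\bdiag^{-1}({\X'_{\triangledown}}_{\flat}))$ is feasible for \eqref{eq:originalModel} with $f(\mX') = h(\X') = f(\mX^*)$, so by uniqueness $\mX'=\mX^*$, i.e., ${\X'_{\triangledown}}_{\flat} = \overbar{\X^*}$. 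The remaining task — the main obstacle — is to show $\X'$ must then equal ${{\overbar{\X^*}}^{\sharp}}^{\lozenge}$, i.e., that the diagonal blocks of the dilation are also forced. This is where strict complementarity enters: a PSD matrix $\widetilde{\X}$ of trace $N$ with prescribed off-diagonal block $\barX$ of nuclear norm exactly $N$ (which holds since $\nabla f(\mX^*)\neq 0$ at a strict-complementarity point forces $\Vert\mX^*\Vert_*=1$) must have a specific rank structure; the Cauchy–Schwarz-type inequality relating the diagonal blocks to the off-diagonal singular values is tight precisely when $\widetilde{\X}=\barX^{\sharp}$ with the canonical choice of $\xi=0$ (since $\Vert\barX\Vert_*=N=\trace(\barX^{\sharp})$). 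I expect this rigidity argument — showing the dilation is the unique PSD completion of trace $N$ with a maximal-nuclear-norm off-diagonal block — to be the technical heart, and it should follow from the equality case of $\Vert\barX\Vert_* \le \sqrt{\trace(\X_1)\trace(\X_2)}$ (or $\le\frac{1}{2}(\trace(\X_1)+\trace(\X_2))$) for PSD $\left[\begin{smallmatrix}\X_1&\barX\\\barX^{\tc}&\X_2\end{smallmatrix}\right]$, which pins down $\X_1,\X_2$ in terms of $\barX$'s SVD.
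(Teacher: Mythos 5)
Your proposal takes essentially the same route as the paper's proof. Steps 1--2 match: you verify that $\mX\mapsto{{\barX}^{\sharp}}^{\lozenge}$ lands in the feasible set of \eqref{eq:spectrahedronModelUnComplexed}, that the backward map $\X\mapsto\ifft(\bdiag^{-1}({\X_{\triangledown}}_{\flat}))$ returns a real tensor of TNN $\le1$ (the paper uses Lemma~1 of \cite{NuclearNorm2Spectrahedron} for the nuclear-norm bound on the off-diagonal block of a trace-bounded PSD matrix, as you also invoke), and that objectives agree via $\langle\widetilde{\A}^{\lozenge},\widetilde{\B}^{\lozenge}\rangle=2\langle\widetilde{\A},\widetilde{\B}\rangle$ and Lemma \ref{lemma:orderPinnerProductEquivalence}. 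Step 3 is also the paper's argument: a second minimizer $\X'$ of \eqref{eq:spectrahedronModelUnComplexed} must have off-diagonal block ${\X'_{\triangledown}}_{\flat}=\overbar{\X^*}$ by uniqueness of $\mX^*$, after which the diagonal blocks must be pinned down.

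The one place you leave a sketch --- the rigidity claim that when $\Vert\overbar{\X^*}\Vert_*=N$ (guaranteed since strict complementarity gives $\nabla f(\mX^*)\neq 0$, hence $\Vert\mX^*\Vert_*=1$, hence $\xi=0$) the Hermitian PSD dilation with that off-diagonal block and trace $N$ is \emph{unique} --- is precisely where the paper also does not give an argument, instead citing Lemma~3 of \cite{Simplicity}. Your proposed mechanism (equality case of $\Vert\barX\Vert_*\le\tfrac12(\trace(\X_1)+\trace(\X_2))$ for PSD $\left[\begin{smallmatrix}\X_1&\barX\\\barX^{\tc}&\X_2\end{smallmatrix}\right]$, using test vectors built from the singular-vector pairs of $\barX$) is the right idea and is essentially the content of that cited lemma. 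So there is no conceptual gap relative to the paper's proof; the only unfinished item in your write-up corresponds one-to-one with the paper's external citation, and you would simply need to either cite that result or carry out the equality-case analysis you outline.
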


\begin{proof}
For any $\widetilde{\X},\widetilde{\Y}\in\mathbb{H}^{(n_1+n_2) N}$, since $\real(\widetilde{\X}),\real(\widetilde{\Y})$ are symmetric and $\im(\widetilde{\X}),\im(\widetilde{\Y})$ are antisymmetric, it holds that
\begin{align} \label{eq:InProofQGinnerProductEquivalence}
\langle\widetilde{\X}^{\lozenge},\widetilde{\Y}^{\lozenge}\rangle & = 
\trace\left(\left[\begin{array}{cc} \real(\widetilde{\X}) & -\im(\widetilde{\X})
\\ \im(\widetilde{\X}) & \real(\widetilde{\X}) \end{array}\right]\left[\begin{array}{cc} \real(\widetilde{\Y}) & -\im(\widetilde{\Y})
\\ \im(\widetilde{\Y}) & \real(\widetilde{\Y}) \end{array}\right]\right) \nonumber
\\ & =2\trace(\real(\widetilde{\X})\real(\widetilde{\Y})-\im(\widetilde{\X})\im(\widetilde{\Y}))=2\langle\widetilde{\X},\widetilde{\Y}\rangle.
\end{align}
Therefore, for every $\mX\in\reals^{n_1\times\cdots\times n_d}$ it holds that
\begin{align*}
\langle\mC,\mX\rangle \underset{(a)}{=} \frac{1}{ N}\langle\barC,\barX\rangle = \frac{1}{ N}\left\langle\left[\begin{array}{cc} \textbf{0} & \barC
\\ \barC^{\tc} & \textbf{0} \end{array}\right],\barX^{\sharp}\right\rangle \underset{(b)}{=} \frac{1}{2 N}\left\langle\left[\begin{array}{cc} \textbf{0} & \barC
\\ \barC^{\tc} & \textbf{0} \end{array}\right]^{\lozenge},{\barX^{\sharp}}^{\lozenge}\right\rangle,
\end{align*}
where (a) follows from \cref{lemma:orderPinnerProductEquivalence}, and (b) follows from \eqref{eq:InProofQGinnerProductEquivalence}. This, together with the fact that $\mcB({\barX^{\sharp}}^{\lozenge}) = \mA(\mX)$ implies that 
$h({\barX^{\sharp}}^{\lozenge})=f(\mX)$.

Since $\mX^*$ satisfies strict complementarity then $\nabla{}f(\mX^*)\not=0$, and therefore, $(1/ N)\Vert\overbar{\X^*}\Vert_*=\Vert\mX\Vert_*=1$. This implies that $\trace({\overbar{\X^*}}^{\sharp})= N$. By the definition of  ${\overbar{\X^*}}^{\sharp}$ it can be seen that ${\overbar{\X^*}}^{\sharp}\succeq0$ and $({\overbar{\X^*}}^{\sharp})_{\flat}=\overbar{\X^*}\in\mcM$. Since ${\overbar{\X^*}}^{\sharp}$ is a hermitian matrix, its diagonal is necessarily real and so $\trace({\overbar{\X^*}}^{\sharp})=\trace(\real({\overbar{\X^*}}^{\sharp}))= N$, which implies that $\trace({{\overbar{\X^*}}^{\sharp}}^{\lozenge})=2 N$. According to section 3 in \cite{Complex2BigRealMatrix}, ${\overbar{\X^*}}^{\sharp}\succeq0$ if and only if ${{\overbar{\X^*}}^{\sharp}}^{\lozenge}\succeq0$. The other constraints hold trivially from the construction. Therefore, ${{\overbar{\X^*}}^{\sharp}}^{\lozenge}$ is a feasible solution to \eqref{eq:spectrahedronModelUnComplexed}.

We next show that ${{\overbar{\X^*}}^{\sharp}}^{\lozenge}$ is an optimal solution to \eqref{eq:spectrahedronModelUnComplexed}. Let $\X\in\mathbb{S}^{2(n_1+n_2) N}$ such that $\trace(\X)=2 N$, $\X\succeq0$, $\mcQ(\X) = \mathbf{0}$, and for which $\mathcal{E}(\X)=\mathbf{0}$. Then, it can be written as $\X=\left[\begin{array}{cc} \A & -\B
\\ \B & \A \end{array}\right]\in\mathbb{S}^{2(n_1+n_2) N}$ for some symmetric $\A\in\mathbb{S}^{(n_1+n_2) N}$ and antisymmetric $\B\in\reals^{(n_1+n_2) N\times(n_1+n_2) N}$, where $\A+i\B\succeq0$, $\trace(\A)= N$, and $(\A+i\B)_{\flat}\in\mcM$. Therefore, $\A+i\B$ can be written as 
$\A+i\B=\frac{1}{2}\left[\begin{array}{cc} \X_1 & (\A+i\B)_{\flat}
\\ {(\A+i\B)}_{\flat}^{\tc} & \X_2 \end{array}\right]\in\mathbb{H}^{(n_1+n_2) N}$, for some $\X_1\in\mathbb{H}^{n_1 N}$, $\X_2\in\mathbb{H}^{n_2 N}$ such that $\trace(\X_1)+\trace(\X_2)=2 N$. Then, by Lemma 1 in \cite{NuclearNorm2Spectrahedron}\footnote{\cite{NuclearNorm2Spectrahedron}  states the lemma for real matrices however, the proof holds also for the complex case by replacing symmetric matrices with hermitian matrices.}, it follows that $\Vert(\A+i\B)_{\flat}\Vert_*\le N$, and hence, $\ifft(\bdiag^{-1}((\A+i\B)_{\flat}))\in\reals^{n_1\times n_d}$ and $\Vert\ifft(\bdiag^{-1}((\A+i\B)_{\flat}))\Vert_*\le1$, and therefore, $\ifft(\bdiag^{-1}((\A+i\B)_{\flat}))$ is a feasible solution to \eqref{eq:originalModel}. Therefore, since $\mX^*$ is optimal to \eqref{eq:originalModel}, it follows that
\begin{align*}
h(\widetilde{\X}^{\lozenge})=f(\ifft(\bdiag^{-1}((\A+i\B)_{\flat}))) \ge f(\mX^*)=h({{\overbar{\X^*}}^{\sharp}}^{\lozenge}),
\end{align*}
which implies that ${{\overbar{\X^*}}^{\sharp}}^{\lozenge}$ is an optimal solution to  \eqref{eq:spectrahedronModelUnComplexed}.

We will now show that ${{\overbar{\X^*}}^{\sharp}}^{\lozenge}$ is a unique solution. Assume there exists a different optimal solution $\Y^*$. Then, it can be written as $\Y^*=\left[\begin{array}{cc} \A & -\B
\\ \B & \A \end{array}\right]\succeq0$
for some $\A\in\mathbb{S}^{(n_1+n_2) N}$ and some anti-symmetric $\B\in\reals^{(n_1+n_2) N\times(n_1+n_2) N}$, and it satisfies that $\trace(\Y^*)=2 N$, $\Y^*\succeq0$, $\mcQ(\Y^*) = \mathbf{0}$, $\mathcal{E}(\Y^*)=\mathbf{0}$. Since both ${{\overbar{\X^*}}^{\sharp}}^{\lozenge}$ and $\Y^*$ are optimal solutions to \eqref{eq:spectrahedronModelUnComplexed}, it follows that
\begin{align*}
f(\mX^*)=h({{\overbar{\X^*}}^{\sharp}}^{\lozenge})=h(\Y^*)
=f(\ifft(\bdiag^{-1}((\A+i\B)_{\flat}))),
\end{align*}
which by the uniqueness of $\mX^*$ implies that
$\mX^*=\ifft(\bdiag^{-1}((\A+i\B)_{\flat}))$. 

Since the Fourier transform and the $\bdiag(\cdot)$ operator are invertible, it follows that $\overbar{\X^*}=\bdiag(\overbar{\mX^*})=(\A+i\B)_{\flat}$. Now, since $\Vert\overbar{\X^*}\Vert_*= N$, by Lemma 3 in \cite{Simplicity} it follows that ${\overbar{\X^*}}^{\sharp}$ is the unique positive semidefinite matrix of the form $\frac{1}{2}\left[\begin{array}{cc} \X_1 & \widetilde{\X^*}_{\flat}
\\ \widetilde{\X^*}_{\flat}^{\tc} & \X_2 \end{array}\right]$ for some $\X_1\in\mathbb{H}^{n_1 N}$, $\X_2\in\mathbb{H}^{n_2 N}$ such that $\trace(\X_1)+\trace(\X_2)=2 N$. Therefore, ${\overbar{\X^*}}^{\sharp}=\A+i\B$, and so, $\A=\real({\overbar{\X^*}}^{\sharp})$ and $\B=\im({\overbar{\X^*}}^{\sharp})$, which implies that $\Y^*={{\overbar{\X^*}}^{\sharp}}^{\lozenge}$. 

\end{proof}

%

The following lemma bounds the distance, under the application of $\mP$, between ${{\overbar{\X^*}}^{\sharp}}^{\lozenge}$, which is the mapping of the optimal solution $\mX^*$ to a real-valued matrix in a spectrahedron, and a matrix whose eigenvectors are related to the eigenvectors as ${{\overbar{\X^*}}^{\sharp}}^{\lozenge}$.

\begin{lemma}
\label{lemma:technicalLemma3QG}
Let $\mX^*$ be a unique optimal solution to Problem \eqref{eq:originalModel11} such that $\rank_{\textnormal{a}}(\mX)=r/N$, and assume $g$ is $\alpha$-strongly convex.
Denote $\overbar{\X^*}=\overbar{\U^*}_r\overbar{\S^*}_r{\overbar{\V^*}_r}^{\tc}$ to be the SVD of $\overbar{\X^*}=\bdiag(\overbar{\mX^*})$, and denote 
\begin{align*}
\widetilde{\V}_r:=\frac{1}{\sqrt{2}}\left[\begin{array}{c}\overbar{\U^*}_r \\ -\overbar{\V^*}_r \end{array}\right].
\end{align*}
Then, for any $\S\in\mathbb{S}^{2r}$ it holds that
\begin{align*} 
&\left\Vert\left[\begin{array}{cc} \real(\widetilde{\V}_r) & \im(\widetilde{\V}_r)
\\ \im(\widetilde{\V}_r) & -\real(\widetilde{\V}_r) \end{array}\right]\S\left[\begin{array}{cc} \real(\widetilde{\V}_r) & \im(\widetilde{\V}_r)
\\ \im(\widetilde{\V}_r) & -\real(\widetilde{\V}_r) \end{array}\right]^{\top}-{{\overbar{\X^*}}^{\sharp}}^{\lozenge}\right\Vert_F \nonumber
\\ & \le \frac{1}{\sigma_{\min}(\mcP_{V})}\left\Vert\mcP\left(\left[\begin{array}{cc} \real(\widetilde{\V}_r) & \im(\widetilde{\V}_r)
\\ \im(\widetilde{\V}_r) & -\real(\widetilde{\V}_r) \end{array}\right]\S\left[\begin{array}{cc} \real(\widetilde{\V}_r) & \im(\widetilde{\V}_r)
\\ \im(\widetilde{\V}_r) & -\real(\widetilde{\V}_r) \end{array}\right]^{\top}\right)-\mcP({{\overbar{\X^*}}^{\sharp}}^{\lozenge})\right\Vert_2,
\end{align*}
where $\sigma_{\min}(\mcP_{V}):= \min_{\Vert\S\Vert_F=1}\Vert\mcP_{V}(\S)\Vert_2>0$.
\end{lemma}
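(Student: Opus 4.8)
The plan is to read the desired inequality as a \emph{quantitative injectivity bound} for the linear operator $\mcP$, restricted to the finite-dimensional subspace relevant to the problem, with the constant $1/\sigma_{\min}(\mcP_V)$ being exactly what such an injectivity bound yields. First I would record two elementary structural facts. (i) The matrix $V:=\left[\begin{smallmatrix}\real(\widetilde{\V}_r)&\im(\widetilde{\V}_r)\\ \im(\widetilde{\V}_r)&-\real(\widetilde{\V}_r)\end{smallmatrix}\right]$ has orthonormal columns: by \eqref{eq:defVrtilde}, $\widetilde{\V}_r^{\tc}\widetilde{\V}_r=\tfrac12(\overbar{\U^*}_r{}^{\tc}\overbar{\U^*}_r+\overbar{\V^*}_r{}^{\tc}\overbar{\V^*}_r)=\I_r$, and the real encoding appearing in \eqref{eq:inProof9944} turns an orthonormal complex matrix into an orthonormal real one; hence $\Vert V\T V^{\top}\Vert_F=\Vert\T\Vert_F$ for every $\T\in\mathbb{S}^{2r}$, and $\mcP_V(\T)=\mcP(V\T V^{\top})$ by the very definition of $\mcP_V$. (ii) Using the eigendecomposition of the dilation ${\overbar{\X^*}}^{\sharp}$ together with the identity \eqref{eq:inProof9944}, one pins down how ${{\overbar{\X^*}}^{\sharp}}^{\lozenge}$ sits relative to the image of the conjugation $\S\mapsto V\S V^{\top}$, writing it as $V\S^{\star}V^{\top}$ for an explicit $\S^{\star}\in\mathbb{S}^{2r}$ (built from the singular values $\overbar{\S^*}_r$ and the slack $\xi$). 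Given (i)--(ii), linearity of $\mcP$ gives $\mcP(V\S V^{\top})-\mcP({{\overbar{\X^*}}^{\sharp}}^{\lozenge})=\mcP_V(\S-\S^{\star})$ while $\Vert V\S V^{\top}-{{\overbar{\X^*}}^{\sharp}}^{\lozenge}\Vert_F=\Vert\S-\S^{\star}\Vert_F$, so the claim reduces to $\sigma_{\min}(\mcP_V)\Vert\T\Vert_F\le\Vert\mcP_V(\T)\Vert_2$ with $\T=\S-\S^{\star}$, which is immediate from $\sigma_{\min}(\mcP_V):=\min_{\Vert\S\Vert_F=1}\Vert\mcP_V(\S)\Vert_2$ (apply it to $\T/\Vert\T\Vert_F$; the case $\T=0$ is trivial).

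It then remains to prove $\sigma_{\min}(\mcP_V)>0$. Since $\mathbb{S}^{2r}$ is finite-dimensional, the unit sphere $\{\Vert\S\Vert_F=1\}$ is compact and $\mcP_V$ is continuous, so the minimum is attained; it is positive if and only if $\mcP_V$ is injective. To get injectivity I would use uniqueness of $\mX^*$: suppose $\mcP_V(\S)=0$ for some $\S\in\mathbb{S}^{2r}$, and perturb the optimal solution of the spectrahedron problem \eqref{eq:spectrahedronModelUnComplexed} along $V\S V^{\top}$. The identity $\mcP_V(\S)=0$ encodes $\trace(V\S V^{\top})=0$, $\mcQ(V\S V^{\top})=\mathbf 0$, $\mathcal E(V\S V^{\top})=\mathbf 0$ and $\mcB(V\S V^{\top})=\mathbf 0$; hence ${{\overbar{\X^*}}^{\sharp}}^{\lozenge}+tV\S V^{\top}$ remains feasible for \eqref{eq:spectrahedronModelUnComplexed} for all sufficiently small $|t|$ (trace and the linear constraints $\mcQ,\mathcal E$ are preserved, and the PSD constraint survives because ${{\overbar{\X^*}}^{\sharp}}^{\lozenge}$ is positive definite on the range it actually occupies, so a small perturbation cannot leave the cone), and its objective value differs from that of ${{\overbar{\X^*}}^{\sharp}}^{\lozenge}$ only through the linear term $\tfrac{t}{2N}\langle\widetilde{\C}^{\lozenge},V\S V^{\top}\rangle$. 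Optimality of ${{\overbar{\X^*}}^{\sharp}}^{\lozenge}$, combined with the freedom to choose $t$ of either sign, forces that inner product to vanish, so ${{\overbar{\X^*}}^{\sharp}}^{\lozenge}+tV\S V^{\top}$ is itself optimal for every small $t$. By the uniqueness part of \cref{lemma:QGequivalenceOfProblems} — which is where the hypothesis that $\mX^*$ is the unique minimizer of \eqref{eq:originalModel11} enters — this forces $V\S V^{\top}=\mathbf 0$, and then $\S=\mathbf 0$ since $V$ has orthonormal columns. Thus $\mcP_V$ is injective and $\sigma_{\min}(\mcP_V)>0$.

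I expect the genuine difficulty to lie in step (ii) together with the PSD-feasibility check in the perturbation argument: correctly identifying $\S^{\star}$ (equivalently, how ${{\overbar{\X^*}}^{\sharp}}^{\lozenge}$ decomposes with respect to the subspace $\{V\S V^{\top}\}$) demands careful bookkeeping of the dilation $\sharp$, of the real-matrix encoding $\lozenge$, and of the way the conjugate-complex-symmetry constraints $\mcQ$ and the antisymmetry constraints $\mathcal E$ cut out the feasible set; and one must track whether the trace slack $\xi$ (which vanishes precisely when $\Vert\mX^*\Vert_*=1$) keeps the relevant part of ${{\overbar{\X^*}}^{\sharp}}^{\lozenge}$ definite so the local perturbation stays PSD. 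Everything else — orthonormality of $V$, linearity of $\mcP$, compactness of the sphere, and transferring uniqueness back through \cref{lemma:QGequivalenceOfProblems} — is routine.
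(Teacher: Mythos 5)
Your proposal is correct and follows essentially the same route as the paper: reduce the inequality to $\sigma_{\min}(\mcP_V)\Vert\T\Vert_F\le\Vert\mcP_V(\T)\Vert_2$ via orthonormality of $V$ and linearity of $\mcP$, then establish $\sigma_{\min}(\mcP_V)>0$ by showing injectivity of $\mcP_V$ through a perturbation argument anchored in the uniqueness of ${{\overbar{\X^*}}^{\sharp}}^{\lozenge}$ (i.e.\ in \cref{lemma:QGequivalenceOfProblems}). The one place you genuinely diverge is the mechanism for injectivity: the paper writes out the KKT system for Problem~\eqref{eq:spectrahedronModelUnComplexed}, records that strong convexity of $g$ fixes $\mcB$ over the optimal set, and checks that the one-sided perturbation $V(\alpha\S_0+\boldsymbol{\Lambda}\oplus\boldsymbol{\Lambda})V^{\top}$ ($\alpha\ge0$) satisfies that system, whereas you perturb directly, observe that $\mcP_V(\S)=0$ kills the $\mcB$ and $\mcQ,\mathcal E$,$\trace$ contributions so only the linear term $\tfrac{t}{2N}\langle\widetilde{\C}^{\lozenge},V\S V^{\top}\rangle$ survives, and force it to vanish by two-sided optimality. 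The two arguments are interchangeable; yours is marginally more elementary because it never needs to display the dual multiplier $\Z^*$, at the modest cost of having to justify PSD-feasibility for both signs of $t$ rather than one.

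There is one load-bearing step you acknowledge but defer, and it is not just bookkeeping. You need that ${{\overbar{\X^*}}^{\sharp}}^{\lozenge}=V\S^{\star}V^{\top}$ for some $\S^{\star}\in\mathbb{S}^{2r}$ with $\S^{\star}\succ0$; both the reduction (via $\Vert V(\S-\S^\star)V^\top\Vert_F=\Vert\S-\S^\star\Vert_F$) and the PSD stability of the perturbation hinge on it. The paper gets this by working with the dilation $\widetilde{\boldsymbol{\nabla}}$ of $\overbar{\nabla f(\mX^*)}$, invoking \cref{lemma:svd_opt_grad_d_dim} to identify $\widetilde{\V}_r$ as the bottom-$r$ eigenvectors of $\widetilde{\boldsymbol{\nabla}}$ (this is where the sign flip in $\widetilde{\V}_r$ comes from), which are simultaneously the range eigenvectors of $\overbar{\X^*}^{\sharp}$, with $\boldsymbol{\Lambda}$ built from the strictly positive singular values $\overbar{\S^*}_r$ and $\xi=0$ (because strict complementarity forces $\Vert\overbar{\X^*}\Vert_*=N$). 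Your sketch is right that this is the delicate part; a complete write-up would need to carry out exactly this identification, and would need to state (or inherit from the context in which the lemma is invoked) that $\mX^*$ satisfies strict complementarity so that the trace slack $\xi$ is zero and $\boldsymbol{\Lambda}$ is positive definite rather than merely PSD.
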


\begin{proof}
Invoking \cref{lemma:QGequivalenceOfProblems}, we have that since $\mX^*$ is a unique optimal solution to Problem \eqref{eq:originalModel}, then
${{\overbar{\X^*}}^{\sharp}}^{\lozenge}$ is the unique optimal solution of  \eqref{eq:spectrahedronModelUnComplexed}. Therefore, by the KKT conditions for Problem \eqref{eq:spectrahedronModelUnComplexed}, it must hold that
\begin{align*}
& {\mcB}^{\top}\nabla{}g(\mcB({{\overbar{\X^*}}^{\sharp}}^{\lozenge}))+\frac{1}{2 N}\widetilde{\C}^{\lozenge}-\Z^*-s^*\I-{\v^*}^{\top}\mcQ^{\top}\mcQ({{\overbar{\X^*}}^{\sharp}}^{\lozenge})-{\w^*}^{\top}\mathcal{E}^{\top}\mathcal{E}({{\overbar{\X^*}}^{\sharp}}^{\lozenge})=0
\\ & \trace({{\overbar{\X^*}}^{\sharp}}^{\lozenge})=2 N
\\ & {{\overbar{\X^*}}^{\sharp}}^{\lozenge}\succeq0
\\ & \mcQ({{\overbar{\X^*}}^{\sharp}}^{\lozenge})=\mathbf{0}
\\ & \mathcal{E}({{\overbar{\X^*}}^{\sharp}}^{\lozenge})=\mathbf{0}
\\ & \langle\Z^*,{{\overbar{\X^*}}^{\sharp}}^{\lozenge}\rangle=0,
\end{align*}
where $\Z^*$ is some optimal dual solution. In addition, since $g$ is strongly convex, it follows that $\mcB(\X)$ is constant over the optimal set. We will denote this constant $\d:=\mcB({{\overbar{\X^*}}^{\sharp}}^{\lozenge})$.

Thus, if the following system holds for some $\X\in \mathbb{S}^{2(n_1+n_2) N}$: 
\begin{align} \label{system:linear}
& \mcP(\X)=\c \nonumber
\\ & \X\succeq0 \nonumber
\\ & \langle\Z^*,\X\rangle=0,
\end{align}
where $\mcP(\X):=[\trace(\X),\ \mcQ(\X)^{\top},\ \mathcal{E}(\X)^{\top},\ \mcB(\X)^{\top}]^{\top}$, and $\c:=[2 N,\ \mathbf{0},\ \mathbf{0},\ \d^{\top}]^{\top}$, then the KKT conditions hold, which implies that $\X$ is an optimal solution to Problem \eqref{eq:spectrahedronModelUnComplexed} since problem \eqref{eq:spectrahedronModelUnComplexed} is convex. 
Therefore, since the optimal solution to problem \eqref{eq:spectrahedronModelUnComplexed} is unique, system \eqref{system:linear} has a unique solution ${{\overbar{\X^*}}^{\sharp}}^{\lozenge}$.

Denote the SVD of $\overbar{\nabla{}f(\mX^*)}$ as $\overbar{\nabla{}f(\mX^*)}=\overbar{\U^*}\overbar{\S^*}{\overbar{\V^*}}^{\tc}$. Define
\begin{align*}
\widetilde{\boldsymbol{\nabla}}:=\left[\begin{array}{cc} \textbf{0} & \overbar{\nabla{}f(\mX^*)} \\ \overbar{\nabla{}f(\mX^*)}^{\tc} & \textbf{0} \end{array}\right]=\frac{1}{2}\left[\begin{array}{cc} \overbar{\U^*} & \overbar{\U^*} \\ \overbar{\V^*} & -\overbar{\V^*} \end{array}\right]\left[\begin{array}{cc} \overbar{\S^*} & \textbf{0} \\ \textbf{0} & -\overbar{\S^*} \end{array}\right]\left[\begin{array}{cc} \overbar{\U^*}^{\tc} & \overbar{\V^*}^{\tc} \\ \overbar{\U^*}^{\tc} & -\overbar{\V^*}^{\tc} \end{array}\right] ,
\end{align*}
and $r=\sum_{i_d=1}^{n_d}\cdots\sum_{i_3=1}^{n_3}\rank(\overbar{\X^*}^{(i_3,\ldots,i_d)})$. By \cref{lemma:svd_opt_grad_d_dim}, it follows that $\overbar{\S^*}(1,1)=\cdots=\overbar{\S^*}(r,r)$.
Therefore, the bottom $r+1$ eigenvalues of $\widetilde{\boldsymbol{\nabla}}$ are $-\sigma_1(\overbar{\nabla{}f(\mX^*)}),\ldots,-\sigma_{r+1}(\overbar{\nabla{}f(\mX^*)})$, and the matrix containing the eigenvectors corresponding to the bottom $r$ eigenvalues of $\widetilde{\boldsymbol{\nabla}}$ is 
\begin{align*}
\widetilde{\V}_r:=\frac{1}{\sqrt{2}}\left[\begin{array}{c}\overbar{\U^*}_r \\ -\overbar{\V^*}_r \end{array}\right] ,
\end{align*}
where $\overbar{\U^*}_r$ and $\overbar{\V^*}_r$ contain the $r$ singular vectors of $\overbar{\U^*}$ and $\overbar{\V^*}$ corresponding to the top $r$ singular values of $\overbar{\nabla{}f(\mX^*)}$, and $\overbar{\U^*}_r$ and $-\overbar{\V^*}_r$ are also the singular vectors of $\overbar{\X^*}$.

Since the SVD of $\overbar{\X^*}^{\sharp}$ can be written as $\overbar{\X^*}^{\sharp}=\frac{1}{2}\left[
\begin{array}{c} \overbar{\U^*}_r \\ -\overbar{\V^*}_r \end{array}
\right] 
\boldsymbol{\Lambda} \left[ \begin{array}{cc} \overbar{\U^*}_r^{\tc} & -\overbar{\V^*}_r^{\tc} \end{array}\right]=\widetilde{\V}_r\boldsymbol{\Lambda}\widetilde{\V}_r^{\tc}$, for some $\boldsymbol{\Lambda}\in\mathbb{S}^{r}$ such that $\boldsymbol{\Lambda}\succeq0$, by \eqref{eq:inProof9944} we know that the solution ${{\overbar{\X^*}}^{\sharp}}^{\lozenge}$ can be written in the form
\begin{align*}
{{\overbar{\X^*}}^{\sharp}}^{\lozenge} = \left[\begin{array}{cc} \real(\widetilde{\V}_r) & \im(\widetilde{\V}_r)
\\ \im(\widetilde{\V}_r) & -\real(\widetilde{\V}_r) \end{array}\right]
\left[\begin{array}{cc} \boldsymbol{\Lambda} & \mathbf{0}
\\ \mathbf{0} & \boldsymbol{\Lambda} \end{array}\right]
\left[\begin{array}{cc} \real(\widetilde{\V}_r) & \im(\widetilde{\V}_r)
\\ \im(\widetilde{\V}_r) & -\real(\widetilde{\V}_r) \end{array}\right]^{\top}.
\end{align*}

Consider now the mapping upon the vector space $\mathbb{S}^{2r}$:
\begin{align*}
\mcP_{V}(\S)=\mcP\left(\left[\begin{array}{cc} \real(\widetilde{\V}_r) & \im(\widetilde{\V}_r)
\\ \im(\widetilde{\V}_r) & -\real(\widetilde{\V}_r) \end{array}\right]\S\left[\begin{array}{cc} \real(\widetilde{\V}_r) & \im(\widetilde{\V}_r)
\\ \im(\widetilde{\V}_r) & -\real(\widetilde{\V}_r) \end{array}\right]^{\top}\right).
\end{align*}

Assume some $\S_0\in\mathbb{S}^{2r}$ satisfies that $\mcP_{V}(\S_0)=0$. Then, for a small enough $\alpha\ge0$ it holds that 
$$\left[\begin{array}{cc} \real(\widetilde{\V}_r) & \im(\widetilde{\V}_r)
\\ \im(\widetilde{\V}_r) & -\real(\widetilde{\V}_r) \end{array}\right]\left(\alpha\S_0+\left[\begin{array}{cc} \boldsymbol{\Lambda} & \mathbf{0}
\\ \mathbf{0} & \boldsymbol{\Lambda} \end{array}\right]\right)\left[\begin{array}{cc} \real(\widetilde{\V}_r) & \im(\widetilde{\V}_r)
\\ \im(\widetilde{\V}_r) & -\real(\widetilde{\V}_r) \end{array}\right]^{\top}$$ satisfies the system \eqref{system:linear}. Therefore, $\S_0=0$, and hence, the mapping $\mcP_{V}$ is injective.

Since $\mcP_{V}$ is injective, it follows that $\sigma_{\min}(\mcP_{V}) := \min_{\Vert\S\Vert_F=1}\Vert\mcP_{V}(\S)\Vert_2>0$. Therefore, for any $\S\in\mathbb{S}^{2r}$ it holds that
\begin{align*} 
&\left\Vert\left[\begin{array}{cc} \real(\widetilde{\V}_r) & \im(\widetilde{\V}_r)
\\ \im(\widetilde{\V}_r) & -\real(\widetilde{\V}_r) \end{array}\right]\S\left[\begin{array}{cc} \real(\widetilde{\V}_r) & \im(\widetilde{\V}_r)
\\ \im(\widetilde{\V}_r) & -\real(\widetilde{\V}_r) \end{array}\right]^{\top}-{{\overbar{\X^*}}^{\sharp}}^{\lozenge}\right\Vert_F \nonumber
\\ & \le \frac{1}{\sigma_{\min}(\mcP_{V})}\left\Vert\mcP\left(\left[\begin{array}{cc} \real(\widetilde{\V}_r) & \im(\widetilde{\V}_r)
\\ \im(\widetilde{\V}_r) & -\real(\widetilde{\V}_r) \end{array}\right]\S\left[\begin{array}{cc} \real(\widetilde{\V}_r) & \im(\widetilde{\V}_r)
\\ \im(\widetilde{\V}_r) & -\real(\widetilde{\V}_r) \end{array}\right]^{\top}\right)-\mcP({{\overbar{\X^*}}^{\sharp}}^{\lozenge})\right\Vert_2.
\end{align*}

\end{proof}

We now restate \cref{thm:QG} and then prove it.
\begin{theorem}[quadratic growth]
Let $f(\mX)=g(\mA(\mX))+\langle\mC,\mX\rangle$, where $g$ is $\alpha$-strongly convex and $\mA:\reals^{n_1\times\cdots\times n_d}\rightarrow\reals^m$ is a linear map. Assume there exist a unique optimal solution $\mX^*\in\lbrace\mX\in\reals^{n_1\times\cdots\times n_d}\ \vert\ \Vert\mX\Vert_*\le1\rbrace$ to Problem \eqref{mainModel}, and that it satisfies strict complementarity. Then, there exists a constant $\gamma>0$ such that for every $\mX\in\lbrace\mX\in\reals^{n_1\times\cdots\times n_d}\ \vert\ \Vert\mX\Vert_*\le1\rbrace$, it holds that
\begin{align*}
f(\mX)-f(\mX^*) \ge \gamma\Vert\mX-\mX^*\Vert_F^2.
\end{align*}
\end{theorem}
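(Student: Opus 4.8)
The plan is to reduce the tensor problem to a matrix problem over a real spectrahedron with extra linear constraints, exactly as set up in \cref{lemma:QGequivalenceOfProblems}, and then transfer a quadratic-growth bound from that spectrahedron problem back to the original tensor problem. First I would fix $\mX\in\lbrace\mX~|~\Vert\mX\Vert_*\le1\rbrace$ and consider its dilation-type matrix $\barX^{\sharp}$ and the real-ified version ${\barX^{\sharp}}^{\lozenge}\in\mathbb{S}^{2(n_1+n_2)N}$, which is a feasible point of Problem \eqref{eq:spectrahedronModelUnComplexed} (its trace is $2N$ only after rescaling, so one must be slightly careful with the $\xi$ parameter, but feasibility of a rescaled version is what is used). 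By \cref{lemma:QGequivalenceOfProblems}, the unique optimal solution of \eqref{eq:spectrahedronModelUnComplexed} is ${{\overbar{\X^*}}^{\sharp}}^{\lozenge}$, and $h({\barX^{\sharp}}^{\lozenge}) = f(\mX)$, $h({{\overbar{\X^*}}^{\sharp}}^{\lozenge}) = f(\mX^*)$.

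Next I would apply \cref{lemma:spectrahedronLemmaQuadraticGrowth} with $\Z = {{\overbar{\X^*}}^{\sharp}}^{\lozenge}$ (rescaled appropriately so $\trace(\Z)=2N$), $\M = \widetilde{\boldsymbol{\nabla}}^{\lozenge}$ the dilated/real-ified gradient direction (using \cref{lemma:svd_opt_grad_d_dim} to see that the bottom $r$ eigenvalues of $\widetilde{\boldsymbol{\nabla}}$ coincide and that its eigenvectors relate to those of $\overbar{\X^*}$), and $\X = {\barX^{\sharp}}^{\lozenge}$ (rescaled). The spectral gap $\delta$ there is the generalized complementarity measure, which is strictly positive by strict complementarity and \cref{lemma:SCequivalence}. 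This yields $\langle {\barX^{\sharp}}^{\lozenge} - {{\overbar{\X^*}}^{\sharp}}^{\lozenge}, \M\rangle \ge \frac{\delta}{4N}\Vert {\barX^{\sharp}}^{\lozenge} - \frac{2N\,\P{\barX^{\sharp}}^{\lozenge}\P}{\trace(\P{\barX^{\sharp}}^{\lozenge}\P)}\Vert_F^2$ for the projection $\P$ onto the eigenspace of the bottom $r$ eigenvalues. Denote $\W^{\sharp\lozenge}$ the normalized compressed matrix $\frac{2N\,\P{\barX^{\sharp}}^{\lozenge}\P}{\trace(\cdots)}$; because $\P$ is built from the eigenvectors $\widetilde{\V}_r$ of $\overbar{\X^*}$, $\W^{\sharp\lozenge}$ has the structure $\bigl[\begin{smallmatrix}\real(\widetilde{\V}_r)&\im(\widetilde{\V}_r)\\ \im(\widetilde{\V}_r)&-\real(\widetilde{\V}_r)\end{smallmatrix}\bigr]\S\bigl[\begin{smallmatrix}\real(\widetilde{\V}_r)&\im(\widetilde{\V}_r)\\ \im(\widetilde{\V}_r)&-\real(\widetilde{\V}_r)\end{smallmatrix}\bigr]^\top$ for some $\S\in\mathbb{S}^{2r}$, so \cref{lemma:technicalLemma3QG} applies and bounds $\Vert\W^{\sharp\lozenge} - {{\overbar{\X^*}}^{\sharp}}^{\lozenge}\Vert_F \le \sigma_{\min}(\mcP_V)^{-1}\Vert\mcP(\W^{\sharp\lozenge}) - \mcP({{\overbar{\X^*}}^{\sharp}}^{\lozenge})\Vert_2$, with $\sigma_{\min}(\mcP_V)>0$.

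Then I would chain the two bounds. Writing $\mW$ for the tensor whose dilation corresponds to $\W^{\sharp\lozenge}$, I use $\Vert\mX-\mX^*\Vert_F^2 \le 2\Vert\mW-\mX\Vert_F^2 + 2\Vert\mW-\mX^*\Vert_F^2$, and translate Frobenius norms between tensors, their block-diagonal Fourier matrices, and their dilations via \cref{lemma:orderPinnerProductEquivalence} (picking up factors of $N$, which is where the $N$-dependence in $\gamma$ enters). The first term $\Vert\mW-\mX\Vert_F^2$ is controlled, up to constants, by $\langle {\barX^{\sharp}}^{\lozenge} - {{\overbar{\X^*}}^{\sharp}}^{\lozenge},\M\rangle$, which translates back to $\langle\mX-\mX^*,\nabla f(\mX^*)\rangle$ times a constant involving $\delta^{-1}$. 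The second term $\Vert\mW-\mX^*\Vert_F^2$ is bounded using \cref{lemma:technicalLemma3QG}: since $\mcP$ contains the block $\mcB$ which corresponds to the linear map $\mA$, $\Vert\mcP(\W^{\sharp\lozenge})-\mcP({{\overbar{\X^*}}^{\sharp}}^{\lozenge})\Vert_2$ is bounded by $\sigma_{\max}(\mcP)$ times the other differences plus a term of the form $\Vert\mA(\mX)-\mA(\mX^*)\Vert_2$ (all the non-$\mcB$ components of $\mcP$ vanish or are small because $\mW^{\sharp\lozenge}$ and ${{\overbar{\X^*}}^{\sharp}}^{\lozenge}$ are both feasible up to the trace/compression scaling). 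Combining everything gives
$$\Vert\mX-\mX^*\Vert_F^2 \le \frac{1}{\gamma}\Bigl(\langle\nabla f(\mX^*),\mX-\mX^*\rangle + \tfrac{\alpha}{2}\Vert\mA(\mX)-\mA(\mX^*)\Vert_2^2\Bigr)$$
with $\gamma$ the minimum displayed in \cref{remark:QGparameter}. Finally, since $f(\mX)=g(\mA\mX)+\langle\mC,\mX\rangle$ with $g$ $\alpha$-strongly convex, standard strong-convexity of $g$ along the segment gives $g(\mA\mX)-g(\mA\mX^*) \ge \langle\nabla g(\mA\mX^*),\mA\mX-\mA\mX^*\rangle + \tfrac{\alpha}{2}\Vert\mA\mX-\mA\mX^*\Vert_2^2$, and adding $\langle\mC,\mX-\mX^*\rangle$ produces exactly the right-hand side above, so $f(\mX)-f(\mX^*)\ge\gamma\Vert\mX-\mX^*\Vert_F^2$.

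\textbf{Main obstacle.} The hard part will be the careful bookkeeping in the reduction: making sure the trace-normalization constant ($\xi$, and the rescaling to trace $2N$) is handled consistently so that the compressed matrix $\W^{\sharp\lozenge}$ genuinely has the $\widetilde{\V}_r$-structure required by \cref{lemma:technicalLemma3QG}, and that when translating $\Vert\mcP(\W^{\sharp\lozenge})-\mcP({{\overbar{\X^*}}^{\sharp}}^{\lozenge})\Vert_2$ back to the tensor world, only the $\mcB$ (i.e. $\mA$) component survives with a clean bound while the $\trace$, $\mcQ$, and $\mathcal{E}$ components are either exactly zero or absorbed. The conjugate-complex symmetry constraints encoded in $\mcQ$ are the genuinely new difficulty relative to the matrix case of \cite{ding2020kfw}: one must check that $\W^{\sharp\lozenge}$ and ${{\overbar{\X^*}}^{\sharp}}^{\lozenge}$ both satisfy $\mcQ(\cdot)=\mathbf 0$ so that this component contributes nothing, which relies on the fact that $\overbar{\X^*}\in\mcM$ and that the compression by $\P$ preserves membership in $\mcM$.
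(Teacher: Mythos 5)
Your proposal reproduces the paper's argument: you reduce to the real spectrahedron problem via \cref{lemma:QGequivalenceOfProblems}, apply \cref{lemma:spectrahedronLemmaQuadraticGrowth} with $\M$ the dilated gradient (using \cref{lemma:svd_opt_grad_d_dim}) to control $\Vert\mW-\mX\Vert_F^2$, apply \cref{lemma:technicalLemma3QG} to control $\Vert\mW-\mX^*\Vert_F^2$, chain via the triangle inequality, translate norms back through \cref{lemma:orderPinnerProductEquivalence}, and close with strong convexity of $g$ --- this is exactly the paper's route. The only cosmetic difference is that you apply the spectrahedron lemma to the real-ified $\lozenge$-matrices (trace $2N$) rather than to the Hermitian $\sharp$-matrices directly (trace $N$) as the paper does, which is an inconsequential bookkeeping choice.
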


\begin{proof}

Let $\mX\in\lbrace\mX\in\reals^{n_1\times\cdots\times n_d}\ \vert\ \Vert\mX\Vert_*\le1\rbrace$, and denote $\barX^{\sharp}=\bdiag(\overbar{\mX})^{\sharp}$. 
Let $\widetilde{\V}_r$ be as defined in \eqref{eq:defVrtilde}.
Denote $\M:=\frac{ N\widetilde{\V}_r^{\tc}\barX^{\sharp}\widetilde{\V}_r}{\trace(\P\barX^{\sharp}\P)}\in\mathbb{H}^{r}$ and $\W:=\frac{ N\P\barX^{\sharp}\P}{\trace(\P\barX^{\sharp}\P)}=\widetilde{\V}_r\M\widetilde{\V}_r^{\tc}$, where $\P=\widetilde{\V}_r\widetilde{\V}_r^{\tc}$. It can be seen that
\begin{align*}
\W^{\lozenge}& = \left[\begin{array}{cc} \real(\W) & -\im(\W)
\\ \im(\W) & \real(\W) \end{array}\right]
\\ & =\left[\begin{array}{cc} \real(\widetilde{\V}_r) & \im(\widetilde{\V}_r)
\\ \im(\widetilde{\V}_r) & -\real(\widetilde{\V}_r) \end{array}\right]
\left[\begin{array}{cc} \real(\M) & \im(\M)
\\ -\im(\M) & \real(\M) \end{array}\right]
\left[\begin{array}{cc} \real(\widetilde{\V}_r) & \im(\widetilde{\V}_r)
\\ \im(\widetilde{\V}_r) & -\real(\widetilde{\V}_r) \end{array}\right]^{\top}.
\end{align*}
Therefore, by \cref{lemma:technicalLemma3QG} it holds that 
\begin{align} \label{ineq:inProof979}
\Vert\W^{\lozenge}-{{\overbar{\X^*}}^{\sharp}}^{\lozenge}\Vert_F
& \le \frac{1}{\sigma_{\min}(\mcP_{V})}\Vert\mcP(\W^{\lozenge})-\mcP({{\overbar{\X^*}}^{\sharp}}^{\lozenge})\Vert_2,
\end{align}
where $\sigma_{\min}(\mcP_{V}):= \min_{\Vert\S\Vert_F=1}\Vert\mcP_{V}(\S)\Vert_2>0$.

Therefore, for any $\mX\in\lbrace\mX\in\reals^{n_1\times\cdots\times n_d}\ \vert\ \Vert\mX\Vert_*\le1\rbrace$ we have that
\begin{align} \label{ineq:inProof9999}
2\Vert\W-{{\overbar{\X^*}}^{\sharp}}\Vert_F^2 & \underset{(a)}{=} \Vert\W^{\lozenge}-{{\overbar{\X^*}}^{\sharp}}^{\lozenge}\Vert_F^2  \underset{(b)}{\le} \frac{1}{\sigma_{\min}^2(\mcP_{V})}\Vert\mcP(\W^{\lozenge})-\mcP({{\overbar{\X^*}}^{\sharp}}^{\lozenge})\Vert_2^2 \nonumber
\\ & \le \frac{1}{\sigma_{\min}^2(\mcP_{V})} \left(\Vert\mcP(\W^{\lozenge})-\mcP({\barX^{\sharp}}^{\lozenge})\Vert_2+\Vert\mcP({\barX^{\sharp}}^{\lozenge})-\mcP({{\overbar{\X^*}}^{\sharp}}^{\lozenge})\Vert_2\right)^2 \nonumber
\\ & \le 2\frac{\sigma_{\max}^2(\mcP)}{\sigma_{\min}^2(\mcP_{V})} \Vert\W^{\lozenge}-{\barX^{\sharp}}^{\lozenge}\Vert_F^2+ \frac{2}{\sigma_{\min}^2(\mcP_{V})}\Vert\mcP({\barX^{\sharp}}^{\lozenge})-\mcP({{\overbar{\X^*}}^{\sharp}}^{\lozenge})\Vert_2^2 \nonumber
\\ & \underset{(c)}{=} 2\frac{\sigma_{\max}^2(\mcP)}{\sigma_{\min}^2(\mcP_{V})} \Vert\W^{\lozenge}-{\barX^{\sharp}}^{\lozenge}\Vert_F^2+ \frac{2}{\sigma_{\min}^2(\mcP_{V})}\Vert\mcB({\barX^{\sharp}}^{\lozenge})-\mcB({\overbar{\X^*}^{\sharp}}^{\lozenge})\Vert_2^2 \nonumber
\\ & \underset{(d)}{=} 4\frac{\sigma_{\max}^2(\mcP)}{\sigma_{\min}^2(\mcP_{V})} \Vert\W-\barX^{\sharp}\Vert_F^2+ \frac{2}{\sigma_{\min}^2(\mcP_{V})}\Vert\mA(\mX)-\mA(\mX^*)\Vert_2^2,
\end{align}
where $\sigma_{\max}(\mcP):=\max_{\Vert\S\Vert_F=1}\Vert\mcP(\S)\Vert_2$. Here (a) and (d) follow since as we saw in \eqref{eq:InProofQGinnerProductEquivalence}, for any $\widetilde{\A},\widetilde{\B}\in\mathbb{H}^{(n_1+n_2) N}$ it holds that $\langle\widetilde{\A}^{\lozenge},\widetilde{\B}^{\lozenge}\rangle=2\langle\widetilde{\A},\widetilde{\B}\rangle$, which implies that  $\Vert\widetilde{\A}^{\lozenge}\Vert_F^2= 2\Vert\widetilde{\A}\Vert_F^2$. (b) follows from \eqref{ineq:inProof979}, (c) holds since $\trace({\barX^{\sharp}}^{\lozenge})=\trace({\overbar{\X^*}^{\sharp}}^{\lozenge})$ and $\mcQ({\barX^{\sharp}}^{\lozenge})=\mcQ({\overbar{\X^*}^{\sharp}}^{\lozenge})$, and (d) also follows since $\ifft(\bdiag^{-1}({{\overbar{\X^*}^{\sharp}}_{\triangledown}^{\lozenge}}_{\flat}))=\mX^*$.

Therefore, we obtain that
\begin{align} \label{ineq:inProof996}
\Vert\barX^{\sharp}-{\overbar{\X^*}}^{\sharp}\Vert_F^2 & \le 2\Vert\barX^{\sharp}-\W\Vert_F^2 + 2\Vert\W-{\overbar{\X^*}}^{\sharp}\Vert_F^2 \nonumber
\\ & \underset{(a)}{\le} \left(2+\frac{4\sigma_{\max}^2(\mcP)}{\sigma_{\min}^2(\mcP_{V})}\right)\Vert\barX^{\sharp}-\W\Vert_F^2 + \frac{2}{\sigma_{\min}
^2(\mcP_{V})}\Vert\mA(\mX)-\mA(\mX^*)\Vert_2^2 \nonumber
\\ & \underset{(b)}{\le} \frac{4 N}{\delta}
\left(1+\frac{2\sigma_{\max}^2(\mcP)}{\sigma_{\min}^2(\mcP_{V})}\right)\langle\barX^{\sharp}-{\overbar{\X^*}}^{\sharp}, \widetilde{\boldsymbol{\nabla}}\rangle + \frac{2}{\sigma_{\min}^2(\mcP_{V})}\Vert\mA(\mX)-\mA(\mX^*)\Vert_2^2,
\end{align}
where (a) follows from plugging-in \eqref{ineq:inProof9999}, and (b) follows by invoking \cref{lemma:spectrahedronLemmaQuadraticGrowth} and letting $\delta$ denote the complementarity measure of $\mX^*$.

It can be seen that 
\begin{align} \label{ineq:inProof998}
 N\Vert\mX-\mX^*\Vert_F^2 \underset{(a)}{=} \Vert\barX-\overbar{\X^*}\Vert_F^2=\Vert\barX^{\sharp}_{\flat}-{\overbar{\X^*}}^{\sharp}_{\flat}\Vert_F^2\le2\Vert\barX^{\sharp}-{\overbar{\X^*}}^{\sharp}\Vert_F^2,
\end{align}
and 
\begin{align} \label{ineq:inProof997}
\langle\barX^{\sharp}-{\overbar{\X^*}}^{\sharp}, \widetilde{\boldsymbol{\nabla}}  \rangle = \langle\barX^{\sharp}_{\flat}-{\overbar{\X^*}}^{\sharp}_{\flat}, \overbar{\nabla{}f(\mX^*)}\rangle 
= \langle\barX-\overbar{\X^*}, \overbar{\nabla{}f(\mX^*)}\rangle
\underset{(b)}{=}  N\langle\mX-\mX^*,\nabla{}f(\mX^*)\rangle,
\end{align}
where both (a) and (b) follow from \cref{lemma:orderPinnerProductEquivalence}.

Plugging-in \eqref{ineq:inProof998} into the LHS of \eqref{ineq:inProof996} and \eqref{ineq:inProof997} into the RHS of 
\eqref{ineq:inProof996}, we obtain that
\begin{align} \label{ineq:inProof995}
& \Vert\mX-\mX^*\Vert_F^2 \nonumber
\\ & \le \frac{8 N}{\delta}
\left(1+\frac{2\sigma_{\max}^2(\mcP)}{\sigma_{\min}^2(\mcP_{V})}\right)\langle\mX-\mX^*, \nabla{}f(\mX^*)\rangle + \frac{4}{\sigma_{\min}^2(\mcP_{V}) N}\Vert\mA(\mX)-\mA(\mX^*)\Vert_2^2 \nonumber
\\ & \le \max\left\lbrace\frac{8 N}{\delta}
\left(1+\frac{2\sigma_{\max}^2(\mcP)}{\sigma_{\min}^2(\mcP_{V})}\right), \frac{8}{\sigma_{\min}^2(\mcP_{V}) N\alpha}\right\rbrace \nonumber
\\ & \ \ \ \left(\langle\nabla{}f(\mX^*),\mX-\mX^*\rangle + \frac{\alpha}{2}\Vert\mA(\mX)-\mA(\mX^*)\Vert_2^2\right).
\end{align}
Therefore, we have that
\begin{align*}
f(\mX)-f(\mX^*) & = g(\mA(\mX))-g(\mA(\mX^*))+\langle\mC,\mX-\mX^*\rangle
\\ & \underset{(a)}{\ge} \langle\nabla{}g(\mA(\mX^*)),\mA(\mX-\mX^*)\rangle + \frac{\alpha}{2}\Vert\mA(\mX)-\mA(\mX^*)\Vert_2^2+\langle\mC,\mX-\mX^*\rangle
\\ & = \langle\mA^{\top}\nabla{}g(\mA\mX^*)+\mC,\mX-\mX^*\rangle + \frac{\alpha}{2}\Vert\mA(\mX)-\mA(\mX^*)\Vert_2^2
\\ & = \langle\nabla{}f(\mX^*),\mX-\mX^*\rangle + \frac{\alpha}{2}\Vert\mA(\mX)-\mA(\mX^*)\Vert_2^2
\\ & \underset{(b)}{\ge} \min\left\lbrace\frac{\delta}{8 N}
\left(1+\frac{2\sigma_{\max}^2(\mcP)}{\sigma_{\min}^2(\mcP_{V})}\right)^{-1}, \frac{\sigma_{\min}^2(\mcP_{V}) N\alpha}{8}\right\rbrace\Vert\mX-\mX^*\Vert_F^2,
\end{align*}
where (a) follows from the strong convexity of $g$, and (b) follows from \eqref{ineq:inProof995}.
\end{proof}

\subsection{Proof of \cref{lemma:iffLowRankfCondiotionOrderP}}
\label{sec:App:proofLemma122}

\begin{lemma} 
Let $\mX\in\reals^{n_1\times\cdots\times n_d}$ and let $\mX=\mU*\mS*\mV^{\top}$ denote its t-SVD. For every $j\in\{1,\dots,\min\{n_1,n_2\}\}$, denote $\sigma_j^{\max}(\mX) = \max_{k_3\in[n_3],\ldots,k_d\in[n_d]}\sigma_{j}(\barX^{(k_3,\ldots,k_d)})$. Also, for every $j\in\{1,\dots,\min\{n_1,n_2\}\}$ and $i_3\in[n_3],\ldots,i_d\in[n_d]$, denote $\#\sigma_{>{}j}^{(i_3,\dots,i_d)}(\mX) =\#\left\lbrace i\ \bigg\vert\ \sigma_i(\barX^{(i_3,\ldots,i_d)})>\sigma_{j}^{\max}(\barX)\right\rbrace\le j-1$. Let $\min\lbrace n_1,n_2\rbrace>r\ge0$.
Then, $\rank_{\textnormal{t}}(\Pi_{\lbrace\Vert\mY\Vert_*\le\tau\rbrace}[\mX])\le r$ if and only if
\begin{align*} 
\frac{1}{N}\sum_{i_d=1}^{n_d}\cdots\sum_{i_3=1}^{n_3}\left(\sum_{i=1}^{\#\sigma_{>r+1}^{(i_3,\dots,i_d)}(\mX)}\sigma_{i}(\barX^{(i_3,\ldots,i_d)})-\#\sigma_{>{}r+1}^{(i_3,\dots,i_d)}(\mX) \cdot \sigma_{r+1}^{\max}(\barX)\right)\ge\tau.
\end{align*}
\end{lemma}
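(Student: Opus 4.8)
The plan is to reduce the tensor projection problem to a matrix projection problem via the block-diagonalization in the Fourier domain, and then characterize the rank of the projection in terms of a thresholding condition on the singular values.

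First I would recall from \cref{lemma:projectionOntoTNN} that $\Pi_{\lbrace\Vert\mY\Vert_*\le\tau\rbrace}[\mX]$ is obtained by computing the t-SVD of $\mX$ (equivalently, the SVDs of all frontal slices $\barX^{(i_3,\ldots,i_d)}$ of $\barX = \bdiag(\overbar{\mX})$), finding the unique threshold $\sigma\ge0$ such that
\begin{align*}
\frac{1}{N}\sum_{i_d=1}^{n_d}\cdots\sum_{i_3=1}^{n_3}\sum_{i=1}^{\rank(\barX^{(i_3,\ldots,i_d)})} \max\lbrace0,\sigma_i(\barX^{(i_3,\ldots,i_d)})-\sigma\rbrace = \tau,
\end{align*}
and then soft-thresholding all singular values by $\sigma$. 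Since $\rank_{\textnormal{t}}$ of the projection equals the maximal rank over frontal slices, and soft-thresholding by $\sigma$ kills exactly the singular values $\le\sigma$, we have $\rank_{\textnormal{t}}(\Pi_{\lbrace\Vert\mY\Vert_*\le\tau\rbrace}[\mX])\le r$ if and only if the threshold $\sigma$ associated with radius $\tau$ satisfies $\sigma\ge\sigma_{r+1}^{\max}(\barX)$, i.e., in every frontal slice the $(r+1)$-th singular value is thresholded away.

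Next I would make this equivalence quantitative. Define $\Phi(\sigma) := \frac{1}{N}\sum_{i_3,\ldots,i_d}\sum_{i}\max\lbrace0,\sigma_i(\barX^{(i_3,\ldots,i_d)})-\sigma\rbrace$, which is continuous, convex, nonincreasing in $\sigma$, and strictly decreasing on the relevant range; the projection threshold is the unique $\sigma$ with $\Phi(\sigma)=\tau$. Hence $\rank_{\textnormal{t}}(\Pi[\mX])\le r$ iff $\sigma\ge\sigma_{r+1}^{\max}(\barX)$ iff $\Phi(\sigma_{r+1}^{\max}(\barX))\le\tau$ (using monotonicity of $\Phi$). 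It remains to compute $\Phi(\sigma_{r+1}^{\max}(\barX))$ explicitly: for each frontal slice, $\sum_i \max\lbrace0,\sigma_i(\barX^{(i_3,\ldots,i_d)})-\sigma_{r+1}^{\max}(\barX)\rbrace = \sum_{i : \sigma_i(\barX^{(i_3,\ldots,i_d)}) > \sigma_{r+1}^{\max}(\barX)}\bigl(\sigma_i(\barX^{(i_3,\ldots,i_d)})-\sigma_{r+1}^{\max}(\barX)\bigr)$, and the number of such indices $i$ is precisely $\#\sigma_{>r+1}^{(i_3,\dots,i_d)}(\mX)$ — which is $\le r$ by definition of $\sigma_{r+1}^{\max}$ as a maximum over slices (any slice can have at most $r$ singular values strictly exceeding the maximal $(r+1)$-th singular value, since within the slice the singular values are ordered and $\sigma_{r+1}(\barX^{(i_3,\ldots,i_d)})\le\sigma_{r+1}^{\max}(\barX)$). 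Substituting gives exactly
\begin{align*}
\Phi(\sigma_{r+1}^{\max}(\barX)) = \frac{1}{N}\sum_{i_d=1}^{n_d}\cdots\sum_{i_3=1}^{n_3}\left(\sum_{i=1}^{\#\sigma_{>r+1}^{(i_3,\dots,i_d)}(\mX)}\sigma_{i}(\barX^{(i_3,\ldots,i_d)})-\#\sigma_{>r+1}^{(i_3,\dots,i_d)}(\mX) \cdot \sigma_{r+1}^{\max}(\barX)\right),
\end{align*}
and the condition $\Phi(\sigma_{r+1}^{\max}(\barX))\le\tau$ is, after flipping the inequality's presentation, the stated displayed inequality with $\ge\tau$. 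Wait — I need to be careful with the direction: $\rank_{\textnormal{t}}(\Pi[\mX])\le r$ iff $\sigma\ge\sigma_{r+1}^{\max}$ iff (by $\Phi$ nonincreasing) $\tau = \Phi(\sigma)\le\Phi(\sigma_{r+1}^{\max})$, i.e., $\Phi(\sigma_{r+1}^{\max})\ge\tau$, which matches the claim.

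The main obstacle I anticipate is handling the edge cases and the monotonicity argument cleanly: namely (i) the case $\tau$ large enough that the projection equals $\mX$ itself, where the threshold is $\sigma=0$ and one must check the inequality still behaves correctly; (ii) ensuring uniqueness of the threshold $\sigma$ and that $\Phi$ is strictly decreasing precisely where needed (it is constant at $0$ for $\sigma\ge\sigma_1^{\max}$), so that the "iff" between $\sigma\ge\sigma_{r+1}^{\max}$ and $\Phi(\sigma_{r+1}^{\max})\ge\tau$ is genuinely bidirectional rather than one-directional; and (iii) the bookkeeping that $\#\sigma_{>r+1}^{(i_3,\dots,i_d)}(\mX)\le r$ so that the inner summation index is well-defined and the thresholded quantity in each slice is correctly truncated. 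I would dispatch (i) and (ii) by noting that whenever $\tau < \Vert\mX\Vert_*$ the threshold is strictly positive and $\Phi$ is strictly decreasing on $[0,\sigma_1^{\max}]$ so the equivalence is exact, and whenever $\tau\ge\Vert\mX\Vert_*$ the projection is $\mX$, $\rank_{\textnormal{t}}(\Pi[\mX])=\rank_{\textnormal{t}}(\mX)$, and a direct check against the formula (with $\sigma=0$) confirms consistency. With these cases resolved the proof is essentially the explicit computation above.
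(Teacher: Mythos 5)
Your approach is essentially the same as the paper's: both reduce the statement to the equivalence $\rank_{\textnormal{t}}(\Pi_{\lbrace\Vert\mY\Vert_*\le\tau\rbrace}[\mX])\le r \Leftrightarrow \sigma\ge\sigma_{r+1}^{\max}(\barX)$, where $\sigma$ is the soft-thresholding level produced by the projection (Lemma~\ref{lemma:projectionOntoTNN}), and then convert that threshold comparison into the displayed inequality by evaluating the residual nuclear-norm mass at $\sigma_{r+1}^{\max}(\barX)$; your monotone-function framing via $\Phi$ packages the paper's two directions a bit more uniformly, but the underlying computation is identical.

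One correction to your handling of edge case (i): for $\tau>\Vert\mX\Vert_*$ the ``direct check'' does \emph{not} confirm consistency. In that regime $\Pi_{\lbrace\Vert\mY\Vert_*\le\tau\rbrace}[\mX]=\mX$; if in addition $\rank_{\textnormal{t}}(\mX)\le r$ then $\sigma_{r+1}^{\max}(\barX)=0$, so $\#\sigma_{>r+1}^{(i_3,\dots,i_d)}(\mX)$ counts all nonzero singular values of each slice and the left-hand side of the displayed inequality evaluates to exactly $\Vert\mX\Vert_*$, which is strictly less than $\tau$ --- the inequality fails even though $\rank_{\textnormal{t}}(\Pi[\mX])\le r$ holds. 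The lemma therefore implicitly carries the hypothesis $\tau\le\Vert\mX\Vert_*$ (the paper's own proof makes this assumption silently when it posits the existence of a threshold $\sigma\ge 0$ achieving value $\tau$); that is harmless in the contexts where the lemma is applied, but it is a genuine hypothesis, not something confirmed by setting $\sigma=0$ and checking.
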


\begin{proof}
Invoking \cref{lemma:projectionOntoTNN}, we know that the projection of $\mX$ onto the TNN ball of radius $\tau$ must satisfy  $$\frac{1}{ N}\sum_{i_d=1}^{n_d}\cdots\sum_{i_3=1}^{n_3}\sum_{i=1}^{\min\lbrace n_1,n_2\rbrace} \max\lbrace0,\sigma_i(\barX^{(i_3,\ldots,i_d)})-\sigma\rbrace=\tau,$$ for some $\sigma\ge0$.
Therefore, 
\begin{align*}
& 0  = \frac{1}{ N}\sum_{i_d=1}^{n_d}\cdots\sum_{i_3=1}^{n_3}\sum_{i=1}^{\min\lbrace n_1,n_2\rbrace} \max\lbrace0,\sigma_i(\barX^{(i_3,\ldots,i_d)})-\sigma\rbrace-\tau
\\ & \ge \frac{1}{ N}\sum_{i_d=1}^{n_d}\cdots\sum_{i_3=1}^{n_3}\sum_{i=1}^{\#\sigma_{>r+1}^{(i_3,\dots,i_d)}(\mX)} (\sigma_i(\barX^{(i_3,\ldots,i_d)})-\sigma)-\tau
\\ & \ge \left(\frac{1}{ N}\sum_{i_d=1}^{n_d}\cdots\sum_{i_3=1}^{n_3}\#\sigma_{>{}r+1}^{(i_3,\dots,i_d)}(\mX)\right)\sigma_{r+1}^{\max}(\barX)-\left(\frac{1}{ N}\sum_{i_d=1}^{n_d}\cdots\sum_{i_3=1}^{n_3}\#\sigma_{>{}r+1}^{(i_3,\dots,i_d)}(\mX)\right)\sigma,
\end{align*}
which implies that
$\max_{k_3\in[n_3],\ldots,k_d\in[n_d]}\sigma_{r+1}(\barX^{(k_3,\ldots,k_d)})\le \sigma$,
and so, $\rank_{\textnormal{t}}(\Pi_{\lbrace\Vert\mY\Vert_*\le\tau\rbrace}[\mX]) \le r$.

On the other hand, if $\rank_{\textnormal{t}}(\Pi_{\lbrace\Vert\mY\Vert_*\le\tau\rbrace}[\mX]) \le r$, then for all $i_3\in[n_3],\ldots,i_d\in[n_d]$, there exist $r_{i_3,\ldots,i_d}'\le r$ and $\tau_{i_3,\ldots,i_d}\ge0$ such that $\frac{1}{N}\sum_{i_d=1}^{n_d}\cdots\sum_{i_3=1}^{n_3}\tau_{i_3,\ldots,i_d}=\tau$, for which it holds that $\rank(\Pi_{\lbrace\Vert\Y\Vert_*\le\tau_{i_3,\ldots,i_d}\rbrace}[\barX^{(i_3,\ldots,i_d)}]) = r_{i_3,\ldots,i_d}' \le \#\sigma_{>{}r+1}^{(i_3,\dots,i_d)}(\mX) \le r$. 
In addition, it implies that for all $i_3\in[n_3],\ldots,i_d\in[n_d]$, it holds that $\sum_{i=1}^{r_{i_3,\ldots,i_d}'}(\sigma_i(\barX^{(i_3,\ldots,i_d)})-\sigma)=\tau_{i_3,\ldots,i_d}$ and $\sigma_{r+1}^{\max}(\barX)\le \sigma$. 
Therefore, for all $i_3\in[n_3],\ldots,i_d\in[n_d]$, we have that
\begin{align*}
\tau_{i_3,\ldots,i_d} & = \sum_{i=1}^{r_{i_3,\ldots,i_d}'}\sigma_{i}(\barX^{(i_3,\ldots,i_d)})-r_{i_3,\ldots,i_d}'\sigma
\le \sum_{i=1}^{r_{i_3,\ldots,i_d}'}\sigma_{i}(\barX^{(i_3,\ldots,i_d)})-r_{i_3,\ldots,i_d}'\sigma_{r+1}^{\max}(\barX)
\\ & \le \sum_{i=1}^{\#\sigma_{>r+1}^{(i_3,\dots,i_d)}(\mX)}\sigma_{i}(\barX^{(i_3,\ldots,i_d)})-\#\sigma_{>{}r+1}^{(i_3,\dots,i_d)}(\mX)\cdot\sigma_{r+1}^{\max}(\barX).
\end{align*}
Averaging over all $i_3\in[n_3],\ldots,i_d\in[n_d]$, we obtain that
\begin{align*}
\frac{1}{ N}\sum_{i_d=1}^{n_d}\cdots\sum_{i_3=1}^{n_3}\left(\sum_{i=1}^{\#\sigma_{>r+1}^{(i_3,\dots,i_d)}(\mX)}\sigma_i(\barX^{({i_3,\ldots,i_d})}) - \#\sigma_{>{}r+1}^{(i_3,\dots,i_d)}(\mX)\cdot\sigma_{r+1}^{\max}(\barX)\right)\ge\tau.
\end{align*}

\end{proof}

\subsection{Proof of \cref{lemma:RadiusForProjectedGradient_d_dim}}
\label{sec:App:proofTheoremSmoothRadius}

\begin{theorem}
Assume $\nabla{}f$ is non-zero over the unit TNN ball and fix some optimal solution $\mX^*$ to Problem \eqref{mainModel}. Denote $ {\#\sigma_1}^{\max}:=\max_{i_3\in[n_3],\ldots,i_d\in[n_d]}\#\sigma_1(\overbar{\nabla{}f(\mX^*)}^{(i_3,\ldots,i_d)})$ and assume ${\#\sigma_1}^{\max}<\min\lbrace n_1,n_2\rbrace$.
Then, for any $\eta\ge0$, $\min\lbrace n_1,n_2\rbrace> r\ge{\#\sigma_1}^{\max}$, and $\mX\in\reals^{n_1\times{}\cdots\times{}n_d}$, if 
\begin{align*} 
& \Vert\mX-\mX^*\Vert_F \le \nonumber
\\ &  \frac{\eta}{\sqrt{N}(1+\eta\beta)}\max\left\lbrace\frac{\delta(r)}{1+\frac{\sqrt{{\#\sigma_1}^{\max}\cdot\nnzb(\overbar{\nabla{}f(\mX^*)})}}{\#\sigma_1(\overbar{\nabla{}f(\mX^*)})}},\frac{\delta(r-{\#\sigma_1}^{\max}+1)}{\frac{1}{\sqrt{{\#\sigma_1}^{\max}}}+\frac{\sqrt{{\#\sigma_1}^{\max}\cdot\nnzb(\overbar{\nabla{}f(\mX^*)})}}{\#\sigma_1(\overbar{\nabla{}f(\mX^*)})}}\right\rbrace,
\end{align*}
or  
\begin{align*} 
\Vert\mX-\mX^*\Vert_2 \le \frac{\eta}{2(1+\eta\beta_2)}\delta(r),
\end{align*}
then $\rank_{\textnormal{t}}(\Pi_{\lbrace\Vert\mY\Vert_*\le1\rbrace}[\mX-\eta\nabla{}f(\mX)])\le r$.
\end{theorem}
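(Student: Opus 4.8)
The plan is to follow the template of the matrix analogue in \cite{garberNuclearNormMatrices}, working throughout in the Fourier domain where the tensor nuclear norm ball becomes (essentially) a matrix nuclear norm ball acting blockwise. First I would reduce the statement to a verifiable spectral condition: by \cref{lemma:iffLowRankfCondiotionOrderP}, writing $\mP=\mX-\eta\nabla{}f(\mX)$ and $\barP=\bdiag(\overbar{\mP})$, the conclusion $\rank_{\textnormal{t}}(\Pi_{\lbrace\Vert\mY\Vert_*\le1\rbrace}[\mP])\le r$ is equivalent to
\begin{align*}
\frac{1}{N}\sum_{i_d=1}^{n_d}\cdots\sum_{i_3=1}^{n_3}\left(\sum_{i=1}^{\#\sigma_{>r+1}^{(i_3,\dots,i_d)}(\mP)}\sigma_{i}(\barP^{(i_3,\ldots,i_d)})-\#\sigma_{>r+1}^{(i_3,\dots,i_d)}(\mP)\cdot\sigma_{r+1}^{\max}(\barP)\right)\ge 1,
\end{align*}
so it suffices to establish this inequality under either of the two radius hypotheses.

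Next I would carry out the argument exactly as in the proof sketch: set $\mP^* = \mX^*-\eta\nabla{}f(\mX^*)$ and use \cref{lemma:svd_opt_grad_d_dim} to get the blockwise singular value identities in \eqref{eq:svOpt_d_dim1_sketch} (the top $\rank(\overbar{\X^*}^{(i_3,\dots,i_d)})$ singular values of each slice of $\mP^*$ are $\sigma_i(\overbar{\X^*}^{(i_3,\dots,i_d)})+\eta\sigma_1(\overbar{\nabla{}f(\mX^*)})$, and the remaining ones are $\eta\sigma_i(\overbar{\nabla{}f(\mX^*)}^{(i_3,\dots,i_d)})$). Plugging these into the left-hand side above with $\mP$ replaced by $\mP^*$, and using $r\ge{\#\sigma_1}^{\max}$ together with $\nabla{}f(\mX^*)\ne 0$ (so $\Vert\mX^*\Vert_*=1$), I get that $\mP^*$ satisfies the inequality with a strictly positive slack equal to $\frac{\eta\delta(r)}{N}\sum \#\sigma_{>r+1}^{(i_3,\dots,i_d)}(\mP^*)$, where the sum is at least $1$ because the slice achieving $\sigma_1(\overbar{\P^*})$ contributes at least one such index (this is the computation in \eqref{eq:PositiveSlackAtOpt}). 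Then the core estimate: for general $\mX$ near $\mX^*$, bound $\Vert\barP-\overbar{\P^*}\Vert_F\le\sqrt{N}(1+\eta\beta)\Vert\mX-\mX^*\Vert_F$ via \cref{lemma:orderPinnerProductEquivalence} and $\beta$-smoothness, and use Ky Fan / Weyl perturbation inequalities on the singular values of the slices to show the left-hand side for $\mP$ differs from that for $\mP^*$ by at most a term controlled by $\Vert\barP-\overbar{\P^*}\Vert_F$ times the appropriate combinatorial factor (the $\sqrt{{\#\sigma_1}^{\max}\cdot\nnzb(\cdot)}/\#\sigma_1(\cdot)$ and the $1/\sqrt{{\#\sigma_1}^{\max}}$ that appear in the denominators of \eqref{ineq:FrobRadiusBound}). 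The two terms inside the $\max$ arise from two ways of invoking Weyl's inequality: directly at level $r+1$, versus using the generalized Weyl inequality with a shift of ${\#\sigma_1}^{\max}-1$, which produces $\delta(r-{\#\sigma_1}^{\max}+1)$ and the extra $\sqrt{{\#\sigma_1}^{\max}}$ improvement. Requiring the error term to be at most the slack $\eta\delta(r)$ (or $\eta\delta(r-{\#\sigma_1}^{\max}+1)$) gives precisely the Frobenius radius in \eqref{ineq:FrobRadiusBound}. For the spectral bound \eqref{ineq:spectralRadiusBound}, I would run the same argument using $\Vert\barP-\overbar{\P^*}\Vert_2\le(1+\eta\beta_2)\Vert\mX-\mX^*\Vert_2$ and the spectral-norm versions of Weyl's inequality, where no $\sqrt{N}$ factor is incurred because we only ever compare single singular values rather than sums.

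I expect the main obstacle to be bookkeeping the combinatorial quantities correctly: carefully tracking $\#\sigma_{>r+1}^{(i_3,\dots,i_d)}$, $\nnzb$, and ${\#\sigma_1}^{\max}$ across the averaging over frontal slices, and making sure the Cauchy--Schwarz step that converts a sum of the top-$k$ perturbed singular values over all $N$ slices into $\sqrt{{\#\sigma_1}^{\max}\cdot\nnzb}\cdot\Vert\barP-\overbar{\P^*}\Vert_F/N$ is tight enough to yield the claimed denominators. The rest — the singular value identities at the optimum, the smoothness bound on $\Vert\barP-\overbar{\P^*}\Vert_F$, and the reduction via \cref{lemma:iffLowRankfCondiotionOrderP} — is routine given the lemmas already established. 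Since this follows the matrix proof of \cite{garberNuclearNormMatrices} closely, modulo the extra factor of $1/\sqrt{N}$ coming from the normalization $\Vert\mX\Vert_F=N^{-1/2}\Vert\barX\Vert_F$ in \cref{lemma:orderPinnerProductEquivalence}, I would structure the write-up to mirror that reference and flag only the tensor-specific modifications.
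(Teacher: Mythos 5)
Your proposal matches the paper's proof in all essentials: the reduction via \cref{lemma:iffLowRankfCondiotionOrderP}, the singular-value identities at the optimum from \cref{lemma:svd_opt_grad_d_dim}, the positive-slack argument at $\mP^*$, the Ky Fan/Cauchy--Schwarz averaging step that produces the $\sqrt{{\#\sigma_1}^{\max}\cdot\nnzb(\cdot)}$ factor, the two flavors of Weyl's inequality giving the two terms inside the maximum, the $\sqrt{N}(1+\eta\beta)$ smoothness bound via \cref{lemma:orderPinnerProductEquivalence}, and the spectral-norm variant for \eqref{ineq:spectralRadiusBound}. This is the same route the paper takes, so no comparison is needed beyond noting that the paper's full write-up phrases the argument as a chain of averaged inequalities (\eqref{ineq:inProof3_d_dim} through \eqref{ineq:inProof5_d_dim}) rather than explicitly isolating the ``slack'' term, but the content is identical.
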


\begin{proof}
For all $i_3\in[n_3],\ldots,i_d\in[n_d]$ denote ${\overbar{\P^*}}^{(i_3,\ldots,i_d)}:= {\overbar{\X^*}}^{(i_3,\ldots,i_d)} - \eta\overbar{\nabla{}f(\mX^*)}^{(i_3,\ldots,i_d)}$. Invoking \cref{lemma:svd_opt_grad_d_dim} we have that
\begin{align} \label{eq:svOpt_d_dim}
\forall i\le\rank({\overbar{\X^*}}^{(i_3,\ldots,i_d)}):\quad & \sigma_i({\overbar{\P^*}}^{(i_3,\ldots,i_d)}) = \sigma_i({\overbar{\X^*}}^{(i_3,\ldots,i_d)}) + \eta \sigma_1(\overbar{\nabla{}f(\mX^*)}) \nonumber
\\ \forall i>\rank({\overbar{\X^*}}^{(i_3,\ldots,i_d)}):\quad & \sigma_i({\overbar{\P^*}}^{(i_3,\ldots,i_d)}) = \eta \sigma_i(\overbar{\nabla{}f(\mX^*)}^{(i_3,\ldots,i_d)}).
\end{align}

Fix some $i_3\in[n_3],\ldots,i_d\in[n_d]$. Let $\#\sigma_1^{(i_3,\dots,i_d)}$ denote the multiplicity of $\sigma_1(\overbar{\nabla{}f(\mX^*)}^{(i_3,\ldots,i_d)})$. From \eqref{eq:svOpt_d_dim} we have that
\begin{align} \label{ineq:inProof1_d_dim}
\sum_{i=1}^{\#\sigma_1^{(i_3,\dots,i_d)}}\sigma_i({\overbar{\P^*}}^{(i_3,\ldots,i_d)}) 
& = \sum_{i=1}^{\#\sigma_1^{(i_3,\dots,i_d)}} \sigma_i({\overbar{\X^*}}^{(i_3,\ldots,i_d)} - \eta\overbar{\nabla{}f(\mX^*)}^{(i_3,\ldots,i_d)})  \nonumber
\\ & = \sum_{i=1}^{\#\sigma_1^{(i_3,\dots,i_d)}} \sigma_i({\overbar{\X^*}}^{(i_3,\ldots,i_d)}) + \eta\sum_{i=1}^{\#\sigma_1^{(i_3,\dots,i_d)}}\sigma_i(\overbar{\nabla{}f(\mX^*)}^{(i_3,\ldots,i_d)}) \nonumber
\\ & = \sum_{i=1}^{\rank({\overbar{\X^*}}^{(i_3,\ldots,i_d)})} \sigma_i({\overbar{\X^*}}^{(i_3,\ldots,i_d)}) + \eta\sum_{i=1}^{\#\sigma_1^{(i_3,\dots,i_d)}}\sigma_i(\overbar{\nabla{}f(\mX^*)}^{(i_3,\ldots,i_d)}) \nonumber
\\ &  = \Vert{\overbar{\X^*}}^{(i_3,\ldots,i_d)}\Vert_* + \eta\cdot\#\sigma_1^{(i_3,\dots,i_d)}\sigma_1(\overbar{\nabla{}f(\mX^*)}).
\end{align}

Let $\mP\in\reals^{n_1\times{}\cdots\times{}n_d}$. From \cref{lemma:iffLowRankfCondiotionOrderP}, it follows that a sufficient condition so that 
$\rank_{\textnormal{t}}(\Pi_{\lbrace\Vert\mX\Vert_*\le1\rbrace}[\mP]) \le r$ is that 
\begin{align} \label{ineq:inProof13_d_dim}
\frac{1}{N}\sum_{i_d=1}^{n_d}\cdots\sum_{i_3=1}^{n_3}\left(\sum_{i=1}^{\#\sigma_{>r+1}^{(i_3,\dots,i_d)}(\mP)}\sigma_{i}(\barP^{(i_3,\ldots,i_d)})-\#\sigma_{>r+1}^{(i_3,\dots,i_d)}(\mP) \cdot \sigma_{r+1}^{\max}(\barP)\right)\ge1,
\end{align}
where $\barP=\bdiag(\overbar{\mP})$ and for every $j\in\{1,\dots,\min\{n_1,n_2\}\}$ we denote $\sigma_{j}^{\max}(\barP) = \max_{k_3\in[n_3],\ldots,k_d\in[n_d]}\sigma_{j}(\barP^{(k_3,\ldots,k_d)})$ and $\#\sigma_{>{}(j)}^{(i_3,\dots,i_d)}(\mP) =\#\left\lbrace i\ \bigg\vert\ \sigma_i(\barP^{(i_3,\ldots,i_d)})>\sigma_{j}^{\max}(\barP)\right\rbrace$. We will lower-bound the LHS of  \eqref{ineq:inProof13_d_dim}.

Fix $i_3\in[n_3],\ldots,i_d\in[n_d]$. We first show that
\begin{align} \label{ineq:inProof2_d_dim}
& \sum_{i=1}^{\#\sigma_1^{(i_3,\dots,i_d)}}\sigma_i(\barP^{(i_3,\ldots,i_d)})  \underset{(a)}{\ge} \sum_{i=1}^{\#\sigma_1^{(i_3,\dots,i_d)}}\sigma_i({\overbar{\P^*}}^{(i_3,\ldots,i_d)}) - \sum_{i=1}^{\#\sigma_1^{(i_3,\dots,i_d)}}\sigma_i(\barP^{(i_3,\ldots,i_d)}-{\overbar{\P^*}}^{(i_3,\ldots,i_d)}) \nonumber
\\ & \underset{(b)}{=} \Vert{\overbar{\X^*}}^{(i_3,\ldots,i_d)}\Vert_* + \eta\cdot\#\sigma_1^{(i_3,\dots,i_d)}\sigma_1(\overbar{\nabla{}f(\mX^*)}) - \sum_{i=1}^{\#\sigma_1^{(i_3,\dots,i_d)}}\sigma_i(\barP^{(i_3,\ldots,i_d)}-{\overbar{\P^*}}^{(i_3,\ldots,i_d)}),
\end{align}
where (a) follows from Ky Fan's inequality for singular values and (b) follows from \eqref{ineq:inProof1_d_dim}.

Averaging \eqref{ineq:inProof2_d_dim} over all $i_3\in[n_3],\ldots,i_d\in[n_d]$, we obtain that
\begin{align} 
& \frac{1}{N}\sum_{i_3=1}^{n_3}\cdots\sum_{i_d=1}^{n_d}\sum_{i=1}^{\#\sigma_1^{(i_3,\dots,i_d)}}\sigma_i(\barP^{(i_3,\ldots,i_d)}) \nonumber
\\ & \ge   1 +\frac{\eta}{N}\sigma_1(\overbar{\nabla{}f(\mX^*)})\sum_{i_3=1}^{n_3}\cdots\sum_{i_d=1}^{n_d}\#\sigma_1^{(i_3,\dots,i_d)} \nonumber
\\ & \ \ \ - \frac{1}{N}\sum_{i_3=1}^{n_3}\cdots\sum_{i_d=1}^{n_d}\sum_{i=1}^{\#\sigma_1^{(i_3,\dots,i_d)}}\sigma_i(\barP^{(i_3,\ldots,i_d)}-{\overbar{\P^*}}^{(i_3,\ldots,i_d)}) \label{ineq:inProof33333}
\\ & \ge 1 +\frac{\eta}{N}\sigma_1(\overbar{\nabla{}f(\mX^*)})\sum_{i_3=1}^{n_3}\cdots\sum_{i_d=1}^{n_d}\#\sigma_1^{(i_3,\dots,i_d)} \nonumber
\\ & \ \ \ - \frac{1}{N}\sum_{\substack{(i_3,\dots,i_d)\in[n_3]\times\cdots\times[n_d] \\ \#\sigma_1^{(i_3,\dots,i_d)}\not=0}}\sum_{i=1}^{{\#\sigma_1}^{\max}}\sigma_i(\barP^{(i_3,\ldots,i_d)}-{\overbar{\P^*}}^{(i_3,\ldots,i_d)}) \nonumber
\\ & \ge 1 +\frac{\eta}{N}\sigma_1(\overbar{\nabla{}f(\mX^*)})\sum_{i_3=1}^{n_3}\cdots\sum_{i_d=1}^{n_d}\#\sigma_1^{(i_3,\dots,i_d)} \nonumber
\\ & \ \ \ - \frac{1}{N}\sqrt{{\#\sigma_1}^{\max}\cdot\nnzb(\overbar{\nabla{}f(\mX^*)})\sum_{\substack{(i_3,\dots,i_d)\in[n_3]\times\cdots\times[n_d] \\ \#\sigma_1^{(i_3,\dots,i_d)}\not=0}}\sum_{i=1}^{{\#\sigma_1}^{\max}}\sigma_i^2(\barP^{(i_3,\ldots,i_d)}-{\overbar{\P^*}}^{(i_3,\ldots,i_d)})} \nonumber
\\ & \ge 1 +\frac{\eta}{N}\sigma_1(\overbar{\nabla{}f(\mX^*)})\sum_{i_3=1}^{n_3}\cdots\sum_{i_d=1}^{n_d}\#\sigma_1^{(i_3,\dots,i_d)} \nonumber
\\ & \ \ \ - \frac{1}{N}\sqrt{{\#\sigma_1}^{\max}\cdot\nnzb(\overbar{\nabla{}f(\mX^*)})\sum_{i_3=1}^{n_3}\cdots\sum_{i_d=1}^{n_d}\sum_{i=1}^{\min\lbrace n_1,n_2\rbrace}\sigma_i^2(\barP^{(i_3,\ldots,i_d)}-{\overbar{\P^*}}^{(i_3,\ldots,i_d)})} \nonumber
\\ & = 1 +\frac{\eta}{N}\sigma_1(\overbar{\nabla{}f(\mX^*)})\sum_{i_3=1}^{n_3}\cdots\sum_{i_d=1}^{n_d}\#\sigma_1^{(i_3,\dots,i_d)}- \frac{\sqrt{{\#\sigma_1}^{\max}\cdot\nnzb(\overbar{\nabla{}f(\mX^*)})}}{N}\Vert\barP-{\overbar{\P^*}}\Vert_F. \label{ineq:inProof3_d_dim}
\end{align} 

In addition, for all $i_3\in[n_3],\ldots,i_d\in[n_d]$ and any $r\ge\#\sigma_1^{(i_3,\dots,i_d)}$, using Weyl's inequality and \eqref{eq:svOpt_d_dim} we have that
\begin{align}
\sigma_{r+1}(\barP^{(i_3,\ldots,i_d)}) & \le \sigma_{r+1}({\overbar{\P^*}}^{(i_3,\ldots,i_d)}) + \sigma_{1}(\barP^{(i_3,\ldots,i_d)}-{\overbar{\P^*}}^{(i_3,\ldots,i_d)}) \label{ineq:inProof3334}
\\ & \le \sigma_{r+1}({\overbar{\P^*}}^{(i_3,\ldots,i_d)})+\Vert\barP-{\overbar{\P^*}}\Vert_F \nonumber
\\ & = \eta \sigma_{r+1}(\overbar{\nabla{}f(\mX^*)}^{(i_3,\ldots,i_d)})+\Vert\barP-{\overbar{\P^*}}\Vert_F. \nonumber
\end{align} 

Taking the maximum over all $k_3\in[n_3],\ldots,k_d\in[n_d]$, we obtain that 
\begin{align} \label{ineq:inProof7_d_dim}
& \sigma_{r+1}^{\max}(\barP)  = \max_{k_3\in[n_3],\ldots,k_d\in[n_d]}\sigma_{r+1}(\barP^{(k_3,\ldots,k_d)}) \nonumber
\\ & \le \eta \max_{k_3\in[n_3],\ldots,k_d\in[n_d]}\sigma_{r+1}(\overbar{\nabla{}f(\mX^*)}^{(k_3,\ldots,k_d)})+\Vert\barP-{\overbar{\P^*}}\Vert_F = \eta\sigma_{r+1}^{\max}(\overbar{\nabla{}f(\mX^*)})+\Vert\barP-{\overbar{\P^*}}\Vert_F.
\end{align} 

For every $i_3\in[n_3],\ldots,i_d\in[n_d]$ it holds that
\begin{align} \label{ineq:inProof9_d_dim}
& \sum_{i=1}^{\#\sigma_{>r+1}^{(i_3,\dots,i_d)}(\mP)}\sigma_i(\barP^{(i_3,\ldots,i_d)})-\#\sigma_{>r+1}^{(i_3,\dots,i_d)}(\mP)\cdot\sigma_{r+1}^{\max}(\barP) \nonumber
\\ & \ge \sum_{i=1}^{\#\sigma_1^{(i_3,\dots,i_d)}}\sigma_i(\barP^{(i_3,\ldots,i_d)})-\#\sigma_1^{(i_3,\dots,i_d)}\sigma_{r+1}^{\max}(\barP).
\end{align}
Note that this holds whether $\#\sigma_{>r+1}^{(i_3,\dots,i_d)}(\mP)\le\#\sigma_1^{(i_3,\dots,i_d)}\le r$ or whether $\#\sigma_1^{(i_3,\dots,i_d)}\le \#\sigma_{>r+1}^{(i_3,\dots,i_d)}(\mP)\le r$.
Averaging \eqref{ineq:inProof9_d_dim} over all $i_3\in[n_3],\ldots,i_d\in[n_d]$ we obtain that
\begin{align} \label{ineq:inProof10_d_dim}
& \frac{1}{N}\sum_{i_3=1}^{n_3}\cdots\sum_{i_d=1}^{n_d}\sum_{i=1}^{\#\sigma_{>r+1}^{(i_3,\dots,i_d)}(\mP)}\sigma_i(\barP^{(i_3,\ldots,i_d)}) - \frac{1}{N}\left(\sum_{i_3=1}^{n_3}\cdots\sum_{i_d=1}^{n_d}\#\sigma_{>r+1}^{(i_3,\dots,i_d)}(\mP)\right)\sigma_{r+1}^{\max}(\barP) \nonumber
\\ & \ge \frac{1}{N}\sum_{i_3=1}^{n_3}\cdots\sum_{i_d=1}^{n_d}\sum_{i=1}^{\#\sigma_1^{(i_3,\dots,i_d)}}\sigma_i(\barP^{(i_3,\ldots,i_d)}) - \frac{1}{N}\left(\sum_{i_3=1}^{n_3}\cdots\sum_{i_d=1}^{n_d}\#\sigma_1^{(i_3,\dots,i_d)}\right)\sigma_{r+1}^{\max}(\barP).
\end{align}

Plugging \eqref{ineq:inProof3_d_dim} and \eqref{ineq:inProof7_d_dim} into the RHS of \eqref{ineq:inProof10_d_dim} we obtain that
\begin{align} \label{ineq:inProof8_d_dim}
& \frac{1}{N}\sum_{i_3=1}^{n_3}\cdots\sum_{i_d=1}^{n_d}\sum_{i=1}^{\#\sigma_{>r+1}^{(i_3,\dots,i_d)}(\mP)}\sigma_i(\barP^{(i_3,\ldots,i_d)}) \nonumber
- \frac{1}{N}\left(\sum_{i_3=1}^{n_3}\cdots\sum_{i_d=1}^{n_d}\#\sigma_{>r+1}^{(i_3,\dots,i_d)}(\mP)\right)\sigma_{r+1}^{\max}(\barP) \nonumber
\\ & \ge 1 +\frac{\eta}{N}\left(\sigma_1(\overbar{\nabla{}f(\mX^*)})-\sigma_{r+1}^{\max}(\overbar{\nabla{}f(\mX^*)})\right)\sum_{i_3=1}^{n_3}\cdots\sum_{i_d=1}^{n_d}\#\sigma_1^{(i_3,\dots,i_d)} \nonumber
\\ & \ \ \ - \left(\frac{1}{N}\sum_{i_3=1}^{n_3}\cdots\sum_{i_d=1}^{n_d}\#\sigma_1^{(i_3,\dots,i_d)}+\frac{\sqrt{{\#\sigma_1}^{\max}\cdot\nnzb(\overbar{\nabla{}f(\mX^*)})}}{N}\right)\Vert\barP-{\overbar{\P^*}}\Vert_F.
\end{align}

Taking $\barP:=\barX-\eta\overbar{\nabla{}f(\mX)}$, invoking \cref{lemma:orderPinnerProductEquivalence}, and using the $\beta$-smoothness of $f$, we have that
\begin{align} \label{ineq:inProof12_d_dim}
\Vert\barP-{\overbar{\P^*}}\Vert_F & = \Vert\barX-\eta\overbar{\nabla{}f(\mX)}-\overbar{\X^*}-\eta\overbar{\nabla{}f(\mX^*)}\Vert_F \nonumber
 \\
&\le \Vert\barX-\overbar{\X^*}\Vert_F+\eta\Vert\overbar{\nabla{}f(\mX)}-\overbar{\nabla{}f(\mX^*)}\Vert_F \nonumber
\\ & = \sqrt{N}(\Vert\mX-\mX^*\Vert_F+\eta\Vert\nabla{}f(\mX)-\nabla{}f(\mX^*)\Vert_F) \nonumber\\
& \le \sqrt{N}(1+\eta\beta)\Vert\mX-\mX^*\Vert_F.
\end{align}

Plugging \eqref{ineq:inProof12_d_dim} into \eqref{ineq:inProof8_d_dim}, we obtain that
\begin{align*} 
& \frac{1}{N}\sum_{i_3=1}^{n_3}\cdots\sum_{i_d=1}^{n_d}\sum_{i=1}^{\#\sigma_{>r+1}^{(i_3,\dots,i_d)}(\mP)}\sigma_i(\barP^{(i_3,\ldots,i_d)})  - \frac{1}{N}\left(\sum_{i_3=1}^{n_3}\cdots\sum_{i_d=1}^{n_d}\#\sigma_{>r+1}^{(i_3,\dots,i_d)}(\mP)\right)\sigma_{r+1}^{\max}(\barP) 
\\ & \ge 1 +\frac{\eta}{N}\left(\sigma_1(\overbar{\nabla{}f(\mX^*)})-\sigma_{r+1}^{\max}(\overbar{\nabla{}f(\mX^*)})\right)\sum_{i_3=1}^{n_3}\cdots\sum_{i_d=1}^{n_d}\#\sigma_1^{(i_3,\dots,i_d)}
\\ & \ \ \ - \left(\frac{1}{\sqrt{N}}\sum_{i_3=1}^{n_3}\cdots\sum_{i_d=1}^{n_d}\#\sigma_1^{(i_3,\dots,i_d)}+\sqrt{\frac{{\#\sigma_1}^{\max}\cdot\nnzb(\overbar{\nabla{}f(\mX^*)})}{N}}\right)(1+\eta\beta)\Vert\mX-\mX^*\Vert_F.
\end{align*}

Rearranging, we obtain that the condition in \eqref{ineq:inProof13_d_dim} holds if
\begin{align} \label{ineq:inProof4_d_dim}
\Vert\mX-\mX^*\Vert_F \le \frac{\eta\left(\sigma_1(\overbar{\nabla{}f(\mX^*)})-\sigma_{r+1}^{\max}(\overbar{\nabla{}f(\mX^*)})\right)}{\sqrt{N}\left(1+\frac{\sqrt{{\#\sigma_1}^{\max}\cdot\nnzb(\overbar{\nabla{}f(\mX^*)})}}{\sum_{i_3=1}^{n_3}\cdots\sum_{i_d=1}^{n_d}\#\sigma_1^{(i_3,\dots,i_d)}}\right)(1+\eta\beta)}.
\end{align}

Alternatively, for all $i_3\in[n_3],\ldots,i_d\in[n_d]$, if $r\ge 2\cdot{\#\sigma_1}^{\max}-1$, then using the general Weyl inequality and \eqref{eq:svOpt_d_dim} and denoting $\tilde{r}_{\max}={\#\sigma_1}^{\max}$ for clarity of notation we have that
\begin{align*} 
\sigma_{r+1}(\barP^{(i_3,\ldots,i_d)}) & \le \sigma_{r-\tilde{r}_{\max}+2}({\overbar{\P^*}}^{(i_3,\ldots,i_d)}) + \sigma_{\tilde{r}_{\max}}(\barP^{(i_3,\ldots,i_d)}-{\overbar{\P^*}}^{(i_3,\ldots,i_d)})
\\ & = \sigma_{r-\tilde{r}_{\max}+2}({\overbar{\P^*}}^{(i_3,\ldots,i_d)})+\sqrt{\sigma_{\tilde{r}_{\max}}^2(\barP^{(i_3,\ldots,i_d)}-{\overbar{\P^*}}^{(i_3,\ldots,i_d)})}
\\ & \le \sigma_{r-\tilde{r}_{\max}+2}({\overbar{\P^*}}^{(i_3,\ldots,i_d)})+\sqrt{\sigma_{\tilde{r}_{\max}}^2(\barP-{\overbar{\P^*}})}
\\ & \le \sigma_{r-\tilde{r}_{\max}+2}({\overbar{\P^*}}^{(i_3,\ldots,i_d)}) +\frac{1}{\sqrt{\tilde{r}_{\max}}}\Vert\barP-{\overbar{\P^*}}\Vert_F
\\ & = \eta \sigma_{r-\tilde{r}_{\max}+2}(\overbar{\nabla{}f(\mX^*)}^{(i_3,\ldots,i_d)})+\frac{1}{\sqrt{\tilde{r}_{\max}}}\Vert\barP-{\overbar{\P^*}}\Vert_F.
\end{align*} 

Therefore, taking the maximum over all $k_3\in[n_3],\ldots,k_d\in[n_d]$, we obtain that if $r\ge2\cdot{\#\sigma_1}^{\max}-1$ then,
\begin{align} \label{ineq:inProof11_d_dim}
\sigma_{r+1}^{\max}(\barP) & = \max_{k_3\in[n_3],\ldots,k_d\in[n_d]}\sigma_{r+1}(\barP^{(k_3,\ldots,k_d)}) \nonumber
\\ & \le \eta \max_{k_3\in[n_3],\ldots,k_d\in[n_d]}\sigma_{r-\tilde{r}_{\max}+2}(\overbar{\nabla{}f(\mX^*)}^{(k_3,\ldots,k_d)})+\frac{1}{\sqrt{\tilde{r}_{\max}}}\Vert\barP-{\overbar{\P^*}}\Vert_F \nonumber
\\ & = \eta \sigma_{r-\tilde{r}_{\max}+2}^{\max}(\overbar{\nabla{}f(\mX^*)})+\frac{1}{\sqrt{\tilde{r}_{\max}}}\Vert\barP-{\overbar{\P^*}}\Vert_F.
\end{align} 

Plugging \eqref{ineq:inProof3_d_dim} and \eqref{ineq:inProof11_d_dim} into the RHS of \eqref{ineq:inProof10_d_dim} we obtain that
\begin{align} 
& \frac{1}{N}\sum_{i_3=1}^{n_3}\cdots\sum_{i_d=1}^{n_d}\sum_{i=1}^{\#\sigma_{>r+1}^{(i_3,\dots,i_d)}(\mP)}\sigma_i(\barP^{(i_3,\ldots,i_d)})  - \frac{1}{N}\left(\sum_{i_3=1}^{n_3}\cdots\sum_{i_d=1}^{n_d}\#\sigma_{>r+1}^{(i_3,\dots,i_d)}(\mP)\right)\sigma_{r+1}^{\max}(\barP) \nonumber
\\ & \ge 1 +\frac{\eta}{N}\left(\sigma_1(\overbar{\nabla{}f(\mX^*)})-\sigma_{r-\tilde{r}_{\max}+2}^{\max}(\overbar{\nabla{}f(\mX^*)})\right)\sum_{i_3=1}^{n_3}\cdots\sum_{i_d=1}^{n_d}\#\sigma_1^{(i_3,\dots,i_d)} \nonumber
\\ & \ \ \ - \left(\frac{1}{N\sqrt{\tilde{r}_{\max}}}\sum_{i_3=1}^{n_3}\cdots\sum_{i_d=1}^{n_d}\#\sigma_1^{(i_3,\dots,i_d)}+\frac{\sqrt{\tilde{r}_{\max}\cdot\nnzb(\overbar{\nabla{}f(\mX^*)})}}{N}\right)\Vert\barP-{\overbar{\P^*}}\Vert_F  \label{ineq:inProof15_d_dim}
\\ & \underset{(a)}{\ge} 1+\frac{\eta}{N}\left(\sigma_1(\overbar{\nabla{}f(\mX^*)})-\sigma_{r-\tilde{r}_{\max}+2}^{\max}(\overbar{\nabla{}f(\mX^*)})\right)\sum_{i_3=1}^{n_3}\cdots\sum_{i_d=1}^{n_d}\#\sigma_1^{(i_3,\dots,i_d)} \nonumber
\\ & \ \ \ - \left(\frac{1}{\sqrt{N\tilde{r}_{\max}}}\sum_{i_3=1}^{n_3}\cdots\sum_{i_d=1}^{n_d}\#\sigma_1^{(i_3,\dots,i_d)}+\sqrt{\frac{\tilde{r}_{\max}\cdot\nnzb(\overbar{\nabla{}f(\mX^*)})}{N}}\right)(1+\eta\beta)\Vert\mX-\mX^*\Vert_F, \label{ineq:inProof6_d_dim}
\end{align}
where (a) follows from \eqref{ineq:inProof12_d_dim}.

Therefore, we obtain that the condition in \eqref{ineq:inProof13_d_dim} holds if
\begin{align} \label{ineq:inProof5_d_dim}
\Vert\mX-\mX^*\Vert_F \le \frac{\eta\left(\sigma_1(\overbar{\nabla{}f(\mX^*)})-\sigma_{r-\tilde{r}_{\max}+2}^{\max}(\overbar{\nabla{}f(\mX^*)})\right)}{\sqrt{N}\left(\frac{1}{\sqrt{\tilde{r}_{\max}}}+\frac{\sqrt{\tilde{r}_{\max}\cdot\nnzb(\overbar{\nabla{}f(\mX^*)})}}{\sum_{i_3=1}^{n_3}\cdots\sum_{i_d=1}^{n_d}\#\sigma_1^{(i_3,\dots,i_d)}}\right)(1+\eta\beta)}.
\end{align}

Invoking \cref{lemma:svd_opt_grad_d_dim} which implies that $\sum_{i_3=1}^{n_3}\cdots\sum_{i_d=1}^{n_d}\#\sigma_1^{(i_3,\dots,i_d)}=\#\sigma_1(\overbar{\nabla{}f(\mX^*)})$, taking the maximum between the radius in \eqref{ineq:inProof4_d_dim} and  \eqref{ineq:inProof5_d_dim}, and returning to the original notation ${\#\sigma_1}^{\max}=\tilde{r}_{\max}$, we obtain the radius with respect to the Frobenius norm stated in the lemma.

We proceed to obtaining the bound of the spectral radius around the optimal solution.
Bounding the RHS of \eqref{ineq:inProof33333} we have that
\begin{align} \label{ineq:inProof3_d_dim_spectral}
& \frac{1}{N}\sum_{i_3=1}^{n_3}\cdots\sum_{i_d=1}^{n_d}\sum_{i=1}^{\#\sigma_1^{(i_3,\dots,i_d)}}\sigma_i(\barP^{(i_3,\ldots,i_d)}) \nonumber
\\ & \ge 1 +\frac{\eta}{N}\sigma_1(\overbar{\nabla{}f(\mX^*)})\sum_{i_3=1}^{n_3}\cdots\sum_{i_d=1}^{n_d}\#\sigma_1^{(i_3,\dots,i_d)} - \frac{1}{N}\sum_{i_3=1}^{n_3}\cdots\sum_{i_d=1}^{n_d}\#\sigma_1^{(i_3,\dots,i_d)}\Vert\barP-{\overbar{\P^*}}\Vert_2.
\end{align}  

In addition, bounding the RHS of \eqref{ineq:inProof3334} we have that
\begin{align*}
\sigma_{r+1}(\barP^{(i_3,\ldots,i_d)}) & \le
\eta \sigma_{r+1}(\overbar{\nabla{}f(\mX^*)}^{(i_3,\ldots,i_d)})+\Vert\barP-{\overbar{\P^*}}\Vert_2.
\end{align*} 
Taking the maximum over all $k_3\in[n_3],\ldots,k_d\in[n_d]$, we obtain that 
\begin{align} \label{ineq:inProof7_d_dim_spectral}
& \sigma_{r+1}^{\max}(\barP)  = \max_{k_3\in[n_3],\ldots,k_d\in[n_d]}\sigma_{r+1}(\barP^{(k_3,\ldots,k_d)}) \nonumber
\\ & \le \eta \max_{k_3\in[n_3],\ldots,k_d\in[n_d]}\sigma_{r+1}(\overbar{\nabla{}f(\mX^*)}^{(k_3,\ldots,k_d)})+\Vert\barP-{\overbar{\P^*}}\Vert_2 = \eta\sigma_{r+1}^{\max}(\overbar{\nabla{}f(\mX^*)})+\Vert\barP-{\overbar{\P^*}}\Vert_2.
\end{align} 

Plugging \eqref{ineq:inProof3_d_dim_spectral} and \eqref{ineq:inProof7_d_dim_spectral}
into the RHS of \eqref{ineq:inProof10_d_dim} we obtain that
\begin{align} \label{ineq:inProof8_d_dim_spectral}
& \frac{1}{N}\sum_{i_3=1}^{n_3}\cdots\sum_{i_d=1}^{n_d}\sum_{i=1}^{\#\sigma_{>r+1}^{(i_3,\dots,i_d)}(\mP)}\sigma_i(\barP^{(i_3,\ldots,i_d)}) - \frac{1}{N}\left(\sum_{i_3=1}^{n_3}\cdots\sum_{i_d=1}^{n_d}\#\sigma_{>r+1}^{(i_3,\dots,i_d)}(\mP)\right)\sigma_{r+1}^{\max}(\barP) \nonumber
\\ & \ge 1 +\frac{\eta}{N}\left(\sigma_1(\overbar{\nabla{}f(\mX^*)})-\sigma_{r+1}^{\max}(\overbar{\nabla{}f(\mX^*)})\right)\sum_{i_3=1}^{n_3}\cdots\sum_{i_d=1}^{n_d}\#\sigma_1^{(i_3,\dots,i_d)} \nonumber
\\ & \ \ \ - \frac{2}{N}\sum_{i_3=1}^{n_3}\cdots\sum_{i_d=1}^{n_d}\#\sigma_1^{(i_3,\dots,i_d)}\Vert\barP-{\overbar{\P^*}}\Vert_2.
\end{align}

Taking $\barP:=\barX-\eta\overbar{\nabla{}f(\mX)}$ and using the $\beta_2$-smoothness of $f$ with respect to the spectral norm, we have that
\begin{align} \label{ineq:inProof12_d_dim_spectral}
\Vert\barP-{\overbar{\P^*}}\Vert_2 & = \Vert\barX-\eta\overbar{\nabla{}f(\mX)}-\overbar{\X^*}-\eta\overbar{\nabla{}f(\mX^*)}\Vert_2 \nonumber
\le \Vert\barX-\overbar{\X^*}\Vert_2+\eta\Vert\overbar{\nabla{}f(\mX)}-\overbar{\nabla{}f(\mX^*)}\Vert_2
\\ & = \Vert\mX-\mX^*\Vert_2+\eta\Vert\nabla{}f(\mX)-\nabla{}f(\mX^*)\Vert_2 \le (1+\eta\beta_2)\Vert\mX-\mX^*\Vert_2.
\end{align}

Plugging \eqref{ineq:inProof12_d_dim_spectral} into \eqref{ineq:inProof8_d_dim_spectral}, we obtain that
\begin{align*} 
& \frac{1}{N}\sum_{i_3=1}^{n_3}\cdots\sum_{i_d=1}^{n_d}\sum_{i=1}^{\#\sigma_{>r+1}^{(i_3,\dots,i_d)}(\mP)}\sigma_i(\barP^{(i_3,\ldots,i_d)}) - \frac{1}{N}\left(\sum_{i_3=1}^{n_3}\cdots\sum_{i_d=1}^{n_d}\#\sigma_{>r+1}^{(i_3,\dots,i_d)}(\mP)\right)\sigma_{r+1}^{\max}(\barP)
\\ & \ge 1 +\frac{\eta}{N}\left(\sigma_1(\overbar{\nabla{}f(\mX^*)})-\sigma_{r+1}^{\max}(\overbar{\nabla{}f(\mX^*)})\right)\sum_{i_3=1}^{n_3}\cdots\sum_{i_d=1}^{n_d}\#\sigma_1^{(i_3,\dots,i_d)}
\\ & \ \ \ - \frac{2(1+\eta\beta_2)}{N}\sum_{i_3=1}^{n_3}\cdots\sum_{i_d=1}^{n_d}\#\sigma_1^{(i_3,\dots,i_d)}\Vert\mX-\mX^*\Vert_2.
\end{align*}

Rearranging, we finally obtain that the condition in \eqref{ineq:inProof13_d_dim} holds if
\begin{align*}
\Vert\mX-\mX^*\Vert_2 \le \frac{\eta\left(\sigma_1(\overbar{\nabla{}f(\mX^*)})-\sigma_{r+1}^{\max}(\overbar{\nabla{}f(\mX^*)})\right)}{2(1+\eta\beta_2)}.
\end{align*}

\end{proof}

\section{Proof omitted from \cref{sec:nonsmoothCase}}

\subsection{Proof of \cref{lemma:RadiusForExtragradient_d_dim}}
\label{sec:App:proofLemmaNonsmooth}
We first restate the lemma and then prove it.

\begin{theorem}
Assume $\nabla{}_{\mX}F$ is non-zero over the unit TNN ball and fix some saddle-point $(\mX^*,\y^*)$ of Problem \eqref{saddlePointProblem}. 
Denote $\nnzb(\overbar{\nabla_{\mX}F^*}):=\#\lbrace(i_3,\dots,i_d)~|~\overbar{\nabla_{\mX}F(\mX^*,\y^*)}^{(i_3,\ldots,i_d)}\not=0\rbrace$ and ${\#\sigma_1}^{\max}:=\max_{i_3\in[n_3],\ldots,i_d\in[n_d]}\#\sigma_1(\overbar{\nabla_{\mX}F(\mX^*,\y^*)}^{(i_3,\ldots,i_d)})$  and assume ${\#\sigma_1}^{\max}<\min\lbrace n_1,n_2\rbrace$. 
Then, for any $\eta\ge0$, $\min\lbrace n_1,n_2\rbrace>r\ge{\#\sigma_1}^{\max}$, and $(\mX,\y),(\mZ,\w)\in\reals^{n_1\times{}\cdots\times{}n_d}\times\mK$, if 
\begin{align*} 
& \max\lbrace\Vert\mX-\mX^*\Vert_F,\Vert(\mZ,\w)-(\mX^*,\y^*)\Vert\rbrace \nonumber
\\ & \le  \frac{\eta}{\sqrt{N}K}
\max\left\lbrace\frac{\delta(r)}{1+\frac{\sqrt{{\#\sigma_1}^{\max}\cdot\nnzb(\overbar{\nabla_{\mX}F^*})}}{\#\sigma_1(\overbar{\nabla_{\mX}F(\mX^*,\y^*)})}},\frac{\delta(r-{\#\sigma_1}^{\max}+1)}{\frac{1}{\sqrt{{\#\sigma_1}^{\max}}}+\frac{\sqrt{{\#\sigma_1}^{\max}\cdot\nnzb(\overbar{\nabla_{\mX}F^*})}}{\#\sigma_1(\overbar{\nabla_{\mX}F(\mX^*,\y^*)})}}\right\rbrace,
\end{align*} 
where $K=1+\sqrt{2}\eta\max\lbrace\beta_{X},\beta_{Xy}\rbrace$, then $\rank_{\textnormal{t}}(\Pi_{\lbrace\Vert\mY\Vert_*\le1\rbrace}[\mX-\eta\nabla{}_{\mX}F(\mZ,\w)])\le r$.
\end{theorem}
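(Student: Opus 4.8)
The strategy mirrors the proof of \cref{lemma:RadiusForProjectedGradient_d_dim} for the smooth case, with the crucial modification that the gradient direction $\nabla{}f(\mX^*)$ is replaced by $\nabla_{\mX}F(\mX^*,\y^*)$, which, by \cref{lemma:connectionSubgradientNonsmoothAndSaddlePoint}, is a subgradient in $\partial f(\mX^*)$ satisfying the first-order optimality condition $\langle\mX-\mX^*,\nabla_{\mX}F(\mX^*,\y^*)\rangle\ge 0$ for all feasible $\mX$. First I would observe that the version of \cref{lemma:svd_opt_grad_d_dim} used in the smooth proof goes through verbatim with $\nabla_{\mX}F(\mX^*,\y^*)$ in place of $\nabla{}f(\mX^*)$, since its proof only uses the first-order optimality condition and the structure of the normal cone of the TNN ball; this gives the analogue of Eq.~\eqref{eq:svOpt_d_dim}, namely that for $\mP^* := \mX^*-\eta\nabla_{\mX}F(\mX^*,\y^*)$ the singular values of each frontal slice ${\overbar{\P^*}}^{(i_3,\ldots,i_d)}$ split into the ``signal'' part $\sigma_i({\overbar{\X^*}}^{(i_3,\ldots,i_d)})+\eta\sigma_1(\overbar{\nabla_{\mX}F(\mX^*,\y^*)})$ for $i\le\rank({\overbar{\X^*}}^{(i_3,\ldots,i_d)})$ and the ``noise'' part $\eta\sigma_i(\overbar{\nabla_{\mX}F(\mX^*,\y^*)}^{(i_3,\ldots,i_d)})$ for larger $i$.

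Next I would run the exact same chain of estimates as in \cref{sec:App:proofTheoremSmoothRadius}: use \cref{lemma:iffLowRankfCondiotionOrderP} to reduce the claim $\rank_{\textnormal{t}}(\Pi_{\lbrace\Vert\mY\Vert_*\le1\rbrace}[\mP])\le r$ (with $\mP := \mX-\eta\nabla_{\mX}F(\mZ,\w)$) to the scalar condition \eqref{ineq:inProof13_d_dim}; establish via Ky Fan's inequality the averaged lower bound on $\sum_i\sigma_i(\barP^{(i_3,\ldots,i_d)})$ in terms of the perturbation $\Vert\barP-{\overbar{\P^*}}\Vert_F$, and via Weyl's inequality (both the standard and the generalized version) the corresponding upper bounds on $\sigma_{r+1}^{\max}(\barP)$; then combine these to show \eqref{ineq:inProof13_d_dim} holds provided $\Vert\barP-{\overbar{\P^*}}\Vert_F$ is at most a constant times $\eta\,\#\sigma_1(\overbar{\nabla_{\mX}F(\mX^*,\y^*)})\cdot\delta(r)/N$ (or the analogous expression with $\delta(r-{\#\sigma_1}^{\max}+1)$ and the extra $\sqrt{{\#\sigma_1}^{\max}}$ gain). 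The one genuinely new ingredient is bounding the perturbation term $\Vert\barP-{\overbar{\P^*}}\Vert_F$: unlike the smooth case, $\mP = \mX-\eta\nabla_{\mX}F(\mZ,\w)$ involves the gradient evaluated at the extrapolated point $(\mZ,\w)$ rather than at $(\mX,\y)$. I would split
\begin{align*}
\Vert\barP-{\overbar{\P^*}}\Vert_F &= \sqrt{N}\,\Vert\mP-\mP^*\Vert_F = \sqrt{N}\,\Vert(\mX-\mX^*) - \eta(\nabla_{\mX}F(\mZ,\w)-\nabla_{\mX}F(\mX^*,\y^*))\Vert_F \\
&\le \sqrt{N}\left(\Vert\mX-\mX^*\Vert_F + \eta\Vert\nabla_{\mX}F(\mZ,\w)-\nabla_{\mX}F(\mX^*,\y^*)\Vert_F\right),
\end{align*}
and then use the joint smoothness bounds \eqref{def:betas} — specifically $\beta_X$ for the $\mX$-coordinate and $\beta_{Xy}$ for the $\y$-coordinate — together with $\Vert\mZ-\mX^*\Vert_F,\Vert\w-\y^*\Vert_2\le\Vert(\mZ,\w)-(\mX^*,\y^*)\Vert$, to get $\Vert\nabla_{\mX}F(\mZ,\w)-\nabla_{\mX}F(\mX^*,\y^*)\Vert_F\le\beta_X\Vert\mZ-\mX^*\Vert_F+\beta_{Xy}\Vert\w-\y^*\Vert_2\le\sqrt{2}\max\{\beta_X,\beta_{Xy}\}\Vert(\mZ,\w)-(\mX^*,\y^*)\Vert$. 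Combining with the bound on $\Vert\mX-\mX^*\Vert_F$ yields $\Vert\barP-{\overbar{\P^*}}\Vert_F\le\sqrt{N}\,K\cdot\max\{\Vert\mX-\mX^*\Vert_F,\Vert(\mZ,\w)-(\mX^*,\y^*)\Vert\}$ with $K = 1+\sqrt{2}\eta\max\{\beta_X,\beta_{Xy}\}$, which is exactly what is needed to absorb the factor $K$ in the denominator of \eqref{ineq:boundNonsmooth} and conclude.

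The main obstacle — and really the only place requiring care beyond bookkeeping — is keeping the two-source perturbation estimate tight: one must track separately the $\mX$-part (bounded by $\Vert\mX-\mX^*\Vert_F$, coefficient $1$) and the gradient-difference part (bounded via $\beta_X,\beta_{Xy}$ and the distance of the \emph{extrapolated} point $(\mZ,\w)$), and then see that taking the maximum of the two distances and the constant $K=1+\sqrt{2}\eta\max\{\beta_X,\beta_{Xy}\}$ is exactly the right packaging so that the positive slack $\tfrac{\eta\delta(r)}{N}\sum_{i_3,\ldots,i_d}\#\sigma_{>r+1}^{(i_3,\ldots,i_d)}(\mP^*)$ at the saddle point (which is positive for the same reason as in the smooth case, since $r\ge{\#\sigma_1}^{\max}$ forces at least one slice to have $\#\sigma_{>r+1}\ge 1$) still dominates. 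Everything else — the two branches of the max coming from the standard versus generalized Weyl bound, the Cauchy–Schwarz step converting sums of singular values to Frobenius norms, and the final rearrangement — is identical to the smooth proof and I would simply refer to it rather than repeat it.
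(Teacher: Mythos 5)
Your proposal is correct and follows essentially the same route as the paper: you identify the same two key modifications to the smooth proof — replacing $\nabla f(\mX^*)$ by the optimal subgradient $\nabla_{\mX}F(\mX^*,\y^*)$ (justified via \cref{lemma:connectionSubgradientNonsmoothAndSaddlePoint}), and deriving the perturbation bound $\Vert\barP-\overbar{\P^*}\Vert_F\le\sqrt{N}K\max\{\Vert\mX-\mX^*\Vert_F,\Vert(\mZ,\w)-(\mX^*,\y^*)\Vert\}$ by splitting $\nabla_{\mX}F(\mZ,\w)-\nabla_{\mX}F(\mX^*,\y^*)$ through the intermediate point $(\mX^*,\w)$ and invoking the joint smoothness constants $\beta_X,\beta_{Xy}$. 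The remaining chain of Ky Fan/Weyl estimates is carried over unchanged, exactly as the paper does by plugging the new perturbation bound into \eqref{ineq:inProof8_d_dim} and \eqref{ineq:inProof15_d_dim}.
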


\begin{proof}

For any $(\mZ,\w)\in\reals^{n_1\times{}\cdots\times{}n_d}\times\mK$ it holds that 
\begin{align} \label{ineq:smoothnessEGproof_d_dim}
& \Vert\overbar{\nabla_{\mX}F(\mZ,\w)}-\overbar{\nabla_{\mX}F(\mX^*,\y^*)}\Vert_F \nonumber
\\ & \le \Vert\overbar{\nabla_{\mX}F(\mZ,\w)}-\overbar{\nabla_{\mX}F(\mX^*,\w)}\Vert_F + \Vert\overbar{\nabla_{\mX}F(\mX^*,\w)}-\overbar{\nabla_{\mX}F(\mX^*,\y^*)}\Vert_F \nonumber
\\ & \underset{(a)}{=} \sqrt{N}(\Vert\nabla{}_{\mX}F(\mZ,\w)-\nabla{}_{\mX}F(\mX^*,\w)\Vert_F + \Vert\nabla{}_{\mX}F(\mX^*,\w)-\nabla{}_{\mX}F(\mX^*,\y^*)\Vert_F) \nonumber
\\ & \underset{(b)}{\le} \beta_{X}\sqrt{N}\Vert\mZ-\mX^*\Vert_F+\beta_{Xy}\sqrt{N}\Vert\w-\y^*\Vert_2 \nonumber
\\ & \le \max\lbrace\beta_{X},\beta_{Xy}\rbrace\sqrt{N}(\Vert\mZ-\mX^*\Vert_F+\Vert\w-\y^*\Vert_2) \nonumber
\\ & \le \sqrt{2N}\max\lbrace\beta_{X},\beta_{Xy}\rbrace\Vert(\mZ,\w)-(\mX^*,\y^*)\Vert,
\end{align}
where (a) follows from \cref{lemma:orderPinnerProductEquivalence} and (b) follows from the smoothness of $f$.

Denote $\barP:=\barX-\eta\overbar{\nabla_{\mX}F(\mZ,\w)}$ and $\overbar{\P^*}:=\overbar{\X^*}-\eta\overbar{\nabla_{\mX}F(\mX^*,\y^*)}$. Then, we have that
\begin{align} \label{ineq:PNormBound1_d_dim}
& \Vert\barP-\overbar{\P^*}\Vert_F \nonumber
\\ & \le \Vert\barX-\overbar{\X^*}\Vert_F + \eta\Vert\overbar{\nabla_{\mX}F(\mZ,\w)}-\overbar{\nabla_{\mX}F(\mX^*,\y^*)}\Vert_F \nonumber
\\ & \underset{(a)}{=} \sqrt{N}\Vert\mX-\mX^*\Vert_F +\eta\Vert\overbar{\nabla_{\mX}F(\mZ,\w)}-\overbar{\nabla_{\mX}F(\mX^*,\y^*)}\Vert_F \nonumber
\\ & \underset{(b)}{\le} \sqrt{N}\Vert\mX-\mX^*\Vert_F +\sqrt{2N}\eta\max\lbrace\beta_{X},\beta_{Xy}\rbrace\Vert(\mZ,\w)-(\mX^*,\y^*)\Vert \nonumber
\\ & \le \sqrt{N}(1+\sqrt{2}\eta\max\lbrace\beta_{X},\beta_{Xy}\rbrace)\max\lbrace\Vert\mX-\mX^*\Vert_F,\Vert(\mZ,\w)-(\mX^*,\y^*)\Vert\rbrace,
\end{align}
where (a) follows from \cref{lemma:orderPinnerProductEquivalence} and (b) follows from \eqref{ineq:smoothnessEGproof_d_dim}.

For every $j\in\{1,\dots,\min\{n_1,n_2\}\}$ we denote $\sigma_{j}^{\max}(\barP) = \max_{k_3\in[n_3],\ldots,k_d\in[n_d]}\sigma_{j}(\barP^{(k_3,\ldots,k_d)})$ and $\#\sigma_{>{}j}^{(i_3,\dots,i_d)}(\mP) =\#\left\lbrace i\ \bigg\vert\ \sigma_i(\barP^{(i_3,\ldots,i_d)})>\sigma_{j}^{\max}(\barP)\right\rbrace$.
Then, plugging \eqref{ineq:PNormBound1_d_dim} into the RHS of \eqref{ineq:inProof8_d_dim} and replacing $\overbar{\nabla{}f(\mX^*)}^{(i_3,\ldots,i_d)}$ with $\overbar{\nabla_{\mX}F(\mX^*,\y^*)}^{(i_3,\ldots,i_d)}$ we have that
\begin{align}  \label{ineq:EG1ineq_d_dim}
& \frac{1}{N}\sum_{i_d=1}^{n_d}\cdots\sum_{i_3=1}^{n_3}\sum_{i=1}^{\#\sigma_{>r+1}^{(i_3,\dots,i_d)}(\mP)}\sigma_i(\barP^{(i_3,\ldots,i_d)})-\frac{1}{N}\left(\sum_{i_d=1}^{n_d}\cdots\sum_{i_3=1}^{n_3}\#\sigma_{>r+1}^{(i_3,\dots,i_d)}(\mP)\right)\sigma_{r+1}^{\max}(\barP) \nonumber
\\ & \ge 1 +\frac{\eta}{N}\left(\sigma_1(\overbar{\nabla_{\mX}f(\mX^*,\y^*)})-\sigma_{r+1}^{\max}(\overbar{\nabla_{\mX}f(\mX^*,\y^*)})\right)\sum_{i_d=1}^{n_d}\cdots\sum_{i_3=1}^{n_3}\#\sigma_1^{(i_3,\dots,i_d)} \nonumber
\\ & \ \ \ - \left(\frac{1}{\sqrt{N}}\sum_{i_d=1}^{n_d}\cdots\sum_{i_3=1}^{n_3}\#\sigma_1^{(i_3,\dots,i_d)}+\sqrt{\frac{{\#\sigma_1}^{\max}\cdot\nnzb(\overbar{\nabla_{\mX}F^*})}{N}}\right)(1+\sqrt{2}\eta\max\lbrace\beta_{X},\beta_{Xy}\rbrace) \nonumber
\\ & \quad\quad\ \max\lbrace\Vert\mX-\mX^*\Vert_F,\Vert(\mZ,\w)-(\mX^*,\y^*)\Vert\rbrace,
\end{align}
where $\#\sigma_1^{(i_3,\dots,i_d)}$ denotes the multiplicity of $\sigma_1(\overbar{\nabla_{\mX}f(\mX^*,\y^*)}^{(i_3,\ldots,i_d)})$.

Alternatively, plugging \eqref{ineq:PNormBound1_d_dim} into the RHS of \eqref{ineq:inProof15_d_dim} and replacing $\overbar{\nabla{}f(\mX^*)}^{(i_3,\ldots,i_d)}$ with $\overbar{\nabla_{\mX}F(\mX^*,\y^*)}^{(i_3,\ldots,i_d)}$ we have that 
\begin{align} \label{ineq:EG2ineq_d_dim}
& \frac{1}{N}\sum_{i_d=1}^{n_d}\cdots\sum_{i_3=1}^{n_3}\sum_{i=1}^{\#\sigma_{>r+1}^{(i_3,\dots,i_d)}(\mP)}\sigma_i(\barP^{(i_3,\ldots,i_d)})-\frac{1}{N}\left(\sum_{i_d=1}^{n_d}\cdots\sum_{i_3=1}^{n_3}\#\sigma_{>r+1}^{(i_3,\dots,i_d)}(\mP)\right)\sigma_{r+1}^{\max}(\barP) \nonumber
\\ & \ge 1 +\frac{\eta}{N}\left(\sigma_1(\overbar{\nabla_{\mX}f(\mX^*,\y^*)})-\sigma_{r-{\#\sigma_1}^{\max}+2}^{\max}(\overbar{\nabla_{\mX}f(\mX^*,\y^*)})\right)\sum_{i_d=1}^{n_d}\cdots\sum_{i_3=1}^{n_3}\#\sigma_1^{(i_3,\dots,i_d)} \nonumber
\\ & \ \ \ - \left(\frac{1}{\sqrt{N\cdot{\#\sigma_1}^{\max}}}\sum_{i_d=1}^{n_d}\cdots\sum_{i_3=1}^{n_3}\#\sigma_1^{(i_3,\dots,i_d)}+\sqrt{\frac{{\#\sigma_1}^{\max}\cdot\nnzb(\overbar{\nabla_{\mX}F^*})}{N}}\right) \nonumber
\\ & \quad\quad\ (1+\sqrt{2}\eta\max\lbrace\beta_{X},\beta_{Xy}\rbrace)\max\lbrace\Vert\mX-\mX^*\Vert_F,\Vert(\mZ,\w)-(\mX^*,\y^*)\Vert\rbrace.
\end{align}

Therefore, from \eqref{ineq:EG1ineq_d_dim} and \eqref{ineq:EG2ineq_d_dim} we obtain that the condition $$\frac{1}{N}\sum_{i_d=1}^{n_d}\cdots\sum_{i_3=1}^{n_3}\left(\sum_{i=1}^{\#\sigma_{>r+1}^{(i_3,\dots,i_d)}(\mP)}\sigma_i(\barP^{(i_3,\ldots,i_d)})-\#\sigma_{>r+1}^{(i_3,\dots,i_d)}(\mP)\cdot\sigma_{r+1}^{\max}(\barP)\right)\ge1$$ holds if 
\begin{align*}
& \max\lbrace\Vert\mX-\mX^*\Vert_F,\Vert(\mZ,\w)-(\mX^*,\y^*)\Vert\rbrace 
\\ & \le \frac{\eta}{\sqrt{N}K}
\max\left\lbrace\frac{\delta(r)}{1+\frac{\sqrt{{\#\sigma_1}^{\max}\cdot\nnzb(\overbar{\nabla_{\mX}F^*})}}{\sum_{i_d=1}^{n_d}\cdots\sum_{i_3=1}^{n_3}\#\sigma_1^{(i_3,\dots,i_d)}}},\frac{\delta(r-{\#\sigma_1}^{\max}+1)}{\frac{1}{\sqrt{{\#\sigma_1}^{\max}}}+\frac{\sqrt{{\#\sigma_1}^{\max}\cdot\nnzb(\overbar{\nabla_{\mX}F^*})}}{\sum_{i_d=1}^{n_d}\cdots\sum_{i_3=1}^{n_3}\#\sigma_1^{(i_3,\dots,i_d)}}}\right\rbrace
\end{align*} 
which by \cref{lemma:iffLowRankfCondiotionOrderP} implies that under this condition $\rank_{\textnormal{t}}(\Pi_{\lbrace\Vert\mY\Vert_*\le1\rbrace}[\mX-\eta\nabla{}_{\mX}F(\mZ,\w)])\le r$.
\end{proof}

\section{Some Algorithmic Consequences}
\label{sec:algorithmicConsequences}

In this section we consider several first-order methods for solving Problem \eqref{mainModel} and the nonsmooth case in Problem \eqref{mainModel}. Using the results we obtained in \cref{lemma:RadiusForProjectedGradient_d_dim} and \cref{lemma:RadiusForExtragradient_d_dim} we show that for projected gradient decent, Nesterov's accelerated gradient method, and projected extragradient for saddle-point problems all converge with their standard convergence rates to the optimal solution when initializing with some ``warm  start", while only requiring low-rank gradient mappings. Furthermore, we show that under a quadratic growth condition, in addition to the low-rank mappings, projected gradient decent obtains a linear convergence rate.


\subsection{Smooth setting}
\label{sec:algorithmicConsequencesSmooth}

In this section we use the following notations.
For any optimal solution $\mX^*\in\lbrace\mX\ \vert\ \Vert\mX\Vert_*\le1\rbrace$ of Problem \eqref{mainModel}, we denote ${\#\sigma_1}^{\max}:=\max_{i_3\in[n_3],\ldots,i_d\in[n_d]}\#\sigma_1(\overbar{\nabla{}f(\mX^*)}^{(i_3,\ldots,i_d)})$ and $\nnzb(\overbar{\nabla{}f(\mX^*)}):=\#\lbrace(i_3,\dots,i_d)~|~\overbar{\nabla{}f(\mX^*)}^{(i_3,\ldots,i_d)}\not=0\rbrace$. For any $\min\lbrace n_1,n_2\rbrace>r\ge{\#\sigma_1}^{\max}$
the term $\delta(r)$ is as defined in \cref{def:GSC}.

\begin{theorem}[local convergence of projected gradient decent] 
\label{thm:PGD}
Fix an optimal solution $\mX^*$ to Problem \eqref{mainModel}. Let $\lbrace\mX_t\rbrace_{t\ge1}$ be the sequence of iterates produced by the Projected Gradient Decent method:
\begin{align*}
& \mX_1\in\lbrace\mX\in\reals^{n_1\times\cdots\times n_d}\ \vert\ \Vert\mX\Vert_*\le1\rbrace, 
\\ & \forall t\ge1:\ \mX_{t+1}=\Pi_{\lbrace\mX\in\reals^{n_1\times\cdots\times n_d}\ \vert\ \Vert\mX\Vert_*\le1\rbrace}[\mX_t-\frac{1}{\beta}\nabla{}f(\mX_t)].
\end{align*}
Let $\min\lbrace n_1,n_2\rbrace>r\ge{\#\sigma_1}^{\max}$. Assume the initialization $\mX_1$ satisfies  $\Vert\mX_1-\mX^*\Vert_F \le R_0(r)$, where
\begin{align*}
& R_0(r):=
 \frac{\eta}{2\beta\sqrt{N}}\max\left\lbrace\frac{\delta(r)}{1+\frac{\sqrt{{\#\sigma_1}^{\max}\cdot\nnzb(\overbar{\nabla{}f(\mX^*)})}}{\#\sigma_1(\overbar{\nabla{}f(\mX^*)})}},\frac{\delta(r-{\#\sigma_1}^{\max}+1)}{\frac{1}{\sqrt{{\#\sigma_1}^{\max}}}+\frac{\sqrt{{\#\sigma_1}^{\max}\cdot\nnzb(\overbar{\nabla{}f(\mX^*)})}}{\#\sigma_1(\overbar{\nabla{}f(\mX^*)})}}\right\rbrace.
\end{align*}
Then, for all $t\ge1$, the projections $\Pi_{\lbrace\mX\in\reals^{n_1\times\cdots\times n_d}\ \vert\ \Vert\mX\Vert_*\le1\rbrace}[\cdot]$ throughout the run of the algorithm, could be replaced with rank-r truncated projections (see \cref{def:lowRankProjection}) without changing the sequence $\lbrace\mX_t\rbrace_{t\ge1}$. In particular, for all $t\ge1$ it holds that
\begin{align*}
f(\mX_t)-f(\mX^*)\le \frac{\beta\Vert\mX_1-\mX^*\Vert_F^2}{2(t-1)}.
\end{align*}
Furthermore, if $f$ is of the form $f(\mX)=g(\mA(\mX))+\langle\mC,\mX\rangle$, where  $g$ is $\alpha$-strongly convex and $\beta$-smooth and  $\mA:\reals^{n_1\times\cdots\times n_d}\rightarrow\reals^m$ is a linear map, and the optimal solution $\mX^*$ is unique and satisfies strict complementarity (Definition in \cref{lemma:SCequivalence}), then there exists $\gamma > 0$ (see  \cref{remark:QGparameter}) such that for all $t\ge1$,
 \begin{align*}
f(\mX_{t})-f(\mX^*)\le \frac{\beta\Vert\mX_1-\mX^*\Vert^2}{2}\left(\frac{1}{1+\frac{2\gamma}{\beta}}\right)^{t-1}.
\end{align*}
\end{theorem}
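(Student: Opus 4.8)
\textbf{Proof plan for \cref{thm:PGD}.} The plan is to combine three ingredients that are all either already established or standard: (i) the low-tubal-rank guarantee for the projected gradient mapping from \cref{lemma:RadiusForProjectedGradient_d_dim}, used inductively to show that the iterates never leave the ball $B(\mX^*, R_0(r))$ so that all projections may be replaced by rank-$r$ truncated projections; (ii) the standard descent-lemma analysis of projected gradient descent for $\beta$-smooth convex functions, giving the $O(\beta/t)$ rate; and (iii) the quadratic growth bound of \cref{thm:QG}, which upgrades the sublinear rate to a linear one in the special structured case. First I would set $\eta = 1/\beta$ (the step-size used by the method) and observe that with this choice the radius $R_0(r)$ in the statement is exactly $\tfrac12$ times the Frobenius radius bound appearing in \eqref{ineq:FrobRadiusBound} of \cref{lemma:RadiusForProjectedGradient_d_dim}; the extra factor of $\tfrac12$ is the slack that will let the induction close.

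The core of the argument is an induction on $t$ showing simultaneously that $\Vert\mX_t - \mX^*\Vert_F \le R_0(r)$ and that the step $\mX_{t+1}$ may be computed with a rank-$r$ truncated projection. The base case $t=1$ is the hypothesis on $\mX_1$. For the inductive step, assuming $\Vert\mX_t-\mX^*\Vert_F \le R_0(r)$, apply \cref{lemma:RadiusForProjectedGradient_d_dim} with $\eta = 1/\beta$: since $R_0(r)$ is at most the RHS of \eqref{ineq:FrobRadiusBound}, we get $\rank_{\textnormal{t}}(\Pi_{\lbrace\Vert\mY\Vert_*\le1\rbrace}[\mX_t - \tfrac1\beta\nabla f(\mX_t)]) \le r$, so by the second part of \cref{lemma:projectionOntoTNN} (or \cref{def:lowRankProjection}) the rank-$r$ truncated projection equals the exact Euclidean projection, and hence $\mX_{t+1}$ is unchanged. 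It then remains to show $\Vert\mX_{t+1}-\mX^*\Vert_F \le R_0(r)$. This is where nonexpansiveness of the Euclidean projection onto the (convex) TNN ball is used together with the standard fact that for a $\beta$-smooth convex $f$ and step-size $1/\beta$, the projected gradient map is nonexpansive on the constraint set, so $\Vert\mX_{t+1} - \mX^*\Vert_F = \Vert \Pi[\mX_t - \tfrac1\beta\nabla f(\mX_t)] - \Pi[\mX^* - \tfrac1\beta\nabla f(\mX^*)]\Vert_F \le \Vert\mX_t - \mX^*\Vert_F \le R_0(r)$, closing the induction. (Here one uses that $\mX^*$ is a fixed point of the projected gradient map by first-order optimality.)

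Once the iterates are confined and the low-rank replacement is justified, the convergence rates follow from textbook arguments applied verbatim. For the general $\beta$-smooth convex case, the descent lemma gives $f(\mX_{t+1}) \le f(\mX_t) - \tfrac{\beta}{2}\Vert\mX_{t+1}-\mX_t\Vert_F^2$ and convexity plus nonexpansiveness give the telescoping bound $f(\mX_t) - f(\mX^*) \le \tfrac{\beta \Vert\mX_1-\mX^*\Vert_F^2}{2(t-1)}$; I would cite the standard reference (e.g., Nesterov / Beck's book, already referenced as \cite{beckOptimizationBook}) rather than reproduce it. For the structured case $f(\mX) = g(\mA(\mX)) + \langle\mC,\mX\rangle$ with $g$ $\alpha$-strongly convex, invoke \cref{thm:QG} to get $f(\mX) - f(\mX^*) \ge \gamma\Vert\mX - \mX^*\Vert_F^2$ on the feasible set with $\gamma > 0$ as in \cref{remark:QGparameter}; combining quadratic growth with the one-step descent inequality is the standard way to obtain linear convergence for projected gradient descent under quadratic growth (e.g., \cite{nesterov_LCwithQG, drusvyatskiy2018error}), yielding the factor $(1 + 2\gamma/\beta)^{-1}$ per iteration. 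Note that in the structured case, $\mX^*$ being the unique optimum means the quadratic growth bound and the low-rank confinement are around the same point, so the two analyses are compatible with no additional work.

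The main obstacle I anticipate is making the confinement step fully rigorous: one must verify that $\mX^*$ is indeed a fixed point of the projected gradient map (immediate from the first-order optimality condition \eqref{firstOrderOpt_d_dim}, since the projection of $\mX^* - \eta\nabla f(\mX^*)$ onto the ball is $\mX^*$ precisely when $-\eta\nabla f(\mX^*) \in \mathcal{N}_{\lbrace\Vert\mX\Vert_*\le1\rbrace}(\mX^*)$), and that the factor $\tfrac12$ in $R_0(r)$ relative to \eqref{ineq:FrobRadiusBound} is the right bookkeeping — here one should double-check whether the $(1+\eta\beta)$ denominator in \eqref{ineq:FrobRadiusBound} evaluated at $\eta=1/\beta$ gives exactly the factor $2$ claimed, i.e., that $\eta/(1+\eta\beta) = 1/(2\beta)$, which it does. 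A secondary subtlety is that the linear-rate conclusion needs the iterates to stay in the feasible set (trivially true since each iterate is a projection onto it) and within the region where \cref{thm:QG} applies (the whole feasible set), so no extra localization is needed there; the only genuinely new content beyond citing standard results is the interplay with \cref{lemma:RadiusForProjectedGradient_d_dim}, which the induction handles cleanly.
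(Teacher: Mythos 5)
Your proposal matches the paper's argument in every essential respect: confine the iterates to a ball around $\mX^*$ via the Fej\'er monotonicity / nonexpansiveness of the projected gradient map (the paper cites Lemma~9.17 of \cite{beckIntroduction} for the inequality $\Vert\mX_{t+1}-\mX^*\Vert_F\le\Vert\mX_t-\mX^*\Vert_F$, and you correctly observe this follows because $\mX^*$ is a fixed point of the map by first-order optimality), invoke \cref{lemma:RadiusForProjectedGradient_d_dim} inside that ball to justify replacing full projections with rank-$r$ truncated ones, then cite the standard sublinear PGD rate for the first conclusion and \cref{thm:QG} together with the quadratic-growth-to-linear-convergence mechanism of \cite{nesterov_LCwithQG} for the second.

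One small bookkeeping point worth flagging: you assert that $R_0(r)$ is ``exactly $\tfrac12$'' the Frobenius radius of \eqref{ineq:FrobRadiusBound} and that this slack is what closes the induction. Neither is quite right. Evaluating \eqref{ineq:FrobRadiusBound} at $\eta=1/\beta$ gives the prefactor $\tfrac{\eta}{\sqrt{N}(1+\eta\beta)}=\tfrac{1}{2\beta\sqrt{N}}$, whereas the theorem's $R_0(r)$ has prefactor $\tfrac{\eta}{2\beta\sqrt{N}}$; if the $\eta$ appearing in $R_0(r)$ is again interpreted as $1/\beta$, the ratio is $1/\beta$, not $1/2$ (this looks like a typo in the paper's statement, where that $\eta$ should plausibly be $1$, making $R_0(r)$ coincide with the lemma's bound). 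More importantly, the induction needs no slack at all: nonexpansiveness gives $\Vert\mX_{t+1}-\mX^*\Vert_F\le\Vert\mX_t-\mX^*\Vert_F$ exactly, so the iterates never leave the initial ball regardless of its radius, and \cref{lemma:RadiusForProjectedGradient_d_dim} can be applied to each iterate with no margin required. Since your proof ultimately relies on that nonexpansiveness argument rather than on the purported slack, the slip does not undermine the proof; it is just a loose end in the exposition.
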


\begin{proof}
If for all $t\ge1$ it holds that $\Vert\mX_t-\mX^*\Vert_F \le R_0(r)$, then by \cref{lemma:RadiusForProjectedGradient_d_dim} it follows that $\rank_{\textnormal{t}}(\Pi_{\lbrace\Vert\mY\Vert_*\le1\rbrace}[\mX-\eta\nabla{}f(\mX)])\le r$, and therefore, the projection $\Pi_{\lbrace\Vert\mY\Vert_*\le1\rbrace}[\mX-\eta\nabla{}f(\mX)])$ could be replaced with its rank-$r$ truncated counterpart, without any change to the result. Thus, the standard convergence rate result for the projected gradient decent method, which is known to be 
\begin{align*}
f(\mX_t)-f(\mX^*)\le \frac{\beta\Vert\mX_1-\mX^*\Vert_F^2}{2(t-1)}
\end{align*} 
(see for instance Theorem 9.16 in \cite{beckIntroduction}), still holds.

Since the initialization $\mX_1$ satisfies that $\Vert\mX_1-\mX^*\Vert_F \le R_0(r)$, and using the known result 
 that the iterates of the projected gradient decent method satisfy that
$\Vert\mX_{t+1}-\mX^*\Vert_F\le\Vert\mX_{t}-\mX^*\Vert_F$ for all $t\ge1$ (see for instance lemma 9.17 in \cite{beckIntroduction}), it follows that indeed for or all $t\ge1$, $\Vert\mX_t-\mX^*\Vert_F \le R_0(r)$

For the second part of the theorem, under strict complementarity and assuming there exists a unique optimal solution, \cref{thm:QG} implies that quadratic growth holds with some parameter $\gamma>0$. Therefore, as discussed for instance in Section 5.1 in \cite{nesterov_LCwithQG},  the projected gradient decent method converges with the  linear rate specified in the theorem.
\end{proof}

\begin{algorithm}
	\caption{Nesterov's restarted fast gradient method \cite{nesterov_LCwithQG}}\label{alg:RFGM}
	\begin{algorithmic}
	    \STATE \textbf{Input:} $K\in\mathbb{N}$
		\STATE \textbf{Initialization:} $\mX_1\in\lbrace\mX\in\reals^{n_1\times\cdots\times n_d}\ \vert\ \Vert\mX\Vert_*\le1\rbrace$
		\FOR {$t=1,2,\ldots$}
		\STATE $\mX_{t,1}=\mY_{t,1}=\mX_t$
		\STATE $\theta_1=1$
		\FOR {$s=1,2,\ldots,K$}
			\STATE $\mX_{t,s+1}=\Pi_{\lbrace\mX\in\reals^{n_1\times\cdots\times n_d}\ \vert\ \Vert\mX\Vert_*\le1\rbrace}[\mY_{t,s}-\frac{1}{\beta}\nabla{}f(\mY_{t,s})]$
			\STATE $\theta_{s+1} =\frac{1+\sqrt{1+4\theta_s^2}}{2}$
			\STATE $\mY_{t,s+1}=\mX_{t,s+1}+\frac{\theta_{s}-1}{\theta_{s+1}}(\mX_{t,s+1}-\mX_{t,s})$
		\ENDFOR
		\STATE $\mX_{t+1}=\mX_{t,K+1}$
		\ENDFOR
	\end{algorithmic}
\end{algorithm}

\begin{theorem}[local convergence of Nesterov's restarted fast gradient method method under quadratic growth] 
\label{thm:acceleratedWithQG}
Assume $f$ in Problem \eqref{mainModel} is of the form $f(\mX)=g(\mA(\mX))+\langle\mC,\mX\rangle$, where $g$ is $\alpha$-strongly convex and {$\beta$-smooth},  $\mA:\reals^{n_1\times\cdots\times n_d}\rightarrow\reals^m$ is a linear map, and that the optimal solution $\mX^*$ is unique and satisfies strict complementarity (Definition in \cref{lemma:SCequivalence}). Let $\lbrace\mX_t\rbrace_{t\ge1}$ be the sequence of iterates produced by Nesterov's restart fast gradient method, \cref{alg:RFGM}, with $K=\lceil(\sqrt{2\beta}e)/\sqrt{\gamma}\rceil$, where $\gamma>0$ is as discussed in \cref{remark:QGparameter}.
Let $\min\lbrace n_1,n_2\rbrace>r\ge{\#\sigma_1}^{\max}$. If the initialization $\mX_1$ satisfies that $\Vert\mX_1-\mX^*\Vert_F \le \frac{\sqrt{2}\min\left\lbrace\gamma,\sqrt{\gamma}\right\rbrace}{3\sqrt{\beta}} R_0(r)$, where
\begin{align*}
& R_0(r):=
 \frac{\eta}{2\beta\sqrt{N}}\max\left\lbrace\frac{\delta(r)}{1+\frac{\sqrt{{\#\sigma_1}^{\max}\cdot\nnzb(\overbar{\nabla{}f(\mX^*)})}}{\#\sigma_1(\overbar{\nabla{}f(\mX^*)})}},\frac{\delta(r-{\#\sigma_1}^{\max}+1)}{\frac{1}{\sqrt{{\#\sigma_1}^{\max}}}+\frac{\sqrt{{\#\sigma_1}^{\max}\cdot\nnzb(\overbar{\nabla{}f(\mX^*)})}}{\#\sigma_1(\overbar{\nabla{}f(\mX^*)})}}\right\rbrace
\end{align*}
then, for all $t\ge1$, the projections $\Pi_{\lbrace\mX\in\reals^{n_1\times\cdots\times n_d}\ \vert\ \Vert\mX\Vert_*\le1\rbrace}[\cdot]$ could be replaced with their rank-r truncated counterparts (see \cref{def:lowRankProjection}) without changing the sequence $\lbrace\mX_t\rbrace_{t\ge1}$, and for all $t\ge1$ it holds that
\begin{align*}
f(\mX_t)-f(\mX^*)\le \exp\left(-\frac{\sqrt{2\gamma}}{\sqrt{\beta}e}(t-1)\right)(f(\mX_1)-f(\mX^*)).
\end{align*}
\end{theorem}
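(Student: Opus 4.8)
The plan is to combine two ingredients that are already essentially available: (i) the linear-convergence guarantee for Nesterov's restarted fast gradient method under quadratic growth, and (ii) the low-tubal-rank property of the projected gradient mapping near an optimal solution from \cref{lemma:RadiusForProjectedGradient_d_dim}, together with the quadratic growth bound from \cref{thm:QG}. The only genuinely new work is to verify that, under the stated (shrunken) initialization radius, \emph{every} point at which a projected-gradient step is evaluated during the run of \cref{alg:RFGM} stays inside the Frobenius ball of radius $R_0(r)$ around $\mX^*$, so that by \cref{lemma:RadiusForProjectedGradient_d_dim} each projection produces a tensor of tubal rank at most $r$ and hence may be replaced by its rank-$r$ truncated counterpart (via \cref{def:lowRankProjection} and the second part of \cref{lemma:projectionOntoTNN}) without altering the iterates.

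First I would invoke \cref{thm:QG}: since $f(\mX)=g(\mA(\mX))+\langle\mC,\mX\rangle$ with $g$ strongly convex, $\mX^*$ unique and satisfying strict complementarity, quadratic growth holds with the constant $\gamma>0$ from \cref{remark:QGparameter}. Then I would quote the standard analysis of the restarted fast gradient method (as in \cite{nesterov_LCwithQG}, Section 5.1): with inner-loop length $K=\lceil(\sqrt{2\beta}e)/\sqrt{\gamma}\rceil$, each outer iteration contracts the optimality gap by a factor $e^{-1}$, giving $f(\mX_t)-f(\mX^*)\le\exp(-\tfrac{\sqrt{2\gamma}}{\sqrt{\beta}e}(t-1))(f(\mX_1)-f(\mX^*))$, which is exactly the claimed rate. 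Combining quadratic growth with $\beta$-smoothness at $\mX^*$ (so $f(\mX_1)-f(\mX^*)\le\tfrac{\beta}{2}\Vert\mX_1-\mX^*\Vert_F^2$ and $\gamma\Vert\mX-\mX^*\Vert_F^2\le f(\mX)-f(\mX^*)$), this yields, for every outer iterate, $\Vert\mX_t-\mX^*\Vert_F^2\le\tfrac{\beta}{2\gamma}\Vert\mX_1-\mX^*\Vert_F^2$, i.e. $\Vert\mX_t-\mX^*\Vert_F\le\sqrt{\beta/(2\gamma)}\,\Vert\mX_1-\mX^*\Vert_F$.

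The main obstacle — and the step requiring care — is controlling the \emph{inner}-loop points $\mY_{t,s}$, since these are the tensors at which $\nabla f$ is evaluated and projected gradient steps are taken, and the momentum term $\frac{\theta_s-1}{\theta_{s+1}}(\mX_{t,s+1}-\mX_{t,s})$ can push them a bounded but nontrivial distance from $\mX^*$. Here I would use standard bounds for the accelerated method within one restart cycle: from the contraction $f(\mX_{t,s})-f(\mX^*)\le f(\mX_t)-f(\mX^*)$ along the cycle (or more precisely the standard $O(1/s^2)$ descent plus the fact that $\mX_t$ already has small gap), one bounds $\Vert\mX_{t,s}-\mX^*\Vert_F$ and $\Vert\mX_{t,s+1}-\mX_{t,s}\Vert_F$ by constant multiples of $\sqrt{(f(\mX_t)-f(\mX^*))/\gamma}$, and the $\theta$-coefficients are bounded by $1$; a crude bound gives $\Vert\mY_{t,s}-\mX^*\Vert_F\le 3\sqrt{(f(\mX_t)-f(\mX^*))/\gamma}\le 3\sqrt{\beta/(2\gamma)}\cdot\sqrt{1/\gamma}\cdot\Vert\mX_1-\mX^*\Vert_F\cdot(\text{adjust for }\gamma<1)$. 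This is precisely why the hypothesis on $\Vert\mX_1-\mX^*\Vert_F$ carries the factor $\tfrac{\sqrt{2}\min\{\gamma,\sqrt{\gamma}\}}{3\sqrt{\beta}}$: it is chosen so that after the worst-case blow-up by $3\sqrt{\beta/(2\gamma)}/\min\{1,\sqrt{\gamma}\}$ one still has $\Vert\mY_{t,s}-\mX^*\Vert_F\le R_0(r)$ for all $t,s$.

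Finally, with $\Vert\mY_{t,s}-\mX^*\Vert_F\le R_0(r)$ established for all inner points, \cref{lemma:RadiusForProjectedGradient_d_dim} (with step-size $\eta=1/\beta$, which makes its RHS equal to $R_0(r)$) gives $\rank_{\textnormal{t}}(\Pi_{\{\Vert\mY\Vert_*\le1\}}[\mY_{t,s}-\tfrac1\beta\nabla f(\mY_{t,s})])\le r$; by \cref{lemma:projectionOntoTNN} the exact projection coincides with the rank-$r$ truncated projection of \cref{def:lowRankProjection}, so substituting the latter changes nothing in the run, and the convergence estimate from the second paragraph holds verbatim. One subtlety to double-check is that the truncated projections must already be used from iteration $1$ onward — this is fine since the argument shows the truncation is lossless at every step, so the sequences produced with and without truncation are identical by induction on $(t,s)$. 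I would organize the write-up as: (1) quote QG and smoothness to get the gap/distance relations; (2) quote the restarted-FGM rate; (3) the inner-loop distance bound; (4) apply \cref{lemma:RadiusForProjectedGradient_d_dim} and conclude.
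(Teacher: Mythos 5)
Your plan follows essentially the same route as the paper's own proof: invoke \cref{thm:QG} for quadratic growth, quote the restarted FGM rate from \cite{nesterov_LCwithQG}, bound the momentum points $\mY_{t,s}$ via $\Vert\mY_{t,s+1}-\mX^*\Vert_F \le \Vert\mX_{t,s}-\mX^*\Vert_F + 2\Vert\mX_{t,s+1}-\mX^*\Vert_F$ (using $\frac{\theta_s-1}{\theta_{s+1}}\le 1$), run the induction on $(t,s)$ to keep everything in the $R_0(r)$ ball, and then apply \cref{lemma:RadiusForProjectedGradient_d_dim} with $\eta=1/\beta$. One small caution: your phrase about ``the contraction $f(\mX_{t,s})-f(\mX^*)\le f(\mX_t)-f(\mX^*)$ along the cycle'' is not literally true for the accelerated inner loop (which is not monotone), but you immediately replace it with the correct $O(1/s^2)$ FGM estimate $f(\mX_{t,s+1})-f(\mX^*)\le \frac{\beta}{2}\Vert\mX_t-\mX^*\Vert_F^2$; combined with quadratic growth and the across-epoch recursion, that is precisely where the $\min\{\gamma,\sqrt{\gamma}\}/\sqrt{\beta}$ shrinkage factor in the initialization radius arises, as you identified.
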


\begin{proof}
By the update rule for $\mY_{t,s+1}$, for any $t\ge1$ and $s\in[K]$, we have that
\begin{align*}
\Vert\mY_{t,s+1}-\mX_{t,s+1}\Vert_F = \frac{\theta_{s}-1}{\theta_{s+1}}\Vert\mX_{t,s+1}-\mX_{t,s}\Vert_F \le \Vert\mX_{t,s+1}-\mX_{t,s}\Vert_F,
\end{align*}
where the last inequality holds using the update of $\theta_s$, which implies that $\theta_{s+1}^2-\theta_{s+1}=\theta_{s}^2$, and therefore, since for all $s\in[K]$ it holds that $\theta_{s+1}\ge1$ (see Lemma 10.33 in \cite{beckOptimizationBook}), it follows that $\frac{\theta_{s}-1}{\theta_{s+1}}=\sqrt{1-\frac{1}{\theta_{s+1}}}-\frac{1}{\theta_{s+1}}\le1$.

Thus, we have that
\begin{align*}
\Vert\mY_{t,s+1}-\mX^*\Vert_F & \le \Vert\mY_{t,s+1}-\mX_{t,s+1}\Vert_F+\Vert\mX_{t,s+1}-\mX^*\Vert_F
\\ & \le \Vert\mX_{t,s+1}-\mX_{t,s}\Vert_F+\Vert\mX_{t,s+1}-\mX^*\Vert_F
\\ & \le \Vert\mX_{t,s}-\mX^*\Vert_F+2\Vert\mX_{t,s+1}-\mX^*\Vert_F.
\end{align*} 

We will prove by induction that for every $t\ge1$ and any $s\in[K]$ it holds that $\Vert\mX_{t,s}-\mX^*\Vert_F\le R_0(r)/3$. Together with the inequality above, this will imply that for every $t\ge1$ and any $s\in[K]$, $\Vert\mY_{t,s+1}-\mX^*\Vert_F\le R_0(r)$, which by \cref{lemma:RadiusForProjectedGradient_d_dim} implies that all projections onto the unit TNN ball could be replaced with their rank-r truncated counterparts, without any change to the outcome. Thus, the original convergence rate of the restart fast gradient method will be kept, which as established in Section 5.2.2 in \cite{nesterov_LCwithQG}, is
\begin{align*}
f(\mX_t)-f(\mX^*)\le \exp\left(-\frac{\sqrt{2\gamma}}{\sqrt{\beta}e}(t-1)\right)(f(\mX_1)-f(\mX^*)).
\end{align*} 
The base case of the induction $t=s=1$ holds directly by our initialization choice. Assume that the claim holds up to some $t\ge1$ and some $s\in[K]$. Then, all the iterates computed up to $\mX_{s,t}$ and $\mY_{t,s}$ are identical to the iterates that would have been computed if using full-rank SVD computations for the projections, and so, the original convergence rate of the fast gradient method with our choice of $\theta_s$ (see theorem 10.34 in \cite{beckOptimizationBook}) of each epoch is maintained, i.e., it holds that
\begin{align}
f(\mX_{t,s+1})-f(\mX^*) & \le \frac{2\beta\Vert\mX_{t,1}-\mX^*\Vert_F^2}{(s+1)^2} = \frac{2\beta\Vert\mX_{t}-\mX^*\Vert_F^2}{(s+1)^2}   \label{CR:FGM}
\\ & \le \frac{\beta}{2}\Vert\mX_{t}-\mX^*\Vert_F^2. \label{CR:FGM_l2}
\end{align}
In particular, for $t=1$ we have that for any $s\in[K]$ it holds that
\begin{align} \label{ineq:inProof553}
\Vert\mX_{1,s+1}-\mX^*\Vert_F^2 \underset{(a)}{\le} \frac{1}{\gamma}(f(\mX_{1,s+1})-f(\mX^*)) & \underset{(b)}{\le} \frac{\beta}{2\gamma}\Vert\mX_{1}-\mX^*\Vert_F^2,
\end{align}
where (a) follows from the quadratic growth property, and (b) follows from \eqref{CR:FGM_l2}. Otherwise, if $t>1$ then we have that
\begin{align} \label{ineq:inProof557}
\Vert\mX_{t,s+1}-\mX^*\Vert_F^2 \underset{(a)}{\le} \frac{1}{\gamma}(f(\mX_{t,s+1})-f(\mX^*)) & \underset{(b)}{\le} \frac{\beta}{2\gamma}\Vert\mX_{t}-\mX^*\Vert_F^2 \underset{(c)}{\le} \frac{\beta}{2\gamma^2}(f(\mX_{t})-f(\mX^*)),
\end{align}
where both (a) and (c) follow from the quadratic growth property, and (b) follows from \eqref{CR:FGM_l2}. 

In addition,
\begin{align*}
f(\mX_{t})-f(\mX^*) & = f(\mX_{t-1,K+1})-f(\mX^*) \underset{(a)}{\le} \frac{2\beta\Vert\mX_{t-1}-\mX^*\Vert_F^2}{(K+1)^2}\le \frac{2\beta\Vert\mX_{t-1}-\mX^*\Vert_F^2}{K^2}
\\ & \underset{(b)}{=} \frac{\gamma}{e^2}\Vert\mX_{t-1}-\mX^*\Vert_F^2 \underset{(c)}{\le} \frac{1}{e^2}(f(\mX_{t-1})-f(\mX^*)) \le f(\mX_{t-1})-f(\mX^*),
\end{align*}
where (a) follows from  \eqref{CR:FGM}, (b) follows from our choice of $K$, and (c) follows from the quadratic growth property. Unrolling the  recursion, we have that
\begin{align} \label{ineq:inProof552}
f(\mX_{t})-f(\mX^*) &  \le \cdots \le f(\mX_{2})-f(\mX^*) = f(\mX_{1,K+1})-f(\mX^*) \nonumber
\\ & \underset{(a)}{\le} \frac{2\beta\Vert\mX_{1}-\mX^*\Vert_F^2}{(K+1)^2}
\underset{(b)}{\le} \Vert\mX_{1}-\mX^*\Vert_F^2,
\end{align}
where here too (a) follows from  \eqref{CR:FGM}, and (b) follows from our choice of $K$. 

Plugging-in \eqref{ineq:inProof552} into the RHS of \eqref{ineq:inProof557}, we obtain that for $t>1$, it holds that
\begin{align} \label{ineq:inProof558}
\Vert\mX_{t,s+1}-\mX^*\Vert_F^2 \le \frac{\beta}{2\gamma^2}\Vert\mX_{1}-\mX^*\Vert_F^2.
\end{align}
Therefore, choosing $\mX_1$ such that $\Vert\mX_{1}-\mX^*\Vert_F \le \frac{\sqrt{2}\min\left\lbrace\gamma,\sqrt{\gamma}\right\rbrace}{3\sqrt{\beta}} R_0(r)$, it follows by taking the maximum  between \eqref{ineq:inProof553} and \eqref{ineq:inProof558}, that for any $t\ge1$ and $s\in[K]$, indeed $\Vert\mX_{t,s+1}-\mX^*\Vert_F \le R_0(r)/3$ as desired.

\end{proof}

For accelerated gradient decent with a unique optimal solution $\mX^*$, but in case the quadratic growth result does not hold (e.g., when $f$ does not admit the structure $f(\mX) = g(\mA(\mX))+\langle{\mC,\mX}\rangle$ with strongly convex $g$ etc.), 
a stronger initialization condition is required to ensure that throughout the run, all iterates remain inside a certain ball so that \cref{lemma:RadiusForProjectedGradient_d_dim} could be applied. In this case, it is also sufficient to use the standard accelerated gradient method, i.e., without restarts. The sufficient  condition so that all the projections $\Pi_{\lbrace\mX\in\reals^{n_1\times\cdots\times n_d}\ \vert\ \Vert\mX\Vert_*\le1\rbrace}[\cdot]$ could be replaced with their rank-r counterparts, is to initialize it with $\mX_1$ such that 
\begin{align*}
& \max\lbrace\Vert\mX_1-\mX^*\Vert_F,3R(\mX_1)\rbrace
\\ & \le  \frac{\eta}{2\beta\sqrt{N}}\max\left\lbrace\frac{\delta(r)}{1+\frac{\sqrt{{\#\sigma_1}^{\max}\cdot\nnzb(\overbar{\nabla{}f(\mX^*)})}}{\#\sigma_1(\overbar{\nabla{}f(\mX^*)})}},\frac{\delta(r-{\#\sigma_1}^{\max}+1)}{\frac{1}{\sqrt{{\#\sigma_1}^{\max}}}+\frac{\sqrt{{\#\sigma_1}^{\max}\cdot\nnzb(\overbar{\nabla{}f(\mX^*)})}}{\#\sigma_1(\overbar{\nabla{}f(\mX^*)})}}\right\rbrace,
\end{align*}
where
\begin{align*}
R(\mX) := \sup_{\Vert\mZ\Vert_*\le1:\ f(\mZ)\le f(\mX^*)+2\beta\Vert\mX-\mX^*\Vert_F^2}\Vert\mZ-\mX^*\Vert_F,
\end{align*}
and the rest of the parameters are as defined in \cref{thm:acceleratedWithQG}.
See Theorem 5 in \cite{garberNuclearNormMatrices} for a complete proof.

\subsection{Nonsmooth setting}
\label{sec:algorithmicConsequencesNonmooth}

In this section we introduce the following notation.
For any optimal solution $\mX^*\in\lbrace\mX\in\reals^{n_1\times\cdots\times n_d}\ \vert\ \Vert\mX\Vert_*\le1\rbrace$ of Problem \eqref{mainModel} and $\mG^*\in\partial{}f(\mX^*)$, we denote  $\nnzb(\G^*):=\#\lbrace(i_3,\dots,i_d)~|~\overbar{\G^*}^{(i_3,\ldots,i_d)}\not=0\rbrace$ and ${\#\sigma_1}^{\max}:=\max_{i_3\in[n_3],\ldots,i_d\in[n_d]}\#\sigma_1(\overbar{\G^*})$, where $\overbar{\G^*}:=\bdiag(\overbar{\mG^*})$, and for any $\min\lbrace n_1,n_2\rbrace>r\ge{\#\sigma_1}^{\max}$
 the term $\delta(r)$ is as defined in \cref{lemma:SCnonsmooth}.


\begin{theorem}[local convergence of projected extragradient] 
\label{thm:nonsmoothEG}
Fix an optimal solution $\mX^*\in\lbrace\mX\in\reals^{n_1\times\cdots\times n_d}\ \vert\ \Vert\mX\Vert_*\le1\rbrace$ to Problem \eqref{mainModel} and assume \cref{lemma:structureOfNonsmoothAssumption} holds. Let $\mG^*\in\partial{}f(\mX^*)$ which satisfies that $\langle\mX-\mX^*,\mG^*\rangle\ge0$ for all $\mX\in\lbrace\mX\in\reals^{n_1\times\cdots\times n_d}\ \vert\ \Vert\mX\Vert_*\le1\rbrace$. 
Define $F$ as in Problem \eqref{saddlePointProblem} and 
%
let $\lbrace(\mX_t,\y_t)\rbrace_{t\ge1}$ and $\lbrace(\mZ_t,\w_t)\rbrace_{t\ge2}$ be the sequences of iterates produced by the projected extragradient method, \cref{alg:EG}, with a fixed step-size:
\begin{align*}
\eta = \min\left\lbrace \frac{1}{2\sqrt{\beta_X^2+\beta_{yX}^2}}, \frac{1}{2\sqrt{\beta_y^2+\beta_{Xy}^2}} , \frac{1}{\beta_X+\beta_{Xy}} , \frac{1}{\beta_y+\beta_{yX}}  \right\rbrace,
\end{align*}
where $\beta_X,\beta_y,\beta_{Xy},\beta_{yX}$ are as defined in \eqref{def:betas}.
Let $\min\lbrace n_1,n_2\rbrace>r\ge{\#\sigma_1}^{\max}$. Assume the initialization $(\mX_1,\y_1)$ satisfies that $\Vert(\mX_1,\y_1)-(\mX^*,\y^*)\Vert_F \le R_0(r),$ where
\begin{align*}
& R_0(r):=
\\ & \frac{(1+\sqrt{2})^{-1}\eta}{\sqrt{N}\left(1+\sqrt{2}\eta\max\lbrace\beta_{X},\beta_{Xy}\rbrace\right)}
\max\left\lbrace\frac{\delta(r)}{1+\frac{\sqrt{{\#\sigma_1}^{\max}\cdot\nnzb(\G^*)}}{\#\sigma_1(\overbar{\G^*})}},\frac{\delta(r-{\#\sigma_1}^{\max}+1)}{\frac{1}{\sqrt{{\#\sigma_1}^{\max}}}+\frac{\sqrt{{\#\sigma_1}^{\max}\cdot\nnzb(\G^*)}}{\#\sigma_1(\overbar{\G^*})}}\right\rbrace .
\end{align*}
Then, for all $t\ge1$, the projections $\Pi_{\lbrace\mX\in\reals^{n_1\times\cdots\times n_d}\ \vert\ \Vert\mX\Vert_*\le1\rbrace}[\cdot]$ could be replaced with their rank-$r$ truncated counterparts (see \cref{def:lowRankProjection}) without changing the sequences $\lbrace(\mX_t,\y_t)\rbrace_{t\ge1}$ and $\lbrace(\mZ_t,\w_t)\rbrace_{t\ge2}$, and for all $T\ge0$ it holds that
\begin{align*}
& \min_{t\in[T]}f(\mZ_{t+1})-f(\mX^*)
\le \frac{D^2\max\left\lbrace\sqrt{\beta_X^2+\beta_y^2}, \sqrt{\beta_y^2+\beta_{Xy}^2}, \frac{1}{2}(\beta_X+\beta_{Xy}), \frac{1}{2}(\beta_y+\beta_{yX}) \right\rbrace}{T},
\end{align*}
where $D:=\sup_{(\mX,\y),(\mZ,\w)\in \lbrace\mX\in\reals^{n_1\times\cdots\times n_d}\ \vert\ \Vert\mX\Vert_*\le1\rbrace\times\mK}\Vert(\mX,\y)-(\mZ,\w)\Vert$.
\end{theorem}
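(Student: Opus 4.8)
\textbf{Proof plan for \cref{thm:nonsmoothEG}.}

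The plan is to combine the invariant-ball argument of \cref{thm:PGD} (adapted to the extragradient setting via \cref{lemma:RadiusForExtragradient_d_dim}) with the standard $O(1/T)$ convergence guarantee for the projected extragradient method on smooth convex--concave saddle-point problems, and then to translate the saddle-point guarantee back to Problem \eqref{mainModel} using the relation $f(\mX)=\max_{\y\in\mK}F(\mX,\y)$. First I would invoke \cref{lemma:connectionSubgradientNonsmoothAndSaddlePoint}: under \cref{lemma:structureOfNonsmoothAssumption}, the given optimal $\mX^*$ together with the chosen subgradient $\mG^*$ produces a $\y^*\in\argmax_{\y\in\mK}F(\mX^*,\y)$ such that $(\mX^*,\y^*)$ is a saddle-point of \eqref{saddlePointProblem} and $\nabla_{\mX}F(\mX^*,\y^*)=\mG^*$. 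This lets me transfer the generalized strict complementarity measure $\delta(r)$ from the nonsmooth setting (\cref{lemma:SCnonsmooth}) to the saddle-point setting (\cref{def:GSCsaddlePoint}) with the same value, so that $\overbar{\G^*}=\overbar{\nabla_{\mX}F(\mX^*,\y^*)}$ and the quantities $\nnzb(\G^*)$, ${\#\sigma_1}^{\max}$, $\#\sigma_1(\overbar{\G^*})$ all coincide with their $F$-based analogues appearing in \cref{lemma:RadiusForExtragradient_d_dim}.

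Next I would establish the invariant-ball property. The chosen step-size $\eta$ is exactly the one for which the projected extragradient method is non-expansive toward the saddle-point, i.e., one has $\Vert(\mX_{t+1},\y_{t+1})-(\mX^*,\y^*)\Vert\le\Vert(\mX_t,\y_t)-(\mX^*,\y^*)\Vert$ for all $t\ge1$ (this is the standard Fej\'er-monotonicity of extragradient under the stated smoothness/step-size conditions; I would cite the same source used in \cref{sec:algorithmicConsequencesNonmooth} / \cite{ourExtragradient}). Combined with a triangle-inequality bound relating $\Vert\mZ_{t+1}-\mX^*\Vert$ to $\Vert(\mX_t,\y_t)-(\mX^*,\y^*)\Vert$ (using that $\mZ_{t+1}$ is one projected-gradient step from $(\mX_t,\y_t)$ and the non-expansiveness of projection, plus $\nabla_{\mX}F$-smoothness), the initialization condition $\Vert(\mX_1,\y_1)-(\mX^*,\y^*)\Vert\le R_0(r)$ with the displayed $R_0(r)$ — which carries the factor $(1+\sqrt2)^{-1}$ precisely to absorb this triangle-inequality slack — guarantees that for every $t$ both $\Vert\mX_t-\mX^*\Vert_F$ and $\Vert(\mZ_{t+1},\w_{t+1})-(\mX^*,\y^*)\Vert$ satisfy the hypothesis of \cref{lemma:RadiusForExtragradient_d_dim} with $K=1+\sqrt2\,\eta\max\{\beta_X,\beta_{Xy}\}$. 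Hence $\rank_{\textnormal{t}}(\Pi_{\{\Vert\mY\Vert_*\le1\}}[\mX_t-\eta\nabla_{\mX}F(\mZ_{t+1},\w_{t+1})])\le r$ and similarly $\rank_{\textnormal{t}}(\Pi_{\{\Vert\mY\Vert_*\le1\}}[\mX_t-\eta\nabla_{\mX}F(\mX_t,\y_t)])\le r$ (applying the lemma with $(\mZ,\w)=(\mX_t,\y_t)$), so by \cref{lemma:projectionOntoTNN} every primal projection in \cref{alg:EG} may be replaced by its rank-$r$ truncated counterpart without altering any iterate — this needs a short induction on $t$ so that "replacing the projection" is legitimate (the iterates are unchanged, so the next iterate's distance bound still holds).

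Finally I would record the convergence rate. Since the iterates are identical to those produced with exact projections, the textbook ergodic guarantee for extragradient applies: $\frac1T\sum_{t=1}^T\max_{\y\in\mK}F(\mZ_{t+1},\y)-\frac1T\sum_{t=1}^T\min_{\Vert\mX\Vert_*\le1}F(\mX,\w_{t+1})=O(1/T)$ with the stated constant $D^2\max\{\dots\}/T$ (again via \cite{ourExtragradient}). Then I pass to Problem \eqref{mainModel} by the chain $f(\mZ_{t+1})-f(\mX^*)=\max_{\y}F(\mZ_{t+1},\y)-\max_{\y}F(\mX^*,\y)\le\max_{\y}F(\mZ_{t+1},\y)-\min_{\Vert\mX\Vert_*\le1}F(\mX,\w_{t+1})$, using $\max_{\y}F(\mX^*,\y)=\min_{\Vert\mX\Vert_*\le1}\max_{\y}F(\mX,\y)\ge\min_{\Vert\mX\Vert_*\le1}F(\mX,\w_{t+1})$ by weak duality at the saddle-point; taking the minimum over $t\in[T]$ of the left side and averaging-then-minimizing on the right yields the claimed bound on $\min_{t\in[T]}f(\mZ_{t+1})-f(\mX^*)$. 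The main obstacle I anticipate is the bookkeeping in the invariant-ball step: one must verify the triangle-inequality slack factor $(1+\sqrt2)$ really suffices for \emph{both} the intermediate point $(\mZ_{t+1},\w_{t+1})$ and the point $\mX_t$ at which the second gradient is evaluated, and that the induction correctly interleaves "distance stays small $\Rightarrow$ low-rank projection $\Rightarrow$ iterate unchanged $\Rightarrow$ distance still decreases" — everything else is an application of results already in the excerpt.
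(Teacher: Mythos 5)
Your proposal matches the paper's proof essentially step for step: invoke \cref{lemma:connectionSubgradientNonsmoothAndSaddlePoint} to identify $\mG^*$ with $\nabla_{\mX}F(\mX^*,\y^*)$, use the Fej\'er-monotonicity and the $(1+\sqrt 2)$-factor intermediate-point bound from \cite{ourExtragradient} (which your step-size ensures, since $\eta\beta\le 1/\sqrt 2$ gives $1+1/\sqrt{1-\eta^2\beta^2}\le 1+\sqrt 2$) to keep all $(\mX_t,\y_t)$ and $(\mZ_{t+1},\w_{t+1})$ inside the radius of \cref{lemma:RadiusForExtragradient_d_dim}, conclude all primal projections have tubal rank $\le r$ and hence commute with truncation, then transfer the ergodic saddle-point rate to $f$ via $f(\mX)=\max_{\y\in\mK}F(\mX,\y)$. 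The only cosmetic differences are that the paper bounds $\frac{1}{T}\sum_t\min_{\mX}F(\mX,\w_{t+1})$ directly by $f(\mX^*)$ rather than appealing to weak duality, and that it avoids your explicit induction by first arguing the distance bounds for the exact-projection iterates and then noting a posteriori that truncation leaves them unchanged; both are inessential.
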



\begin{proof} 
Denote $\beta=\sqrt{2}\max\lbrace\sqrt{\beta_X^2+\beta_{yX}^2},\sqrt{\beta_y^2+\beta_{Xy}^2}\rbrace$.
By Lemma 8 in \cite{ourExtragradient} (which is not unique for the matrix case but holds for any finite Euclidean space), for all $t\ge2$, the iterates of the projected extragradient method satisfy that
\begin{align*}
\Vert(\mX_{t},\y_{t})-(\mX^*,\y^*)\Vert \le \Vert(\mX_{t-1},\y_{t-1})-(\mX^*,\y^*)\Vert,
\end{align*}
and for all $t\ge1$ they satisfy that
\begin{align*}
\Vert(\mZ_{t+1},\w_{t+1})-(\mX^*,\y^*)\Vert & \le  \left(1+\frac{1}{\sqrt{1-\eta^2\beta^2}}\right)\Vert(\mX_{t},\y_{t})-(\mX^*,\y^*)\Vert
\\ &  \le (1+\sqrt{2})\Vert(\mX_{t},\y_{t})-(\mX^*,\y^*)\Vert,
\end{align*}
where the last inequality follows from our choice of $\eta$.
Therefore, unrolling the recursion and using our initialization choice of $(\mX_1,\y_1)$, we obtain that for all $t\ge1$, 
\begin{align*}
& \max\left\lbrace\Vert(\mX_{t},\y_{t})-(\mX^*,\y^*)\Vert,\Vert(\mZ_{t+1},\w_{t+1})-(\mX^*,\y^*)\Vert\right\rbrace 
\\ & \le(1+\sqrt{2})\Vert(\mX_{t-1},\y_{t-1})-(\mX^*,\y^*)\Vert \le \ldots \le (1+\sqrt{2})\Vert(\mX_{1},\y_{1})-(\mX^*,\y^*)\Vert 
\\ & \le (1+\sqrt{2})R_0(r).
\end{align*}
Since for all $t\ge1$ it holds that 
\begin{align*}
\Vert\mX_{t}-\mX^*\Vert & \le \Vert(\mX_{t},\y_{t})-(\mX^*,\y^*)\Vert 
\\ & \le\max\left\lbrace\Vert(\mX_{t},\y_{t})-(\mX^*,\y^*)\Vert,\Vert(\mZ_{t+1},\w_{t+1})-(\mX^*,\y^*)\Vert\right\rbrace,
\end{align*}
we have that for all $t\ge1$ the condition in \cref{lemma:RadiusForExtragradient_d_dim} holds for $(\mX_{t},\y_{t})$ and $(\mZ_{t+1},\w_{t+1})$, and so for all $t\ge1$ it follows that 
\begin{align*}
& \rank_{\textnormal{t}}(\Pi_{\lbrace\Vert\mY\Vert_*\le1\rbrace}[\mX_t-\eta\nabla{}_{\mX}F(\mX_t,\y_t)])\le r
\\ & \rank_{\textnormal{t}}(\Pi_{\lbrace\Vert\mY\Vert_*\le1\rbrace}[\mX_t-\eta\nabla{}_{\mX}F(\mZ_{t+1},\w_{t+1})])\le r.
\end{align*}
Hence, the the iterates of projected extragradient method will remain unchanged when replacing all projections onto the unit TNN ball with their rank-$r$ truncated counterparts, and so, the method will also maintain its original convergence rate stated in \cite{ourExtragradient}, i.e.,
\begin{align} \label{ineq:convergenceRateSP}
& \frac{1}{T}\sum_{t=1}^{T}\max_{\y\in\mK}F(\mZ_{t+1},\y)-\frac{1}{T}\sum_{t=1}^{T}\min_{\mX\in\lbrace\mX\in\reals^{n_1\times\cdots\times n_d}\ \vert\ \Vert\mX\Vert_*\le1\rbrace}F(\mX,\w_{t+1}) \nonumber
\\ & \le \frac{D^2\max\left\lbrace\sqrt{\beta_X^2+\beta_y^2}, \sqrt{\beta_y^2+\beta_{Xy}^2}, \frac{1}{2}(\beta_X+\beta_{Xy}), \frac{1}{2}(\beta_y+\beta_{yX}) \right\rbrace}{T}.
\end{align}
Since we assume \cref{lemma:structureOfNonsmoothAssumption} holds, invoking \cref{lemma:connectionSubgradientNonsmoothAndSaddlePoint}
we know that there exists a point $\y^*\in\argmax_{\y\in\mathcal{K}}F(\mX^*,\y)$ such that $(\mX^*,\y^*)$ is a saddle-point of Problem \eqref{saddlePointProblem}, and $\nabla_{\mX}F(\mX^*,\y^*)=\mG^*$. Therefore, we can replace $\nabla_{\mX}F(\mX^*,\y^*)$ with $\mG^*$ in the assumptions and radius in \cref{lemma:RadiusForExtragradient_d_dim} to obtain the radius written in the statement of this theorem.

We can now use the relationship $f(\mX) = \max_{\y\in\mK} F(\mX,\y)$ to bound
\begin{align} \label{ineq:convergenceRateNS_1}
\min_{t\in[T]}f(\mZ_{t+1}) = \min_{t\in[T]}\max_{\y\in\mK} F(\mZ_{t+1},\y) \le \frac{1}{T}\sum_{t=1}^{T}\max_{\y\in\mK}F(\mZ_{t+1},\y)
\end{align}
and
\begin{align} \label{ineq:convergenceRateNS_2}
\frac{1}{T}\sum_{t=1}^{T}\min_{\mX\in\lbrace\mX\ \vert\ \Vert\mX\Vert_*\le1\rbrace}F(\mX,\w_{t+1}) \le \frac{1}{T}\sum_{t=1}^{T}F(\mX^*,\w_{t+1}) \le \max_{\y\in\mK} F(\mX^*,\y) = f(\mX^*).
\end{align}
Plugging \eqref{ineq:convergenceRateNS_1} and \eqref{ineq:convergenceRateNS_2} into the RHS of \eqref{ineq:convergenceRateSP}, we obtain the convergence rate for the nonsmooth problem in the theorem.

\end{proof}

\bibliographystyle{plain}
\bibliography{bibs}

\end{document}